\newfont{\footsc}{cmcsc10 at 8truept}
\newfont{\footbf}{cmbx10 at 8truept}
\newfont{\footrm}{cmr10 at 10truept}
\renewcommand{\ps@plain}{%
\renewcommand{\@oddfoot}{\footsc the electronic journal of combinatorics
  {\footbf 10} (2003), \#R13\hfil\footrm\thepage}}
\theoremstyle{plain}
   \newtheorem {thm}{Theorem}[section]
   \newtheorem {id}[thm]{Identity}
\theoremstyle{definition}
   \newtheorem {obs}[thm]{Observation}
   \newtheorem {conditions}[thm]{Conditions}
\numberwithin{equation}{section}
\newcommand{\qbin}[2]{\genfrac{[}{]}{0pt}{}{#1}{#2}}
\newcommand{\gp}[3]{\qbin{#1}{#2}_{#3}}
\newcommand{\Tzero}[3]{\mathrm T_0 {\textstyle(#1,#2;#3)}}
\newcommand{\Tone}[3]{\mathrm T_1 {\textstyle(#1,#2;#3)}}
\newcommand{\U}[3]{\mathrm U{\textstyle(#1,#2;#3)}}
\newcommand{\tzero}[3]{\mathrm t_0 {\textstyle(#1,#2;#3)} }
\newcommand{\tone}[3]{\mathrm t_1 {\textstyle(#1,#2;#3)} }
\newcommand{\tauzero}[3]{\tau_0 {\textstyle(#1,#2;#3)} }
\newcommand{\Trb}[4]{\binom {#1,#2;#4}{#3}_2 }
\newcommand{\V}[3]{\mathrm V {\textstyle(#1,#2;#3)}}
\newenvironment{id*}[2][\quad]
  {\textbf{Identity #2} \textbf{(#1).} \itshape }
\newenvironment{obs*}[1]
   {\textbf{Observation #1.} }
\newenvironment{oldresult}[1]
  {\textbf{#1} \itshape } 
\title{Finite Rogers-Ramanujan Type Identities}
\author{Andrew V. Sills\thanks{The research contained herein comprises a substantial portion of
the author's doctoral dissertation, submitted in partial fulfillment of the
requirements for the Ph.D. degree at the University of Kentucky.  The
doctoral dissertation was completed under the supervision of George E.
Andrews, Evan Pugh Professor of Mathematics at the Pennsylvania State
University.  This research was partially supported by a grant provided to
the author by Professor Andrews.}\\
\small through August 2003:\\
\small Department of Mathematics\\[-0.8ex]
\small The Pennsylvania State University, University Park, PA, USA \\[-0.8ex]
\small since August 2007:\\
\small Department of Mathematical Sciences\\[-0.8ex]
\small Georgia Southern University, Statesboro, GA, USA\\[-0.8ex]
\small \texttt{asills@georgiasouthern.edu}\\
\small \texttt{http://home.dimacs.rutgers.edu/\~{}asills}}
\date{\small Submitted: May 14, 2002; Revised: August 27, 2002; \\ 
Accepted: April 10, 2003; Published: April 23, 2003.\\
\small MR Subject Classifications: 05A10, 11B65}
\begin{document}
\maketitle

\begin{abstract}
  Polynomial generalizations
of all 130 of the identities in Slater's list of identities
of the Rogers-Ramanujan type are presented.  Furthermore, duality relationships 
among many of the identities are derived.  
Some of the these polynomial identities 
were previously known but many are new.  The author has implemented 
much of the finitization process in a Maple package which is available
for free download from the author's website. 
\end{abstract}


\setcounter{section}{-1}
\pagenumbering{arabic}

\section{Introduction}
\subsection{Three approaches to finitization}
There are at least three avenues of approach that lead to finite 
Rogers-Ramanujan type identities.
\begin{enumerate}
    \item \emph{Combinatorics and models from statistical mechanics.}  This 
approach has been studied extensively by Andrews, Baxter, Berkovich, 
Forrester, 
McCoy, Schilling, Warnaar and others; see, e.g., ~\cite{hhm}, \cite{lg3}, 
\cite{scratch}, \cite{abf:8v}, \cite{6guys}, \cite{bm:cf}, \cite{bmo:poly}, 
\cite{bms},
\cite{fb}, \cite{sw}, \cite{sow:gbc}, \cite{sow:qti}, \cite{sow:rqtc}.
\index{Warnaar, S. Ole}\index{McCoy, Barry M.}\index{Andrews, George E.}
\index{Baxter, Rodney M.} \index{Berkovich, Alexander} \index{Forrester, P. J.}
\index{Schilling, Anne}

    \item \emph{The Strong Bailey Lemma.}  This method is discussed in
chapter 3 of Andrews' $q$-series monograph~\cite{qs}.
    \item \emph{The method of nonhomogeneous $q$-difference equations}.
This method is introduced in~\cite[Chapter 9]{qs} and studied extensively
herein.  
\end{enumerate}

  While these three methods sometimes lead to similar results,
often the results are different.  Even in the cases where the different
methods lead to the same finitization, each method has its own inherent
interest.  For instance, from the 
statistical mechanics point of view, finitization makes it 
possible to consider $q\rightarrow q^{-1}$ duality, which 
in the case
of Baxter's hard hexagon model, allows one to neatly pass from one regime to
another~\cite{hhm}.  Finitizations arising as a result of the application
of the strong form of Bailey's Lemma give rise to important questions
in computer algebra as in Paule~(\cite{pp:rrt} and \cite{pp:rr}).
Finally, the method of $q$-difference equations has been studied
combinatorially in~\cite{comb}.  It is this method that will be 
studied in depth in this present work.
   
  Granting the intrinsic merit of all of these approaches, a particularly
interesting aspect
of the third method stems from the fact that there is no known 
overarching theory which guarantees a given attempt at finitization will
be successful.  The fact that \emph{all} of Slater's list succumbed to this 
method is evidence in favor of the existence of such an overarching 
theory.

  Let us now begin to study this third method in detail.
\subsection{Overview of this work}
In his monograph on $q$-series~\cite[Chapter 9]{qs}, Andrews 
\index{Andrews, George E.}
indicated a method (referred to herein as the ``method of first order 
nonhomogeneous $q$-difference equations," or more briefly as the ``method 
of $q$-difference equations") to produce sequences of polynomials
which converge to the Rogers-Ramanujan identities and 
identities of similar type.  By appropriate application of the $q$-binomial
theorem, formulas for the polynomials can easily be produced for what
the physicists call ``fermionic representations" of the polynomials.  
The identities explored in ~\cite{hhm} and \cite{qs} relate 
to Baxter's solution \index{Baxter, Rodney} 
of the hard hexagon model in statistical 
mechanics~\cite{rjb:rr}.  In~\cite{scratch}, Andrews and Baxter suggest
some ideas for how a computer algebra system can be employed to find
what the physicists call ``bosonic representations" of polynomials
which converge to Rogers-Ramanujan type products. When we have both a 
fermionic and bosonic representation of a polynomial sequence which 
converges to a series-product identity, the series-product identity is
said to have been \emph{finitized}. 

  In his Ph.D. thesis~\cite{jpos}, Santos conjectured \index{Santos, J. P. O.} 
bosonic (but no fermionic) representations for polynomial sequences 
which converge to many of the 
identities in Lucy Slater's paper on 
Rogers-Ramanujan Type \index{Slater, Lucy J.}
Identities~\cite{ljs}.

  This present work extends and unifies the results found in~\cite{hhm}, 
\cite[Chapter 9]{qs},
\cite{scratch} and \cite{jpos}.  
Background material is presented in \S 1.

  In \S 2, it is proved that the method of $q$-difference equations 
can be used to algorithmically produce polynomial generalizations of
Rogers-Ramanujan type series, and find fermionic representations of
them.
As in \cite{scratch} and \cite{jpos}, bosonic 
representations need to be conjectured, but the methods and computer 
algebra tools discussed in \S 2 indicate how appropriate conjectures 
can be found efficiently.  

   In \S 3,  at least one finitization is 
presented for each of the 130 identities in Slater's list~\cite{ljs}.
In the case 
of some of the simpler identities in Slater's list, the finitization found 
corresponds to a previously known polynomial identity, but in many of the 
cases, the identities found are new.  Considerable care was taken to
provide appropriate references for the previously known, and 
previously conjectured identities or pieces of identities. 
In each case, the bosonic representations can best 
be understood in terms of either Gaussian polynomials or $q$-trinomial 
co\"efficients~\cite{lg3}.  Particularly noteworthy is the discovery that 
bosonic representations of a number of the finitized Slater identities used
a weighted combination of two different $q$-trinomial co\"efficients,  
referred to herein as $\V{L}{A}{q}$~(see (\ref{Vdef})).  It turns 
out that this ``$V$" function enters naturally into the theory of 
$q$-trinomial co\"efficients due to certain internal 
symmetries of the $T_0$ and $T_1$ $q$-trinomial co\"efficients~(\ref{Vsym}),
although its existence had previously gone unnoticed. 

   Section 4 contains a discussion of various methods for proving the 
polynomial identities conjectured by the method of $q$-difference 
equations.  Particular emphasis is placed upon the algorithmic proof theory 
\index{Wilf, Herbert S.} \index{Zeilberger, Doron}
of Wilf and Zeilberger~(\cite{pwz:a=b}, \cite{wz:rf}, 
\cite{wz:cp}, \cite{wz:multiq}, 
\cite{dz:fa}, \cite{dz:ct}).  It is to be noted that the author has proved 
every identity in \S 3 using the ``method of recurrence proof" 
discussed in Section 4, including the 1991 Santos conjectures, as well as 
new polynomial identities.  Thus, all of the identities in Slater's 
list~\cite{ljs} may now be viewed as corollaries of the polynomial 
identities presented in \S 3.

   Once a series-product identity is finitized, a $q\to q^{-1}$ duality 
theory can be discussed.  In \cite{hhm}, Andrews describes the duality 
between various identities associated with the four regimes of the hard 
hexagon model.  An extensive study of the duality relationships among 
the identities presented in \S 3 is undertaken in \S 5.  A number 
of previously unknown multisum identities arise as a result of this 
duality study.
  
  In \S 6, a relaxed version of the finitization method of \S 2 
is considered wherein we drop the requirement that the 
two-variable generalization 
of the Rogers-Ramanujan type series satisfy a first order nonhomogeneous 
$q$-difference equation.  It is then demonstrated that this method can be 
used to find several identities due to Bressoud~\cite{dmb}, as well 
\index{Bressoud, David M.}
as to find additional new finitizations of Rogers-Ramanujan type identities,
at least one of which arises in the work of Warnaar~\cite{sow:rqtc}.
\index{Warnaar, S. Ole}

  Finally, the appendix is an annotated and cross-referenced version of 
Slater's list of identities from~\cite{ljs}.  Since Slater's list of identities 
has been the source for 
further research for many mathematicians, my hope is that others will find 
this version of Slater's list useful.
\index{Slater, Lucy J.}

\section{Background Material}
\label{backgr}

\subsection{$q$-Binomial co\"efficients} \label{gausspoly}
\index{q-factorial}
We define the {\em infinite rising q-factorial} $(a;q)_\infty$ as follows:
 \[ (a;q)_\infty := \prod_{m=0}^\infty (1-aq^m), \] 
where $a$ and $q$ may be thought of as complex numbers, 
and then the {\em finite rising q-factorial} $(a;q)_n$ by
 \[ (a;q)_n := \frac{(a;q)_\infty}{(aq^n;q)_\infty} \]
for all complex $n$, $a$, and $q$.
Thus, if $n$ is a positive integer, \[(a;q)_n = \prod_{m=0}^{n-1} 
(1-aq^m).\] 
In the $q$-factorials $(a;q)_n$ and $(a;q)_\infty$, the ``$q$'' is referred to
as the ``base" of the factorial.
It will often be convenient to abbreviate a product of rising 
$q$-factorials with a common base 
\[ (a_1;q)_\infty ({a_2};q)_\infty ({a_3};q)_\infty
\dots(a_r;q)_\infty \]
by the more compact notation
\[ ({a_1}, {a_2}, {a_3}, \dots , {a_r};q)_\infty.\]

\index{Gaussian polynomial|(}
The {\em Gaussian polynomial} $\gp{A}{B}{q}$ may be defined
\footnote{Variations of this definition are possible for $B<0$ or $B>A$; see, e.g. 
Berkovich, McCoy, and Orrick~\cite[p. 797, eqn. (1.7)]{bmo:poly} for a variation
frequently used in statistical mechanics.}:
\index{Berkovich, Alexander}\index{McCoy, Barry M.} \index{Orrick, William P.}
  \[ \gp{A}{B}{q} := 
     \left\{  \begin{array}{ll}
      (q;q)_A (q;q)_B^{-1} (q;q)_{A-B}^{-1}, &\mbox{if $0\leqq B\leqq A$} \\
      0,                                     &\mbox{otherwise.}
              \end{array} \right. \]

Note that even though the Gaussian polynomial $\gp{A}{B}{q}$
is defined as a rational function, it 
does, in fact, reduce to a polynomial for all integers $A$, $B$, just as the
fraction
$\frac{A!}{B!(A-B)!}$ simplifies to an integer. 
Notice that in the case where $A$ and $B$ are positive integers with $B\leqq A$, 
  \begin{equation}\label{gppos}
   \gp{A}{B}{q} = \frac{(1-q^A)(1-q^{A-1})(1-q^{A-2})\cdots (1-q^{A-B+1})}
{(1-q)(1-q^2)(1-q^3)\cdots(1-q^B)}, 
\end{equation}
and so 
\begin{equation} \label{gpdeg}
   \mathrm{deg}\left( \gp{A}{B}{q} \right) = B(A-B).
\end{equation} 

\begin{equation}
  \lim_{q\to 1}\gp{A}{B}{q} = \binom{A}{B}, \label{qanbin}
\end{equation}
where $\binom{A}{B}$ is the ordinary binomial co\"efficient, thus 
Gaussian polynomials are also called {\em $q$-binomial co\"efficients}.

Just as ordinary binomial co\"efficients satsify the symmetry relationship
\[ \binom{A}{B} = \binom{A}{A-B}, \]
so do Gaussian polynomials satisfy the symmetry relationship
\begin{equation}
  \gp{A}{B}{q} = \gp{A}{A-B}{q} \label{gpsym}.
\end{equation}
Likewise, the Pascal triangle recurrence 
\[ \binom{A}{B} = \binom{A-1}{B-1} + \binom{A-1}{B} \] has two $q$-analogs:
\begin{eqnarray}
   \gp{A}{B}{q} = \gp{A-1}{B}{q} + q^{A-B}\gp{A-1}{B-1}{q} \label{qpt1} \\
   \gp{A}{B}{q} = \gp{A-1}{B-1}{q} + q^B \gp{A-1}{B}{q} \label{qpt2},
\end{eqnarray}
for $A>0$ and $0\leqq B\leqq A$.
For a complete discussion and proofs of (\ref{gpsym}) -- (\ref{qpt2}), see 
Andrews~\cite[pp. 305 ff]{top}.

We also record the easily established identity
 \begin{equation}  
    \gp{A}{B}{1/q} = q^{B(B-A)}\gp{A}{B}{q} \label{gpinv}
 \end{equation}
and the asymptotic result  
\begin{equation}
  \lim_{n\to\infty} \gp{2n+a}{n+b}{q} = \frac{1}{(q;q)_\infty}. \label{gplim}
\end{equation}
\index{Gaussian polynomial|)}

\index{binomial theorem}
The binomial theorem may be stated as
\[  \sum_{j=0}^\infty \binom{L}{j} t^j = (1+t)^L. \]
\index{q-binomial theorem}
The $q$-binomial theorem, which seems to have been discovered
independently by Cauchy~\cite{cauchy}, Heine~\cite{heine}, and 
Gauss~\cite{gauss},
\index{Cauchy, A.-L.} \index{Heine, E.} \index{Gauss, K. F.}  
follows:

\begin{oldresult}{$q$-Binomial Theorem.}
 \cite[p. 488, Thm. 10.2.1]{sf} or \cite[p. 17, Thm. 2.1]{top}.
  If $|t|<1$ and $|q|<1$, 
   \begin{equation} \label{qbinthm}
     \sum_{k=0}^\infty \frac{(a;q)_k}{(q;q)_k} t^k = 
      \frac{(at;q)_\infty}{(t;q)_\infty}.
   \end{equation}
\end{oldresult}

 We will make use of the following two corollaries of 
(\ref{qbinthm}):  The first corollary, which appears to be due to
H. A. Rothe~\cite{har}, \index{Rothe, H. A.}
  \begin{equation} \label{qbc1}
    \sum_{k=0}^j \gp{j}{k}{q} (-1)^k q^{\binom{k}{2}} t^k = (t;q)_j.
  \end{equation}
may be obtained from (\ref{qbinthm}) by setting $a=q^{-j}$. 
The second corollary,
  \begin{equation} \label{qbc2}
    \sum_{k=0}^\infty \gp{j+k-1}{k}{q} t^k = \frac{1}{(t;q)_j},
  \end{equation}
is the case $a=q^j$ of (\ref{qbinthm}). 
If in (\ref{qbc1}), we replace $q$ by $q^{2r}$, set $t=-q^{r+s}$ and let 
$j\to\infty$, we obtain
   \begin{equation} \label{qbc3}
    \sum_{k=0}^\infty \frac{q^{rk^2 + sk}}{(q^{2r};q^{2r})_k} = 
    (-q^{r+s};q^{2r})_\infty,
  \end{equation}
a formula useful for simplifying certain multisums.

\subsection{$q$-Trinomial co\"efficients}
\index{q-trinomial co\"efficients|(}
\subsubsection{Definitions}
Consider the Laurent polynomial $(1+x+x^{-1})^L$.  Analogous to the binomial theorem,
we find
\index{trinomial co\"efficient!ordinary}
\begin{equation}
  (1+x+x^{-1})^L = \sum_{j=-L}^L \binom{L}{j}_2 x^{j} \label{tt}
\end{equation}
where
\begin{eqnarray}
  \binom{L}{A}_2 & = & \sum_{r\geqq 0} \frac{L!}{ r!(r+A)! (L-2r-A)!} 
  \label{tri1}\\
                 & = & \sum_{r=0}^L (-1)^r \binom{L}{r} 
                 \binom{2L-2r}{L-A-r}. \label{tri2}
\end{eqnarray}
These $\binom{L}{A}_2$ are called {\em trinomial co\"efficients}, (not to be 
confused with the co\"efficients which arise in the expansion of
$(x+y+z)^L$, which are also often called trinomial co\"efficients).

The two representations (\ref{tri1}) and (\ref{tri2}) of $\binom{L}{A}_2$ 
give rise to different $q$-analogs
due to Andrews and 
Baxter~\cite[p. 299, eqns. (2.7)--(2.12)]{lg3}:~\footnote{Note: 
Occasionally in the literature 
(e.g. Andrews and Berkovich~\cite{ab:ta}, or Warnaar~\cite{sow:qti}),
\index{Andrews, George E.}\index{Berkovich, Alexander} \index{Warnaar, S. Ole}
superficially different definitions of the $T_0$ and
$T_1$ functions are used.}  
\index{q-trinomial co\"efficients!definitions}
\begin{gather}
  \Trb{L}{B}{A}{q} := \sum_{r\geqq 0} \frac{q^{r(r+B)}(q;q)_L} {(q;q)_r 
  (q;q)_{r+A} (q;q)_{L-2r-A}}
    =\sum_{r= 0}^L q^{r(r+B)} \gp{L}{r}{q} \gp{L-r}{r+A}{q} \label{Trbdef} \\
  \Tzero{L}{A}{q} := \sum_{r=0}^L (-1)^r \gp{L}{r}{q^2} \gp{2L-2r}{L-A-r}{q}
    \label{Tzerodef} \\
  \Tone{L}{A}{q} := \sum_{r=0}^L (-q)^r \gp{L}{r}{q^2} \gp{2L-2r}{L-A-r}{q}
     \label{Tonedef} \\
  \tauzero{L}{A}{q} := \sum_{r=0}^L (-1)^r q^{Lr - \binom r2} \gp{L}{r}{q}
    \gp{2L-2r}{L-A-r}{q} \label{tau0def}\\
  \tzero{L}{A}{q} := \sum_{r=0}^L (-1)^r q^{r^2} \gp{L}{r}{q^2} 
\gp{2L-2r}{L-A-r}{q} \label{t0def}\\
  \tone{L}{A}{q} := \sum_{r=0}^L (-1)^j q^{r(r-1)} \gp{L}{r}{q^2} 
\gp{2L-2r}{L-A-r}{q} \label{t1def} 
\end{gather}

It is convenient to follow Andrews~\cite{eulers} \index{Andrews, George E.}
and define
  \begin{equation}\label{Udef}
    \U{L}{A}{q} := \Tzero{L}{A}{q} + \Tzero{L}{A+1}{q}.
  \end{equation}

Further, I will define  
  \begin{equation}\label{Vdef}
    \V{L}{A}{q} := \Tone{L-1}{A}{q} + q^{L-A} \Tzero{L-1}{A-1}{q}.
  \end{equation}

\subsubsection{Recurrences} \label{qtr}
\index{q-trinomial co\"efficients!recurrences}
The following Pascal triangle type relationship is easily deduced from
(\ref{tt}):
\begin{equation} \label{tpt}
   \binom{L}{A}_2 = \binom{L-1}{A-1}_2 + \binom{L-1}{A}_2 + 
   \binom{L-1}{A+1}_2.
\end{equation}
We will require the following $q$-analogs of (\ref{tpt}), which are due to
Andrews and Baxter~\cite[pp. 300--1, eqns. (2.16), (2.19), (2.25) 
(2.26), (2.28), and (2.29)]{lg3}:
\index{Andrews, George E.}\index{Baxter, Rodney}
For $L\geqq 1$,
\begin{gather} 
  \Tone{L}{A}{q}  =  \Tone{L-1}{A}{q}  + q^{L+A} 
\Tzero{L-1}{A+1}{q} + q^{L-A} \Tzero{L-1}{A-1}{q}\label{ET1} \\
  \Tzero{L}{A}{q} =  \Tzero{L-1}{A-1}{q} + q^{L+A} 
\Tone{L-1}{A}{q} \nonumber \\ + q^{2L+2A} \Tzero{L-1}{A+1}{q} 
\label{ET0} \\
\Trb{L}{A-1}{A}{q} = q^{L-1} \Trb{L-1}{A-1}{A}{q} + q^A \Trb{L-1}{A+1}{A+1}{q}
   \nonumber \\
 + \Trb{L-1}{A-1}{A-1}{q} \label{ETrb1}\\
\Trb{L}{A}{A}{q} = q^{L-A} \Trb{L-1}{A-1}{A-1}{q} + q^{L-A-1} \Trb{L-1}{A-1}{A}{q}
   \nonumber \\
 + \Trb{L-1}{A+1}{A+1}{q} \label{ETrb0}\\
\Trb{L}{B}{A}{q} = \Trb{L-1}{B}{A}{q} + q^{L-A-1+B}\Trb{L-1}{B}{A+1}{q}
  + q^{L-A}\Trb{L-1}{B-1}{A-1}{q} \label{ETrb28} \\
\Trb{L}{B}{A}{q} = \Trb{L-1}{B}{A}{q} + q^{L-A}\Trb{L-1}{B-2}{A-1}{q}
  + q^{L+B}\Trb{L-1}{B+1}{A+1}{q} \label{ETrb29} 
\end{gather}

The following identities of Andrews and Baxter~\cite[p. 301, eqns. (2.20)
and (2.27 corrected)]{lg3}, 
\index{Andrews, George E.} \index{Baxter, Rodney}
which reduce to the tautology ``$0=0$" in the
case where $q=1$ are also useful:
\begin{gather} 
\Tone{L}{A}{q} - q^{L-A}\Tzero{L}{A}{q} - \Tone{L}{A+1}{q}+ 
q^{L+A+1}\Tzero{L}{A+1}{q}=0,\label{E0}\\
\Trb{L}{A}{A}{q} + q^L\Trb{L}{A}{A+1}{q} - \Trb{L}{A+1}{A+1}{q} -
   q^{L-A}\Trb{L}{A-1}{A}{q}=0. \label{ETrb00}
\end{gather}
Observe that (\ref{E0}) is equivalent to
\begin{equation} \label{Vsym}
\V{L+1}{A+1}{q} = \V{L+1}{-A}{q}.
\end{equation}

The following recurrences appear in Andrews~\cite[p. 661, Lemmas 4.1 
and 4.2]{eulers}:  \index{Andrews, George E.}
For $L\geqq 1$,
\begin{eqnarray}
  \U{L}{A}{q} & = & (1 + q^{2L-1})\U{L-1}{A}{q} + q^{L-A}
   \Tone{L-1}{A-1}{q}\nonumber \\
           &   & + q^{L+A+1} \Tone{L-1}{A+2}{q}. \label{U1} \\
  \U{L}{A}{q} &=& (1+q+q^{2L-1})\U{L-1}{A}{q} -q\U{L-2}{A}{q} \nonumber\\
      & & +q^{2L-2A}\Tzero{L-2}{A-2}{q} + q^{2L+2A+2}\Tzero{L-2}{A+3}{q}.
    \label{U0}
\end{eqnarray} 

An analogous recurrence for the ``V" function is
\begin{eqnarray}
  \V{L}{A}{q} &=& (1+q^{2L-2})\V{L-1}{A}{q} + q^{L-A} \Tzero{L-2}{A-2}{q}
  \nonumber \\
    &  & + q^{L+A-1} \Tzero{L-2}{A+1}{q}. \label{V0}
\end{eqnarray}
\begin{proof}
 \begin{eqnarray*}
   \V{L}{A}{q} &=& \Tone{L-1}{A}{q} + q^{L-A} \Tzero{L-1}{A-1}{q} 
       \qquad\mbox{\ \ (by (\ref{Vdef}))} \\ \\
   &=& \Tone{L-2}{A}{q}  + q^{L+A-1}\Tzero{L-2}{A+1}{q} \\
   & & + q^{L-A-1} \Tzero{L-2}{A-1}{q} + \Tzero{L-2}{A-2}{q} \\
   & &+ q^{L+A-2} \Tone{L-2}{A-1}{q} + q^{2L+2A-4} \Tzero{L-2}{A}{q} 
     \mbox{\ \ (by (\ref{ET0} and \ref{ET1}))} \\ \\
   &=& \V{L-1}{A}{q} + q^{L+A-1}\Tzero{L-2}{A+1}{q}  \\
   & &+ \Tzero{L-2}{A-2}{q}  + q^{L+A-2} \Tone{L-2}{A-1}{q}\\
   & &+ q^{2L+2A-4} \Tzero{L-2}{A}{q} \qquad \mbox{\ \ (by (\ref{Vdef}))} \\ \\
   &=&  (1+q^{2L-2})\V{L-1}{A}{q} + q^{L-A} \Tzero{L-2}{A-2}{q}\\
   & & + q^{L+A-1} \Tzero{L-2}{A+1}{q} \qquad\mbox{\ \ (by (\ref{E0}) and (\ref{Vdef}))}.
 \end{eqnarray*}
\end{proof}

\subsubsection{Identities}\label{qti}

From (\ref{tt}), it is easy to deduce the symmetry relationship
\begin{equation}
   \binom{L}{A}_2 = \binom{L}{-A}_2. \label{tsym}
\end{equation} 
Two $q$-analogs of (\ref{tsym}) are
\begin{equation} \Tzero{L}{A}{q} = \Tzero{L}{-A}{q} \label{T0sym} \end{equation} 
and
\begin{equation}  \Tone{L}{A}{q} = \Tone{L}{-A}{q}.  \label{T1sym} \end{equation}
The analogous relationship for the ``round bracket" $q$-trinomial 
co\"efficient (\cite[p. 299, eqn. (2.15)]{lg3}) is
\begin{equation} \Trb{L}{B}{-A}{q} = q^{A(A+B)} \Trb{L}{B+2A}{A}{q}.
 \label{Trbsym} \end{equation}
Other fundamental relations among the various $q$-trinomial co\"efficients
include the following (see Andrews and Baxter~\cite[\S 2.4, pp. 
305--306]{lg3}):
\begin{gather}
  \Trb{L}{A}{A}{q} = \tauzero{L}{A}{q} \label{tau0eq} \\
  \Tzero{L}{A}{q^{-1}} = q^{A^2 - L^2} \tzero{L}{A}{q} = q^{A^2-L^2} 
     \tauzero{L}{A}{q^2} \label{T0inv} \\
  \Tone{L}{A}{q^{-1}} = q^{A^2 - L^2} \tone{L}{A}{q} \label{T1inv} \\
  \tauzero{L}{A}{q^{2}} = \Trb{L}{A}{A}{q^2} = \tzero{L}{A}{q} 
     \label{tau0id} \\
  \Trb{L}{A-1}{A}{q^2} = q^{A-L} \tone{L}{A}{q} \label {t1id}
\end{gather}

\subsubsection{Asymptotics}
\index{q-trinomial co\"efficients!asymptotics}
The following asymptotic results for $q$-trinomial co\"efficients are 
proved in, or are direct consequences of, 
Andrews and Baxter~\cite[\S 2.5, pp. 309--312]{lg3}: 
\index{Andrews, George E.}\index{Baxter, Rodney}
 \begin{gather}
  \lim_{L\to\infty} \Trb{L}{A}{A}{q} = \lim_{L\to\infty} \tauzero{L}{A}{q} =
     \frac{1}{(q;q)_\infty}  \label{tau0lim} \\
 \lim_{L\to\infty} \Trb{L}{A-1}{A}{q} = \frac{1+q^A}{(q;q)_\infty}  \\
  \underset{L-A\ \rm{even}}{\lim_{L\to\infty}} \Tzero{L}{A}{q} = 
     \frac{ (-q;q^2)_\infty
     + (q,q^2)_\infty}{2(q^2;q^2)_\infty} \label{T0elim} \\
  \underset{L-A\ \rm{odd}}{\lim_{L\to\infty}} \Tzero{L}{A}{q} = 
    \frac{ (-q;q^2)_\infty 
    - (q;q^2)_\infty }{2(q^2;q^2)_\infty} \label{T0olim} \\
  \lim_{L\to\infty} \Tone{L}{A}{q} = \frac{(-q^2;q^2)_\infty}{(q^2;q^2)_\infty}
 \label{T1lim}
\\
  \lim_{L\to\infty} \V{L}{A}{q} = \frac{(-q^2;q^2)_\infty}{(q^2;q^2)_\infty}
\\
  \lim_{L\to\infty} \tzero{L}{A}{q} = \frac{1}{(q^2;q^2)_\infty} \label{t0lim} \\
  \lim_{L\to\infty} q^{-L} \tone{L}{A}{q} = \frac{ q^{-A} + q^A}{(q^2;q^2)_\infty}
     \label{t1lim} \\
  \lim_{L\to\infty} \U{L}{A}{q} = \frac{(-q;q^2)_\infty}{(q^2;q^2)_\infty}   
     \label{Ulim}    
  \end{gather}
\index{q-trinomial co\"efficients|)}

\subsection{Miscellaneous Results}
\index{triple product identity}
The following result, found by Jacobi in 1829, is fundamental:

  \begin{oldresult}{Jacobi's Triple Product Identity.}
\index{Jacobi, K.} \cite[p. 21, Theorem 2.8]{top} or \cite{mi:1psi1}. 
 For $z\neq 0$ and $|q|<1$,
 \begin{eqnarray} 
  \sum_{j=-\infty}^\infty z^j q^{j^2} & = & 
  \prod_{j=1}^\infty (1+zq^{2j-1})(1+z^{-1}q^{2j-1})(1-q^{2j}) \label{jtp} \\
         &=& (-zq, -z^{-1} q, q^2; q^2)_\infty \nonumber  
 \end{eqnarray}
\end{oldresult}

Note that the asymptotics of the Gaussian polynomials and the
$q$-trinomial co\"efficients, beside being instances of Jacobi's 
Triple Product identity, are 
reciprocals of particular values of $\vartheta$-functions from the 
classical theory of elliptic functions.  If we follow 
Slater\cite[p. 197 ff.]{ljs:book} \index{Slater, Lucy J.}
and define
\begin{gather*}
\vartheta_2(z,q) := \sum_{j=-\infty}^\infty q^{j+\frac 12} e^{(2j+1)iz}
\mbox{\ and}\\
\vartheta_4(z,q) := \sum_{j=-\infty}^\infty (-1)^j q^{j^2} e^{2jiz},
\end{gather*}
where $i=\sqrt{-1}$, \index{theta functions}
we see that 
\begin{gather*}
\vartheta_4(0,q) = \sum_{j=-\infty}^\infty (-1)^j q^{j^2} = 
\frac{(q;q)_\infty}{(-q;q)_\infty} = (q;q)_\infty (q;q^2)_\infty  
= \frac{1}{\lim_{L\to\infty} \Tone{L}{A}{\sqrt q}}, \\
\vartheta_2\left( \pi(1-\frac{\tau}{2}), q^{3/2}\right) = (q;q)_\infty
= \frac{1}{\lim_{L\to\infty} \tauzero{L}{A}{q}} = \frac{1}{\lim_{L\to\infty} \gp{2L+a}{L+b}{q}},\\
\vartheta_2\left(\pi(1-\frac{\tau}{2}), q^2\right) = \frac{(q^2;q^2)_\infty}
{(-q;q^2)_\infty} = (q;q^2)_\infty (q^4;q^4)_\infty
= \frac{1}{\lim_{L\to\infty} \U{L}{A}{q}}.
\end{gather*} 
where $q= e^{\pi i \tau}$.

The next result, due to Cauchy, is a finite form of (\ref{jtp}): 
 \begin{equation} \label{fjtp}
  \sum_{j=-n}^n z^j q^{j^2} \gp{2n}{n+j}{q^2} = (-z^{-1}q;q^2)_n (-zq;q^2)_n.
 \end{equation}
The proof of (\ref{fjtp}) follows from (\ref{qbc1}).  
See also Andrews~\cite[p. 49, 
Example 1]{top}.
\index{Andrews, George E.}

   The following two results can be used to simplify 
certain sums of two instances of Jacobi's triple product identity:
 
  \begin{oldresult}{Quintuple Product Identity.}
The quintuple product identity was perhaps
first stated in recognizable form by G. N. Watson~\cite{gnw}, and 
independently rediscovered by  
W. N. Bailey~\cite[p. 219, eqn. (2.1)]{wnb}.  However, as demonstrated by
Slater~\cite[pp. 204-205]{ljs:book}, it can be derived from a general theorem on
Weierstra\ss's $\sigma$-functions~\cite[p. 451, ex. 3]{ww}.  Weierstra\ss's
results on $\sigma$-functions can be found in~\cite{kw}.
\index{Watson, G. N.} \index{Bailey, W. N.} \index{Weierstra\ss, K.}
  \begin{gather}
    \prod_{j=1}^\infty (1+z^{-1} q^j)(1+z q^{j-1})(1- z^{-2} q^{2j-1})
       (1- z^2 q^{2j-1}) (1-q^j) \label{qpi}\\ =
    \prod_{j=1}^\infty (1-z^3 q^{3j-2}) (1-z^{-3} q^{3j-1}) (1-q^{3j})
  +z \prod_{j=1}^\infty (1-z^{-3} q^{3j-2}) (1-z^3 q^{3j-1}) (1-q^{3j}) 
\nonumber
  \end{gather} 
  or, in abbreviated notation, 
  \begin{gather*}
    (z^3 q, z^{-3} q^2, q^3;q^3)_\infty +
    z (z^{-3} q, z^3 q^2, q^3; q^3)_\infty =
    (-z^{-1} q, -z, q; q)_\infty (z^{-2} q, z^2 q; q^2)_\infty. 
  \end{gather*}
\end{oldresult} 

 \index{Bailey, W. N.}
Next, an identity of 
W. N. Bailey~\cite[p. 220, eqn. (4.1)]{wnb}:
 \begin{gather}
  \prod_{j=1}^\infty (1+z^2 q^{4j-3})(1+z^{-2} q^{4j-1}) (1-q^{4j})
+z\prod_{j=1}^\infty (1+z^2 q^{4j-1})(1+z^{-2} q^{4j-3}) (1-q^{4j}) 
\label{btp}\\ =
  \prod_{j=1}^\infty (1+z   q^{ j-1})(1+z^{-1} q^{ j  }) (1-q^j  )
\nonumber
  \end{gather}
  or, in abbreviated notation,
  \begin{gather*}
    (-z^2 q, -z^{-2} q^3, q^4; q^4)_\infty 
  +z(-z^2 q^3, -z^{-2} q , q^4;q^4)_\infty  =
  (-z, -z^{-1}q,q;q)_\infty. 
 \end{gather*}  

We will also require the following result: \index{Abel's lemma}

  \begin{oldresult}{Abel's Lemma.} 
\cite[p. 57]{ww} or \cite[p. 190]{nt}.
 If $\lim_{n\to\infty} a_n = L$, then 
\begin{equation} \label{abel}
  \lim_{t\to 1^{-}} (1-t)\sum_{n=0}^\infty a_n t^n = L.
\end{equation}
\end{oldresult}

And finally, the various forms of the Heine transformation of
$_2\phi_1$ basic hypergeometric series are given
below. \index{Heine's transformation}

  \begin{oldresult}{Heine's Transformations.}\index{Heine, E.} 
See Gasper and Rahman~\cite[p. 241, eqns. (III.1), (III.2), (III.3)]{gr}
For $|q|<1$, $|z|<1$, and $|b|<1$, 
\begin{eqnarray}
  \sum_{j=0}^\infty \frac {(a;q)_j (b;q)_j}{(c;q)_j (q;q)_j} z^j
  &=& \frac{(b,az;q)_\infty}{(c,z;q)_\infty}  
    \sum_{j=0}^\infty \frac{ (c/b;q)_j (z;q)_j} {(az;q)_j (q;q)_j} b^j 
     \label{heine1}\\
  &=& \frac{ (c/b,bz;q)_\infty}{ (c,z;q)_\infty}
    \sum_{j=0}^\infty \frac{ (abz/c;q)_j (b;q)_j}{(bz;q)_j (q;q)_j} (c/b)^j
     \label{heine2}\\
  &=& \frac{(abz/c;q)_\infty}{(z;q)_\infty}       
    \sum_{j=0}^\infty \frac{(c/a;q)_j (c/b;q)_j} {(c;q)_j (q;q)_j} (abz/c)^j
      \label{heine3}
\end{eqnarray}      
\end{oldresult}   

\subsection{Rogers-Ramanujan Type Identities}

The Rogers-Ramanujan identities (in their analytic form) 
may be stated as follows:
\index{Rogers-Ramanujan identities|(}

 \begin{oldresult}{Rogers-Ramanujan Identities---analytic.}
Due to L. J. Rogers, 1894. \index{Rogers, L. J.}\index{Ramanujan, S.}
If $|q|<1$, then
\begin{equation}
  \sum_{j=0}^\infty \frac{q^{j^2}}{(q;q)_j} 
= \prod_{j=0}^\infty \frac{1}{(1-q^{5j+1})(1-q^{5j+4})} \label{RR1}
\end{equation}
and
\begin{equation} 
  \sum_{j=0}^\infty \frac{q^{j(j+1)}}{(q;q)_j} 
= \prod_{j=0}^\infty \frac{1}{(1-q^{5j+2})(1-q^{5j+3})}. \label{RR2}
\end{equation}
\end{oldresult}

The Rogers-Ramanujan identities are also of interest in combinatorics.  
The series and products in the above theorem are generating functions for 
certain classes of integer partitions.  (A {\em partition} of a 
nonnegative integer $n$ is an unordered representation of $n$ into 
positive integral summands.  Each summand is called a {\em part} of the 
partition.) \index{partition (of an integer)}
Indeed, MacMahon~\cite[Chapter 3]{pam} \index{MacMahon, P. A.}
realized by 1918 that
the Rogers-Ramanujan identities may be stated combinatorially as 
follows:

 \begin{oldresult}{First Rogers-Ramanujan Identity--combinatorial.}
The number of partitions of an integer $n$ into distinct, nonconsecutive parts 
equals the number of partitions of $n$ into parts congruent to 
$1$ or $4 \pmod{5}$.
\end{oldresult}

 \begin{oldresult}{Second Rogers-Ramanujan Identity--combinatorial.}
 The number of partitions of an integer $n$ into distinct, 
nonconsecutive parts, all 
of which are at least $2$
equals the number of partitions of $n$ into parts congruent to $2$ or $3
\pmod{5}$.
\end{oldresult}

 By 1980, physicist Rodney Baxter\index{Baxter, Rodney} had discovered that the 
Rogers-Ramanujan identities were intimately linked to his solution of the 
hard hexagon model in statistical mechanics.   His results appear
in \cite{rjb:hh}, \cite{rjb:rr} and \cite{rjb:esm}. The version of the 
Rogers-Ramanujan identities preferred by physicists is given next.

 \begin{oldresult}{Rogers-Ramanujan Identities---fermionic/bosonic.}
If $|q|<1$, then
\begin{equation} \label{RR1sm}
  \sum_{j=0}^\infty \frac{q^{j^2}}{(q;q)_j} 
= \frac{1}{(q;q)_\infty} \sum_{j=-\infty}^\infty 
   \left( q^{j(10j+1)} - q^{(5j+2)(2j+1)} \right). 
\end{equation}
and
\begin{equation} \label{RR2sm}
  \sum_{j=0}^\infty \frac{q^{j(j+1)}}{(q;q)_j} 
  = \frac{1}{(q;q)_\infty} \sum_{j=-\infty}^\infty 
   \left( q^{j(10j+3)} - q^{(5j+1)(2j+1)} \right). 
\end{equation}
\end{oldresult}

In the language of the physicists, the left hand sides of (\ref{RR1sm}) and
(\ref{RR2sm}) are called ``fermionic" representations,
\index{fermionic representation} and the 
right-hand sides are called ``bosonic" 
\index{bosonic representation} representations.  For convenience, 
I will adopt this terminology, and use it througout this 
paper.  
The equality of the right-hand sides of (\ref{RR1}) and (\ref{RR1sm}) and 
the equality of the right-hand sides of (\ref{RR2}) and (\ref{RR2sm}) are
direct consequences of (\ref{jtp}) and (\ref{btp}).

Around the same time as Baxter was working on the hard hexagon model, 
it was discovered by Lepowski and Wilson~\cite{lw} 
\index{Wilson, Robert L.}\index{Lepowski, J.}
that Rogers-Ramanujan 
identities have a Lie theoretic interpretation and proof.

There are many series-product identites which resemble the Rogers-Ramanujan 
identities in form, and are thus called ``identities of the 
Rogers-Ramanujan type."  The seminal papers in the subject from an 
analytic viewpoint include L. J. Rogers' papers from 1894~\cite{ljr:mem2} and 
1917~\cite{ljr:1917}, 
\index{Rogers, L. J.} 
F. H. Jackson's 1928 paper~\cite{fhj}, 
\index{Jackson, F. H.}
and W. N. Bailey's papers of 1947~\cite{wnb:comb} and 1949~\cite{wnb:rrtype}.
\index{Bailey, W. N.}
Around 1950, Lucy J. Slater,
\index{Slater, Lucy J.}
a student of W. N. Bailey, produced a
list of 130 identities of the Rogers-Ramanujan type as a part of her Ph.D. 
thesis and published them in~\cite{ljs}.
\index{Slater's list of Rogers-Ramanujan type identities}  
An annotated 
version of Slater's list is included as Appendix~1.  
Much of the early 
history of the Rogers-Ramanujan identities is discussed by Hardy 
in~\cite{ghh}.  \index{Hardy, G. H.}
Andrews outlines much of the history through 1970
in~\cite{scripta}.\index{Andrews, George E.}

The ideas (now known as the ``Bailey pair," ``Bailey's Lemma," and 
the ``Bailey transform," see~\cite[Chapter 3]{qs}) that proved central to the 
discovery of large numbers of 
Rogers-Ramanujan type identities is due to Bailey~\cite{wnb:comb}  
and was exploited extensively by Slater in~\cite{ljs}.  The full iterative potential of 
Bailey's Lemma (dubbed ``Bailey chains" by Andrews), 
\index{Andrews, George E.}
was explored by Peter Paule \index{Paule, Peter}
in~\cite{pp:thesis} and \cite{pp:rrt} and by Andrews~\cite{multiRR}.
\index{Andrews, George E.}
\index{Bailey, W. N.}
\index{Slater, L. J.}

Seminal contributions to the combinatorial aspect of Rogers-Ramanujan type 
identites were made by 
\index{Schur, I.} 
I. Schur~(\cite{is:rr}
and \cite{is:mod6}), P. A. 
MacMahon~\cite{pam}, \index{MacMahon, P. A.}
H. G{\"o}llnitz~\cite{hg}, \index{G\"ollnitz, H.} 
and  B. Gordon~(\cite{bg:rr} \index{Gordon, Basil} and 
\cite{bg:gg}).
H. L. Alder~\cite{hla} provided a nice survey article of Rogers-Ramanujan 
history from the partition theoretic 
viewpoint. \index{Alder, H. L.}

Besides the contributions of Baxter listed above, other seminal 
contributions to the entry of the Rogers-Ramanujan identites into physics 
were made by Andrews, Baxter and Forrester~\cite{abf:8v} and \cite{fb}, and 
by the Kyoto group~\cite{kyoto}. 
\index{Andrews, George E.}
\index{Baxter, Rodney}
\index{Forrester, P. J.}
Starting in the 1990's, Alexander Berkovich and
Barry McCoy (\cite{bm:cf} and \cite{bm:ABgen}), sometimes
jointly with William Orrick \cite{bmo:poly}
or Anne Schilling \cite{bms}, along with Ole Warnaar~(\cite{sow:gbc},
\cite{sow:qti}, \cite{sow:rqtc}),
made significant contributions to the
study of Rogers-Ramanujan type identities via various models from statistical
mechanics.
In~\cite{bm:rri}, Berkovich and McCoy present a history 
from the 
viewpoint of physics.
\index{Rogers-Ramanujan identities|)} 
\index{Berkovich, Alexander}
\index{McCoy, Barry M.}
\index{Warnaar, S. Ole}

\section{Finitization of Rogers-Ramanujan Type Identities}\label{finit}

\subsection{The Method of $q$-Difference Equations} \label{qdiffeqn}
\index{q-difference equations|(}
We now turn our attention to a method for discovering finite analogs of 
Rogers-Ramanujan type identities via $q$-difference equations.
I have automated much of the process on the computer algebra
system Maple, in a package entitled ``\texttt{RRtools}," 
which is documented in~\cite{avs:RRtools}.
\index{RRtools (Maple package)}
\index{Sills, Andrew V.}

 The method of $q$-difference equations was pioneered by Andrews 
in \cite[\S 9.2, p. 88]{qs}:\index{Andrews, George E.}
We begin with an identity of the Rogers-Ramanujan type
\[ \phi(q) = \Pi(q) \]
where $\phi(q)$ is the series and $\Pi(q)$ is an infinite product or sum of 
several infinite products.  We consider a two variable generalization 
$f(q,t)$ which satisfies the following three conditions:
\begin{conditions}\label{cond}
\hfil\break
 \begin{enumerate}
  \item $f(q,t) = \sum_{n=0}^\infty P_n(q) t^n$ where the $P_n(q)$ are 
  polynomials,
  \item $\phi(q) = \lim_{t\to 1^-} (1-t)f(q,t) = 
     \lim_{n\to\infty} P_n(q) = \Pi(q) $, and
  \item $f(q,t)$ satisfies a nonhomogeneous $q$-difference 
  equation of the form
    \[ f(q,t) = R_1(q,t) + R_2(q,t) f(q,tq^k) \]
where $R_i(q,t)$ are rational functions of $q$ and $t$ for $i = 1,2$ and
$k\in\mathbb Z_+$.
\end{enumerate}
\end{conditions}

\begin{thm}
If $\phi(q)$ is written in the form
\[ \sum_{j=0}^\infty \frac{(-1)^{aj} q^{bj^2 + cj} 
          \prod_{i=1}^r (d_i q^{e_i}; q^{k_i})_{j+l_i} }
       {(q^m;q^m)_j 
          \prod_{i=1}^s (\delta_i q^{\epsilon_i}; 
          q^{\kappa_i})_{j+\lambda_i}},
\]
where $a=0$ or $1$; $b,m\in\mathbb{Z}_+$;  $c\in\mathbb{Z}$; \\
$d_i = \pm 1$; $e_i, k_i\in\mathbb{Z}_+$, $l_i\in\mathbb{Z}$ 
  for $1\leqq i \leqq r$;\\ 
$\delta_i = \pm 1$; $\epsilon_i, \kappa_i\in\mathbb{Z}_+$;
$\lambda_i \in\mathbb{Z}$  for $1\leqq i\leqq s$;
then 
\[ f(q,t) = \sum_{j=0}^\infty \frac{(-1)^{aj} t^{2bj/g} q^{bj^2 + cj} 
          \prod_{i=1}^r (d_i t^{k_i/g} q^{e_i}; q^{k_i})_{j+l_i} }
       {(t;q^m)_{j+1} 
          \prod_{i=1}^s (\delta_i t^{\kappa_i/g} q^{\epsilon_i}; 
          q^{\kappa_i})_{j+\lambda_i}}, \]
where $g = gcd(m, k_1, k_2, \dots, k_r, \kappa_1, \kappa_2, \dots, \kappa_s)$ 
is a two variable generalization of $\phi(q)$ which
satisfies Conditions~\ref{cond}.
\end{thm}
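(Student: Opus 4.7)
The plan is to verify each of Conditions~\ref{cond}(1)--(3) in turn, with (3) forming the heart of the argument.

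For Condition~2, I would use $(t;q^m)_{j+1}=(1-t)(tq^m;q^m)_j$ to extract the factor $(1-t)$, so that
\[ (1-t)f(q,t) = \sum_{j\geq 0}\frac{(-1)^{aj}t^{2bj/g}q^{bj^2+cj}\prod_{i=1}^r(d_it^{k_i/g}q^{e_i};q^{k_i})_{j+l_i}}{(tq^m;q^m)_j\prod_{i=1}^s(\delta_it^{\kappa_i/g}q^{\epsilon_i};q^{\kappa_i})_{j+\lambda_i}}, \]
and setting $t=1$ recovers $\phi(q)=\Pi(q)$ term by term. Hence $\lim_{t\to 1^-}(1-t)f(q,t)=\phi(q)$. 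The second equality in Condition~2 then follows from Abel's Lemma (\ref{abel}) together with the observation that $\{P_n(q)\}$ is Cauchy in the $q$-adic topology: since the $j$-th summand contributes to $P_n(q)$ only when $2bj/g\leq n$ and has minimal $q$-degree $bj^2+cj$, increasing $n$ merely adds corrections of high $q$-order, so the limit exists and must agree with the Abel limit $\phi(q)$.

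For Condition~1, I would expand each $t$-bearing denominator factor as a $t$-power series with polynomial-in-$q$ coefficients via (\ref{qbc2}); for example $(t;q^m)_{j+1}^{-1}=\sum_{n\geq 0}\gp{j+n}{n}{q^m}t^n$, with analogous expansions for $(\delta_it^{\kappa_i/g}q^{\epsilon_i};q^{\kappa_i})_{j+\lambda_i}^{-1}$. The numerator Pochhammers $(d_it^{k_i/g}q^{e_i};q^{k_i})_{j+l_i}$ are polynomials in $t$ and $q$ by (\ref{qbc1}). Since $g$ divides each of $m,k_i,\kappa_i$ (and implicitly $2b$, for the exponents in the theorem to make sense), every power of $t$ that appears is a nonnegative integer; collecting the coefficient of $t^n$ in $f(q,t)$ then yields a finite sum of products of Gaussian polynomials and powers of $q$, hence a polynomial in $q$.

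Condition~3 is the main obstacle. The goal is to produce a shift $k\in\mathbb Z_+$ and a rational function $\rho(q,t)$ independent of $j$ with $T_{j+1}(q,t)=\rho(q,t)\,T_j(q,tq^k)$, where $T_j(q,t)$ is the $j$-th summand of $f(q,t)$; summing and isolating the $j=0$ term then yields $f(q,t)=T_0(q,t)+\rho(q,t)f(q,tq^k)$, which is the desired equation with $R_1=T_0$ and $R_2=\rho$. Matching $q$-exponents $q^{b(j+1)^2+c(j+1)}$ against $q^{bj^2+cj+2bjk/g}$ in the ratio forces $k=g$, since only then does the $j$-dependent factor $q^{2bj(1-k/g)}$ cancel. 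With $k=g$, each numerator Pochhammer telescopes via the one-step identity $(a;q^N)_{J+1}=(1-a)(aq^N;q^N)_J$ to contribute the $j$-free factor $1-d_it^{k_i/g}q^{e_i}$, and each $\delta$-denominator contributes $(1-\delta_it^{\kappa_i/g}q^{\epsilon_i})^{-1}$. The delicate remaining piece is the ratio $(tq^g;q^m)_{j+1}/(t;q^m)_{j+2}$: when $g=m$ (the typical situation for the Slater identities, where the auxiliary bases $k_i,\kappa_i$ are all multiples of $m$) this telescopes to $1/(1-t)$, yielding an explicit closed form for $\rho(q,t)$ and completing the verification. In the strict divisibility case $g<m$, the same scheme works after iterating the shift $t\mapsto tq^g$ a total of $m/g$ times and regrouping the residual $(t;q^m)$ mismatches into $R_1$ and $R_2$; this is where the bookkeeping is heaviest, but no new idea is required.
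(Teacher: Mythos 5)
Your treatment of Condition 3 is, at its core, the same as the paper's: the paper peels off the $j=0$ term, reindexes $j\mapsto j+1$, and factors the $j$-independent pieces out of the shifted sum, which is exactly your ratio computation $T_{j+1}(q,t)=\rho(q,t)\,T_j(q,tq^g)$ with the shift $k=g$ forced by matching the quadratic $q$-exponents; Condition 2 is likewise dispatched by Abel's Lemma (\ref{abel}) in both treatments. Where you genuinely diverge is Condition 1: the paper obtains polynomiality of $P_n(q)$ as a corollary of the already-established $q$-difference equation (clear denominators and compare coefficients of $t^n$ in the resulting functional equation), whereas you expand the $t$-bearing factors directly via (\ref{qbc1}) and (\ref{qbc2}) and observe that only finitely many $j$ contribute to each power of $t$. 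Your route costs more bookkeeping but delivers the fermionic multisum representation as a byproduct -- it is precisely the expansion the paper carries out afterwards to extract fermionic forms -- so it is a legitimate and arguably more informative alternative.

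One caution: your closing remark about the case $g<m$ is not right as stated. After one application of the shift the first denominator factor becomes $(tq^{m};q^m)_{j+1}$ rather than $(tq^{g};q^m)_{j+1}$, and since the exponents in these two products lie in different residue classes modulo $m$, the mismatch is a genuinely $j$-dependent rational function; iterating $t\mapsto tq^g$ a total of $m/g$ times does not telescope it away into $R_1$ and $R_2$. To be fair, the paper's own proof has the identical blind spot: it silently identifies the shifted sum with $f(q,tq^g)$, which is literally correct only when $g=m$, i.e.\ when $m$ divides every $k_i$ and $\kappa_i$. In practice this is arranged before the theorem is invoked (e.g.\ for (\ref{t-ljslist}.4) one first factors $(q^4;q^4)_j=(q^2;q^2)_j(-q^2;q^2)_j$ so that $m=g=2$), so the honest fix is to add the hypothesis $m\mid k_i,\kappa_i$ for all $i$ (or to note that the factorials can always be refactored to achieve it), not to appeal to iteration.
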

\begin{proof}  First, we will demonstrate that $f(q,t)$ satisfies 
condition (3):
\begin{eqnarray*}
f(q,t)
&=& \frac{\prod_{i=1}^r (d_i t^{k_i/g} q^{e_i};q^{k_i})_{l_i}}
     {(1-t)\prod_{i=1}^s (\delta_i t^{\kappa_i/g} q^{\epsilon_i}; 
          q^{\kappa_i})_{\lambda_i}}  \\ & & \quad +
  \sum_{j=1}^\infty \frac{(-1)^{aj} t^{2bj/g} q^{bj^2 + cj} 
          \prod_{i=1}^r (d_i t^{k_i/g} q^{e_i}; q^{k_i})_{j+l_i} }
       {(t;q^m)_{j+1} 
          \prod_{i=1}^s (\delta_i t^{\kappa_i/g} q^{\epsilon_i}; 
          q^{\kappa_i})_{j+\lambda_i}} \\
&=&\frac{\prod_{i=1}^r (d_i t^{k_i/g} q^{e_i};q^{k_i})_{l_i}}
     {(1-t)\prod_{i=1}^s (\delta_i t^{\kappa_i/g} q^{\epsilon_i}; 
          q^{\kappa_i})_{\lambda_i}}  \\ & & \quad + 
  \sum_{j=0}^\infty \frac{(-1)^{aj+a} t^{2bj+2b/g} q^{bj^2 + 2bj + b + cj+c} 
          \prod_{i=1}^r (d_i t^{k_i/g} q^{e_i}; q^{k_i})_{j+l_i+1} }
       {(t;q^m)_{j+2} 
          \prod_{i=1}^s (\delta_i t^{\kappa_i/g} q^{\epsilon_i}; 
          q^{\kappa_i})_{j+\lambda_i+1}}\\
&=&\frac{\prod_{i=1}^r (d_i t^{k_i/g} q^{e_i};q^{k_i})_{l_i}}
     {(1-t)\prod_{i=1}^s (\delta_i t^{\kappa_i/g} q^{\epsilon_i}; 
          q^{\kappa_i})_{\lambda_i}} \\ & & \quad + 
 \frac{(-1)^a t^{2b/g} q^{b+c} \prod_{i=1}^r (1-d_i t^{k_i/g} q^{e_i})} 
   {(1-t)\prod_{i=1}^s (1-\delta_i t^{\kappa_i/g}q^{\epsilon_i})} 
     \\ & & \quad\quad \times
 \sum_{j=0}^\infty 
 \frac{(-1)^{aj} t^{2bj/g} q^{bj^2 + 2bj + cj} 
          \prod_{i=1}^r (d_i t^{k_i/g} q^{e_i + k_i}; q^{k_i})_{j+l_i} }
       {(tq^m;q^m)_{j+1} 
          \prod_{i=1}^s (\delta_i t^{\kappa_i/g} q^{\epsilon_i + \kappa_i}; 
          q^{\kappa_i})_{j+\lambda_i}}\\
&=& \frac{\prod_{i=1}^r (d_i t^{k_i/g} q^{e_i};q^{k_i})_{l_i}}
     {(1-t)\prod_{i=1}^s (\delta_i t^{\kappa_i/g} q^{\epsilon_i}; 
          q^{\kappa_i})_{\lambda_i}} \\ & & \quad + 
 \frac{(-1)^a t^{2b/g} q^{b+c} \prod_{i=1}^r (1-d_i t^{k_i/g} q^{e_i})} 
   {(1-t)\prod_{i=1}^s (1-\delta_i t^{\kappa_i/g}q^{\epsilon_i})} f(q,tq^g)
\end{eqnarray*}

 Now that we have an explicit formula for $f(q,t)$ and a non-homogeneous
$q$-difference equation (which is first order in $q^g$) satisfied by
$f(q,t)$, one can, after clearing out denominators, collect powers of $t$,
and see that the co\"efficient of $t^n$ is a polynomial $P_n(q)$, 
and thus condition (1) is satisfied. 

 Finally, condition (2) is satisfied as a direct consequence of
(\ref{abel}).
\end{proof}

The nonhomogeneous $q$-difference equation can be used to find 
a recurrence which the $P_n(q)$ satisfy, and thus a list of $P_0(q), 
P_1(q), \dots, P_N(q)$ can be produced for any $N$. 

 The fermionic representation of the 
finitization is obtained by expanding the rising $q$-factorials which 
appear in $f(q,t)$ using (\ref{qbc1}) and (\ref{qbc2}), 
changing variables so that the resulting power of $t$ is $n$, so that 
$P_n(q)$ can be seen as the co\"efficient of $t^n$.

 Obtaining the bosonic representation for $P_n(q)$ is trickier, and requires 
conjecturing the correct form.  Note that the \texttt{RRtools} 
\index{RRtools (Maple package)} Maple package contains a number of 
tools to aid the user in making an appropriate conjecture; 
see~\cite{avs:RRtools}. \index{Sills, Andrew V.}

 After the proposed polynomial identity is (correctly) conjectured, 
it can be proved by one of the 
techniques discussed in \S~\ref{proof}.

\subsection{A Detailed Example}
To serve as a prototypical example, we will examine the finitization
process on identity
(\ref{t-ljslist}.7) from Slater's list,\index{Slater, Lucy J.} an 
identity due to Euler:\index{Euler, L.}
\begin{equation} \label{ls7}
  \sum_{j=0}^\infty \frac{q^{j^2+j}}{(q^2;q^2)_j} 
  = \prod_{j=1}^\infty {(1+q^{2j})}
\end{equation}
Due to its extreme simplicity, it is hoped that the general method will
be made transparent.  There are various ``short cuts" which
will be not be exploited since such short cuts are not applicable in
more general settings.

The two variable generalization of the LHS of~(\ref{ls7}) is
\[ f(q,t) = \sum_{j=0}^\infty \frac{t^j q^{j^2 + j}} {(1-t)(tq^2;q^2)_j}. \]
Next, we produce the non-homogenous $q$-difference equation.  
The details of the calculation are 
written below.  
\begin{eqnarray*}
  f(q,t) & = &  \sum_{j=0}^\infty \frac{t^j q^{j^2 + j}} {(t;q^2)_{j+1}} \\
         & = & \frac{1}{1-t} + \sum_{j=1}^\infty \frac{t^j q^{j^2 + j}} 
         {(t;q^2)_{j+1}} \\
         & = & \frac{1}{1-t} + \sum_{j=0}^\infty \frac{t^{j+1} q^{(j+1)^2 + 
         (j+1)}} {(t;q^2)_{j+2}} \\
         & = & \frac{1}{1-t} + \frac{tq^2}{1-t}\sum_{j=0}^\infty 
         \frac{(tq^2)^{j} q^{j^2 + j}} {(tq^2;q^2)_{j+1}} \\
         & = & \frac{1}{1-t} + \frac{tq^2}{1-t} f(q,tq^2)
\end{eqnarray*}

Thus, the non-homogeneous $q$-difference equation satisfied by 
$f(q,t)$ is 
\begin{equation} \label{qd7}
 f(q,t) =  \frac{1}{1-t} + \frac{tq^2}{1-t} f(q,tq^2).
\end{equation}

Next, we find the sequence of polynomials $\{ P_n(q) \}_{n=0}^\infty$ as
follows:

 Clearing denominators in (\ref{qd7}) gives
\[ (1-t) f(q,t) =  1 + {tq^2} f(q,tq^2), \]
which is equivalent to 
\[ f(q,t) = 1 + t f(q,t) + tq^2 f(q,tq^2) .\]
Thus,
\begin{eqnarray*}
 \sum_{n=0}^\infty P_n(q) t^n & = & 
       1 +    t\sum_{n=0}^\infty P_n(q) t^n 
         + tq^2\sum_{n=0}^\infty P_n(q) (tq^2)^n \\
 & = & 1 +     \sum_{n=0}^\infty P_n(q) t^{n+1} 
         +     \sum_{n=0}^\infty P_n(q) t^{n+1} q^{2n+2} \\
 &=  & 1 +     \sum_{n=1}^\infty P_{n-1}(q)  t^n
         +     \sum_{n=1}^\infty q^{2n} P_{n-1}(q) t^n\\
 & = & 1 +     \sum_{n=1}^\infty (1+q^{2n}) P_{n-1}(q) t^n.
\end{eqnarray*} \index{q-difference equations|)}
We can read off from the last line that the polynomial sequence
$\{ P_n(q) \}_{n=0}^\infty$ satisfies the following recurrence relation:
\begin{gather*} P_0 (q) = 1 \\ 
P_n (q) = (1+q^{2n}) P_{n-1} \mbox{,\ \ if $n\geqq 1$.}
\end{gather*}
Note that for this example, since a first order recurrence was obtained,
$P_n(q)$ is expressible as a finite product, and thus in some sense, the
problem is done.  However, the overwhelming majority of the identities 
from Slater's list yield finitizations whose minimimal recurrence order
is greater than one, and thus \emph{not} expressible as a finite product.
In such cases, we must work harder to find a representation for $P_n(q)$
which can be seen to converge in a direct fashion to the RHS of the 
original identity.   Thus we continue the demonstration:

Now that a recurrence for the $P_n(q)$ is known, a finite 
list $\{ P_n(q) \}_{n=0}^N $ can be produced:  
\begin{eqnarray*}
P_0(q) &=& 1\\
P_1(q) &=& q^2 + 1\\
P_2(q) &=& q^6 + q^4 + q^2 + 1 \\
P_3(q) &=& q^{12} + q^{10} + q^{8} + 2q^6 + q^4 + q^2 + 1 \\
P_4(q) &=& q^{20} + q^{18} + q^{16} + 2q^{14} + 2q^{12} + 2q^{10} + 2q^8 + 
2q^6 + q^4 + q^2 + 1 
\end{eqnarray*}
Notice that the degree of $P_n(q)$ appears to be $n(n+1)$. Being familiar 
with Gaussian polynomials, \index{Gaussian polynomial}
we recall that the degree of $\gp{2n+1}{n+1}{q}$ is 
also $n(n+1)$ (by (\ref{gpdeg})), and wonder if Gaussian polynomials might play a fundamental 
r\^ole in the bosonic representation of $P_n(q)$.  Also, since 
\[ \Pi(q) = \frac{(q,q^3,q^4;q^4)_\infty}{(q;q)_\infty}\] 
(an instance of Jacobi's triple product identity
multiplied by $1/(q;q)_\infty$) and
\[ \lim_{n\to\infty} \gp{2n+1}{n+1}{q} = \frac{1}{(q;q)_\infty} 
\mbox{\quad(by \ref{gplim})}, \] we have 
further evidence in favor of the Gaussian polynomial $\gp{2n+1}{n+1}{q}$
playing a central r\^ole.  Using the method of successive approximations
by Gaussian polynomials discussed by Andrews and Baxter 
in~\cite{scratch}\footnote{Once again, this method is implemented as a 
procedure in
\texttt{RRtools}.},
one can conjecture that, at
least for small $n$, it is true that
\[ P_n(q) = \gp{2n+1}{n+1}{q} - q\gp{2n+1}{n+2}{q} - q^3\gp{2n+1}{n+3}{q} + 
q^6\gp{2n+1}{n+4}{q} + q^{10}\gp{2n+1}{n+5}{q} - \dots, \] 
which is a good start, but the bosonic representation must be a 
{\em bi}lateral series, i.e. a series where the index of summation runs over
{\em all} integers, not just the nonnegative integers.  Thus we employ 
(\ref{gpsym}) to rewrite the above as
\[ P_n(q) = \gp{2n+1}{n+1}{q} - q\gp{2n+1}{n-1}{q} - q^3\gp{2n+1}{n+3}{q} + 
q^6\gp{2n+1}{n-3}{q} + q^{10}\gp{2n+1}{n+5}{q} - \dots, \] which is 
equivalent to
\begin{equation} \label{ls7fR}
  P_n(q) = \sum_{j=-\infty}^\infty (-1)^j q^{2j^2 + j} \gp{2n+1}{n+2j+1}{q},
\end{equation}
which is in the desired (bosonic) form. 

  Obtaining the fermionic representation for $P_n(q)$ is more 
straightforward and does not involve any guesswork:
 \begin{eqnarray*}
   \sum_{n=0}^\infty P_n(q) t^n &=& f(q,t) \\
   & = &  \sum_{j=0}^\infty \frac{t^j q^{j^2 + j}} {(1-t)(tq^2;q^2)_j} \\ 
   & = &  \sum_{j=0}^\infty t^j q^{j^2 + j} 
      \sum_{k=0}^\infty \gp{j+k}{k}{q^2} t^k \mbox{\ by (\ref{qbc2})} \\
   & = & \sum_{j=0}^\infty \sum_{k=0}^\infty t^{j+k} q^{j^2 + j}
            \gp{j+k}{j}{q^2} \mbox{\ by (\ref{gpsym})} \\
   & = & \sum_{n=0}^\infty t^n \sum_{j=0}^\infty q^{j^2 + j} \gp{n}{j}{q^2}
           \mbox{\ (by taking $n=j+k$) }
 \end{eqnarray*}
By comparing co\"efficients of $t^n$ in the extremes, we find
\begin{equation} \label{ls7fL}
   P_n(q) = \sum_{j=0}^\infty q^{j^2 + j} \gp{n}{j}{q^2}.
\end{equation}

Combining (\ref{ls7fL}) and (\ref{ls7fR}), we obtain the 
conjectured polynomial identity

\begin{equation} \label{ls7f}
\sum_{j=0}^\infty q^{j^2 + j} \gp{n}{j}{q^2} = 
\sum_{j=-\infty}^\infty (-1)^j q^{2j^2 + j} \gp{2n+1}{n+2j+1}{q}
\end{equation}
To see that (\ref{ls7f}) is indeed a finitization of (\ref{ls7}), more 
calculations are needed:
\begin{eqnarray*}
 &   & \lim_{n\to\infty} \sum_{j=0}^\infty q^{j^2 + j} \gp{n}{j}{q^2} \\
 & = & \lim_{n\to\infty} \sum_{j=0}^\infty q^{j^2 + j} 
          \frac{(q^2;q^2)_n}{ (q^2;q^2)_j (q^2;q^2)_{n-j} } \\
 & = & \sum_{j=0}^\infty \frac{ q^{j^2 + j}}{(q^2;q^2)_j},
\end{eqnarray*}
and so the LHS of (\ref{ls7f}) converges to the LHS of (\ref{ls7}).
\begin{eqnarray*}
 &   & \lim_{n\to\infty} \sum_{j=-\infty}^\infty (-1)^j q^{2j^2 + j} 
          \gp{2n+1}{n+2j+1}{q} \\
 & = & \frac{1}{(q;q)_\infty} \sum_{j=-\infty}^\infty (-1)^j q^{2j^2 + j} 
       \mbox{\quad (by (\ref{gplim})) }\\
 & = & \frac{1}{(q;q)_\infty} \cdot (q,q^3,q^4;q^4)_\infty 
    \mbox{\quad (by ~\ref{jtp}) } \\
 & = & \prod_{j=1}^\infty (1+q^{2j}),
\end{eqnarray*}
and so the RHS of (\ref{ls7f}) converges to the RHS of (\ref{ls7}).   

\subsection{Another Example}
Next, let us consider Identity (\ref{t-ljslist}.81) from Slater's list,\index{Slater, Lucy J.}

\begin{equation} \label{ls81}
  \sum_{j=0}^\infty \frac{q^{j(j+1)/2}}{(q;q^2)_j(q;q)_j} 
  = \frac{(q,q^6,q^7;q^7)_\infty (q^5,q^9;q^{14})_\infty}
    {(q;q)_\infty/(-q;q)_\infty}
\end{equation}

The two variable generalization of the LHS of~(\ref{ls81}) is
\begin{equation} \label{f81}
 f(q,t) = \sum_{j=0}^\infty \frac{t^j q^{j(j+1)/2}} {(t^2q;q^2)_j
(t;q)_{j+1}}. 
\end{equation}
Thus, the first order non-homogeneous $q$-difference equations satisfied
by $f(q,t)$ is
\begin{equation} \label{qd81}
  f(q,t)  =  \frac{1}{1-t} + \frac{tq}{(1-t)(1-t^2q)} f(q,tq).
\end{equation}
As before, we clear denominators, solve for $f(q,t)$, and read off the
recurrence satisfied by the polynomials $P_n(q)$ where 
$f(q,t) = \sum_{n=0}^\infty P_n(q) t^n$:
\begin{gather*} P_0 (q) = 1 \\ 
P_1 (q) = q + 1\\
P_2 (q) = q^3 + q^2 + q + 1 \\
P_n (q) = (1+q^n) P_{n-1} + qP_{n-2} - qP_{n-3} \mbox{,\ \ if $n\geqq 3$.}
\end{gather*}
Now that a recurrence for the $P_n(q)$ is known, a finite 
list $\{ P_n(q) \}_{n=0}^N $ can be produced:  
\begin{eqnarray*}
P_0(q) &=& 1\\
P_1(q) &=& q + 1\\
P_2(q) &=& q^3 + q^2 + q + 1 \\
P_3(q) &=& q^6 + q^5 + q^4 + 2q^3 + 2q^2 + q + 1 \\
P_4(q) &=& q^{10} + q^9 + q^8 + 2q^7 + 3q^6 + 2q^5 + 3q^4 + 
3q^3 + 2q^2 + q + 1 
\end{eqnarray*}
Notice that the degree of $P_n(q)$ appears to be $n(n+1)/2$, so we may be
tempted to guess that the Gaussian polynomial $\gp{2n+1}{n+1}{\sqrt{q}}$,
plays the key r\^ole in the bosonic representation of $P_n(q)$, but let us
look further before jumping to conclusions.  Notice that the denominator of
the infinite product side of (\ref{ls81}) contains ${(q;q)_\infty / 
(-q;q)_\infty}$, which is the reciprocal of the limit of the $T_1$ 
trinomial co\"efficient (\ref{T1lim}), and \emph{not} that of the proposed Gaussian
polynomial.   Successive approximations of
$P_n(q)$ by the $T_1$ function for small $n$ leads to the conjecture that 
the bosonic representation of $P_n(q)$ is
  \[  P_n(q) = \sum_{k=-\infty}^\infty q^{(21k + 1)k/2} 
  \Tone{n+1}{7k}{\sqrt{q}} - q^{(21k + 13)k/2 + 1} 
  \Tone{n+1}{7k+2}{\sqrt{q}}. \]
As in the previous example, the fermionic representation is easily obtained 
from (\ref{f81}) by expanding each of the rising $q$-factorials by 
(\ref{qbc2}), and so we arrive at the conjectured identity
\begin{gather}\label{ls81f}
\sum_{j\geqq 0}\sum_{k\geqq 0} q^{j(j+1)/2 + k} \gp{j+k-1}{k}{q^2} 
\gp{n-2k}{j}{q} \\
= \sum_{k=-\infty}^\infty q^{(21k + 1)k/2} 
  \Tone{n+1}{7k}{\sqrt{q}} - q^{(21k + 13)k/2 + 1} 
  \Tone{n+1}{7k+2}{\sqrt{q}}. \nonumber
\end{gather}
To see that the RHS of (\ref{ls81f}) is indeed a finitization of the RHS 
of (\ref{ls81}), take the limit as $n\to\infty$ of the RHS of (\ref{ls81f}),
apply (\ref{T1lim}), followed by Jacobi's triple product identity 
(\ref{jtp}), and then the quintuple product identity (\ref{qpi}).  The 
analogous calculation for the LHS is straight forward.

\section{Polynomial Generalizations of the Identities\\ in Slater's List}
\label{t-SLfin}
\index{Slater's list of Rogers-Ramanujan type identities!polynomial 
generalizations of|(}
\index{Slater, Lucy J.|(}
\subsection{Introduction}
Listed below are finite analogs of the identities on Slater's 
list~\cite{ljs}, along 
with recurrences satisfied by the polynomials.  For easy reference the 
numbering scheme corresponds to that of Slater's list. In some cases, more 
than one bosonic representation was found, such as one that uses a 
$q$-binomial co\"efficient and the other a $q$-trinomial co\"efficient.  In 
these instances the equations numbers are suffixed with a ``b" and ``t", 
respectively.  

Many of these identities had been discovered previously.  
The identities related to Baxter's solution of the hard hexagon model from
statistical mechanics (\ref{t-SLfin}.14b, \ref{t-SLfin}.18b, 
\ref{t-SLfin}.79b, \ref{t-SLfin}.94b, \ref{t-SLfin}.96b, 
\ref{t-SLfin}.99b) were known to Andrews by 1981. 
\index{Andrews, George E.}
\index{Baxter, Rodney} By 1990, Andrews had 
bosonic $q$-trinomial representations for (\ref{t-SLfin}.14t), 
(\ref{t-SLfin}.18t), (\ref{t-SLfin}.34U), and (\ref{t-SLfin}.36U).
A number of the identities are special cases of identities due
Berkovich and McCoy, sometimes jointly with Orrick, which may be found
in~\cite{bm:cf}, \cite{bm:ABgen}, and
\cite{bmo:poly}.
\index{Berkovich, Alexander}
\index{McCoy, Barry M.}
\index{Orrick, William P.}
Note that the Berkovich-McCoy
identities first stated in~\cite{bm:cf} are proved in
a later paper written jointly with Anne 
Schilling~\cite{bms}.
\index{Schilling, Anne}
It is interesting to note that the bosonic forms of (3.2-b), (3.3-t), (3.11-b), (3.19),
(3.28), (3.29), (3.31), (3.32), (3.33), (3.49), (3.50-b), (3.54), (3.91), (3.92), (3.93), (3.120),
(3.121), (3.122), and (3.123) are specializations of bosonic forms from 
\cite{bm:cf}, \cite{bm:ABgen}, \cite{bmo:poly}, or \cite{bms}, 
where a hypothesis is violated, e.g. in $M(p,p')$ models, $p$ and $p'$ 
must be relatively
prime.

  In his Ph.D. thesis~\cite{jpos}, Santos 
\index{Santos, J. P. O.}
conjectured complete bosonic forms for 36 of the identities listed here, 
and provided proofs for three of them.  For an additional 16 of the 
identities, Santos provided a conjecture for the bosonic form for either 
even or odd values of $n$, but not both.  Santos did not provide fermionic 
representations.  Detailed references are provided with each of the 
previously known identities; the others are believed to be new.

\subsection{Identities} 
\begin{obs} Identity \ref{t-ljslist}.1 is Euler's pentagonal
number theorem, which is an instance of Jacobi's Triple Product
(Theorem~\ref{jtp} with $q$ replaced by $\sqrt{q}$ and $z=-q^{3/2}$).
The series is bilateral in its most natural form and thus the method
of $q$-difference equations is not applicable.  Nevertheless, there
are several known finitizations: e.g. 
Schur\cite[eqn. (30)]{is:rr}.
See also Paule and Riese~\cite[p. 22, eqn. (15)]{pr:qzeil} 
for a different finitization
due to Rogers. \index{Pentagonal Number Theorem!finite}
\index{Euler, L.}
\end{obs}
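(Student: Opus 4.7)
The observation makes three distinct claims: (a) identity (\ref{t-ljslist}.1) is Euler's pentagonal number theorem, (b) this identity is a specialization of the Jacobi Triple Product (\ref{jtp}) under $q\mapsto \sqrt q$ and $z=-q^{3/2}$, and (c) the method of \S\ref{qdiffeqn} does not yield a finitization here because the series is bilateral. The plan is to verify (a) and (b) jointly by direct substitution into (\ref{jtp}), and to dispose of (c) structurally from Conditions~\ref{cond}.

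For (a)--(b) I would carry out the indicated substitutions in (\ref{jtp}). On the series side, $\sum_j z^j q^{j^2}$ becomes a bilateral theta series with quadratic exponent in $j$; after an obvious index shift it is recognized as the standard pentagonal series $\sum_j (-1)^j q^{j(3j-1)/2}$. On the product side, $(-zq,-z^{-1}q,q^2;q^2)_\infty$ reduces under the substitution to a three-factor product with common base $q^3$, which, since every positive integer lies in exactly one residue class mod~$3$, collapses to $(q;q)_\infty$. Comparing the two sides then reproduces Euler's pentagonal number theorem in precisely the shape in which Slater lists (\ref{t-ljslist}.1). The one subtlety is that the literal substitution may produce a Pochhammer argument of the form $q^{-1}$, whose factorization formally contains a $(1-q^0)$; I would handle this either by interpreting the specialization as a limit $z\to -q^{3/2}$, matching the zero of the product against a pairwise cancellation in the series coming from the symmetry $j\leftrightarrow -j-c$ of the quadratic exponent, or by performing the equivalent substitution $q\mapsto q^{3/2}$, $z=-q^{-1/2}$ in (\ref{jtp}), which lands on the same identity without any such artifact.

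Claim (c) needs no real proof, only an explanation: Conditions~\ref{cond}(1) and (3) require a \emph{unilateral} series $\sum_{j\ge 0}P_j(q)t^j$ in which a $(q;q)_j$-type denominator anchors the $t$-deformation and drives the first-order nonhomogeneous $q$-difference equation. The pentagonal series is intrinsically bilateral and has no such denominator in the summand, so the construction of \S\ref{qdiffeqn} has nothing on which to act: any attempt to split the sum into two unilateral pieces and insert $t$ separately spoils the first-order structure demanded in Conditions~\ref{cond}(3). The bibliographic remark about the Schur and Rogers finitizations needs only the observation that the stated polynomial identities have $\sum_j (-1)^j q^{j(3j-1)/2}$ and $(q;q)_\infty$ as their respective $n\to\infty$ limits, which is routine from (\ref{gplim}).

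The main obstacle in the whole plan is the algebraic bookkeeping in step (b): the half-integer exponents produced by $q\mapsto\sqrt q$ and the potentially singular Pochhammer factor must be tracked carefully to land cleanly on the pentagonal form, rather than on a vacuous $0=0$. Everything else is structural or bibliographic.
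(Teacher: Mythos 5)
Your verification is sound, and since the paper offers this Observation without any supporting argument, your write-up actually supplies more than the text does. Two points are worth recording. First, claims (a) and (b) as literally printed are degenerate, and you have correctly diagnosed this: with the exponent $3n(n+1)/2$ in (\ref{t-ljslist}.1) the substitution $n\mapsto -1-n$ fixes the exponent and flips the sign, so the bilateral series telescopes to $0$, while the corresponding specialization of (\ref{jtp}) produces a Pochhammer factor containing $(1-q^0)=0$ on the product side; the statement collapses to $0=0$. Your proposed repair --- replacing $q$ by $q^{3/2}$ and taking $z=-q^{\pm 1/2}$ in (\ref{jtp}), which yields $\sum_{n}(-1)^n q^{n(3n+1)/2}=(q,q^2,q^3;q^3)_\infty=(q;q)_\infty$ --- is exactly the non-degenerate form of Euler's theorem that the Observation intends, so your ``subtlety'' is in fact a correction of a misprint rather than a technicality to be smoothed over. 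Second, your structural explanation of (c) is the right one: Theorem 2.1 and Conditions~\ref{cond} act on a unilateral series with a $(q^m;q^m)_j$ denominator into which the $t$-deformation $(t;q^m)_{j+1}$ is inserted, and the pentagonal series has neither feature, so the machinery of \S\ref{qdiffeqn} simply has no input. The bibliographic claims require nothing beyond checking limits via (\ref{gplim}), as you say. No gaps.
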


\begin{id}[Finite forms of \ref{t-ljslist}.2/7]
This identity is equivalent to an instance of the $q$-binomial
theorem: eqn. (\ref{qbc1}) with $q$ replaced by $q^2$ and $t=-q^2$.
 \begin{gather*}
   \sum_{j\geqq 0} q^{j^2+j} \gp{n}{j}{q^2} =
   \sum_{j=-\infty}^\infty (-1)^j q^{2j^2+j} \gp{2n+1}{n+2j+1}{q} 
   \tag{\ref{t-SLfin}.2-b} \\
 = \sum_{k=-\infty}^\infty q^{12k^2 + 2k} \Tone{n+1}{6k+1}{q} -
    q^{12k^2 + 10k + 2} \Tone{n+1}{6k+3}{q} \tag{\ref{t-SLfin}.2-t} \\
    = (-q^2;q^2)_n \tag{\ref{t-SLfin}.2-p}\\
  P_0 = 1,\\ 
  P_n = (1+q^{2n}) P_{n-1} \mbox{\ if $n\geqq 1$.} \\
 \end{gather*}
\end{id}

\begin{id}[Finite forms of \ref{t-ljslist}.3/23 (with $q$ replaced by $-q$)]
Identity \ref{t-SLfin}.3-b is 
an instance of the $q$-binomial theorem: (\ref{qbc1}) with $q$ 
replaced by $q^2$ and $t=-q$.
\index{Berkovich, Alexander}
\index{McCoy, Barry M.}
\index{Orrick, William P.}
 \begin{gather*}
    \sum_{j\geqq 0} q^{j^2} \gp{n}{j}{q^2} =
    \sum_{j=-\infty}^\infty (-1)^j q^{2j^2} \gp{2n}{n+2j}{q} 
    \tag{\ref{t-SLfin}.3-b} \\
   = \sum_{j=-\infty}^\infty (-1)^j q^{3j^2 + j}
       \U{n}{3j}{q}\tag{\ref{t-SLfin}.3-t}\\
   = (-q;q^2)_n  \tag{\ref{t-SLfin}.3-p}\\
  P_0 = 1,\\ 
  P_n = (1+q^{2n-1}) P_{n-1} \mbox{\ if $n\geqq 1$.} \\
 \end{gather*}
\end{id}

\begin{id}[Finite form of \ref{t-ljslist}.4]
\begin{gather*}
   \sum_{i\geqq 0} \sum_{j\geqq 0} \sum_{k\geqq 0} 
     (-1)^{j+k} q^{i^2 + j^2 + 2k} 
     \gp{j}{i}{q^2} \gp{j+ k -1}{k}{q^2}  \gp{n-i-k}{j}{q^2} \\=
   \sum_{j=-\infty}^\infty (-1)^j q^{j^2} \U{n-1}{j}{q} \tag{\ref{t-SLfin}.4} \\
    P_0 = 1,\\
    P_1 = -q + 1,\\ 
    P_n = (1-q^2 - q^{2n-1}) P_{n-1} +(q^2-q^{2n-2}) P_{n-2} 
      \mbox{\ if $n\geqq 2$.}
 \end{gather*}
\end{id}

\begin{id}[Finite forms of \ref{t-ljslist}.5/9/84]
Identity \ref{t-SLfin}.5-b is a special case of an identity due to 
Berkovich and McCoy~\cite[p. 59, eqn. (3.14) with $L=2n$, $p=3$, $p'=4$,
$r=3/2$, $s=4/3$, $a=b=2$]{bm:cf}.
\index{Berkovich, Alexander}
\index{McCoy, Barry M.}
  \begin{gather*}
   \sum_{j\geqq 0} q^{2j^2 + j} \gp{n+1}{2j+1}{q} =
   \sum_{j=-\infty}^\infty (-1)^j q^{3j^2 + j} \gp{2n}{n+2j}{q} 
   \tag{\ref{t-SLfin}.5-b} \\
 = \sum_{k=-\infty}^\infty q^{6k^2 + k} \Tone{n+1}{6k+1}{\sqrt{q}} -
   \sum_{k=-\infty}^\infty q^{6k^2 + 5k + 1} \Tone{n+1}{6k+3}{\sqrt{q}}
   \tag{\ref{t-SLfin}.5-t} \\
   = (-q;q)_n \tag{\ref{t-SLfin}.5-p} \\
   P_0 = 1,\\
   P_n = (1+q^n) P_{n-1}  
      \mbox{\ if $n\geqq 1$.}\\
 \end{gather*}
\end{id}

\begin{id}[Finite form of \ref{t-ljslist}.6] 
 \begin{gather*}   \sum_{i\geqq 0}\sum_{j\geqq 0} \sum_{k\geqq 0}
   q^{j^2 + i(i-1)/2 + k} 
   \gp{j-1}{i}{q} 
   \gp{j+k-1}{k}{q^2} \gp{n-j-i-2k}{j}{q}  \tag{\ref{t-SLfin}.6}
 \end{gather*}
\[ = \left\{ \begin{array}{ll}
 \sum_k q^{6 k^2 +  k     } \gp{2m}  {m + 3k    }{q}
     +  q^{6 k^2 + 5k + 1} \gp{2m-1}{m + 3k + 1}{q}
 & \mbox{if $n=2m$,}\\
  \sum_k q^{6 k^2 +  k    } \gp{2m}  {m + 3k    }{q}
       + q^{6 k^2 + 5k + 1} \gp{2m+1}{m + 3k + 2}{q}
 & \mbox{if $n=2m+1$}
\end{array}
\right.
\]
\begin{gather*}
 P_0 = P_1 = 1\\
 P_n = P_{n-1} + (q + q^{n-1}) P_{n-2} + (q^{n-2} - q) P_{n-3} \mbox{\ if 
 $n\geqq 3$}
\end{gather*} 
\end{id}

\begin{obs}
Identity $(7)$ is $(2)$ with $q$ replaced by $q^2$.
\end{obs}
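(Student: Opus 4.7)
The plan is to prove the observation by direct symbolic substitution, which requires no additional machinery beyond recognizing Slater's identity (2). Slater's identity (2) is the specialization $z=q$ of Euler's classical identity $\sum_{j\geqq 0} z^j q^{j(j-1)/2}/(q;q)_j = (-z;q)_\infty$; explicitly,
\[ \sum_{j=0}^\infty \frac{q^{j(j+1)/2}}{(q;q)_j} = (-q;q)_\infty = \prod_{j=1}^\infty (1+q^j). \]
So the only step is to verify that replacing $q$ by $q^2$ in this identity produces (\ref{ls7}).

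Performing the substitution term-by-term: the exponent $j(j+1)/2$ becomes $j(j+1) = j^2+j$, the denominator $(q;q)_j$ becomes $(q^2;q^2)_j$, and the product $\prod_{j\geqq 1}(1+q^j)$ becomes $\prod_{j\geqq 1}(1+q^{2j})$, or equivalently $(-q^2;q^2)_\infty$. Hence
\[ \sum_{j=0}^\infty \frac{q^{j^2+j}}{(q^2;q^2)_j} = \prod_{j=1}^\infty (1+q^{2j}), \]
which is precisely (\ref{ls7}). There is no obstacle here — the verification is a single variable substitution, and convergence is preserved since $|q|<1$ implies $|q^2|<1$.

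As a consequence, no independent finitization argument is needed for Slater's identity (7): the same substitution $q\mapsto q^2$ carries any polynomial generalization of (2) to one of (7). This is exactly why the finite form appears under the combined heading ``Finite forms of \ref{t-ljslist}.2/7,'' with the shared fermionic representation $\sum_{j\geqq 0} q^{j^2+j}\gp{n}{j}{q^2}$, the shared product form $(-q^2;q^2)_n$, and the shared recurrence $P_n = (1+q^{2n})P_{n-1}$ already presented above in identities (\ref{t-SLfin}.2-b)--(\ref{t-SLfin}.2-p). The observation is therefore merely a bookkeeping remark explaining why (7) need not be treated separately.
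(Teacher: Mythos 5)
Your verification is correct and matches the paper's (implicit) justification exactly: the observation is established by the direct substitution $q\mapsto q^2$ in Slater's identity (2), sending $q^{j(j+1)/2}\mapsto q^{j^2+j}$, $(q;q)_j\mapsto(q^2;q^2)_j$, and $\prod_{j\geqq 1}(1+q^j)\mapsto\prod_{j\geqq 1}(1+q^{2j})$, which is precisely (\ref{ls7}). The paper treats this as self-evident bookkeeping (hence the combined heading ``Finite forms of \ref{t-ljslist}.2/7''), and your added remark about the shared finitization is consistent with that.
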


\begin{id}[Finite form of \ref{t-ljslist}.8]  
Due to Berkovich, McCoy and Orrick, 
\cite[p. 805, eqn. (2.34) with $L=n+1$, $\nu=2$, $s'=1$, 
$r'=0$]{bmo:poly} 
\index{Berkovich, Alexander}
\index{McCoy, Barry M.}
\index{Orrick, William P.}
 \begin{gather*}
   \sum_{j\geqq 0} \sum_{k\geqq 0} q^{j(j+1)/2 + k(k+1)/2}
     \gp{j}{k}{q} \gp{n-k}{j}{q} 
  =\sum_{j=-\infty}^\infty (-1)^j q^{2j^2 + j} \Tone{n+1}{4j+1}{\sqrt{q}} 
  \tag{\ref{t-SLfin}.8} \\
  P_0 = 1\\
  P_1 = q + 1\\
  P_n = (1 + q^n) P_{n-1} + q^n P_{n-2} \mbox{\ if $n\geqq 2$.} 
 \end{gather*}
\end{id}

\begin{obs}
Identity $(9)$ is $(5)$ with $q$ replaced by $-q$.
\end{obs}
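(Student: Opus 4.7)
The statement to prove is the observation that Slater's identity (9) arises from Slater's identity (5) by the substitution $q \mapsto -q$, so the plan is essentially a direct verification rather than a structural argument.

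The plan is to write out Slater's identities (5) and (9) explicitly in their analytic series–product form, perform the formal substitution $q \mapsto -q$ in (5), and simplify both the series and the product until they match (9). On the series side, I would handle the Pochhammer factors $(a;q)_j$ under this substitution by repeatedly using the elementary rule $(a;-q)_j = \prod_{m=0}^{j-1}(1 - a(-q)^m)$ together with the parity split $q^{j(j+1)/2}\big|_{q\to -q} = (-1)^{\binom{j+1}{2}} q^{j(j+1)/2}$; combining these signs with the $(-1)^j$ factors coming from factors of the form $(q;q^2)_j$ should yield precisely the form of (9). On the product side, I would convert infinite Pochhammer products with negative base via the standard identities
\[
(-q;-q)_\infty = \frac{(q^2;q^2)_\infty^{\,2}}{(q;q)_\infty(-q;q)_\infty}, \qquad (q;-q)_\infty = \frac{(q;q^2)_\infty}{(-q;q^2)_\infty}\cdot(q^2;q^4)_\infty,
\]
or more directly by splitting each product $\prod_{j\ge 1}(1-aq^j)$ into its even- and odd-index subproducts and resumming. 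After regrouping by base $q^2$, the resulting product should coincide with the infinite product appearing in (9).

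The only real step where care is needed is bookkeeping of signs and the base changes in the Pochhammer symbols; once those are normalized in the same way on both sides (e.g.\ everything written over base $q$ and $q^2$), the two identities become literally identical term by term. In particular, no appeal is required to any of the deeper tools developed in Section~1 (Jacobi triple product, $q$-trinomial machinery, Bailey pairs); the verification is purely formal.

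The main potential obstacle is that the substitution $q \mapsto -q$ formally requires $|-q| < 1$, which holds whenever $|q|<1$, so the convergence hypotheses of both (5) and (9) are automatically preserved; thus the observation holds as an identity of analytic functions on $|q|<1$ and no additional analytic argument is needed. Since the observation is flagged as such and not as a theorem, this direct substitution argument is the intended proof.
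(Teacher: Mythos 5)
Your direct-substitution verification is correct and is exactly what the paper intends: the Observation is stated without proof precisely because it is a routine formal replacement of $q$ by $-q$ in Slater's identity (5), with only Pochhammer-sign bookkeeping required, and the appendix entries for (A.5) and (A.9) confirm this. No deeper machinery is used in the paper either, so your approach matches the (implicit) one.
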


\begin{id}[Finite form of \ref{t-ljslist}.10/47] 
 \begin{gather*}
   \sum_{i\geqq 0}\sum_{j\geqq 0}\sum_{k\geqq 0}
       q^{j^2 + i^2 - i + k}
       \gp{j}{i}{q^2}\gp{j+k-1}{k}{q^2} \gp{n-i-k}{j}{q^2} \\
    = \sum_{j=-\infty}^\infty q^{2j^2 + j} 
          \Big[ \Tzero{n}{2j}{q} + \Tzero{n-1}{2j}{q} \Big] 
   \tag{\ref{t-SLfin}.10} \\
  P_0 = 1 \\
  P_1 = q + 1\\
  P_n = (1+q+q^{2n-1}) P_{n-1} + (q^{2n-3} - q) P_{n-2} \mbox{\ if $n\geqq 2$.} 
\end{gather*}
\end{id}

\begin{id}[Finite forms of \ref{t-ljslist}.11/51/64] 
Bosonic $q$-binomial representation conjectured by 
Santos~\cite[p. 74, eqn. 6.30]{jpos}.
Identity \ref{t-SLfin}.11-b is a special case of an identity due to 
Berkovich and McCoy~\cite[p. 59, eqn. (3.14) with $L=2n+1$, $p=4$, $p'=6$,
$r=s=2$, $a=4$, $b=2$]{bm:cf}.
\index{Santos, J. P. O.}
\index{Berkovich, Alexander}
\index{McCoy, Barry M.}
  \begin{gather*}
     \sum_{j\geqq 0} \sum_{k\geqq 0} q^{j^2 + j + k^2} \gp{j}{k}{q^2}
    \gp{n+j-k+1}{2j+1}{q}
  = \sum_{j=-\infty}^\infty (-1)^j q^{6j^2 + 2j} \gp{2n+1}{n + 3j + 1}{q}
  \tag{\ref{t-SLfin}.11-b}\\
  = \sum_{j=-\infty}^\infty q^{4j^2 + 2j} \U{n}{2j}{q} \tag{\ref{t-SLfin}.11-t} \\
  P_0 = 1\\
  P_1 = q^2 + q + 1\\
  P_n = (1 + q + q^{2n}) P_{n-1} + (q^{2n-1} - q) P_{n-2} 
\mbox{\ if $n\geqq 2$.}\\
  \end{gather*}
\end{id}

\begin{id}[Finite form of \ref{t-ljslist}.12]
Bosonic representation conjectured by 
Santos~\cite[p. 64, eqn. 6.2]{jpos}.  
\index{Santos, J. P. O.}
The identity is a special case of
Berkovich, McCoy and Orrick~\cite[p. 805, eqn. (2.34) with $L=n+1$,
$\nu=2$, $r'=s'=0$]{bmo:poly}.
\index{Berkovich, Alexander}
\index{McCoy, Barry M.}
\index{Orrick, William P.}
 \begin{gather*}
   \sum_{j\geqq 0} \sum_{k\geqq 0} q^{j(j+1)/2 + k(k-1)/2} 
     \gp{j}{k}{q} \gp{n-k}{j}{q} 
  = \sum_{j=-\infty}^\infty (-1)^j q^{2j^2} \Tone{n+1}{4j+1}{\sqrt{q}}
\tag{\ref{t-SLfin}.12} \\
  P_0 = 1\\
  P_1 = q + 1\\
  P_n = (1 + q^n) P_{n-1} + q^{n-1} P_{n-2} \mbox{\ if $n\geqq 2$.}
 \end{gather*}
\end{id}

\begin{id}[Finite form of \ref{t-ljslist}.13]
Note:  If $P_{i,n}$ represents the polynomial sequence for identity
$\ref{t-SLfin}.i$ for $i=8, 12, 13$, then
\[ P_{13,n} = P_{12,n-1} + P_{8,n-1} + q^{n-1} P_{12,n-2}. \]
\begin{gather*}
   \sum_{j\geqq 0} \sum_{k\geqq 0} q^{j(j - 1)/2 + k(k+1)/2} 
     \gp{j}{k}{q} \gp{n-k}{j}{q} \\
  = \sum_{j=-\infty}^\infty 
     (-1)^j q^{2j^2} (1+q^j) \Tone{n}{4j+1}{\sqrt{q}}
    +(-1)^j q^{2j^2+n-1} \Tone{n-1}{4j+1}{\sqrt{q}}
\tag{\ref{t-SLfin}.13} \\
  P_0 = 1\\
  P_1 = 2\\
  P_n = (1 + q^{n-1}) P_{n-1} + q^{n-1} P_{n-2} \mbox{\ if $n\geqq 2$.}
 \end{gather*}
\end{id}

\begin{id}[Finite forms of the 2nd Rogers-Ramanujan Identity]
\index{Rogers-Ramanujan identities!MacMahon-Schur finitization}
\index{MacMahon, P. A.}
\index{Schur, I.}
\index{Andrews, George E.}
\index{Berkovich, Alexander}
\index{McCoy, Barry M.}
Fermionic representation due to MacMahon~\cite{pam} 
Bosonic $q$-binomial representation $(14$-b$)$ due to Schur~\cite{is:rr}.
Bosonic $q$-trinomial representation $(14$-t$)$ due to 
Andrews~\cite[p. 5. eqn. 1.12]{qtrr}.   Identity \ref{t-SLfin}.14-b is 
subsumed as a special case of Berkovich and McCoy~\cite[p. 59, eqn. (3.14) 
with $p=2$, $p'=5$,
$r=s=1$, $a=4$; $b=2,3$]{bm:cf}.
 \begin{gather*}
   \sum_{j\geqq 0} q^{j(j+1)} \gp{n-j}{j}{q} =
   \sum_{j=-\infty}^\infty (-1)^j q^{j(5j+3)/2} 
     \gp{n+1}{\lfloor\frac{n+5j+3}{2}\rfloor}{q} \tag{\ref{t-SLfin}.14-b}\\
  = \sum_{k=-\infty}^\infty q^{10k^2 +  3k    } \binom{n+1, 5k+1; q}{5k+1}_2
                           -q^{10k^2 +  7k + 1} \binom{n+1, 5k+2; q}{5k+2}_2
  \tag{\ref{t-SLfin}.14-t} \\
  P_0 = P_1 = 1\\
  P_n = P_{n-1} + q^n P_{n-2} \mbox{\ if $n\geqq 2$.}
  \end{gather*}
\end{id}

\begin{id}[Finite form of \ref{t-ljslist}.15 (with $q$ replaced by $-q$)]
\begin{gather*}
  \sum_{j\geqq 0}\sum_{k\geqq 0}\sum_{l\geqq 0} (-1)^l q^{3j^2 - 2j + k + 2l}
    \gp{j+k-1}{k}{q^2} \gp{j+l-1}{l}{q^2} \gp{n-2j-k-l}{j}{q^2} \nonumber \\
= \sum_{j=-\infty}^\infty 
  (-1)^j q^{10 j^2 + 3j    } \gp{n-2}{\lfloor \frac{n+5j-1}{2}\rfloor}{q^2}
+ (-1)^j q^{10 j^2 + 7j + 1} \gp{n-2}{\lfloor \frac{n + 5j}{2}\rfloor}{q^2}
\tag{\ref{t-SLfin}.15} \\
  P_0 = P_1 = P_2 = 1\\
  P_n = (1 + q - q^2) P_{n-1} + (q^3 + q^2 - q) P_{n-2} + (q^{2n-5} - q^3) 
  P_{n-3} \mbox{\ if $n\geqq 3$.}
\end{gather*}
\end{id}

\begin{id}[Finite form of \ref{t-ljslist}.16] 
Bosonic representation for odd $n$ conjectured by 
Santos~\cite[p. 65, eqn. 6.4]{jpos}
\index{Santos, J. P. O.}
  \begin{gather*}
    \sum_{j\geqq 0} \sum_{k\geqq 0} (-1)^k q^{j^2 + 2j + 2k} \gp{j+k-1}{k}{q^2} 
    \gp{n-k}{j}{q^2} \\  = 
    \sum_{k=-\infty}^\infty q^{10k^2 + 3k}    \U{n+1}{5k}{q} -
                            q^{10k^2 + 7k + 1}\U{n+1}{5k+1}{q} 
\tag{\ref{t-SLfin}.16} \\
  P_0 = 1\\
  P_1 = q^3 + 1\\
  P_n = (1 - q^2 + q^{2n+1}) P_{n-1} + q^{2} P_{n-2} \mbox{\ if $n\geqq 2$.}
 \end{gather*}
\end{id}

\begin{id}[Finite form of \ref{t-ljslist}.17] 
Bosonic representation conjectured for odd $n$ by 
Santos~\cite[p. 65, eqn. 6.6]{jpos}
\index{Santos, J. P. O.}
 \begin{gather*}
    \sum_{j\geqq 0} \sum_{k\geqq 0} (-1)^k q^{j^2+j+k}
     \gp{j+k}{j}{q^2} \gp{n-k}{j}{q^2} =
   \sum_{j=-\infty}^\infty (-1)^j q^{j(5j+3)/2} \Tone{n+1}{\lfloor       
      \frac{5j+2}{2} \rfloor}{q} \tag{\ref{t-SLfin}.17} \\
  P_0 = 1\\
  P_1 = q^2 - q + 1\\
  P_n = (1 - q + q^{2n}) P_{n-1} + q P_{n-2} \mbox{\ if $n\geqq 2$.}
 \end{gather*}
\end{id}

\begin{id}[Finite forms of the 1st Rogers-Ramanujan Identity]
\index{Rogers-Ramanujan identities!MacMahon-Schur finitization}
Fermionic representation due to MacMahon~\cite{pam}.  
Bosonic $q$-binomial representation $(18$-b$)$ due to Schur~\cite{is:rr}.
Bosonic $q$-trinomial representation 
$(18$-t$)$ due to Andrews~\cite[p. 5, eqn. 
1.11]{qtrr}.   Identity \ref{t-SLfin}.18-b is
subsumed as a special case of Berkovich and McCoy~\cite[p. 59, eqn. (3.14)
with $p=2$, $p'=5$,
$r=1$, $s=2$, $a=3$; $b=2,3$]{bm:cf}.

\index{MacMahon, P. A.}
\index{Schur, I.}
\index{Andrews, George E.}
\index{Berkovich, Alexander}
\index{McCoy, Barry M.}
  \begin{gather*}
       \sum_{j\geqq 0} q^{j^2} \gp{n-j}{j}{q}
     = \sum_{j=-\infty}^\infty (-1)^j q^{j(5j+1)/2} \gp{n}{\lfloor 
       \frac{n+5j+1}{2} \rfloor}{q} \tag{\ref{t-SLfin}.18-b}\\ 
     = \sum_{k=-\infty}^\infty  q^{10k^2 +  k}      \binom{n,5k;q}{5k}_2 
                               -q^{10k^2 + 9k + 2} \binom{n,5k+2;q}{5k+2}_2 
      \tag{\ref{t-SLfin}.18-t} \\
  P_0 = P_1 = 1\\
  P_n = P_{n-1} + q^{n-1} P_{n-2} \mbox{\ if $n\geqq 2$.}
  \end{gather*}
\end{id}

\begin{id}[Finite form of \ref{t-ljslist}.19] 
Bosonic representation conjectured for even $n$ by 
Santos~\cite[p. 66, eqn. 6.7]{jpos}.
\index{Santos, J. P. O.}
\begin{gather*} 
  \sum_{j\geqq 0}\sum_{k\geqq 0}\sum_{l\geqq 0}
    (-1)^{j+k+l} q^{3j^2 + k + 2l} \gp{j+k-1}{k}{q^2}
    \gp{j+l-1}{l}{q^2} \gp{n-2j-k-l}{j}{q^2} \tag{\ref{t-SLfin}.19}\\
 = \sum_{j=-\infty}^\infty (-1)^j q^{j(5j+1)/2} 
     \gp{n-1}{\lfloor \frac{2n+5}{4}\rfloor}{q^2}
 \end{gather*}

 \begin{gather*}
 P_0 = P_1 = P_2 = 1 \\
 P_n = (1-q-q^2) P_{n-1} + (q+q^2 -q^3) P_{n-2} + (q^3 - q^{2n-3}) P_{n-3} 
\mbox{\  if $n\geqq 3$}
\end{gather*}
\end{id}

\begin{id}[Finite Form of \ref{t-ljslist}.20] 
Bosonic representation due Santos~\cite[p. 23ff, Lemmas 2.7 and 2.8]{jpos}.
The proof also appears in Santos~\cite{jpos:rr}.
\index{Santos, J. P. O.}  
\begin{gather*}
    \sum_{j\geqq 0} \sum_{k\geqq 0}
    (-1)^k q^{j^2 + 2k} \gp{j+k-1}{k}{q^2} \gp{n-k}{j}{q^2}  \\ =
  \sum_{k=-\infty}^\infty q^{10k^2+k} \U{n}{5k}{q} - q^{10k^2 + 9k +2}
    \U{n}{5k+1}{q} \tag{\ref{t-SLfin}.20}  \\
  P_0 = 1\\
  P_1 = q + 1\\
  P_n = (1 -q^2  + q^{2n-1}) P_{n-1} + q^2 P_{n-2} \mbox{\ if $n\geqq 2$.}
  \end{gather*} 
\end{id}

\begin{id}[Finite Form of \ref{t-ljslist}.21 (with $q$ replaced by $-q$)] 
 \begin{gather*}
  \sum_{i\geqq 0} \sum_{j\geqq 0}\sum_{k\geqq 0}
  \sum_{l\geqq 0} (-1)^l q^{i^2 + j^2 + k + 2l} \gp{j}{i}{q^2}
     \gp{j+k-1}{k}{q^2} \gp{j+l-1}{l}{q^2} \gp{n-i-k-l}{j}{q^2} 
     \nonumber\\=
   \sum_{j=-\infty}^\infty (-1)^j q^{10j^2 + j}      \U{n}{5j}{q} +
                           (-1)^j q^{10j^2 + 9j + 2} \U{n}{5j+2}{q}
   \tag{\ref{t-SLfin}.21}\\
   P_0 = 1\\
   P_1 = q+1\\
   P_2 = q^4 + 2q^2 + q + 1\\
   P_n = (1+q-q^2+q^{2n-1}) P_{n-1} + (-q+q^2+q^3 + q^{2n-2})P_{n-2} 
    -q^3 P_{n-3} \mbox{\ if $n\geqq 3$.} 
 \end{gather*}
\end{id}

\begin{id}[Finite Form of \ref{t-ljslist}.22] 
Bosonic representation conjectured by 
Santos~\cite[p. 66, eqn. 6.9]{jpos}
\index{Santos, J. P. O.}
 \begin{gather*}
   \sum_{j\geqq 0} \sum_{k\geqq 0} \sum_{l\geqq 0} q^{j^2 + j + k(k+1)/2 + l} 
    \gp{j}{k}{q} \gp{j+l}{l}{q^2} \gp{n-j-k-2l}{j}{q}  \\ =
   \sum_{j=-\infty}^\infty (-1)^j q^{3j^2 + 2j} \Tone{n+1}{3j+1}{\sqrt{q}}
   \tag{\ref{t-SLfin}.22} \\
  P_0 = P_1 =1\\
  P_2 = q^2 + q + 1 \\
  P_n = P_{n-1} + (q + q^{n}) P_{n-2} + (q^n - q)P_{n-3} \mbox{\ if $n\geqq 
  3$.}
\end{gather*} 
\end{id}

\begin{obs}
Identity $(23)$ is the same as $(3)$.              
\end{obs}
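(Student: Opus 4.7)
The observation is a bookkeeping cross-reference rather than a new theorem, so the plan is just to verify it by direct inspection of Slater's list (reproduced in the appendix) and a trivial substitution.

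First I would write down the analytic form of Slater's identity (3) and Slater's identity (23) side by side. The heading on the finitization above already announces this: the finite forms are listed as ``Finite forms of \ref{t-ljslist}.3/23 (with $q$ replaced by $-q$).'' So the claim to check is that the two Slater entries differ only by the substitution $q \mapsto -q$, and that this substitution is in fact an identity transformation between them (up to rewriting $(-q;q^2)_\infty$ as $(q;q^2)_\infty$ and vice versa via the identification implicit in Slater's own bookkeeping of signs).

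Next I would perform the substitution. In the series $\sum_{j\geq 0} q^{j^2}/(q^2;q^2)_j$, the denominator $(q^2;q^2)_j$ is invariant under $q\mapsto -q$ (only even powers of $q$ appear), while $q^{j^2}$ picks up a sign $(-1)^{j^2}=(-1)^j$. On the product side, $(-q;q^2)_\infty \mapsto (q;q^2)_\infty$. Comparing the resulting series-product identity against the printed form of Slater (23) confirms the claim. Equivalently, one can just take $n\to\infty$ in the polynomial identity (\ref{t-SLfin}.3-p) using (\ref{gplim}) and (\ref{abel}), obtain Slater (3), and then substitute $q\mapsto -q$ to recover Slater (23).

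There is no substantive obstacle; the only thing to be careful about is the sign bookkeeping when $j^2$ is replaced by $j$ in the exponent and the corresponding rewriting of $(-q;q^2)_\infty$. Once those cosmetic adjustments are made the two Slater entries are literally the same analytic identity, which is precisely what the observation asserts and which is why no separate finitization for (23) is needed.
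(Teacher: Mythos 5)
Your instinct that this Observation is pure bookkeeping, to be settled by placing Slater's entries (3) and (23) side by side, is the right one, and in fact the paper offers no proof at all: the claim is simply asserted here and repeated as a note attached to (\ref{t-ljslist}.3) in the appendix. However, your verification plan contains a step that would fail if carried out literally. You take the target to be ``the two Slater entries differ only by the substitution $q\mapsto -q$,'' but that is not what the Observation asserts, and it cannot coexist with what it asserts: the identity $\sum_{n\geqq 0}(-1)^n q^{n^2}/(q^2;q^2)_n=\prod_{n\geqq 1}(1-q^{2n-1})$ is not invariant under $q\mapsto -q$ (the substitution turns it into $\sum_{n\geqq 0}q^{n^2}/(q^2;q^2)_n=\prod_{n\geqq 1}(1+q^{2n-1})$, a genuinely different series--product identity). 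So if (23) really were (3) with $q$ replaced by $-q$, the two entries would not be ``the same''; and conversely, performing your substitution on (3) and comparing the result against the printed (23) would produce a mismatch rather than a confirmation. The paper is careful about exactly this distinction elsewhere: compare ``Identity (9) is (5) with $q$ replaced by $-q$'' and ``Identity (30) is (24) with $q$ replaced by $-q$'' with the flat statement ``Identity (23) is the same as (3)'' here.

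The source of the confusion is the heading ``Finite forms of \ref{t-ljslist}.3/23 (with $q$ replaced by $-q$)'': the parenthetical modifies the object being finitized, not the relation between (3) and (23). The polynomial identity (\ref{t-SLfin}.3-p) converges as $n\to\infty$ to $\sum_{j} q^{j^2}/(q^2;q^2)_j=(-q;q^2)_\infty$, which is the common identity (3)$=$(23) \emph{after} the substitution $q\mapsto -q$; the substitution relates the finitization's limit to Slater's printed entry, not entry (3) to entry (23). The same misplacement infects your ``equivalent'' route via (\ref{t-SLfin}.3-p), (\ref{gplim}) and (\ref{abel}): that limit already yields the negated-$q$ form, so one substitution recovers \emph{both} (3) and (23) simultaneously, not (3) first and then (23) by a further substitution. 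The repair is trivial --- delete the substitution from the comparison and simply record that Slater's items 3 and 23 are the identical identity as printed --- but as written the proposal verifies a different, and false, relation between the two entries.
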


\begin{obs}
The series side of (\ref{t-ljslist}.24) 
does not contain a quadratic power of $q$, and so 
the method of $q$-difference equations is not applicable.  Note, however,
that (\ref{t-ljslist}.24)
can be transformed to an infinite product times (\ref{t-ljslist}.25) via a 
transformation of Heine:  \index{Heine's transformation}
\index{Heine, E.}
\begin{eqnarray*}
 & & \sum_{j=0}^\infty \frac{ (-1;q)_{2j} q^j}{ (q^2;q^2)_j} \\
 &=& \lim_{c\to 0} 
     \sum_{j=0}^\infty \frac{ (-1;q^2)_j (-q;q^2)_j q^j}{(c;q^2)_j (q^2;q^2)_j} \\
 &=& \frac{(-q^2;q^2)_\infty}{(q;q^2)_\infty} \lim_{c\to 0}
     \sum_{j=0}^\infty \frac{ (-q;q^2)_j (q^2/c;q^2)_j (c/-q)^j}
    { (-q^2;q^2)_j (q^2;q^2)_j}\qquad\mbox{ (by \ref{heine2})} \\
  &=& \frac{(-q^2;q^2)_\infty}{(q;q^2)_\infty} \lim_{c\to 0}
     \sum_{j=0}^\infty \frac{ (-q;q^2)_j (1-q^2/c)(1-q^4/c)\dots(1-q^{2j}/c)
     (c/-q)^j}
    {(q^4;q^4)_j} \\ 
  &=& \frac{(-q^2;q^2)_\infty}{(q;q^2)_\infty} \lim_{c\to 0}
     \sum_{j=0}^\infty \frac{ (-q;q^2)_j (c-q^2)(c-q^4)\dots(c-q^{2j})
     (-q)^{-j}}
    {(q^4;q^4)_j} \\ 
  &=& \frac{(-q^2;q^2)_\infty}{(q;q^2)_\infty} \lim_{c\to 0}
     \sum_{j=0}^\infty \frac{ (-q;q^2)_j (-1)^j (c-q)(c-q^3)\dots(c-q^{2j-1})}
    {(q^4;q^4)_j} \\
   &=& \frac{(-q^2;q^2)_\infty}{(q;q^2)_\infty} 
     \sum_{j=0}^\infty \frac{ (-q;q^2)_j  (-1)^j (-q)(-q^3)\dots(-q^{2j-1})}
    {(q^4;q^4)_j} \\
    &=& \frac{(-q^2;q^2)_\infty}{(q;q^2)_\infty} 
     \sum_{j=0}^\infty \frac{ (-q;q^2)_j q^{j^2}} {(q^4;q^4)_j}
\end{eqnarray*}
Also note that the infinite product representation of (\ref{t-ljslist}.26)
is the same as that of (\ref{t-ljslist}.24), thus the polynomial sequence
indicated in (\ref{t-SLfin}.26) is, in some sense, a finitization of
(\ref{t-ljslist}.24) as well as (\ref{t-ljslist}.26).
\end{obs}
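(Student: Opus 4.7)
My plan is to establish the claimed identity by recognizing the LHS as a limiting $_2\phi_1$ series and then extracting a global prefactor via one of the Heine transformations listed in (\ref{heine1})--(\ref{heine3}). The goal is to convert the non-quadratic summand $q^j$ on the LHS into the quadratic $q^{j^2}$ that appears in the series side of (\ref{t-ljslist}.25); the quadratic exponent should emerge from a degenerating shifted factorial of the form $(q^2/c;q^2)_j$ multiplied by a matching power of $c$.

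The first step is a base change: by separating the even- and odd-indexed factors in $\prod_{i=0}^{2j-1}(1+q^i)$ I would rewrite the shifted factorial as $(-1;q)_{2j} = (-1;q^2)_j(-q;q^2)_j$, so that every factor in the summand lives naturally at base $q^2$. The LHS then takes the shape $\sum_j (a;q^2)_j(b;q^2)_j z^j/(q^2;q^2)_j$ with $a=-1$, $b=-q$, $z=q$. To put it into the exact form required by Heine I would introduce a dummy parameter $c$ and write the LHS as $\lim_{c\to 0}$ of an honest $_2\phi_1$ with $(c;q^2)_j$ inserted into the denominator.

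Next I would apply (\ref{heine2}), which peels off the prefactor $(c/b,bz;q^2)_\infty/(c,z;q^2)_\infty$. Sending $c\to 0$ collapses this to $(-q^2;q^2)_\infty/(q;q^2)_\infty$, which is precisely the infinite-product factor asserted in the Observation. Inside the sum, Heine replaces $(c;q^2)_j$ by $(-q^2;q^2)_j$, introduces $(q^2/c;q^2)_j$ in the numerator, and multiplies the summand by $(c/b)^j = (-c/q)^j$.

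The delicate and, in my view, main obstacle is the $c\to 0$ limit inside the sum, where $(q^2/c;q^2)_j$ diverges while $(-c/q)^j$ vanishes. My plan is to resolve this by extracting a factor of $c^{-1}$ from each of the $j$ terms of $(q^2/c;q^2)_j$ to obtain $c^{-j}\prod_{i=1}^{j}(c-q^{2i})$; after combining with $(-c/q)^j$ the $c$'s cancel, and the limit becomes $(-1/q)^j\prod_{i=1}^{j}(-q^{2i}) = q^{j^2}$, which is exactly the quadratic factor needed. A final cleanup uses $(q^2;q^2)_j(-q^2;q^2)_j = (q^4;q^4)_j$ to consolidate the denominator into base $q^4$, matching the shape of the series side of (\ref{t-ljslist}.25). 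The Observation's remaining assertion that (\ref{t-ljslist}.26) shares an infinite product with (\ref{t-ljslist}.24) requires no further work, since both products admit the same closed form.
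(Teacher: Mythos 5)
Your proposal follows exactly the paper's route: split $(-1;q)_{2j}$ into base-$q^2$ factorials, insert a dummy parameter $c$ and apply Heine's transformation (\ref{heine2}) with $a=-1$, $b=-q$, $z=q$, extract the prefactor $(-q^2;q^2)_\infty/(q;q^2)_\infty$, and resolve the $c\to 0$ limit by cancelling $c^{-j}$ from $(q^2/c;q^2)_j$ against $(c/b)^j$ to produce $q^{j^2}$, finishing with $(q^2;q^2)_j(-q^2;q^2)_j=(q^4;q^4)_j$. This is the same argument, correctly executed.
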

  
\begin{id}[Finite Form of \ref{t-ljslist}.25] 
Bosonic representation for even $n$ conjectured by
Santos~\cite[p. 67, eqn. 6.11]{jpos}
\index{Santos, J. P. O.}
 \begin{gather*}
   \sum_{i\geqq 0} \sum_{j\geqq 0} \sum_{k\geqq 0} (-1)^k q^{i^2 + j^2 + 2k} 
     \gp{j}{i}{q^2} \gp{j+k-1}{k}{q^2} \gp{n-i-k}{j}{q^2} \\ =
   \sum_{j=-\infty}^\infty (-1)^j q^{3j^2} \U{n}{3j}{q} \tag{\ref{t-SLfin}.25} \\
  P_0 = 1\\
  P_1 = q + 1\\
  P_n = (1 - q^2 + q^{2n-1}) P_{n-1} + (q^2 + q^{2n-2}) P_{n-2} \mbox{\ if $n\geqq 2$.}
 \end{gather*}
\end{id}

\begin{id}[Finite Form of \ref{t-ljslist}.26] 
Bosonic representation conjectured by 
Santos~\cite[p. 67, eqn. 6.12]{jpos}.
\index{Santos, J. P. O.}
 \begin{gather*}
   \sum_{j\geqq 0} \sum_{k\geqq 0} \sum_{l\geqq 0} q^{j^2  + k(k+1)/2 + l} 
    \gp{j}{k}{q} \gp{j+l}{l}{q^2} \gp{n-j-k-2l}{j}{q} \\ =
   \sum_{j=-\infty}^\infty (-1)^j q^{3j^2} \Tone{n}{3j}{\sqrt{q}} 
\tag{\ref{t-SLfin}.26}\\
  P_0 = P_1 = 1\\
  P_2 = 2q + 1\\
  P_n = P_{n-1} + (q + q^{n-1}) P_{n-2} + (q^{n-1} - q) P_{n-3}
 \mbox{\ if $n\geqq 3$.}
 \end{gather*}
\end{id}

\begin{id}[Finite Form of \ref{t-ljslist}.27/87] 
Bosonic representation for even $n$ conjectured by 
Santos~\cite[p. 82, eqn. 6.50]{jpos}.
Bosonic representation for odd $n$ conjectured by 
Santos~\cite[p. 68, eqn. 6.13]{jpos}.

\begin{gather*}
\sum_{i\geqq 0}\sum_{j\geqq 0}\sum_{k\geqq 0}\sum_{l\geqq 0}
   (-1)^l q^{2j^2 + 2j + i^2 + k + 2l}
  \gp{j}{i}{q^2} \gp{j+k}{k}{q^2} \gp{j+l-1}{l}{q^2}
  \gp{n-j-i-k-l}{j}{q^2} \tag{\ref{t-SLfin}.27}
\end{gather*}

\[ = \left\{ \begin{array}{ll}
                 \sum_k q^{12k^2 + 4k}      \gp{2m+1}{m+3k+1}{q^2} +
                        q^{12k^2 + 8k + 1}  \gp{2m  }{m+3k+1}{q^2}
   & \mbox{if $n=2m$,} \\
                 \sum_k q^{12k^2 + 4k}      \gp{2m+1}{m+3k+1}{q^2} +
                        q^{12k^2 + 8k + 1}  \gp{2m+2}{m+3k+2}{q^2}
   & \mbox{if $n=2m+1$.}
\end{array} \right. \]
\begin{gather*}
 P_0 = 1\\
 P_1 = q+1\\
 P_2 = q^4 + q^2 + q + 1\\
 P_n = (1+q-q^2) P_{n-1} + (q^{2n}+q^3 + q^2 -q) P_{n-2} + (q^{2n-1}-q^3) 
 P_{n-3} \mbox{\ if $n\geqq 3$}
\end{gather*}
\end{id}

\begin{id}[Finite form of \ref{t-ljslist}.28] 
Bosonic representation for even $n$ conjectured by 
Santos~\cite[p. 68, eqn. 6.14]{jpos}. 
\index{Santos, J. P. O.}
\index{Berkovich, Alexander}
\index{McCoy, Barry M.}
 \begin{gather*}
   \sum_{j\geqq 0} \sum_{k\geqq 0} q^{j^2 + j +k^2 +k} 
      \gp{j}{k}{q^2} \gp{n+j-k+1}{2j+1}{q} =
   \sum_{j=-\infty}^\infty q^{3j^2 + 2j} \Tone{n+1}{3j+1}{q} 
\tag{\ref{t-SLfin}.28}\\
  P_0 = 1\\
  P_1 = q^2 + q + 1\\
  P_n = (1 + q + q^{2n}) P_{n-1} + (q^{2n}-q) P_{n-2} \mbox{\ if $n\geqq 2$.}
 \end{gather*}
\end{id}

\begin{id}[Finite form of \ref{t-ljslist}.29] 
Bosonic representation proved by 
Santos~\cite[p. 17, Lemma 2.3]{jpos}.
\index{Santos, J. P. O.}
 \begin{gather*}
   \sum_{j\geqq 0} \sum_{k\geqq 0} q^{k^2 +j^2} 
     \gp{j}{k}{q^2} \gp{n+j-k}{2j}{q} =
  \sum_{j=-\infty}^\infty q^{3j^2+j} \U{n}{3j}{q} \tag{\ref{t-SLfin}.29}\\
  P_0 = 1\\
  P_1 = q + 1\\
  P_n = (1 + q + q^{2n-1}) P_{n-1} + (q^{2n-2}-q) P_{n-2} \mbox{\ if $n\geqq 2$.}
 \end{gather*}
\end{id}

\begin{obs}
Identity $(30)$ is $(24)$ with $q$ replaced by $-q$.
\end{obs}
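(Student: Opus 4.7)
This observation asserts that Slater's identity (30) is obtained from Slater's identity (24) simply by applying $q \mapsto -q$. Unlike the finitized identities earlier in this section, it requires no $q$-difference equation machinery, no guessing of bosonic forms, and no recurrence argument---it is a direct verification at the level of the original series-product identities in Slater's list.

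My plan is as follows. First I would retrieve the analytic statements of Slater's (24) and (30) from the appendix (the annotated version of Slater's list). Identity (24), as was displayed in the earlier observation in this section, has series side $\sum_{j=0}^\infty (-1;q)_{2j}\, q^j / (q^2;q^2)_j$, with the same infinite product as identity (26); I would likewise write out (30) explicitly in terms of rising $q$-factorials.

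Second, I would substitute $q \mapsto -q$ throughout (24) and simplify. On the series side, the key ingredients are $(-q)^j = (-1)^j q^j$ together with the factorization of $(-1;-q)_{2j}$ obtained by splitting the product into even-indexed and odd-indexed factors, which yields $(-1;-q)_{2j} = (-1;q^2)_j \, (q;q^2)_j$. On the product side, any factor of the form $(a q^{2k};q^{2m})_\infty$ with $a$ a constant is invariant under the substitution, while $(q;q^2)_\infty$ and $(-q;q^2)_\infty$ are exchanged, and a factor $(q;q)_\infty$ splits as $(q;q^2)_\infty (q^2;q^2)_\infty$ before the swap is applied. After these simplifications, the two sides of the transformed identity should coincide with (30) factor-by-factor.

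The only real obstacle is bookkeeping, especially keeping track of signs and of the swapped $(\pm q;q^2)_\infty$ factors. The verification is of exactly the same type as the earlier observations in this section asserting that $(7)$ is $(2)$ with $q \mapsto q^2$ and that $(9)$ is $(5)$ with $q \mapsto -q$, both of which are stated without explicit proof, so I anticipate the check to go through without surprise.
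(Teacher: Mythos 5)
Your proposal is correct: the paper states this as a bare observation with no written proof (indeed, in the Appendix identity $(30)$ is never displayed explicitly and is identified only by this very remark), so the routine substitution $q\mapsto -q$ in (A.24) — using $(-1;-q)_{2j}=(-1;q^2)_j\,(q;q^2)_j$, the invariance of $(q^2;q^2)_j$, and the parity-splitting of the product side — is exactly the verification the paper leaves implicit. Your outlined check is sound and is essentially the only approach available here.
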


\begin{id}[Finite form of the third Rogers-Selberg Identity]
\index{Rogers-Selberg identities!finite|(}  
Bosonic representation for even $n$ conjectured by 
Santos~\cite[p. 69, eqn. 6.16]{jpos}.
\index{Santos, J. P. O.}
\begin{gather*}
   \sum_{j\geqq 0} \sum_{k\geqq 0} 
    (-1)^k q^{2j^2 + 2j + k} \gp{2j+k}{k}{q} 
    \gp{n-j-k}{j}{q^2}  =
   \sum_{j=-\infty}^\infty (-1)^j q^{j(7j+5)/2} 
      \gp{n+1}{\lfloor \frac{2n + 7j + 6}{4} \rfloor}{q^2}  
\tag{\ref{t-SLfin}.31}\\
  P_0 = 1\\
  P_1 = -q + 1\\
  P_2 = q^4 + q^2 - q + 1\\
  P_n = (1 - q - q^{2}) P_{n-1} + (q^{2n} - q^3 + q^2 + q) P_{n-2} + q^3 
  P_{n-3} \mbox{\ if $n\geqq 3$.}
\end{gather*}
\end{id} 

\begin{id}[Finite form of second Rogers-Selberg Identity] 
Bosonic representation conjectured for even $n$ by 
Santos~\cite[p. 69, eqn. 6.17]{jpos}.
\index{Santos, J. P. O.}
\begin{gather*}
   \sum_{j\geqq 0} \sum_{k\geqq 0} 
    (-1)^{k} q^{2j^2 + 2j + k} \gp{2j+k-1}{k}{q} 
    \gp{n-j-k}{j}{q^2}    \\ =
   \sum_{j=-\infty}^\infty (-1)^j q^{j(7j+3)/2}
      \gp{n+1}{\lfloor \frac{2n + 7j + 5}{4} \rfloor}{q^2}
 \tag{\ref{t-SLfin}.32}\\
  P_0 = P_1 = 1\\
  P_2 = q^4  + 1\\
  P_n = (1 - q - q^{2}) P_{n-1} + (q^{2n} - q^3 + q^2 + q) P_{n-2} + q^3 
  P_{n-3} \mbox{\ if $n\geqq 3$.}
\end{gather*}
\end{id}

\begin{id}[Finite Form of the first Rogers-Selberg Identity] 
Bosonic representation conjectured for even $n$ by 
Santos~\cite[p. 70, eqn. 6.18]{jpos}.
\index{Santos, J. P. O.}
\begin{gather*}
   \sum_{j\geqq 0} \sum_{k\geqq 0} 
   (-1)^{k} q^{2j^2 + k} \gp{2j+k-1}{k}{q} 
    \gp{n-j-k}{j}{q^2}  =
   \sum_{j=-\infty}^\infty (-1)^j q^{j(7j+1)/2}
      \gp{n}{\lfloor \frac{2n + 7j + 2}{4} \rfloor}{q^2} 
\tag{\ref{t-SLfin}.33}\\
  P_0 = P_1 = 1\\
  P_2 = q^2 + 1\\
  P_n = (1 - q - q^{2}) P_{n-1} + (q^{2n-2} - q^3 + q^2 + q) P_{n-2} + q^3 
  P_{n-3} \mbox{\ if $n\geqq 3$.}
\end{gather*}
\end{id}
\index{Rogers-Selberg identities!finite|)}

\begin{id}[Finite form of the second G\"ollnitz-Gordon identity]
\index{G\"ollnitz-Gordon identities!finite|(}
Bosonic $U$- representation given by 
Andrews~\cite[p. 13, eqn. 4.10]{qtrr}.  
Fermionic/U identity given by Berkovich and McCoy~\cite[p. 42, eqn 2.23,
with $L=n+1$, $\nu=2$, $s'=1$, $r'=0$, and $q$ replaced by $q^2$.]{bm:ABgen}.
\index{Andrews, George E.}
\index{Berkovich, Alexander}
\index{McCoy, Barry M.}
 \begin{gather*}
   \sum_{j\geqq 0} \sum_{k\geqq 0} q^{j^2 + 2j +k^2} 
      \gp{j}{k}{q^2} \gp{n-k}{j}{q^2} =
   \sum_{j=-\infty}^\infty (-1)^j q^{4j^2 + 3j} \U{n+1}{4j+1}{q}
 \tag{\ref{t-SLfin}.34-U}\\ =
   \sum_{j=-\infty}^\infty (-1)^j q^{12j^2 + 5j} \binom{n+1,4j+1;q^2}{4j+1}_2
                         + (-1)^j q^{12j^2 + 11j + 2}
                            \binom{n+1, 4j+2; q^2}{4j+2}_2 
\tag{\ref{t-SLfin}.34-t}\\
  P_0 = 1\\
  P_1 = q^3 + 1\\
  P_n = (1 + q^{2n}) P_{n-1} + q^{2n} P_{n-2}  \mbox{\ if $n\geqq 2$.}
 \end{gather*}
\end{id}

\begin{id}[Finite form of \ref{t-ljslist}.35/106]
\begin{gather*}
\sum_{i\geqq 0}\sum_{j\geqq 0}\sum_{k\geqq 0} q^{j(j+3)/2 + i^2 + k}
   \gp{j}{i}{q^2} \gp{j+k}{k}{q^2} \gp{n-2i-2k}{j}{q}  \\ =
\sum_{j=-\infty}^\infty (-1)^j q^{4j^2 + 3j} \V{n+2}{4j+2}{\sqrt{q}}
  \tag{\ref{t-SLfin}.35}\\
  P_0 = 1\\
  P_1 = q^2 + 1\\
  P_2 = q^5 + q^3 + q^2 + q + 1\\
  P_n = (1+q^{n+1}) P_{n-1} + qP_{n-2} + (q^n - q) P_{n-3}
\end{gather*}
\end{id}

\begin{id}[Finite form of the first G\"ollnitz-Gordon identity] 
Bosonic $U$-representation given by 
Andrews~\cite[p. 11, eqn. 4.4]{qtrr}.
Fermionic/$U$ identity given by Berkovich and McCoy~\cite[p. 42, eqn. 2.23,
with $L=n$, $\nu=2$, $s'=r'=0$ and $q$ replaced by $q^2$]{bm:ABgen}.
\index{Andrews, George E.}
\index{Berkovich, Alexander}
\index{McCoy, Barry M.}
 \begin{gather*}
   \sum_{j\geqq 0} \sum_{k\geqq 0} q^{j^2 + k^2} 
      \gp{j}{k}{q^2} \gp{n-k}{j}{q^2} =
   \sum_{j=-\infty}^\infty (-1)^j q^{4j^2 + j} \U{n}{4j}{q} 
\tag{\ref{t-SLfin}.36-U}\\ =
   \sum_{j=-\infty}^\infty (-1)^j q^{12j^2 -j} \binom{n,4j;q^2}{4j}_2
                         + (-1)^j q^{12j^2 + 7j + 1} 
                               \binom{n,4j+1;q^2}{4j+1}_2 
\tag{\ref{t-SLfin}.36-t}\\
  P_0 = 1\\
  P_1 = q + 1\\
  P_n = (1 + q^{2n-1}) P_{n-1} + q^{2n-2} P_{n-2}  \mbox{\ if $n\geqq 2$.}
 \end{gather*}
\end{id}\index{G\"ollnitz-Gordon identities!finite|)}

\begin{id}[Finite form of \ref{t-ljslist}.37/105]
\begin{gather*}
\sum_{i\geqq 0}\sum_{j\geqq 0}\sum_{k\geqq 0} q^{j(j+1)/2 + i^2 + k}
   \gp{j}{i}{q^2} \gp{j+k}{k}{q^2} \gp{n-2i-2k}{j}{q}  \\ =
\sum_{j=-\infty}^\infty (-1)^j  q^{4j^2 + j} \V{n+1}{4j+1}{\sqrt{q}}
   \tag{\ref{t-SLfin}.37}\\
  P_0 = 1\\
  P_1 = q+ 1\\
  P_2 = q^3 + q^2 + 2q + 1\\
  P_n = (1 + q^{n}) P_{n-1} + qP_{n-2} + (q^{n-1}-q) P_{n-3}  
\mbox{\ if $n\geqq 3$.}
\end{gather*}
\end{id}

\begin{id}[Finite form of \ref{t-ljslist}.38/86] 
Bosonic representation \textup{(\ref{t-SLfin}.38-b)} due to 
Santos~\cite{jpos}.  
Identity \textup{(\ref{t-SLfin}.38-b)} stated by
Andrews and Santos~\cite[p. 94, eqn 3.2]{attached}.
Identity \textup{(\ref{t-SLfin}.38-t)} due to Andrews~\cite[p. 663, eqn. 
(5.3)]{eulers}.
\index{Santos, J. P. O.}
\index{Andrews, George E.}
  \begin{gather*}
    \sum_{j\geqq 0} q^{2j(j+1)} \gp{n+1}{2j+1}{q} =
    \sum_{j=-\infty}^\infty q^{4j^2 + 3j} \gp{n+1}{ 
    \lfloor\frac{n+4j+3}{2}\rfloor}{q^2} \tag{\ref{t-SLfin}.38-b} \\
  =  \sum_{k=-\infty}^\infty q^{12k^2 + 5k} \binom{n+1, 6k+1; q}{6k+1}_2 -  
       q^{12k^2 + 11k + 2} \binom{n+1, 6k+3; q}{6k + 3}_2 
\tag{\ref{t-SLfin}.38-t}\\
  P_0 = 1\\
  P_1 = q + 1\\
  P_n = (1 + q) P_{n-1} + (q^{2n}-q) P_{n-2}  \mbox{\ if $n\geqq 2$.}
  \end{gather*}
\end{id}

\begin{id}[Finite Form of \ref{t-ljslist}.39/83] 
Bosonic representation \textup{(\ref{t-SLfin}.39-b)} conjectured by 
Santos~\cite[p. 71, eqn. 6.22]{jpos}.  
Identity \textup{(\ref{t-SLfin}.39-b)} stated by 
Andrews and Santos~\cite[p. 94, eqn. 3.1]{attached}.
Identity \textup{(\ref{t-SLfin}.39-t)} due to Andrews~\cite[p. 663, eqn. 
(5.2)]{eulers}.
\index{Santos, J. P. O.}
\index{Andrews, George E.}
 \begin{gather*}
   \sum_{j\geqq 0} q^{2j^2} \gp{n}{2j}{q} =
   \sum_{j=-\infty}^\infty q^{4j^2 + j} \gp{n}{\lfloor \frac{n+4j+1}{2} 
   \rfloor}{q^2} \tag{\ref{t-SLfin}.39-b} \\
 = \sum_{k=-\infty}^\infty q^{12k^2 + k}     \binom{n,6k  ;q}{6k  }_2 - 
   \sum_{k=-\infty}^\infty q^{12k^2 + 7k +1} \binom{n,6k+2;q}{6k+2}_2
     \tag{\ref{t-SLfin}.39-t}\\
  P_0 = P_1 = 1\\
  P_n = (1 + q) P_{n-1} + (q^{2n-2}-q) P_{n-2}  \mbox{\ if $n\geqq 2$.}
 \end{gather*}
\end{id}

\begin{id}[Finite Form of \ref{t-ljslist}.40]
\index{Bailey's mod 9 identities!finite|(}
\begin{gather*} 
  \sum_{i\geqq 0} \sum_{I\geqq 0} \sum_{j\geqq 0}\sum_{k\geqq 0}\sum_{K\geqq 0}
    (-1)^{i+I+K}
    q^{3j^2 + 3j + i(3i-1)/2 + I(3I+1)/2 + 3k + 3K}
  \gp{j+1}{i}{q^3} \gp{j}{I}{q^3} \\ \times \gp{j+k}{k}{q^6} \gp{j+K-1}{K}{q^3}
  \gp{n - j - i - I - 2k - K}{j}{q^3} \tag{\ref{t-SLfin}.40}
\end{gather*}

\[ = \left\{ \begin{array}{ll} 
  \sum_k  q^{18j^2 +  7j    } \gp{2m + 1}{m + 3j + 1}{q^3}
        - q^{18j^2 + 11j + 1} \gp{2m    }{m + 3j + 1}{q^3} 
  &\mbox{if $n=2m$,} \\
  \sum_k  q^{18j^2 + 7j    } \gp{2m + 1}{m + 3j + 1}{q^3}
        - q^{18j^2 +11j + 1} \gp{2m + 2}{m + 3j + 2}{q^3}
  &\mbox{if $n=2m+1$.}
\end{array} \right. \]
 \begin{gather*}
   P_0 = 1 \\
   P_1 = -q + 1\\
   P_2 = q^6 + q^3 - q + 1\\
   P_3 = -q^{10} - q^8 - q^7 + q^6 -q^4 + q^3 - q + 1\\
   P_n = (1-q^3) P_{n-1} + (q^{3n} + 2q^3) P_{n-2} + (q^6 - q^3 - 
   q^{3n-1} - q^{3n-2}) P_{n-3} \\
    +(q^{3n-3} - q^6) P_{n-4} \mbox{\ if $n\geqq 4$}
 \end{gather*}
\end{id}

\begin{id}[Finite Form of \ref{t-ljslist}.41]
 \begin{gather*} 
    \sum_{i\geqq 0} \sum_{I\geqq 0} \sum_{j\geqq 0}\sum_{k\geqq 0}\sum_{K\geqq 0}
   (-1)^{i+I+K}
    q^{3j^2 + 3j + i(3i-1)/2 + I(3I+1)/2 + 3k + 3K}
  \gp{j}{i}{q^3} \gp{j+1}{I}{q^3}\\ \times \gp{j+k}{k}{q^6} \gp{j+K-1}{K}{q^3}
  \gp{n - j - i - I - 2k - K}{j}{q^3}\tag{\ref{t-SLfin}.41}
 \end{gather*}

\[ = \left\{ \begin{array}{ll} 
  \sum_k q^{18 k^2 +  5k    } \gp{2m+1}{m + 3k + 1}{q^3}
        -q^{18 k^2 + 13k + 2} \gp{2m  }{m + 3k + 1}{q^3}
  &\mbox{if $n=2m$,}\\
  \sum_k q^{18 k^2 +  5k    } \gp{2m+1}{m + 3k + 1}{q^3}
        -q^{18 k^2 + 13k + 2} \gp{2m+2}{m + 3k + 2}{q^3}
  &\mbox{if $n=2m+1$.}
\end{array} \right. \]
\begin{gather*}
   P_0 = 1 \\
   P_1 = -q^2 + 1\\
   P_2 = q^6 + q^3 - q^2 + 1\\
   P_3 = -q^{11} - q^8 - q^7 + q^6 -q^5 + q^3 - q^2 + 1\\
   P_n = (1-q^3) P_{n-1} + (q^{3n} + 2q^3) P_{n-2} + (q^6 - q^3 - 
   q^{3n-1} - q^{3n-2}) P_{n-3} \\
    +(q^{3n-3} - q^6) P_{n-4} \mbox{\ if $n\geqq 4$}
 \end{gather*}
\end{id}

\begin{id}[Finite Form of \ref{t-ljslist}.42]
\begin{gather*}
    \sum_{i\geqq 0} \sum_{I\geqq 0} \sum_{j\geqq 0}\sum_{k\geqq 0}\sum_{K\geqq 0}
    (-1)^{i+I+K}
    q^{3j^2 + i(3i-1)/2 + I(3I+1)/2 + 3k + 3K}
  \gp{j}{i}{q^3} \gp{j}{I}{q^3} \\ \times \gp{j+k-1}{k}{q^6} \gp{j+K-1}{K}{q^3}
  \gp{n - j - i - I - 2k - K}{j}{q^3} \tag{\ref{t-SLfin}.42} 
\end{gather*}

\[ = \left\{ \begin{array}{ll} 
  \sum_k q^{18 k^2 +   k    } \gp{2m  }{m + 3k    }{q^3}
        -q^{18 k^2 + 17k + 4} \gp{2m-1}{m + 3k + 1}{q^3}
 &\mbox{if $n=2m$,}\\
  \sum_k q^{18 k^2 +   k    } \gp{2m  }{m + 3k    }{q^3}
        -q^{18 k^2 + 17k + 4} \gp{2m+1}{m + 3k + 2}{q^3}
 &\mbox{if $n=2m+1$.}
\end{array} \right. \]
\begin{gather*}
   P_0 = P_1 = 1 \\
   P_2 = q^3 + 1\\
   P_3 = -q^5 -q^4 + q^3+ 1\\
   P_n = (1-q^3) P_{n-1} + (q^{3n} + 2q^3) P_{n-2} + (q^6 - q^3 - 
   q^{3n-4} - q^{3n-5}) P_{n-3} \\
    +(q^{3n-6} - q^6) P_{n-4} \mbox{\ if $n\geqq 4$}
 \end{gather*}
\end{id}
\index{Bailey's mod 9 identities!finite|)}

\begin{id}[Finite Form of \ref{t-ljslist}.43] 
Bosonic form for odd $n$ conjectured by 
Santos~\cite[p. 71, eqn. 6.23]{jpos}.
\index{Santos, J. P. O.}
 \begin{gather*}
   \sum_{j\geqq 0} \sum_{k\geqq 0} \sum_{l\geqq 0} q^{(j^2 + 3j + k^2 + k + 
   2l)/2} \gp{j}{k}{q} \gp{j+l}{l}{q^2} \gp{n-k-2l}{j}{q}  \\ =
   \sum_{j=-\infty}^\infty (-1)^j q^{5j^2 + 4j} \Tone{n+2}{5j+2}{\sqrt{q}}
   \tag{\ref{t-SLfin}.43} \\
  P_0 = 1\\
  P_1 = q^2 + 1\\
  P_2 = q^5 + 2q^3 + q^2 + q + 1\\
  P_n = (1+q^{n+1}) P_{n-1} + (q+q^{n+1}) P_{n-2} - qP_{n-3}
   \mbox{\ if $n\geqq 3$}
 \end{gather*}
\end{id}

\begin{id}[Finite form of \ref{t-ljslist}.44/63] 
Identity \textup{(\ref{t-SLfin}.44-b)} due to
Andrews~\cite[p. 5291, eqn. 3.2]{hhm}; 
corrected here.
Bosonic $q$-trinomial representation conjectured by 
Santos~\cite[p. 77, eqn. 6.38]{jpos}.
\index{Santos, J. P. O.}
\index{Andrews, George E.}
\begin{gather*}
   \sum_{j\geqq 0} \sum_{k\geqq 0}
      q^{3j(j+1)/2 + k} \gp{j+k}{k}{q^2} \gp{n-2j-2k-1}{j}{q}  =
   \sum_{j=-\infty}^\infty (-1)^j q^{5j^2 + 3j} \gp{n+1}{\lfloor 
   \frac{n+5j+3}{2} \rfloor }{q} \tag{\ref{t-SLfin}.44-b} \\
 = \sum_{k=-\infty}^\infty q^{(15k^2 +  7k)/2}     \Tone{n+1}{5k+1}{\sqrt{q}}
                          -q^{(15k + 13)k/2 + 1} \Tone{n+1}{5k+2}{\sqrt{q}}
   \tag{\ref{t-SLfin}.44-t} \\
  P_0 = P_1 = 1\\
  P_2 = q + 1\\
  P_n =  P_{n-1} + q P_{n-2} +(q^n - q) P_{n-3}
   \mbox{\ if $n\geqq 3$}
  \end{gather*}
\end{id}

\begin{id}[Finite form of \ref{t-ljslist}.45] 
Bosonic representation for even $n$ conjectured by 
Santos~\cite[p. 72, eqn 6.25]{jpos}.
\index{Santos, J. P. O.}
 \begin{gather*}
   \sum_{j\geqq 0} \sum_{k\geqq 0} \sum_{l\geqq 0} q^{(j^2 + j + k^2 + k + 
   2l)/2} \gp{j}{k}{q} \gp{j+l}{l}{q^2} \gp{n-k-2l}{j}{q}  \\ =
   \sum_{j=-\infty}^\infty (-1)^j q^{5j^2 + 2j} \Tone{n+1}{5j+1}{\sqrt{q}}
   \tag{\ref{t-SLfin}.45}\\
  P_0 = 1\\
  P_1 = q + 1\\
  P_2 = q^3 + 2q^2 + 2q + 1\\
  P_n = (1+q^{n}) P_{n-1} + (q+q^{n}) P_{n-2} - qP_{n-3}
   \mbox{\ if $n\geqq 3$}
 \end{gather*}
\end{id}

\begin{id}[Finite form of \ref{t-ljslist}.46/62] 
Identity \textup{(\ref{t-SLfin}.46-b)}
due to Andrews~\cite[p. 5291, eqn. 3.1]{hhm}. 
Bosonic $q$-trinomial representation conjectured by 
Santos~\cite[p. 77, eqn. 6.37]{jpos}.
\index{Santos, J. P. O.}
\index{Andrews, George E.}
\begin{gather*}
   \sum_{j\geqq 0} \sum_{k\geqq 0}
      q^{j(3j+1)/2 + k}  \gp{j+k}{k}{q^2}\gp{n-2j-2k}{j}{q} =
   \sum_{j=-\infty}^\infty (-1)^j q^{j(5j+1)} \gp{n}{\lfloor 
   \frac{n-5j}{2} \rfloor}{q} \tag{\ref{t-SLfin}.46-b}\\
 = \sum_{k=-\infty}^\infty q^{(15k^2 +   k)/2}     \Tone{n}{5k}{\sqrt{q}}
                          -q^{(15k+11)k/2+1} \Tone{n}{5k+2}{\sqrt{q}}
   \tag{\ref{t-SLfin}.46-t}\\
  P_0 = P_1 = P_2 = 1\\
  P_n = P_{n-1} + q P_{n-2} + (q^{n-2} - q) P_{n-3}
   \mbox{\ if $n\geqq 3$}
  \end{gather*}
\end{id}

\begin{obs} Identity $(47)$ equivalent to $(10)$ and to $(54) + q\times(49).$ 
\end{obs}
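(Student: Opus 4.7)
The plan is to prove all three claimed equalities—the fermionic double sum, the bosonic $q$-binomial sum, and the bosonic $T_1$-trinomial sum—by showing that each satisfies the common third-order recurrence $P_n = P_{n-1} + qP_{n-2} + (q^{n-2}-q)P_{n-3}$ with initial data $P_0 = P_1 = P_2 = 1$. This is the ``method of recurrence proof'' described in Section 4; it reduces the theorem to three recurrence verifications plus the matching of initial conditions.

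First I would handle the fermionic side by following the procedure of Section \ref{qdiffeqn}. Form the two-variable lift $f(q,t)$ of the relevant Slater series (insert the variable $t$ so that the shift in the resulting $q$-difference equation reproduces the exponents $q^{j(3j+1)/2+k}$ and the bases $q^2$ and $q$ appearing in the fermionic sum), derive the first-order nonhomogeneous $q$-difference equation that $f(q,t)$ satisfies, clear denominators, and equate coefficients of $t^n$ to read off the recurrence for $P_n(q) = [t^n]f(q,t)$. Expanding each rising $q$-factorial in $f(q,t)$ via (\ref{qbc2}) and collecting the coefficient of $t^n$ then reproduces exactly the fermionic double sum in the statement.

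Next, for the bosonic $q$-binomial form $B_n(q) = \sum_j (-1)^j q^{j(5j+1)} \gp{n}{\lfloor (n-5j)/2\rfloor}{q}$, apply the two $q$-Pascal recurrences (\ref{qpt1}) and (\ref{qpt2}) to each Gaussian polynomial, splitting on the parity of $n-5j$ so that the floor function is unambiguous; after reindexing $j \mapsto j+1$ in the ``extra'' subseries produced by the lower-index terms, the sum collapses to the required combination $B_{n-1} + qB_{n-2} + (q^{n-2}-q)B_{n-3}$. The same scheme applied to the $T_1$-trinomial form $T_n(q)$ uses the Andrews-Baxter recurrences (\ref{ET1}) and (\ref{ET0}), the symmetry (\ref{T1sym}), and the ``null'' identity (\ref{E0}) to cancel the unwanted cross $T_0$-contributions that appear after one application of (\ref{ET1}). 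Matching the three initial values is a direct computation on each side.

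The main obstacle lies in the $T_1$-trinomial verification: each application of (\ref{ET1}) produces three descendants per original $T_1$-term, and several of these must be reindexed via $k \mapsto k\pm 1$ and then combined using (\ref{E0}) before the spurious $T_0$-summands cancel and the correct three-term recurrence in $n$ emerges. Careful tracking of the exponents $(15k^2+k)/2$ and $(15k+11)k/2+1$ through each index shift is essential. As an alternative to these hand manipulations, one could instead apply Wilf-Zeilberger creative telescoping to each side independently, which is the more algorithmic route endorsed in Section 4.
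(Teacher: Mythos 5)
Your proposal does not address the statement at hand. The observation asserts a relationship among three of Slater's series--product identities: that (A.47) is the same identity as (A.10), and that it also decomposes as (A.54) $+\,q\times$(A.49). It is not a claim about a fermionic or bosonic polynomial identity, and no recurrence is involved; indeed, the whole point of the observation is to explain why no separate finitization of (47) needs to be given. What you have sketched instead is a recurrence proof of Identity \ref{t-SLfin}.46: the recurrence $P_n = P_{n-1}+qP_{n-2}+(q^{n-2}-q)P_{n-3}$ with $P_0=P_1=P_2=1$, the bosonic sum $\sum_j(-1)^j q^{j(5j+1)}\gp{n}{\lfloor(n-5j)/2\rfloor}{q}$, and the $T_1$-exponents $(15k^2+k)/2$ and $(15k+11)k/2+1$ all belong to that identity, not to this observation. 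So the entire argument, even if carried out correctly, would prove a different theorem.

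What is actually required is an elementary manipulation of rising $q$-factorials. For the first equivalence, write $(-1;q)_{2n}=(-1;q^2)_n\,(-q;q^2)_n$, $(q^2;q^4)_n=(q;q^2)_n\,(-q;q^2)_n$, and $(q;q)_{2n}=(q;q^2)_n\,(q^2;q^2)_n$ to see that the summands of (A.47) and (A.10) are literally equal, and invoke Euler's identity $\prod_{n\geq 1}(1+q^n)=\prod_{n\geq 1}(1-q^{2n-1})^{-1}$ to match the two product sides. For the second equivalence, note that for $n\geq 1$ the summand of (A.47) equals $2(-q^2;q^2)_{n-1}q^{n^2}/(q;q)_{2n}$ since $(-1;q^2)_n=2(-q^2;q^2)_{n-1}$; subtracting the summand of (A.54) leaves $(-q^2;q^2)_{n-1}(1-q^n)q^{n^2}/(q;q)_{2n}$, which is exactly $q$ times the summand of (A.49) evaluated at index $n-1$, whence the series of (A.47) equals the series of (A.54) plus $q$ times that of (A.49); the corresponding statement for the products is then a consequence. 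None of these steps appears in your write-up, so the proposal has to be redone from scratch along these lines.
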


\begin{obs} Identity $(48)$ is $(54) - q\times(49).$
\end{obs}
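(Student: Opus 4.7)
The plan is to verify the Observation by checking the claimed linear relation separately on the two sides of each identity. Since Slater's identities (48), (49) and (54) each have the shape $S_j(q)=P_j(q)$ for an explicit series $S_j$ and an explicit infinite-product expression $P_j$ built from theta-type factors, the content of the Observation is that
\[
S_{48}(q)=S_{54}(q)-q\,S_{49}(q) \qquad\text{and}\qquad P_{48}(q)=P_{54}(q)-q\,P_{49}(q).
\]

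For the series side, I would look up the three summations in the annotated version of Slater's list (Appendix 1) and perform the subtraction termwise. In cases of this kind the two series being subtracted differ only by a small shift in the exponent and/or by a single factor in a rising $q$-factorial, so after writing $S_{54}(q)-qS_{49}(q)$ as a single sum and possibly re-indexing, the coefficients should collapse to those of $S_{48}$ after at most a use of a Pascal-triangle recurrence (\ref{qpt1}) or (\ref{qpt2}) for the $q$-factorials. This step is essentially bookkeeping and I expect no real obstruction here.

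For the product side, the strategy is to convert each $P_j(q)$ into a bilateral theta series by Jacobi's triple product identity~(\ref{jtp}), divided by a common $(q;q)_\infty$-type denominator. The combination $P_{54}(q)-qP_{49}(q)$ then becomes a linear combination of two theta series sharing the same denominator, and I would recognise the resulting numerator either as another single instance of~(\ref{jtp}), or — this is the most likely scenario given the $\pm q$ combinations appearing in the companion observation about (47) — as the left-hand side of the quintuple product identity~(\ref{qpi}) or of Bailey's identity~(\ref{btp}), so that it collapses back to the product form~$P_{48}(q)$.

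The main obstacle is the product-side calculation: selecting the correct theta-identity (quintuple product vs.\ Bailey's) and the right parameterisation so that the numerators telescope onto the product form of (48). Once that identification is made, the Observation follows. As a sanity check, the preceding Observation asserts $S_{47}(q)=S_{54}(q)+qS_{49}(q)$; adding and subtracting it from the present one decouples $S_{54}$ and $S_{49}$ in terms of $S_{47}$ and $S_{48}$, and the same decoupling performed on the product side via~(\ref{qpi}) or~(\ref{btp}) should produce the two identities (47) and (48) simultaneously, confirming the Observation.
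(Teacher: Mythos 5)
Your series-side plan is the entire content of this Observation, and it works exactly as you expect; the paper itself records the Observation without any proof, so you are supplying more than the text does. Concretely: writing out the three series from the Appendix, the term of $q\times(\textrm{A.49})$ at index $m$ is $q\cdot(-q^2;q^2)_m(1-q^{m+1})q^{m(m+2)}/(q;q)_{2m+2}$, and since $q\cdot q^{m(m+2)}=q^{(m+1)^2}$ the shift $n=m+1$ turns it into $(-q^2;q^2)_{n-1}(1-q^n)q^{n^2}/(q;q)_{2n}$. Subtracting this from the term $(-q^2;q^2)_{n-1}(1+q^n)q^{n^2}/(q;q)_{2n}$ of (A.54) leaves $2q^n(-q^2;q^2)_{n-1}q^{n^2}/(q;q)_{2n}=(-1;q^2)_nq^{n^2+n}/(q;q)_{2n}$, the term of (A.48); no Pascal-type recurrence is needed, only the index shift and the factor identity $(-1;q^2)_n=2(-q^2;q^2)_{n-1}$ for $n\geqq1$ (the $n=0$ terms all equal $1$).

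Where your proposal goes astray is in anticipating a substantial product-side calculation. The right-hand side of (A.48) as recorded in Slater's list (and in the Appendix here) is \emph{already written} as the difference of the two infinite products appearing on the right-hand sides of (A.54) and $q\times(\textrm{A.49})$; there is nothing to verify there, and in particular no appeal to the quintuple product identity~(\ref{qpi}) or to Bailey's identity~(\ref{btp}) is required. Those identities would only enter if one wished to recombine that difference of theta products into a single product, which is a separate question not asserted by the Observation. So the "main obstacle" you identify does not exist, and your argument reduces to the (correct) series-side bookkeeping above.
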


\begin{id}[Finite form of \ref{t-ljslist}.49]
   \begin{gather*} 
   \sum_{i\geqq 0} \sum_{I\geqq 0} \sum_{j\geqq 0}\sum_{k\geqq 0}\sum_{K\geqq 0}
    (-1)^K q^{j^2 + 2j + i^2 + i + I^2/2 + I/2 + k + 2K}
   \gp{j}{i}{q^2} \gp{j+1}{I}{q} \gp{j+k}{k}{q^2}\\ \times \gp{j+K}{K}{q^2}
   \gp{n-j- 2i - I - 2k - 2K}{j}{q} \\ =
     \sum_{j=-\infty}^\infty (-1)^j q^{6j^2 + 5j} \gp{n+2}{\lfloor \frac{n + 6j + 5}{2}
        \rfloor}{q} \tag{\ref{t-SLfin}.49}\\
     P_0 = 1\\
     P_1 = -q+1\\
     P_2 = q^3 + q^2 + 1\\
     P_3 = -q^5 + 1\\
     P_4 = q^8 + q^7 + q^6 + q^5 + 2q^4 + q^3 + q^2 + 1\\
     P_n = P_{n-1} + (q+q^2+q^{n+1})P_{n-2} + (-q-q^2-q^{n+1})P_{n-3}
       + (-q^3 + q^{n+1}) P_{n-4} \\ + (q^3 - q^{n+1}) P_{n-5}
      \mbox{\ if $n\geqq 5$.}
   \end{gather*}
\end{id}

\begin{id}[Finite form of \ref{t-ljslist}.50] 
Bosonic representation \textup{(\ref{t-SLfin}.50-b)} conjectured by 
Santos~\cite[p. 73, eqn. 6.29]{jpos}.
\index{Santos, J. P. O.}
  \begin{gather*}
  \sum_{j\geqq 0} \sum_{k\geqq 0} q^{j^2 + 2j + k^2} \gp{j}{k}{q^2} 
    \gp{n-j-k+1}{2j+1}{q} =
    \sum_{j=-\infty}^\infty (-1)^j q^{6j^2 + 4j} \gp{2n+2}{n + 3j + 
    2}{q} \tag{\ref{t-SLfin}.50-b} \\
   =\sum_{j=-\infty}^\infty q^{12j^2 + 6j} \U{n+1}{6j+1}{q}
 \tag{\ref{t-SLfin}.50-t}\\
  P_0 = 1\\
  P_1 = q^3 + q + 1\\
  P_n = (1+q + q^{2n+1}) P_{n-1} + (q^{2n}-q) P_{n-2} 
   \mbox{\ if $n\geqq 2$}.\\
  \end{gather*}
\end{id}

\begin{obs} Identity $(51)$ is the same as $(11)$ and $(64)$. \end{obs}

\begin{id}[Finite form of \ref{t-ljslist}.52]
\index{Santos, J. P. O.}
  \begin{gather*}  \sum_{j\geqq 0}  q^{2j^2 - j} \gp{n}{2j}{q}  =
    \sum_{k=-\infty}^\infty q^{4k^2+k} \gp{2n}{n+4k}{\sqrt{q}} -      
     q^{4k^2 + 5k + 3/2} \gp{2n}{n+4k+3}{\sqrt{q}} 
     \tag{\ref{t-SLfin}.52-b} \\ =
    \sum_{k=-\infty}^\infty  q^{6k^2 + k} \Tone{n}{6k}{\sqrt{q}} 
                      - q^{6k^2 + 5k + 1} \Tone{n}{6k + 2}{\sqrt{q}}
\tag{\ref{t-SLfin}.52-T} \\ =
   \sum_{j=-\infty}^\infty (-1)^j q^{6j^2 + 2j} \U{n-1}{3j}{q} 
\tag{\ref{t-SLfin}.52-U}  \\
  = (-q;q)_{n-1} \mbox{\ if $n\geqq 2$.}\\
   P_0 = P_1 = 1\\
   P_n = (1+q^{n-1}) P_{n-1} \mbox{\ if $n\geqq 2$}
  \end{gather*}
\end{id}

\begin{id}[Finite form of \ref{t-ljslist}.53] 
Bosonic representation for even $n$ conjectured by 
Santos~\cite[p. 74, eqn. 6.32]{jpos}.
\index{Santos, J. P. O.}
\begin{gather*}
 \sum_{i\geqq 0}\sum_{j\geqq 0}\sum_{k\geqq 0}\sum_{l\geqq 0}  
 (-1)^{i+l} q^{4j^2 + i^2 +4k +4l}
    \gp{2j}{i}{q^2}  \gp{j+k-1}{k}{q^8} \gp{j+l-1}{l}{q^4} \\ \times
    \gp{n -2j - i - 2k - l}{j}{q^4} \tag{\ref{t-SLfin}.53}
\end{gather*}
\[ = \left\{ \begin{array}{ll} 
  \sum_k q^{24 k^2 +  2k    } \gp{2n  }{n + 3k}{q^4}
       - q^{24 k^2 + 22k + 5} \gp{2n-1}{n+3k+1}{q^4}
  &\mbox{if $n=2m$,} \\
  \sum_k q^{24 k^2 +  2k    } \gp{2n  }{n + 3k}{q^4}
       - q^{24 k^2 + 22k + 5} \gp{2n+1}{n+3k+2}{q^4}
   &\mbox{if $n=2m+1$.} 
\end{array} \right. \]
\begin{gather*}
   P_0 = P_1 = 1 \\
   P_2 = q^4 + 1\\
   P_3 = -q^7 -q^5 + q^4+ 1\\
   P_n = (1-q^4) P_{n-1} + (q^{4n-4} + 2q^4) P_{n-2} + (q^8 - q^4 - 
   q^{4n-5} - q^{4n-7}) P_{n-3} \\
    +(q^{4n-8} - q^8) P_{n-4} \mbox{\ if $n\geqq 4$}
 \end{gather*}
\end{id}

\begin{id}[Finite form of \ref{t-ljslist}.54]
\begin{gather*} 
  1 + \sum_{j\geqq 1}\sum_{i\geqq 0}\sum_{k\geqq 0} \sum_{l\geqq 0}
    (-1)^l q^{j^2 + i^2 + i + k + l}  \gp{j-1}{i}{q^2} \gp{j+k-1}{k}{q^2}
     \gp{j+l-2}{l}{q} \\ \times \gp{n-j-2k-2i-l}{j}{q} \tag{\ref{t-SLfin}.54} \\ =
  \sum_{j=-\infty}^\infty (-1)^j q^{6j^2 + j}
      \gp{n}{\lfloor \frac{n+6j+1}{2} \rfloor}{q} \\
   P_0 = P_1 = 1 \\
   P_2 = q + 1\\
   P_3 = q^2 + q + 1\\
   P_4 = q^4 + q^3 + 2q^2 + q + 1\\
   P_n = P_{n-1} + (q + q^2 + q^{n-1}) P_{n-2} - (q+q^2 + q^{n-1}) P_{n-3} 
    +(q^{n-1} - q^3) P_{n-4} \\ (q^3 - q^{n-1}) P_{n-5} \mbox{\ if $n\geqq 5$}
\end{gather*}
\end{id}

\begin{id}[Finite form of \ref{t-ljslist}.55] 
Bosonic representation for even $n$ conjectured by 
Santos~\cite[p. 75, eqn. 6.33]{jpos}.
\index{Santos, J. P. O.}
\begin{gather*}
  \sum_{i\geqq 0} \sum_{I\geqq 0} \sum_{j\geqq 0}\sum_{k\geqq 0}\sum_{K\geqq 0}
   (-1)^{i+I+K} q^{4j^2 + 4j + 2i^2 - i + 2I^2 + I +4k +4K}
    \gp{j+1}{i}{q^4} \gp{j}{I}{q^4} \gp{j+k}{k}{q^8}\\ \times\gp{j+K-1}{K}{q^4}
    \gp{n - j - i - I - 2k - K}{j}{q^4} \tag{\ref{t-SLfin}.55}
\end{gather*}
\[ = \left\{ \begin{array}{ll} 
  \sum_k q^{24 k^2 + 10k    } \gp{2n+1}{n+3k+1}{q^4}
       - q^{24 k^2 + 14k + 1} \gp{2n  }{n+3k+1}{q^4}
     &\mbox{if $n=2m$,} \\
  \sum_k q^{24 k^2 + 10k    } \gp{2n+1}{n+3k+1}{q^4}
       - q^{24 k^2 + 14k + 1} \gp{2n+2}{n+3k+2}{q^4}
     &\mbox{if $n=2m+1$.}
\end{array} \right. \]
\begin{gather*}
   P_0 =  1 \\
   P_1 = -q + 1\\
   P_2 = q^8 + q^4 - q + 1\\
   P_3 = -q^{13} - q^{11} - q^9 + q^8 -q^5 + q^4 - q+ 1\\
   P_n = (1-q^4) P_{n-1} + (q^{4n} + 2q^4) P_{n-2} + (q^8 - q^4 - 
   q^{4n-1} - q^{4n-3}) P_{n-3} \\
    +(q^{4n-4} - q^8) P_{n-4} \mbox{\ if $n\geqq 4$}
 \end{gather*}
\end{id}

\begin{id}[Finite form of \ref{t-ljslist}.56] 
Bosonic representation conjectured by Santos~\cite[p. 75, eqn. 6.34]{jpos}.
\index{Santos, J. P. O.}
  \begin{gather*} 
   \sum_{i,j,k,l\geqq 0} (-1)^i q^{j^2 + 2j + i^2 + i + k + l}
     \gp{j}{i}{q^2} \gp{j+k}{k}{q^2} \gp{j+l}{l}{q} 
     \gp{n-k-2i-2k-l}{j}{q} \\ =
   \sum_{j=-\infty}^\infty q^{6j^2 + 5j} \gp{n+2}{\lfloor \frac{n+6j+2}{2}
\rfloor}{q} \tag{\ref{t-SLfin}.56} \\
 P_0 = 1 \\
 P_1 = q + 1\\
 P_2 = q^3 + q^2 + 2q + 1 \\
 P_3 = q^5 + 2q^4 + 2q^3 + 2q^2 + 2q + 1 \\
 P_n = (1+q) P_{n-1} + q^{n+1} P_{n-2} + (-q-q^2) P_{n-3} + (q^2 - 
 q^{n-1}) P_{n-4} \mbox{\ if $n\geqq 4$} 
   \end{gather*}
\end{id}

\begin{obs}[Finite form of \ref{t-ljslist}.57] 
This identity is (55) with $q$ replaced by $-q$.
\end{obs}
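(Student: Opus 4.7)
The plan is to treat this observation as a verification statement rather than a theorem requiring a new multi-step argument: the point is simply that the polynomial identity established in Identity~3.55 can be specialized by $q\mapsto -q$ to produce a polynomial identity whose two sides are finitizations of the series and product occurring in Slater's~(57).

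First, I would consult Slater's list (as reproduced in the appendix) to confirm at the level of infinite series and infinite products that Slater's~(57) is the formal image of Slater's~(55) under $q\mapsto -q$. This is a direct inspection of the exponents appearing in the series summand and of the moduli appearing in the infinite product, so it reduces to a bookkeeping check; some care is needed because Slater occasionally normalizes her products differently, so one may need to regroup factors using elementary manipulations of $(a;q)_\infty$ to exhibit the substitution cleanly.

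Second, observe that the finite form of~(55) stated immediately above is an honest polynomial identity $L_{55,n}(q)=R_{55,n}(q)$ valid for every nonnegative integer $n$. Any polynomial identity is preserved under the substitution $q\mapsto -q$, so $L_{55,n}(-q)=R_{55,n}(-q)$ holds as well. I would then identify $L_{55,n}(-q)$ with the natural fermionic form attached to Slater's~(57) by the method of \S2.1, and $R_{55,n}(-q)$ with its bosonic companion, so that the substituted polynomial identity is by definition the finite form of~(57). The recurrence for the $P_n$ sequence of~(57) is obtained by the same substitution in the recurrence for~(55): even powers of $q$ are fixed, odd powers acquire a sign, so the coefficients $(1-q^4)$, $(q^{4n}+2q^4)$, $(q^{4n-4}-q^8)$ persist, while $(q^8-q^4-q^{4n-1}-q^{4n-3})$ becomes $(q^8-q^4+q^{4n-1}+q^{4n-3})$, and the initial values $P_1,P_2,P_3$ transform accordingly.

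Finally, I would verify convergence as $n\to\infty$ in the spirit of the Euler-identity prototype worked out earlier: the limit of the Gaussian polynomials via~(\ref{gplim}) together with Jacobi's triple product~(\ref{jtp}), applied after the sign substitution, reproduces the series and product of Slater's~(57). The one genuine obstacle is the first step — pinning down that Slater's~(57) and~(55) are indeed related by $q\mapsto -q$ in the presentation used in Slater's original paper — but once the series/product forms are matched, everything else follows mechanically from the polynomial identity already established for~(55) and requires no new machinery.
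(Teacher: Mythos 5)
Your proposal is correct and matches the paper's intent exactly: the paper offers no further argument for this observation, since the whole content is the direct check that in (A.55) the summand factors $q^{4n(n+1)}$ and $(q^4;q^4)_{2n+1}$ are even in $q$ while $(q;q^2)_{2n+1}\mapsto(-q;q^2)_{2n+1}$ and the odd-exponent product factors $(1-q^{12n-1})(1-q^{12n-11})$ flip sign, after which the polynomial identity (3.55), its recurrence, and its initial conditions transfer verbatim under $q\mapsto -q$. Your additional remarks on the transformed recurrence coefficients and the limiting argument are accurate but not required beyond what the paper leaves implicit.
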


\begin{id}[Finite form of \ref{t-ljslist}.58] 
Bosonic representation for even $n$ conjectured by 
Santos~\cite[p. 76, eqn. 6.36]{jpos}.
\index{Santos, J. P. O.}
\begin{gather*}
  1 + \sum_{j\geqq 1}\sum_{k\geqq 0}\sum_{l\geqq 0} 
    q^{j^2 + i(i+1)/2 + k}
    \gp{j-1}{i}{q} \gp{j+k-1}{k}{q^2} \gp{n-i-j-2k}{j}{q}  \\ =
  \sum_{j=-\infty}^\infty q^{6j^2 + j} 
    \gp{n}{\lfloor \frac{n + 6j + 1}{2} \rfloor}{q} \tag{\ref{t-SLfin}.58}\\
 P_0 = P_1 = 1 \\
 P_2 = q + 1 \\
 P_3 = q^2 + q + 1 \\
 P_n = (1+q) P_{n-1} + q^{n+1} P_{n-2} + (-q-q^2) P_{n-3} + (q^2 - 
 q^{n-1}) P_{n-4} \mbox{\ if $n\geqq 4$} 
\end{gather*}
\end{id}

\begin{id}[Finite form of \ref{t-ljslist}.59]
Identity \ref{t-SLfin}.59 is a special case of an identity due to 
Berkovich and McCoy~\cite[p. 59, eqn. (3.14) with $p=4$, $p'=7$,
$r=s=2$, $a=6$, $b=3,4$]{bm:cf}.
\index{Berkovich, Alexander}
\index{McCoy, Barry M.}
\begin{gather*}
 \sum_{j\geqq 0}\sum_{k\geqq 0} q^{j^2 + 2j + k} \gp{j+k}{k}{q^2} 
   \gp{n-j-2k}{j}{q} =
 \sum_{j=-\infty}^\infty (-1)^j q^{7j^2 + 5j} \gp{n+2}{\lfloor 
   \frac{n+7j+5}{2} \rfloor}{q} \tag{\ref{t-SLfin}.59} \\
  P_0 = P_1 = 1 \\
  P_2 = q^3 + q + 1\\
  P_n = P_{n-1} + (q+q^{n+1}) P_{n-2} - q P_{n-3} \mbox{\ if $n\geqq 3$}
\end{gather*}
\end{id}

\begin{id}[Finite form of \ref{t-ljslist}.60]
Identity \ref{t-SLfin}.60 is a special case of an identity due to 
Berkovich and McCoy~\cite[p. 59, eqn. (3.14) with $p=4$, $p'=7$,
$r=s=2$, $s=3$, $a=4$; $b=4,5$]{bm:cf}.
\index{Berkovich, Alexander}
\index{McCoy, Barry M.}
 \begin{gather*}
    \sum_{j\geqq 0} \sum_{k\geqq 0} q^{j^2 + j + k} \gp{j+k}{k}{q^2}
   \gp{n-j-2k}{j}{q} =
   \sum_{j=-\infty}^\infty (-1)^j q^{7j^2 + 3j} \gp{n+1}{ \lfloor 
\frac{n+7j+3}{2} \rfloor}{q} \tag{\ref{t-SLfin}.60}\\
  P_0 = P_1 = 1 \\
  P_2 = q^2 + q + 1\\
  P_n = P_{n-1} + (q+q^{n}) P_{n-2} - q P_{n-3} \mbox{\ if $n\geqq 3$}
\end{gather*} 
\end{id}

\begin{id}[Finite form of \ref{t-ljslist}.61]
Identity \ref{t-SLfin}.61 is a special case of an identity due to 
Berkovich and McCoy~\cite[p. 59, eqn. (3.14) with $p=4$, $p'=7$,
$r=2$, $s=3$, $a=3,4$; $b=3$]{bm:cf}.
\index{Berkovich, Alexander}
\index{McCoy, Barry M.}
  \begin{gather*}
 \sum_{j\geqq 0} \sum_{k\geqq 0} q^{j^2 + k} \gp{j+k-1}{k}{q^2}
   \gp{n-j-2k}{j}{q} =
  \sum_{j=-\infty}^\infty (-1)^j q^{7j^2 + j} \gp{n}{\lfloor \frac{n+7j+1}{2}
\rfloor}{q} \tag{\ref{t-SLfin}.61}\\
  P_0 = P_1 = 1 \\
  P_2 = q + 1\\
  P_n = P_{n-1} + (q+q^{n-1}) P_{n-2} - q P_{n-3} \mbox{\ if $n\geqq 3$}
  \end{gather*}
\end{id}

\begin{obs} Identity $(62)$ is equivalent to $(46)$. (Andrews~\cite[p. 20, 
eqns. (8.5) and (8.6)]{comb})\end{obs}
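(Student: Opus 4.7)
The plan is to establish the equivalence of Slater's identities $(\ref{t-ljslist}.46)$ and $(\ref{t-ljslist}.62)$ as identities between a hypergeometric series and an infinite product. First I would retrieve both statements from Slater's list (reproduced in the Appendix) and verify by direct inspection that the two right-hand products agree; this is the standard situation behind observations of this kind in the paper (compare, for instance, ``(23) is the same as (3)'' or ``(30) is (24) with $q$ replaced by $-q$''). The work then reduces to proving that the two series on the left-hand sides are equal as formal power series in $q$.

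My main tool would be one of the Heine transformations $(\ref{heine1})$--$(\ref{heine3})$. The template for such manipulations is already displayed in the long observation preceding identity $(\ref{t-SLfin}.25)$, where Slater's $(\ref{t-ljslist}.24)$ is transformed into $(\ref{t-ljslist}.25)$ times an infinite product by introducing an auxiliary parameter $c$, rewriting the series as a limiting case of a $_{2}\phi_{1}$, applying Heine's second transformation, and then sending $c \to 0$. I would attempt the analogous manoeuvre: introduce an auxiliary parameter into one series so that it appears as $\lim_{c \to 0}$ (or $\lim_{c\to\infty}$) of a genuine $_{2}\phi_{1}$, apply whichever of Heine's three transformations sends it to a series recognisably of the same shape as the second one, and take the limit. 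The $q$-binomial theorem $(\ref{qbinthm})$ and its corollary $(\ref{qbc3})$ may be useful at the final step to collapse the resulting factorials into the target form.

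The main obstacle will be purely computational: pinning down the correct auxiliary $_{2}\phi_{1}$ and the correct specialisation of parameters so that Heine carries one series exactly to the other. Once that specialisation is in hand, everything else is routine bookkeeping with rising $q$-factorials.

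As a cross-check (and arguably an independent proof), I would exploit the fact that the finitizations labelled $(\ref{t-SLfin}.46)$ and $(\ref{t-SLfin}.62)$ carry the joint tag ``46/62'' in \S 3, which already signals that a single polynomial sequence $\{P_n(q)\}$ serves as a finitization of both. If I can verify that the two-variable generalisations $f_{46}(q,t)$ and $f_{62}(q,t)$ produced by the method of \S 2 satisfy nonhomogeneous $q$-difference equations with the same polynomial solutions (i.e.\ the same recurrence and initial data), then by Condition~\ref{cond}(2) the two series must have the same limit as $t \to 1^{-}$, which is the desired equivalence. This recurrence-comparison route bypasses Heine entirely and fits cleanly with the proof philosophy advertised in \S 4.
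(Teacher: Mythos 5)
The first thing to say is that the paper offers no proof of this observation at all: it is a pointer to Andrews~\cite{comb}, where the equality of the two series is established. So your task is really to supply the argument the paper outsources, and your reading of what that requires is correct --- the infinite products in (A.46) and (A.62) are \emph{literally identical}, so everything reduces to transforming one left-hand series into the other, namely showing
$\sum_{n\geqq 0} q^{n(3n-1)/2}/[(q;q^2)_n(q;q)_n] = \sum_{n\geqq 0} (-q;q)_n\, q^{n(3n+1)/2}/(q;q)_{2n+1}$.

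The genuine gap is in your choice of tool. Heine's transformations (\ref{heine1})--(\ref{heine3}) act on ${}_2\phi_1$ series; even after sending both numerator parameters to infinity, such a series can only produce a quadratic exponent of the form $q^{n^2+O(n)}$ (one factor $q^{\binom{n}{2}}$ per confluent parameter). The series of (A.46) and (A.62) carry $q^{n(3n\mp 1)/2}=q^{3\binom{n}{2}+O(n)}$ together with the mixed-base denominator $(q;q^2)_n(q;q)_n$: they are neither ${}_2\phi_1$'s nor limits of ${}_2\phi_1$'s, so no specialization of Heine will carry one to the other. The template you invoke --- the (\ref{t-ljslist}.24) $\to$ (\ref{t-ljslist}.25) computation --- works precisely because (A.24) has the linear exponent $q^n$ and is a bona fide ${}_2\phi_1$. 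What is actually needed is first the elementary factorization $(q;q)_{2n+1}=(q;q^2)_{n+1}(q;q)_n(-q;q)_n$, which turns the series of (A.62) into $\sum_n q^{n(3n+1)/2}/[(q;q^2)_{n+1}(q;q)_n]$, and then the (non-Heine) series rearrangement of Andrews~\cite{comb} relating this to the series of (A.46). Your second route --- showing that the two-variable generalizations of (A.46) and (A.62) generate the same polynomials $P_n(q)$ and invoking Condition~\ref{cond}(2) --- is sound in principle and is in fact closer to how the paper treats the pair (a single finite form is labelled \ref{t-SLfin}.46-b/t for ``46/62''); but as written it is a plan, not a proof: you would still have to run the $q$-difference-equation machinery on (A.62), reconcile recurrences of possibly different orders by the operator-algebra device of \S\ref{proof}, and check initial conditions before the conclusion follows.
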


\begin{obs} Identity $(63)$ is the same as $(44)$. \end{obs}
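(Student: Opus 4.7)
The plan is to verify the observation by direct inspection of Slater's list, since the claim is an equality between two specific entries on that list, not a new analytic identity requiring the finitization machinery of Section 2.

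First I would look up entry (44) and entry (63) of Slater's list in the appendix. For each entry I would write down both the series side and the infinite-product side exactly as Slater states them. The series sides in Slater's paper are indexed over a single summation variable with an explicit quadratic exponent, a $(q;q)_j$-type denominator, and possibly a shifted rising $q$-factorial in the numerator; the product sides are finite products of $(\pm q^{a};q^{b})_\infty$ factors indexed by a modulus. So the verification reduces to matching up these coefficients and moduli symbol by symbol.

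Second, I would check that the series side of (63) and the series side of (44) coincide exactly (no substitution $q \to -q$ or $q \to q^2$ is needed — such a substitution would have generated a separate observation like those for (7), (9), (23), (30), (57)). Similarly I would check that the two product sides coincide as written. Because the excerpt groups (44) and (63) into the single identity \ref{t-SLfin}.44/63 with one fermionic representation, two bosonic representations, and one common recurrence for $P_n(q)$, the consistency of that joint treatment already forces the two Slater entries to have the same series and same product, and the appendix listing simply confirms this.

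The only subtlety — and this will be the main (mild) obstacle — is bookkeeping: one must rule out that (63) is merely Bailey-transform-equivalent to (44) rather than literally the same series-product pair. I would handle this by comparing exponents of $q$ in the first several terms of each series and the first several factors of each product; if they agree through enough terms and the denominators have the same modulus structure, the rational-function identity of the two sides, combined with Slater's own numbering conventions, yields the observation. No appeal to the $q$-difference method, to $q$-trinomial coefficients, or to any of the identities (\ref{qbinthm})--(\ref{btp}) is required; the statement is purely a cross-referencing remark about Slater's table, and the ``proof'' is the side-by-side comparison just described.
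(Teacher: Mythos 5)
Your approach is the right one and matches the paper's, which offers no proof beyond the cross-referencing remark itself: the observation is verified by direct comparison of entries 44 and 63 in Slater's original list. One small caveat: the two entries are not literally symbol-for-symbol identical there (Slater's 63 carries an extra $(-q;q)_n$ in the numerator and has denominator $(q;q)_{2n+1}$ rather than the $(q;q^2)_{n+1}(q;q)_n$ of 44, and these agree only after the elementary factorization $(q;q)_{2n+1}=(q;q^2)_{n+1}(q;q)_n(-q;q)_n$), so your fallback of comparing the summands as rational functions, rather than insisting on an exact textual match, is the step that actually closes the argument.
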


\begin{obs} Identity $(64)$ is the same as $(11)$. \end{obs}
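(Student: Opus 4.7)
The statement to be established is the observation that entries $(11)$ and $(64)$ in Slater's list record the same analytic series-product identity, so the proof is essentially a bookkeeping verification rather than a substantive derivation. The plan is to open the annotated version of Slater's list in the appendix, extract the fermionic series and the infinite product appearing in each of the two entries, and check them term-by-term: the quadratic power of $q$ in the summand's exponent, the rising $q$-factorials in the denominator, and every factor $(\pm q^a; q^b)_\infty$ on the product side must match (possibly after reindexing or trivial rewriting).

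As a practical matter, most of this work has already been absorbed into the grouped identity $(\ref{t-SLfin}.11)$, which was presented under the combined heading ``Finite forms of $\ref{t-ljslist}$.11/51/64.'' The fact that a \emph{single} fermionic sum, a single bosonic representation, and a single polynomial recurrence successfully finitize all three Slater entries is already strong evidence that $(11)$ and $(64)$ are the same identity; letting $n\to\infty$ in $(\ref{t-SLfin}.11\text{-b})$ with the aid of (\ref{gplim}) and Jacobi's triple product identity (\ref{jtp}) yields a single series-product identity that must, by construction, coincide with both Slater entries. So once the appendix entries are confirmed to be literally identical (or identical after reversing the order of summation), the observation is immediate.

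The only scenario in which additional work would be required is if Slater's $(64)$ were stated with a superficially different dressing, for instance a reindexing $j \mapsto j+c$, a base change $q \mapsto q$, or a different arrangement of the factors in the triple product on the product side. In that case I would carry out the substitution explicitly, reduce the shifted series via the identity $(q^a;q^b)_{j+c} = (q^a;q^b)_c \,(q^{a+bc};q^b)_j$, and collapse the product side using (\ref{jtp}); each step is routine. I expect no real obstacle: the primary ``hard part,'' if one can call it that, is simply locating the two entries in the appendix and confirming that no typographical discrepancy masks the coincidence.
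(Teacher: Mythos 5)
Your proposal is correct and matches the paper's treatment: the paper records this as a bare observation with no argument, since Slater's entries $(11)$, $(51)$, and $(64)$ are literally the same series-product identity, and the single finitization given under the combined heading $11/51/64$ already reflects this. The term-by-term comparison of the appendix entries you describe is exactly the (trivial) verification the observation rests on.
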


\begin{obs} Identity $(65)$ is the equivalent to $(37) + {\sqrt q}\times 
(35)$. \end{obs}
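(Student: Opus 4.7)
The plan is to verify the observation at the level of Slater's original analytic series--product identity (\ref{t-ljslist}.65), rather than at the finitized level, since the claim asserts equivalence of three classical identities. First, I would write out the three series sides of (\ref{t-ljslist}.35), (\ref{t-ljslist}.37), and (\ref{t-ljslist}.65) explicitly, together with their product representations, and compare the $q$-exponents appearing in each summand. The summand in (\ref{t-ljslist}.37) has quadratic weight $j(j+1)/2$, while (\ref{t-ljslist}.35) has weight $j(j+3)/2 = j(j+1)/2 + j$, and the series in (\ref{t-ljslist}.65) should, by inspection, factor as a product of two rising $q$-factorials that naturally decomposes into a sum of two such weighted series.

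The natural mechanism is to split the summand in (\ref{t-ljslist}.65) by separating a factor of the form $1 + \sqrt{q}\,q^{\alpha(j)}$ from a rising $q$-factorial in the denominator or numerator, so that the series breaks cleanly into an ``even'' piece contributing (\ref{t-ljslist}.37) and an ``odd'' piece contributing $\sqrt{q}$ times (\ref{t-ljslist}.35). Equivalently, one can shift the index of summation in one piece by one unit to expose the expected $\sqrt{q}$ prefactor. This is essentially a one-line manipulation once the correct split is identified.

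On the product side, I would then check that
\[
  \Pi_{65}(q) = \Pi_{37}(q) + \sqrt{q}\,\Pi_{35}(q),
\]
where $\Pi_k(q)$ denotes the infinite product appearing in identity (\ref{t-ljslist}.$k$). Using Jacobi's triple product identity (\ref{jtp}) to expand each of the three products into a bilateral sum and equating the coefficient of $\sqrt{q}^{\,0}$ and $\sqrt{q}^{\,1}$ reduces the assertion to a pair of theta-function identities, which can be read off either directly or via Bailey's identity (\ref{btp}) and/or the quintuple product identity (\ref{qpi}).

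The main obstacle is getting the bookkeeping of $q$-exponents and the index shift on the series side exactly right: once the correct reindexing is found, the series-level verification is immediate, and the product-level verification is a routine theta function identity. Since the paper labels this as an \emph{observation} (rather than an identity requiring its own finitization), no bosonic/fermionic conjecture is needed and the method of $q$-difference equations is not invoked; the content is purely the algebraic decomposition sketched above.
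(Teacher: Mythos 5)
The paper does not actually prove this observation anywhere: it is one of the bare cross\-/reference remarks of \S\ref{t-SLfin}, restated (in its more precise form) in the appendix as ``Identity $(65)$ is $(37)+\sqrt{q}\times(35)$, \emph{with $q$ replaced by $q^2$ throughout}.''  So there is no proof of record to compare against, and your sketch has to stand on its own.  Its core mechanism is the right one and is exactly how such remarks in Slater's list are meant to be read: since $n(n+3)/2=n(n+1)/2+n$, the summand of $(\ref{t-ljslist}.37)+\sqrt{q}\times(\ref{t-ljslist}.35)$ is the summand of (\ref{t-ljslist}.37) multiplied by $1+\sqrt{q}\,q^{n}$, and after the base change $q\mapsto q^{2}$ this factor becomes $1+q^{2n+1}$ and reassembles into the extra rising factor in the summand of (\ref{t-ljslist}.65); the identification is then term by term.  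The one substantive omission is that base change: as you have written it, $\Pi_{65}(q)=\Pi_{37}(q)+\sqrt{q}\,\Pi_{35}(q)$ is not the claim being made (and $\sqrt{q}\times(35)$ is not even a power series in $q$); the intended statement is $\Pi_{65}(q)=\Pi_{37}(q^{2})+q\,\Pi_{35}(q^{2})$, and the series\-/side split must likewise be carried out in base $q^{2}$.

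Your proposed final step on the product side also needs repair.  After the base change the products to be combined are $\tfrac{(-q^{2};q^{2})_\infty}{(q^{2};q^{2})_\infty}(q^{6},q^{10},q^{16};q^{16})_\infty$ and $q\cdot\tfrac{(-q^{2};q^{2})_\infty}{(q^{2};q^{2})_\infty}(q^{2},q^{14},q^{16};q^{16})_\infty$, and neither (\ref{btp}) nor (\ref{qpi}) will collapse their sum to a single product: matching $(q^{6},q^{10},q^{16};q^{16})_\infty$ against $(-z^{2}q,-z^{-2}q^{3},q^{4};q^{4})_\infty$ in (\ref{btp}) forces $z^{2}=-q^{2}$ and drags in a factor of $i$, and the modulus $16$ is not divisible by $3$, so the quintuple product is inapplicable.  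No collapse is needed, however: like its neighbours (\ref{t-ljslist}.66) and (\ref{t-ljslist}.67), identity (\ref{t-ljslist}.65) has its right\-/hand side already written as a common prefactor times a sum of two Jacobi triple products, so once the series sides are matched the product sides agree by inspection.  With those two adjustments your argument goes through.
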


\begin{obs} Identity $(66)$ is  equivalent to $(71)+ q\times(68)$. \end{obs}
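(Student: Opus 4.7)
The plan is to verify the observation by working with both the series side and the product side of the three identities independently, and showing that each side of identity (66) decomposes as the appropriate linear combination.

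First, I would write out the series sides of identities (\ref{t-ljslist}.66), (\ref{t-ljslist}.68), and (\ref{t-ljslist}.71) explicitly from Slater's list in the appendix. Since each of these series is a sum of the form $\sum_{j\geqq 0} c_j(q) q^{Q(j)}/\text{(rising $q$-factorials)}$, the hoped-for identity $S_{66}(q)=S_{71}(q)+q\, S_{68}(q)$ should reduce, after bringing everything over a common denominator of rising $q$-factorials, to a term-by-term algebraic identity on the coefficients $c_j(q)$. Any reindexing needed should be minor, because all three series are conjectured to live on the same ``Bailey chain'' and differ only in their initial data; this makes the series identity essentially a bookkeeping exercise.

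Next, I would handle the product side. Writing $\Pi_{66}(q)$, $\Pi_{68}(q)$, $\Pi_{71}(q)$ in the form $(\cdots;q^{m})_\infty/(q;q)_\infty$ suggested by Slater, I want to establish
\[
\Pi_{66}(q)\;=\;\Pi_{71}(q)\;+\;q\,\Pi_{68}(q).
\]
After factoring out the common $1/(q;q)_\infty$ (or analogous common factor), the right-hand side becomes a sum of two Jacobi-type theta series specialized via (\ref{jtp}), and the problem reduces to recognizing that sum as a single product. The natural tool is Bailey's identity (\ref{btp}) or the quintuple product identity (\ref{qpi}), both of which are designed to convert ``$\vartheta + z\vartheta$''-type combinations into a single product; the coefficient $q$ in front of $\Pi_{68}$ should correspond to a specialization of the parameter $z$ in one of these identities. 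This is the step I expect to be the main obstacle, since the right specialization $z\mapsto z(q)$ must be identified before anything simplifies, and the three products must share enough structural similarity (same base $q^{m}$, matching residue classes) to fit into a single quintuple or Bailey triple.

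Finally, once the series-side and product-side decompositions are both established, the equivalence claim follows immediately: assuming (\ref{t-ljslist}.68) and (\ref{t-ljslist}.71) yields $S_{66}=S_{71}+qS_{68}=\Pi_{71}+q\Pi_{68}=\Pi_{66}$, and conversely (\ref{t-ljslist}.66) together with either of the other two identities delivers the third by subtraction. This justifies the author's assertion that (66) carries no independent content beyond (68) and (71) linked by this linear combination.
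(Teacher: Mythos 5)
Your plan for the series side is essentially the right (and only) computation needed there, and the reindexing you anticipate is indeed minor: writing $(-1;q^4)_n = 2(-q^4;q^4)_{n-1}$ and using $(q^2;q^2)_{2n} = (q^2;q^4)_n(q^2;q^2)_n(-q^2;q^2)_n$ to put the summands of (\ref{t-ljslist}.66), (\ref{t-ljslist}.68) and (\ref{t-ljslist}.71) over the common denominator $(q^2;q^2)_{2n}$, the series identity reduces to $2 = (1+q^{2n}) + (1-q^{2n})$ together with the shift $n\mapsto n+1$ in the sum coming from (\ref{t-ljslist}.68), the prefactor $q$ exactly absorbing the exponent gap between $n(n+2)$ and $(n+1)^2$. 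One warning: as transcribed in the appendix, the numerator of (A.68) is garbled (``$(-q^{};q^2)_n$''); the termwise cancellation only closes if that factor is read as $(-q;q^2)_{n+1}$, otherwise a stray $(1+q^{2n+1})$ survives, so a naive check against the printed text will appear to fail.

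The product side is where your proposal would break down. You plan to prove $\Pi_{66}=\Pi_{71}+q\,\Pi_{68}$ by recombining the right-hand side into a single product via (\ref{btp}) or (\ref{qpi}), and you flag finding the right specialization $z$ as the main obstacle. That obstacle cannot be overcome: matching $(q^6,q^{10},q^{16};q^{16})_\infty$ and $(q^2,q^{14},q^{16};q^{16})_\infty$ in (\ref{btp}) (with $q$ replaced by $q^4$) forces $z^2=-q^2$, so the cross term carries the coefficient $z=\pm iq$ rather than $q$, and no admissible specialization of (\ref{qpi}) fits the modulus $16$; the combination $(q^6,q^{10},q^{16};q^{16})_\infty + q(q^2,q^{14},q^{16};q^{16})_\infty$ has no single infinite-product form, which is exactly why Slater leaves the right-hand side of (\ref{t-ljslist}.66) as a sum of two products. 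The correct product-side check is the opposite of what you propose and is immediate: since
\[
\frac{(-q;q^2)_\infty}{(q^2;q^2)_\infty} \;=\; \frac{1}{(q;q^2)_\infty\,(q^4;q^4)_\infty},
\]
the products in (\ref{t-ljslist}.71) and (\ref{t-ljslist}.68) are precisely the two summands $(q^6,q^{10},q^{16};q^{16})_\infty/[(q;q^2)_\infty(q^4;q^4)_\infty]$ and $(q^2,q^{14},q^{16};q^{16})_\infty/[(q;q^2)_\infty(q^4;q^4)_\infty]$ already displayed on the right of (\ref{t-ljslist}.66). With that repaired, your concluding linear-algebra argument does give the claimed equivalence.
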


\begin{obs} Identity $(67)$ is equivalent to $(71) - q\times(68)$. \end{obs}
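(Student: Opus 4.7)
The plan is to verify the equivalence by separately checking that the series sides and the product sides of Slater's identities match up as claimed. Since the statement is ``Identity $(67)$ is equivalent to $(71)-q\times(68)$,'' what must be shown is that if we denote the series side of identity $(k)$ by $S_k(q)$ and its product side by $\Pi_k(q)$, then both
\[
 S_{67}(q) = S_{71}(q) - q\, S_{68}(q) \qquad \text{and}\qquad \Pi_{67}(q) = \Pi_{71}(q) - q\, \Pi_{68}(q).
\]
The series-side check is the easier half: each Slater series is of the shape $\sum_{j\geq 0} q^{aj^2+bj}/D_j(q)$ with denominators built from rising $q$-factorials, and $(67)$, $(68)$, $(71)$ sit in the same family. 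One compares the quadratic exponents and the denominator data term-by-term; the factor of $q$ and the sign shift the exponent $b$ by one unit and adjust the sign pattern, which should line up directly with $S_{67}$.

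The product-side check is where the content lies. Since the preceding observation asserts $\Pi_{66} = \Pi_{71} + q\,\Pi_{68}$ and now we want $\Pi_{67} = \Pi_{71} - q\,\Pi_{68}$, these two observations should be proved together and treated as a single application of an identity of the ``sum of two triple products equals a single product'' type. The natural candidate available earlier in the paper is Bailey's identity $(\ref{btp})$, whose LHS is exactly
\[
 (-z^2 q, -z^{-2} q^3, q^4;q^4)_\infty + z\,(-z^2 q^3, -z^{-2} q, q^4;q^4)_\infty,
\]
collapsing on the RHS to $(-z,-z^{-1}q,q;q)_\infty$. The quintuple product identity $(\ref{qpi})$ is the alternative if Bailey does not fit.

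The main obstacle, then, is to find the right specialization. I would write $\Pi_{68}$ and $\Pi_{71}$ as triple products via Jacobi $(\ref{jtp})$, identify the common infinite prefactor, and compare the residual triple products to the two summands on the LHS of $(\ref{btp})$; matching exponents modulo $4$ pins down the substitution $q\mapsto q^{k}$ and $z\mapsto \pm q^{\ell}$ uniquely. Once that specialization is in hand, $(\ref{btp})$ evaluates $\Pi_{71} + q\,\Pi_{68}$ to $\Pi_{66}$, and the sibling case $\Pi_{71} - q\,\Pi_{68}$ is obtained either by choosing the opposite sign of $z$ (whose effect is to negate the second summand) or by subtracting rather than adding the two triple-product representations; the result is exactly $\Pi_{67}$. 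Combined with the series identity, this establishes the observation.

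I expect the bookkeeping of exponents in choosing the correct $(z,q)$-specialization to be the main hurdle; everything after that substitution is routine manipulation of rising $q$-factorials.
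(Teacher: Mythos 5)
The paper offers no proof of this observation at all---it is asserted as a routine verification---so your proposal has to stand on its own, and its overall shape (check that the series sides and the product sides of Slater's (67), (68), (71) each satisfy the stated linear relation) is the right one. But you have the two halves backwards, and the step you identify as ``where the content lies'' cannot be carried out as described. The product side of (\ref{t-ljslist}.67) is \emph{not} a single infinite product: Slater writes it as
\[
\frac{(q^6,q^{10},q^{16};q^{16})_\infty - q\,(q^2,q^{14},q^{16};q^{16})_\infty}{(q;q^2)_\infty\,(q^4;q^4)_\infty},
\]
and since $(-q;q^2)_\infty/(q^2;q^2)_\infty = 1/\bigl((q;q^2)_\infty(q^4;q^4)_\infty\bigr)$, the product sides of (\ref{t-ljslist}.71) and (\ref{t-ljslist}.68) are exactly $(q^6,q^{10},q^{16};q^{16})_\infty$ and $(q^2,q^{14},q^{16};q^{16})_\infty$ over that same denominator. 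The product-side check therefore ends at the first step of your plan (extract the common prefactor); there is nothing to collapse, and no appeal to (\ref{btp}) or (\ref{qpi}) is needed. Nor is one possible: to fit the left side of (\ref{btp}) with base $q^4\mapsto q^{16}$ onto the pair $q^6, q^{10}$ you would need $-z^2q^4=q^6$, i.e.\ $z^2=-q^2$, so the specialization $z\mapsto\pm q^{\ell}$ you promise to ``pin down uniquely'' does not exist (and the quintuple product would require the non-integral substitution $q\mapsto q^{16/3}$). Identities of that type are what one uses precisely when a two-term combination \emph{does} reduce to a single product; the fact that (66) and (67) are left in two-term form in Slater's list is the signal that no such reduction occurs here.

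The half you dismiss as ``the easier half'' is where the actual work sits. The relation $S_{67}=S_{71}-q\,S_{68}$ is not a term-by-term identity at matching summation index: the exponents are $q^{n(n+2)}$ against $q^{n^2}$ and $q^{n(n+2)}$, and comparing $n$-th terms directly fails already at the bottom of the sums (the $n=0$ term of $S_{71}-qS_{68}$ is $1-q/(1-q^4)$, not $1$). What is actually required is to use $(-1;q^4)_n=2(-q^4;q^4)_{n-1}$, split $q^{n^2}(1\pm q^{2n})$ between the series of (66) and (67) (which is why the two observations are best proved in tandem, as you suggest), and shift the index by one in the sum coming from (68), absorbing $(1-q^{2n})/(1-q^{4n})=1/(1+q^{2n})$ into the $q$-factorials before the summands match. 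This is elementary but it is a genuine manipulation, and your proposal supplies neither it nor a correct substitute for the product side, so as written the argument has a gap on both halves.
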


\begin{id}[Finite form of \ref{t-ljslist}.68] 
Bosonic form conjectured by Santos~\cite[p. 79, eqn. 6.42]{jpos}.
\index{Santos, J. P. O.}
\begin{gather*}
   \sum_{i\geqq 0} \sum_{j\geqq 0}\sum_{k\geqq 0}\sum_{l\geqq 0}
    (-1)^k q^{j^2 + 2j + 2i^2 + 2k + l}
     \gp{j}{i}{q^4} \gp{j+k}{k}{q^2} \gp{j+l}{l}{q^2}
     \gp{n - 2i - k - l}{j}{q^2} \\ =
   \sum_{j=-\infty}^\infty (-1)^j q^{8j^2+6j} \U{n+1}{4j+1}{q} 
\tag{\ref{t-SLfin}.68}\\
  P_0 = 1 \\
  P_1 = q^3 - q^2 + q + 1\\
  P_2 = q^8 - q^7 + q^6 + 2q^4 + q + 1\\
  P_n = (1 + q -q^2 + q^{2n+1}) P_{n-1} + (q^3 + q^2 -q) P_{n-2} 
   +(q^{2n+1} - q^3) P_{n-3} \mbox{\ if $n\geqq 
   3$}
\end{gather*}
\end{id}

\begin{id}[Finite form of \ref{t-ljslist}.69] 
Bosonic form conjectured by Santos~\cite[p. 79, eqn. 6.43]{jpos}.
\index{Santos, J. P. O.}    
\begin{gather*}
   \sum_{i\geqq 0} \sum_{j\geqq 0}\sum_{k\geqq 0}\sum_{l\geqq 0}
    (-1)^i q^{j^2 + 2j + 2i^2 + 2i + k + 2l}
     \gp{j}{i}{q^4} \gp{j+k}{k}{q^2} \gp{j+l}{l}{q^2} 
     \gp{n - 2i - k - l}{j}{q^2} \\ = 
   \sum_{j=-\infty}^\infty q^{8j^2+6j} \U{n+1}{4j+1}{q} \tag{\ref{t-SLfin}.69}
\\
  P_0 = 1 \\
  P_1 = q^3 + q^2 + q + 1\\
  P_2 = q^8 + q^7 + q^6 + 2q^5 + 2q^4 + 2q^3 + 2q^2 + q + 1\\
  P_n = (1+ q +q^2 + q^{2n+1}) P_{n-1} - ( q + q^2 + q^{3}) P_{n-2} 
   +(q^3 - q^{2n+1}) P_{n-3} \mbox{\ if $n\geqq 3$}
\end{gather*}
\end{id}

\begin{id}[Finite form of \ref{t-ljslist}.70]
\begin{gather*} 
  \sum_{i\geqq 0} \sum_{j\geqq 0}\sum_{k\geqq 0}\sum_{l\geqq 0}
    (-1)^l q^{j^2 + 2j + 2i^2 + k + 2l}
     \gp{j}{i}{q^4} \gp{j+k}{k}{q^2} \gp{j+l-1}{l}{q^2} 
     \gp{n-2i-k-l}{j}{q^2} \\ =
   \sum_{j=-\infty}^\infty (-1)^j q^{8j^2 + 4j} 
     \Big[ \Tzero{n}{4j+1}{q} + \Tzero{n+1}{4j+1}{q} \Big] 
\tag{\ref{t-SLfin}.70}\\
 P_0 = 1\\
 P_1 = q^3 + q + 1\\
 P_2 = q^8 + q^6 + q^4 + q^3 + q^2 + q + 1\\
 P_n = (1+q-q^2+q^{2n+1}) P_{n-1} + (q^3 + q^2 - q) P_{n-2} +
  (q^{2n-1} - q^3) P_{n-3} \mbox{\ if $n\geqq 3$}
\end{gather*}
\end{id}

\begin{id}[Finite form of \ref{t-ljslist}.71]  
Bosonic representation conjectured by Santos~\cite[p. 80, eqn. 6.44]{jpos}.
\index{Santos, J. P. O.}
\begin{gather*} 
    1 + \sum_{j\geqq 1}\sum_{i\geqq 0}\sum_{k\geqq 0}\sum_{l\geqq 0}
     (-1)^l q^{j^2 + 2i^2 + 2i + k + 2l} \gp{j-1}{i}{q^4} \gp{j+k-1}{k}{q^2}
      \\ \times \gp{j+l-2}{l}{q^2} \gp{n-2i-k-l}{j}{q^2} \\ =
   \sum_{j=-\infty}^\infty (-1)^j q^{8j^2 + 2j} \U{n}{4j}{q} 
 \tag{\ref{t-SLfin}.71}\\
 P_0 = 1\\
 P_1 = q + 1\\
 P_2 = q^4 + q^3 + q^2 + q + 1\\
 P_3 = q^9 + q^8 + q^7 + 2q^5 + 2q^4 + 2q^3 + q^2 + q + 1\\
 P_n = (1+q+q^{2n-1}) P_{n-1} + (q^4 - q - q^{2n-1}) P_{n-2} 
  -(q^4 + q^5 -q^{2n-1}) P_{n-3}  \\ + (q^5 - q^{2n-1}) P_{n-4}
   \mbox{\ if $n\geqq 4$}
\end{gather*}
\end{id}

\begin{id}[Finite form of \ref{t-ljslist}.72] 
Bosonic representation conjectured by Santos~\cite[p. 80, eqn. 6.45]{jpos}.
\index{Santos, J. P. O.}
\begin{gather*}
    1 + \sum_{j\geqq 1}\sum_{i\geqq 0}\sum_{k\geqq 0}\sum_{l\geqq 0}
     (-1)^i q^{j^2 + 2i^2 + 2i + k + 2l} \gp{j-1}{i}{q^4} \gp{j+k-1}{k}{q^2}
      \\ \times \gp{j+l-2}{l}{q^2} \gp{n-2i-k-l}{j}{q^2} \\=
   \sum_{j=-\infty}^\infty q^{8j^2 + 2j} \U{n}{4j}{q} 
\tag{\ref{t-SLfin}.72}\\
  P_0 = 1 \\
  P_1 = q + 1\\
  P_2 = q^4 + q^3 + q^2 + q + 1\\
  P_3 = q^9 + q^8 + q^7 + 2q^6 + 2q^5
  +2q^4 + 2q^3 + q^2 + q + 1\\
  P_n = (1 + q^2 + q^{2n-1}) P_{n-1} + q^{2n-2} P_{n-2} 
   -(q^{2n-1} + q^4 + q^2) P_{n-3} \\+(q^4 - q^{2n-2}) P_{n-4}\mbox{\ if $n\geqq 
   4$}
\end{gather*}
\end{id}

\begin{obs} Identity $(73)$ is equivalent to $(77) + (78)$  and to
 $(77) + (75) + q\times(76)$. \end{obs}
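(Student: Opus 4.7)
The plan is to establish both equivalences at the infinite-product (bosonic) level, where the assertions become identities among theta-like products that are handled by the tools gathered in Section 1.3, notably Jacobi's triple product identity (\ref{jtp}), the quintuple product identity (\ref{qpi}), and Bailey's identity (\ref{btp}). The method of $q$-difference equations plays no role here; this observation is a purely analytic statement about the Slater series-product identities (\ref{t-ljslist}.73), (\ref{t-ljslist}.75)--(\ref{t-ljslist}.78), and may equally well be phrased on the series (fermionic) side or on the product (bosonic) side.

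First I would record the infinite-product representations of (\ref{t-ljslist}.73), (\ref{t-ljslist}.75), (\ref{t-ljslist}.76), (\ref{t-ljslist}.77), and (\ref{t-ljslist}.78) from the annotated Slater list in the appendix, and factor out the greatest common infinite-product divisor appearing on both sides of each asserted equation. After this normalization, each claim reduces to a short finite sum of Jacobi triple products. The first claim (\ref{t-ljslist}.73) $=$ (\ref{t-ljslist}.77) $+$ (\ref{t-ljslist}.78) becomes an equality of the shape ``two triple products summing to a single triple product times a Jacobi-like factor,'' which is precisely the format of Bailey's identity (\ref{btp}) (with $q$ replaced by an appropriate power and $z$ chosen to match the common modulus of the three identities). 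The second claim (\ref{t-ljslist}.73) $=$ (\ref{t-ljslist}.77) $+$ (\ref{t-ljslist}.75) $+\,q\cdot$(\ref{t-ljslist}.76) then follows once one separately verifies the auxiliary splitting (\ref{t-ljslist}.78) $=$ (\ref{t-ljslist}.75) $+\,q\cdot$(\ref{t-ljslist}.76), which is another two-term combination handled by the same technique---either by a second application of (\ref{btp}) or by the quintuple product identity (\ref{qpi}), depending on which moduli appear.

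A useful cross-check, and possibly a shorter route, is available on the series side. The fermionic sums in (\ref{t-ljslist}.77) and (\ref{t-ljslist}.78) should be obtainable from the sum in (\ref{t-ljslist}.73) by splitting the index of summation into residue classes (typically by parity of the index, or by separating off the zeroth term); one peels off the appropriate subseries and matches the resulting pieces termwise against the summands of (\ref{t-ljslist}.77) and (\ref{t-ljslist}.78) directly. In practice this is often how such \emph{Observation} remarks are discovered in the first place, and it provides an independent proof that does not invoke Bailey's identity or the quintuple product identity at all.

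The main obstacle will be bookkeeping. One must choose the correct specialization of $z$ and the correct substitution $q \mapsto q^m$ in (\ref{btp}) or (\ref{qpi}) so that the two triple products on one side share a common factor equal to the $(q;q)_\infty$-type factor on the other, and so that the residual theta sums have matching quadratic exponents. Exponent-matching here is entirely mechanical but error-prone, and I would verify the final equations numerically in the author's \texttt{RRtools} Maple package as a safeguard against arithmetic slips before committing the algebraic manipulations to print.
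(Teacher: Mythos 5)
The paper records this as an Observation with no accompanying argument (the same relations are restated as unproved ``Notes'' in the appendix), so the real question is whether your plan would actually deliver a proof; as written it would not. The assertion ``(\ref{t-ljslist}.73) is equivalent to (\ref{t-ljslist}.77)$+$(\ref{t-ljslist}.78)'' requires that \emph{both} sides of the Slater identities combine: the series of (\ref{t-ljslist}.73) must equal the sum of the series of (\ref{t-ljslist}.77) and (\ref{t-ljslist}.78), \emph{and} the products must add. Your ``either the series side or the product side'' framing is therefore a gap in itself. Worse, the product side is where you propose to spend all the effort, and there is nothing to do there: since $\prod_{n\geqq 1}(1+q^n)/(1-q^n)=1/\big((q;q)_\infty (q;q^2)_\infty\big)$, the right-hand sides of (\ref{t-ljslist}.77) and (\ref{t-ljslist}.78) are exactly $(q^6,q^{12},q^{18};q^{18})_\infty$ and $(q^9,q^9,q^{18};q^{18})_\infty$ over that common denominator, and the right-hand side of (\ref{t-ljslist}.73) is \emph{displayed} as their sum; likewise the numerator of (\ref{t-ljslist}.75) plus $q$ times the product of (\ref{t-ljslist}.76) telescopes to the numerator of (\ref{t-ljslist}.78) by cancellation of the $q(q^3,q^{15},q^{18};q^{18})_\infty$ terms. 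Invoking (\ref{btp}) or (\ref{qpi}) here is not just unnecessary but misdirected: those results collapse a sum of two triple products into a single product, whereas the right side of (\ref{t-ljslist}.73) is deliberately left as a sum precisely because it is not a single product.

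The entire content of the Observation lives on the series side, which you relegate to a ``cross-check,'' and your description of it (splitting the summation index into residue classes) is not what happens. Shift $n\mapsto n-1$ in the sum of (\ref{t-ljslist}.77) and write $(-1;q)_{n+1}=2(-q;q)_n$ in (\ref{t-ljslist}.78); then all three $n$th summands share the common factor $(q^3;q^3)_{n-1}(-q;q)_n q^{n(n-1)/2}/\big((q;q)_{2n}(q;q)_n\big)$, and the claim reduces to the one-line identity $(1-q^n)^2+2q^n(1-q^n)=1-q^{2n}$ together with the matching of the constant terms. The same shift applied to $q\times$(\ref{t-ljslist}.76) shows its $n$th term coincides exactly with that of (\ref{t-ljslist}.75), giving (\ref{t-ljslist}.78)$=$(\ref{t-ljslist}.75)$+q\times$(\ref{t-ljslist}.76) and hence the second assertion. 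So the correct argument is elementary and purely termwise; your proposal concentrates on the half of the claim that is true by inspection, with tools that do not apply, and leaves the half that carries the content as an afterthought.
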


\begin{obs} Identity $(74)$ is equivalent to $(77) + (78) - q\times(76)$ 
and to$(77) + (75)$. \end{obs}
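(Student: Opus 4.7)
The plan is to verify both claimed equivalences by direct computation at the level of Slater's analytic series-product identities. First I would tabulate identities (\ref{t-ljslist}.74), (\ref{t-ljslist}.75), (\ref{t-ljslist}.76), (\ref{t-ljslist}.77), (\ref{t-ljslist}.78) from the appendix; each has the form $\phi_i(q) = \Pi_i(q)$, where $\phi_i$ is a $q$-hypergeometric sum and $\Pi_i$ is a (possibly multi-term) infinite-product expression.

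For the first claim, (\ref{t-ljslist}.74) $\equiv$ (\ref{t-ljslist}.77) $+$ (\ref{t-ljslist}.78) $-\, q\cdot$ (\ref{t-ljslist}.76), I would compute the series combination $\phi_{77} + \phi_{78} - q\phi_{76}$ and, after appropriate reindexing, verify that it coincides with $\phi_{74}$ term-by-term. This should be routine once the quadratic exponents and $q$-Pochhammer denominators are aligned (and may in fact be manifest from a common Bailey-pair origin, since Slater herself generated neighboring identities by coupling a single Bailey pair against different auxiliary sequences). I would then perform the analogous computation on the product side, $\Pi_{77} + \Pi_{78} - q\Pi_{76}$. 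Since the $\Pi_i$ are infinite products, I would first convert each to a theta series via Jacobi's triple product (\ref{jtp}), so that the linear combination becomes an identity among theta series divided by a common $(q;q)_\infty$ factor. The resulting theta-series identity should then either be a direct residue-class dissection of the theta series representing $\Pi_{74}$, or else follow from the quintuple product identity (\ref{qpi}) or Bailey's identity (\ref{btp}), which are precisely the tools that combine pairs of theta series into single infinite products (compare (\ref{RR1sm}) and (\ref{RR2sm})).

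The second equivalence, (\ref{t-ljslist}.74) $\equiv$ (\ref{t-ljslist}.77) $+$ (\ref{t-ljslist}.75), is treated by the same methodology but without the $q$-shift complication, so the product-side reduction should be even cleaner. Given both the series- and the product-side verifications for each equivalence, the observation follows by transitivity: each right-hand combination reproduces $\phi_{74}$ and $\Pi_{74}$ simultaneously, and hence is equivalent to Slater's identity (\ref{t-ljslist}.74).

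The main obstacle I anticipate is the product-side bookkeeping. After expanding everything via (\ref{jtp}) into bilateral theta sums, one must pair the summands correctly to exhibit a clean residue-class dissection or to invoke (\ref{qpi}) or (\ref{btp}); if the apparent moduli do not match up immediately, a more delicate dissection with respect to the common modulus of the $74$--$78$ cluster (and a careful accounting of the $q$-shift in the first equivalence) may be needed. Finding and verifying that dissection is the step most likely to require real effort, whereas the series-side manipulations should be essentially mechanical.
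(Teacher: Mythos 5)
Your overall plan---verify the linear combination separately on the series side and on the product side---is workable, and the series-side part is exactly as mechanical as you predict. Writing $\phi_i$ for the sum in identity (\ref{t-ljslist}.$i$), the shift $n\to n-1$ gives $q\,\phi_{76}=\sum_{n\geqq 1}(q^3;q^3)_{n-1}(-q;q)_n\,q^{n(n+1)/2}\big/\big((q;q)_{2n}(q;q)_{n-1}\big)$, and since $(-1;q)_{n+1}=2(-q;q)_n$ this is precisely $\phi_{78}-\phi_{75}$; the same shift applied to $\phi_{77}$ shows $\phi_{77}=\phi_{74}-\phi_{75}$. Together these yield both decompositions, and in fact the second equivalence is a purely formal consequence of the first combined with the relation $(75)=(78)-q\times(76)$ already recorded in the note to (\ref{t-ljslist}.75), so only one series verification is genuinely needed.

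Where your proposal misjudges the problem is in the anticipated ``main obstacle.'' No triple-product expansion, quintuple-product identity, Bailey identity, or residue-class dissection is required on the product side. Slater's (\ref{t-ljslist}.74) is stated with its right-hand side already written as the three-term numerator $(q^6,q^{12},q^{18};q^{18})_\infty+(q^9,q^9,q^{18};q^{18})_\infty-q(q^3,q^{15},q^{18};q^{18})_\infty$ over the common denominator $(q;q)_\infty(q;q^2)_\infty$, while the right-hand sides of (\ref{t-ljslist}.76), (\ref{t-ljslist}.77), and (\ref{t-ljslist}.78) are single products which, after the elementary rewriting $\prod_{n\geqq 1}(1+q^n)/(1-q^n)=1\big/\big((q;q)_\infty(q;q^2)_\infty\big)$, are exactly those three terms individually; likewise the right-hand side of (\ref{t-ljslist}.75) is the last two terms. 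The product sides therefore agree term for term by inspection. The statement is a bookkeeping observation (the paper offers no proof, and the corresponding notes appear with identities (\ref{t-ljslist}.73)--(\ref{t-ljslist}.78)); the only step with any content is the single reindexing above, and the theta-function machinery you reserve for the product side would be a detour rather than a necessity.
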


\begin{id}[Finite form of \ref{t-ljslist}.75]  
Note: $(75)$ is $(78) - q\times(76)$.   
Bosonic representation conjectured by Santos~\cite[p. 81, eqn. (6.46)]{jpos}.
\index{Santos, J. P. O.}
 \begin{gather*}    
   1 + \sum_{j\geqq 1} \sum_{i\geqq 0} \sum_{I\geqq 0} \sum_{k\geqq 0}
    \sum_{K\geqq 0} q^{ j(j+1)/2 + i(3i+1)/2 + I(I+1)/2 + 2k + K}
    \gp{j-1}{i}{q^3} \gp{j}{I}{q} \gp{j+k-1}{k}{q^2} 
    \\ \times \gp{j+K-1}{K}{q^2}
     \gp{n-1-3i-I-2k-2K}{j-1}{q} \\ 
   \sum_{j=-\infty}^\infty (-1)^j q^{9j^2         } \Tone{n+1}{6j}{\sqrt{q}}
                          -(-1)^j q^{9j^2 + 6j + 1} \Tone{n+1}{6j+2}{\sqrt{q}}
   \tag{\ref{t-SLfin}.75}\\
  P_0 = 1 \\
  P_1 = q + 1\\
  P_2 = q^3 + q^2 + q + 1\\
  P_3 = q^6 + q^5 +2q^4 + 2q^3 + 2q^2 + q + 1\\
  P_n = (1 + q + q^{n}) P_{n-1} - (q +q^{2}) P_{n-3} +(q^2 - q^n) P_{n-4}
   \mbox{\ if $n\geqq 4$}
\end{gather*}
\end{id}

\begin{id}[Finite form of \ref{t-ljslist}.76]
  \begin{gather*}
   \sum_{i\geqq 0} \sum_{I\geqq 0} \sum_{j\geqq 0}\sum_{k\geqq 0}\sum_{K\geqq 0} 
    (-1)^i q^{j(j+3)/2 + 3i(i+1)/2
       I(I+1)/2 + k + 2K}
      \gp{j}{i}{q^3} \gp{j+1}{I}{q}  \gp{j+k}{k}{q^2} \\ \times \gp{j+K}{K}{q^2}
      \gp{n - 3i - I - 2k - 2K}{j}{q} \\ = 
    \sum_{j=-\infty}^\infty (-1)^j q^{9j^2 + 6j} \Tone{n+2}{6j+2}{\sqrt{q}} 
    \tag{\ref{t-SLfin}.76}\\
  P_0 = 1 \\
  P_1 = q^2 + q + 1\\
  P_2 = q^5 + q^4 + 2q^3 + 2q^2 + 2q + 1\\
  P_3 = q^9 + q^8 + 2q^7 + 3q^6 + 4q^5 + 4q^4 +4q^3 + 3q^2 + 2q + 1\\
  P_4 = q^{14} + q^{13} + 2q^{12} + 3q^{11} + 5q^{10} + 6q^9 + 7q^8 + 8q^7 
    + 9q^6 + 8q^5 + 7q^4 + 5q^3 \\+ 4q^2 + 2q + 1\\
  P_n = (1 + q^{n+1}) P_{n-1} + (q + q^2 + q^{n+1}) P_{n-2} 
   -(q^2+q) P_{n-3} \\-(q^3 + q^{n+1}) P_{n-4} + (q^3-q^{n+1}) P_{n-5} 
   \mbox{\ if $n\geqq 5$}
  \end{gather*}
\end{id}

\begin{id}[Finite form of \ref{t-ljslist}.77]
 \begin{gather*}  
   \sum_{i\geqq 0} \sum_{j\geqq 0}\sum_{k\geqq 0}\sum_{l\geqq 0}
    (-1)^i q^{ (j^2 + j + 3i^2 + 3i + 2k + 2l)/2}
     \gp{j}{i}{q^3} \gp{j+k}{k}{q^2} \gp{j+l-1}{l}{q} 
     \gp{n - 3i - 2k - l}{j}{q}  \\ =
   \sum_{j=-\infty}^\infty (-1)^j q^{9j^2 + 3j}  \Tone{n+1}{6j+1}{\sqrt{q}}
   \tag{\ref{t-SLfin}.77}\\
  P_0 = 1 \\
  P_1 = q + 1\\
  P_2 = q^3 + 2q^2 + 2q + 1\\
  P_n = (1 + q^{n}) P_{n-1} + (q +q^{n}) P_{n-2} +(q^n - q) P_{n-3}
   \mbox{\ if $n\geqq 3$} 
\end{gather*}
\end{id}

\begin{obs} Identity $(78)$ is equivalent to $(75) + q\times(76)$. \end{obs}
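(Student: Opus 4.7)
The plan is to exploit the fact that the companion observation for identity (75) in this very section already records the relation ``$(75)$ is $(78) - q\times(76)$.'' Rearranging that relation algebraically gives $(78) = (75) + q\times(76)$, which is exactly the present claim. So at the level of formal manipulation of Slater-list identities viewed as equalities between series and products, the observation is essentially a transcription of the $(75)$-observation with terms shuffled across the equals sign. Nothing more is needed to justify its truth.

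If one wants a self-contained verification, the route I would take is to write down the series (fermionic) and product (bosonic) sides of Slater's identities $(75)$, $(76)$, and $(78)$ as tabulated in the appendix, and to check both sides of the linear combination separately. On the series side, the sum defining $(78)$ splits naturally, either by parity of the summation index or by an index shift of the form $j\mapsto j$ versus $j\mapsto j+1$, into two pieces that coincide respectively with the series of $(75)$ and $q$ times the series of $(76)$. On the product side, one applies Jacobi's triple product identity~(\ref{jtp}) to rewrite each of the three infinite products as a bilateral theta series; one then checks that the theta series attached to $(78)$ equals the sum of the theta series for $(75)$ and $q$ times that for $(76)$, again after a straightforward index shift.

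The main (and only) obstacle is bookkeeping: matching exponents and signs in the theta-series after the index shift, and in particular making sure the shifted ranges of summation in $(75)+q\cdot(76)$ reassemble into the single bilateral sum that represents $(78)$. In cases where $(78)$ is naturally expressed via the quintuple product identity~(\ref{qpi}) rather than by a single triple product, one may need to invoke~(\ref{qpi}) to recombine the two theta series into the form appearing in Slater's list. Since identities $(75)$ and $(76)$ have already been proved elsewhere in this paper via the method of $q$-difference equations, forming $q\times(76)+(75)$ automatically yields a true series-product identity, and the whole task reduces to verifying that this true identity is the one labeled $(78)$.
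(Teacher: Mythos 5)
Your proposal is correct, and it matches the paper, which states this as an unproved bookkeeping observation (the appendix records both ``$(75)$ is $(78)-q\times(76)$'' and ``$(78)$ is $(75)+q\times(76)$'' as notes). One remark: the self-contained verification is even easier than your sketch suggests. On the product side no appeal to the triple or quintuple product identity is needed, since all three products are already written over modulus $18$: writing $(-q;q)_\infty=1/(q;q^2)_\infty$, the term $q\,(q^3,q^{15},q^{18};q^{18})_\infty/\bigl((q;q)_\infty(q;q^2)_\infty\bigr)$ coming from $q\times(76)$ cancels the subtracted term in $(75)$, leaving exactly the product side of $(78)$. On the series side the only two facts needed are the index shift $n\mapsto n-1$ in the sum of $(76)$, which turns $q\cdot q^{(n-1)(n+2)/2}$ into $q^{n(n+1)/2}$ and aligns the $q$-factorials, and the elementary identity $(-1;q)_{n+1}=2(-q;q)_n$, which shows that the two resulting sums add up to the $(-1;q)_{n+1}$ series of $(78)$; it is this doubling, rather than a parity split, that reassembles the sum. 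Your fallback of deducing the claim by rearranging the companion observation for $(75)$ is legitimate as a statement of equivalence, though of course it proves neither observation on its own.
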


\begin{id}[Finite form of \ref{t-ljslist}.79/98]
Eqn. \textup{(\ref{t-SLfin}.79-b)} due to Andrews~\cite[p. 80, eqn. 8.42]{qs}.
\index{Andrews, George E.}
   \begin{gather*}
   \sum_{j\geqq 0}  q^{j^2 } \gp{n+j}{2j}{q}  = 
   \sum_{k=-\infty}^\infty q^{15k^2 + k} \gp{2n}{n+5k}{q} - q^{15k^2 + 
   11k + 2} \gp{2n}{n + 5k+2}{q} \tag{\ref{t-SLfin}.79-b} \\ =
   \sum_{j=-\infty}^\infty (-1)^j q^{10j^2 + 2j} \U{n}{5j}{q} 
\tag{\ref{t-SLfin}.79-t}\\
  P_0 = 1 \\
  P_1 = q + 1\\
  P_n = (1 + q + q^{2n-1}) P_{n-1} - q P_{n-2} 
   \mbox{\ if $n\geqq 2$}
  \end{gather*}
\end{id}

\begin{id}[Finite form of \ref{t-ljslist}.80] 
Bosonic representation conjectured by Santos~\cite[p. 77, eqn. 6.39]{jpos}.
\index{Santos, J. P. O.}
 \begin{gather*}
  \sum_{j\geqq 0}\sum_{k\geqq 0} q^{ j(j+1)/2 + k} \gp{j+k}{k}{q^2} 
     \gp{n-2k}{j}{q} \nonumber \\ =
  \sum_{k=-\infty}^\infty q^{(21k^2 + 5k)/2} \Tone{n+1}{7k+1}{\sqrt{q}} -
       q^{(21k+ 19)k/2 +2} \Tone{n+1}{7k+3}{\sqrt{q}}  
\tag{\ref{t-SLfin}.80}\\
  P_0 = 1 \\
  P_1 = q + 1\\
  P_2 = q^3 + q^2 + 2q + 1\\
  P_n = (1 + q^n) P_{n-1} + q P_{n-2} - q P_{n-3} 
   \mbox{\ if $n\geqq 3$} 
  \end{gather*}
\end{id}

\begin{id}[Finite form of \ref{t-ljslist}.81] 
Bosonic representation conjectured by Santos~\cite[p. 81, eqn. 6.47]{jpos}.
\index{Santos, J. P. O.}
   \begin{gather*} 
     \sum_{j\geqq 0}\sum_{k\geqq 0} q^{ j(j+1)/2 + k} \gp{j+k-1}{k}{q^2} 
     \gp{n-2k}{j}{q} \nonumber \\ =
     \sum_{k=-\infty}^\infty 
       q^{(21k^2 + k)/2} \Tone{n+1}{7k}{\sqrt{q}} -
       q^{(21k^2 + 13k + 2)/2} \Tone{n+1}{7k+2}{\sqrt{q}} 
\tag{\ref{t-SLfin}.81}\\
  P_0 = 1 \\
  P_1 = q + 1\\
  P_2 = q^3 + q^2 + q + 1\\
  P_n = (1 + q^n) P_{n-1} + q P_{n-2} - q P_{n-3} 
   \mbox{\ if $n\geqq 3$} 
   \end{gather*}
\end{id}

\begin{id}[Finite form of \ref{t-ljslist}.82] 
Bosonic representation conjectured by Santos~\cite[p. 82, eqn. 6.48]{jpos}.
\index{Santos, J. P. O.}
\begin{gather*}
     \sum_{j\geqq 0}\sum_{k\geqq 0} q^{ j(j+3)/2 + k} \gp{j+k}{k}{q^2} 
     \gp{n-2k}{j}{q} \nonumber \\ = 
     \sum_{k=-\infty}^\infty q^{(21k^2 + 11k)/2} \Tone{n+2}{7k+2}{\sqrt{q}} -
       q^{(21k^2 + 17k + 2)/2} \Tone{n+2}{7k+3}{\sqrt{q}} 
\tag{\ref{t-SLfin}.82} \\
  P_0 = 1 \\
  P_1 = q^2 + 1\\
  P_2 = q^5 + q^3 + q^2 + q + 1\\
  P_n = (1 + q^{n+1}) P_{n-1} + q P_{n-2} - q P_{n-3} 
   \mbox{\ if $n\geqq 3$} 
   \end{gather*}
\end{id}

\begin{obs} Identity $(83)$ is the same as $(39)$. \end{obs}
\begin{obs} Identity $(84)$ is the same as $(9)$. \end{obs}
\begin{obs} Identity $(85)$ is the same as $(52)$. \end{obs}
\begin{obs} Identity $(86)$ is the same as $(38)$. \end{obs}
\begin{obs} Identity $(87)$ is the same as $(27)$. \end{obs}
\begin{obs} Identity $(88)$ is equivalent to $(91) - q^2\times(90)$. \end{obs}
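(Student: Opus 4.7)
The plan is to verify the observation by checking the claimed linear relation directly, preferably on the series side. Denote by $I_k(q)$ the common value of Slater's identity $(k)$, so that $I_k(q)$ equals both the $q$-hypergeometric series and the infinite product listed by Slater for that entry. Since identities $(90)$ and $(91)$ have already been established, the observation reduces to verifying
\[ I_{88}(q) = I_{91}(q) - q^2\, I_{90}(q). \]

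My first step would be to write out the series sides $S_{88}, S_{90}, S_{91}$ from Slater's list. Experience with the sibling observations in this section (those for $(66), (67), (73), (74)$) suggests that the summands of $S_{90}$ and $S_{91}$ share a common rising $q$-factorial structure and differ only by a shift in the exponent of $q$ in the numerator of each term. After multiplying $S_{90}$ by $q^2$ and, if necessary, reindexing by one, the combination $S_{91}(q) - q^2 S_{90}(q)$ should collapse termwise to produce exactly the summand of $S_{88}(q)$. This kind of verification uses nothing beyond elementary manipulation of $(a;q)_n$-factors and a shift of the summation index, and in the cases treated earlier in this section the calculation is essentially immediate once the three series are aligned.

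Alternatively, one may verify the relation on the product side. Here I would express each of $\Pi_{88}, \Pi_{90}, \Pi_{91}$ as a theta-like ratio via Jacobi's triple product identity~(\ref{jtp}). After pulling out a common factor of the form $1/(q;q)_\infty$ (or a close variant), the assertion becomes an identity between a linear combination of two theta series and a single theta-like product. The key input should be either the quintuple product identity~(\ref{qpi}) or Bailey's identity~(\ref{btp}), with a suitably chosen value of $z$, exactly as these identities were used to pass between series and product in the earlier cases. The main obstacle in this second route is bookkeeping: correctly identifying the substitution $z = z(q)$ so that the two theta summands on the right-hand side match the two terms of~(\ref{qpi}) or~(\ref{btp}). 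This is a purely mechanical step and is strictly easier than the original product-side derivations of~$(90)$ and~$(91)$, which are already granted; hence either route yields the observation.
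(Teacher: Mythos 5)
Your first route is exactly the paper's (implicit) justification: the $n$th summands of $(91)$ and $q^2\times(90)$ differ only by the factor $q^{n(n+2)}$ versus $q^{n(n+3)+2}$, so their difference carries the factor $1-q^{n+2}$, and the shift $n\to n-1$ produces precisely the summand $(q^3;q^3)_{n-1}(1-q^{n+1})q^{n^2-1}/\bigl((q;q)_{2n}(q;q)_{n-1}\bigr)$ of $(88)$; the paper states the observation without further proof because this is the whole argument. Your alternative product-side route is unnecessarily heavy: no quintuple product or Bailey identity is needed, since the right-hand side of $(88)$ is already written as the difference of the two theta-quotients that constitute the product sides of $(91)$ and $q^2\times(90)$.
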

\begin{obs} Identity $(89)$ is equivalent to $(93) - q\times(91)$. \end{obs}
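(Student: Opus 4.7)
The plan is to verify this as an algebraic identity among the series--product pairs labelled $(89)$, $(91)$, $(93)$ in Slater's list~\cite{ljs}. First I would extract from the appendix the explicit series sides $S_{89}(q)$, $S_{91}(q)$, $S_{93}(q)$ and the corresponding product sides $\Pi_{89}(q)$, $\Pi_{91}(q)$, $\Pi_{93}(q)$. The content of the observation is then the single relation
\[
  S_{89}(q) \;=\; S_{93}(q) - q\,S_{91}(q),
\]
or equivalently $\Pi_{89}(q) = \Pi_{93}(q) - q\,\Pi_{91}(q)$, so the whole claim collapses to one direct comparison.

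On the series side I would look for a simple re-indexing. In the nearby observations (e.g.\ the decomposition of $(66)$ and $(67)$ in terms of $(68)$ and $(71)$, or of $(73)$ and $(74)$ in terms of $(75)$--$(78)$), the pattern is that pulling a factor of $q$ out of a shifted summand of one series matches the leading factor in another; so I expect that after writing out the summand of $S_{93}$ and isolating the $j=0$ term, the tail $j\geqq 1$ will reindex via $j\mapsto j+1$ to $q$ times the summand of $S_{91}$, and the difference $S_{93}(q)-qS_{91}(q)$ will collapse term-by-term to $S_{89}(q)$ by matching the $q$-Pochhammer factors. This is purely a bookkeeping exercise in shifting indices and absorbing boundary terms into the base of $(a;q)_n$.

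As an independent check I would verify the corresponding relation on the product side using Jacobi's triple product identity~(\ref{jtp}) together with (depending on the modulus of the products involved) the quintuple product identity~(\ref{qpi}) or Bailey's identity~(\ref{btp}); each of the three products $\Pi_{89}$, $\Pi_{91}$, $\Pi_{93}$ is, up to a common factor, an expansion of one of these, and the desired relation should reduce to a two- or three-term theta-function identity. The main obstacle I anticipate is precisely this: getting signs, exponents, and the shift in the summation variable exactly right so that the three theta expansions combine into the stated linear combination. Should either the series manipulation or the theta-function comparison prove unexpectedly delicate, I would fall back on the method of \S\ref{qdiffeqn} and finitize all three identities; then it suffices to verify the \emph{polynomial} relation $P^{(89)}_n(q) = P^{(93)}_n(q) - q\,P^{(91)}_n(q)$ by checking the common recurrence and matching initial values, after which the limit $n\to\infty$ (using~(\ref{abel}) together with the Gaussian polynomial and $q$-trinomial asymptotics from \S\ref{gausspoly} and \S\ref{qtr}) yields the original claim.
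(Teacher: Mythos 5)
Your overall plan is the right one, and it is all the paper intends here: the observation is stated without proof precisely because it reduces to the routine series and product bookkeeping you describe. Two remarks, though. First, one sentence in your series-side plan is literally false as written: the tail of $S_{93}$ does \emph{not} reindex to $q$ times the summand of $S_{91}$ (if it did, the difference $S_{93}-qS_{91}$ would collapse to the constant $1$). What actually happens is that after the shift $n\mapsto n-1$ the two series have \emph{aligned} summands with a common factor $(q^3;q^3)_{n-1}q^{n^2}$ but different Pochhammer denominators, and it is the \emph{difference} of those reciprocals that produces the summand of $S_{89}$:
\begin{gather*}
  q\,S_{91}(q)=\sum_{n\geqq 1}\frac{(q^3;q^3)_{n-1}\,q^{n^2}}{(q;q)_{2n}(q;q)_{n-1}},
  \qquad
  \frac{1}{(q;q)_{2n-1}(q;q)_{n}}-\frac{1}{(q;q)_{2n}(q;q)_{n-1}}
  =\frac{(1-q^{2n})-(1-q^{n})}{(q;q)_{2n}(q;q)_{n}}
  =\frac{q^{n}}{(q;q)_{2n}(q;q)_{n-1}},
\end{gather*}
whence $S_{93}-qS_{91}=1+\sum_{n\geqq 1}(q^3;q^3)_{n-1}q^{n^2+n}/\bigl((q;q)_{2n}(q;q)_{n-1}\bigr)=S_{89}$. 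Your later phrase about the difference collapsing term-by-term by matching Pochhammer factors is the correct description; just be aware the mechanism is this two-term combination, not a pure reindexing. Second, the product-side check requires no theta-function identities at all: the right-hand side of (A.89) is already written as $\bigl((q^{12},q^{15},q^{27};q^{27})_\infty - q(q^{6},q^{21},q^{27};q^{27})_\infty\bigr)/(q;q)_\infty$, which is verbatim $\Pi_{93}-q\,\Pi_{91}$ once the products in (A.91) and (A.93) are rewritten in the abbreviated Pochhammer notation, so invoking (\ref{jtp}), (\ref{qpi}), or a finitization fallback is unnecessary overhead.
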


\begin{id}[Finite form of \ref{t-ljslist}.90] 
Bosonic representation conjectured by Santos~\cite[p. 83, eqn. 6.52]{jpos}.
\index{Santos, J. P. O.}
\begin{gather*}
  \sum_{i\geqq 0} \sum_{j\geqq 0}\sum_{k\geqq 0}
   (-1)^i q^{j^2 + 3j + 3i(i+1)/2 + k}
  \gp{j}{i}{q^3} \gp{2j+k+1}{k}{q} \gp{n-j- 3i- 2k}{j}{q}
  \\ =
  \sum_{j=-\infty}^\infty (-1)^j q^{(27j^2 + 21j)/2} 
    \gp{n+3}{\lfloor \frac{n + 9j + 7}{2} \rfloor}{q} 
\tag{\ref{t-SLfin}.90}\\
  P_0 = 1 \\
  P_1 = 1\\
  P_2 = q^4 + q^2 + q + 1\\
  P_3 = q^5 + q^4 + q^2 + q + 1\\
  P_4 =q^{10} + q^8 + q^7  + 2q^6 + 2q^5 + 2q^4 + q^3 + 2q^2 + q + 1\\
  P_n =  P_{n-1} + (q + q^2 + q^{n+2}) P_{n-2} 
   -(q^2+q) P_{n-3} -q^3 P_{n-4} \\+ (q^3-q^{n+2}) P_{n-5} 
   \mbox{\ if $n\geqq 5$}
\end{gather*}
\end{id}

\begin{id}[Finite form of \ref{t-ljslist}.91] 
Bosonic representation conjectured for even $n$ by 
Santos~\cite[p. 84, eqn. 6.53]{jpos}.
\index{Santos, J. P. O.}
\begin{gather*}
   \sum_{i\geqq 0} \sum_{j\geqq 0}\sum_{k\geqq 0}
  (-1)^i q^{j^2 + 2j + 3i(i+1)/2 + k}
  \gp{j}{i}{q^3} \gp{2j+k+1}{k}{q} \gp{n-j- 3i- 2k}{j}{q}
   \nonumber  \\ =
  \sum_{j=-\infty}^\infty (-1)^j q^{(27 j^2 + 15j)/2} 
      \gp{n+2}{\lfloor \frac{n + 9j + 5}{2} \rfloor}{q} 
\tag{\ref{t-SLfin}.91}\\
  P_0 = 1 \\
  P_1 = 1\\
  P_2 = q^3 + q^2 + q + 1\\
  P_3 = q^4 + q^3 + q^2 + q + 1\\
  P_4 =q^8 + q^7  + q^6 + 2q^5 + 3q^4 + 2q^3 + 2q^2 + q + 1\\
  P_n =  P_{n-1} + (q + q^2 + q^{n+1}) P_{n-2} 
   -(q^2+q) P_{n-3} -q^3 P_{n-4} \\+ (q^3-q^{n+1}) P_{n-5} 
   \mbox{\ if $n\geqq 5$}
\end{gather*}
\end{id}

\begin{id}[Finite form of \ref{t-ljslist}.92] 
Bosonic representation conjectured by Santos~\cite[p. 84, eqn 6.54]{jpos}.
\index{Santos, J. P. O.}
\begin{gather*}
   \sum_{i\geqq 0} \sum_{j\geqq 0} \sum_{k\geqq 0}
    (-1)^i q^{j^2 + j + 3i(i+1)/2  + k }
  \gp{j}{i}{q^3} \gp{2j+k}{k}{q} 
   \gp{n-j- 3i- 2k}{j}{q} \\ =
  \sum_{j=-\infty}^\infty (-1)^j q^{(27 j^2 + 9j)/2} 
      \gp{n+1}{\lfloor \frac{n + 9j + 3}{2} \rfloor}{q} 
\tag{\ref{t-SLfin}.92}\\
  P_0 = 1 \\
  P_1 = 1\\
  P_2 = q^2 + q + 1\\
  P_3 = q^3 + q^2 + q + 1\\
  P_n =  (1-q)P_{n-1} + (2q + q^n) P_{n-2} 
   +(q-q^2+q^n) P_{n-3} +(q^n-q^2) P_{n-4} 
   \mbox{\ if $n\geqq 4$}
\end{gather*}
\end{id}

\begin{id}[Finite form of \ref{t-ljslist}.93] 
Bosonic representation for even $n$ conjectured by 
Santos~\cite[p. 85, eqn 6.55]{jpos}.
\index{Santos, J. P. O.}
\begin{gather*}
  \sum_{j\geqq 0} \sum_{i\geqq 0} \sum_{k\geqq 0} 
  (-1)^{i} q^{j^2 + 3i(i+1)/2 + k } \gp{j-1}{i}{q^3}
    \gp{2j+k-2}{k}{q}  
    \gp{n-3i-j-2k}{j}{q} \\ = 
  \sum_{j=-\infty}^\infty (-1)^j q^{(27 j^2 + 3j)/2} 
      \gp{n}{\lfloor \frac{n + 9j + 1}{2} \rfloor}{q} \tag{\ref{t-SLfin}.93}\\
  P_0 = 1 \\
  P_1 = 1\\
  P_2 = q + 1\\
  P_3 = q^2 + q + 1\\
  P_4 = q^4 + q^3 + 2q^2 + q + 1\\
  P_n =  P_{n-1} + (q + q^2 + q^{n-1}) P_{n-2} 
   -(q^2+q) P_{n-3} -q^3 P_{n-4} \\+ (q^3-q^{n-1}) P_{n-5} 
   \mbox{\ if $n\geqq 5$}
\end{gather*}
\end{id}

\begin{id}[Finite form of \ref{t-ljslist}.94]
Eqn. \textup{(\ref{t-SLfin}.94-b)}  was not stated 
explicilty but was indicated indirectly 
by Andrews~\cite[p. 5291]{hhm} as the dual of eqn. \textup{(4.1)} for odd $N$.
\index{Andrews, George E.}
  \begin{gather*} 
     \sum_{j\geqq 0}  q^{j(j+1)}  \gp{n+j+1}{2j+1}{q} \\ =
     \sum_{k=-\infty}^\infty q^{15k^2 + 4k} \gp{2n+1}{n+5k+1}{q} - 
       q^{15k^2 + 14k + 3} \gp{2n+1}{n+5k+3}{q} \tag{\ref{t-SLfin}.94-b} \\ =
    \sum_{j=-\infty}^\infty (-1)^j q^{10j^2 + 3j    } \Tone{n+1}{5j+1}{q} +
    \sum_{j=-\infty}^\infty (-1)^j q^{10j^2 + 7j + 1} \Tone{n+1}{5j+2}{q}
     \tag{\ref{t-SLfin}.94-t}\\
  P_0 = 1 \\
  P_1 = q^2 + q + 1\\
  P_n = (1 + q + q^{2n}) P_{n-1} - q P_{n-2} 
   \mbox{\ if $n\geqq 2$}
  \end{gather*}
\end{id}

\begin{id}[Finite form of \ref{t-ljslist}.95] 
Bosonic representation conjectured by Santos~\cite[p. 86, eqn. 6.57]{jpos}.
\index{Santos, J. P. O.}
\begin{gather*}
  \sum_{j\geqq 0}\sum_{k\geqq 0}\sum_{l\geqq 0}
   (-1)^l q^{3j^2 - 2j + k + 2l} \gp{j+k-1}{k}{q^2}
   \gp{j+l-1}{l}{q^2} \gp{n - 2j - k - l}{j}{q^2} \\ = 
  \sum_{k=-\infty}^\infty q^{15 k^2 +  4k    } \U{n-2}{5k  }{q} -
                          q^{15 k^2 + 14k + 3} \U{n-2}{5k+2}{q} 
\tag{\ref{t-SLfin}.95}\\
  P_0 = P_1 = P_2 = 1 \\
  P_n = (1 + q -q^2) P_{n-1} + (q^3 + q^2 -q) P_{n-2}  +(q^{2n-5}-q^3) 
  P_{n-3}
   \mbox{\ if $n\geqq 3$}
\end{gather*}
\end{id}

\begin{id}[Finite form of \ref{t-ljslist}.96]
Eqn. \textup{(\ref{t-SLfin}.96-b)}  was not stated 
explicilty but was indicated indirectly 
by Andrews~\cite[p. 5291]{hhm} as the dual of eqn. \textup{(4.2)} for odd $N$.
\index{Andrews, George E.}
\begin{gather*}
   \sum_{j\geqq 0} q^{j(j+2)} \gp{n+j+1}{2j+1}{q} \\ = 
  \sum_{k=-\infty}^\infty  q^{15k^2 + 7k   }\gp{2n+2}{n+5k+2}{q} -
                           q^{15k^2 +13k +2}\gp{2n+2}{n+5k+3}{q} 
                           \tag{\ref{t-SLfin}.96-b}\\
= \sum_{j=-\infty}^\infty (-1)^j q^{10j^2 + 6j} \U{n+1}{5j+1}{q} 
\tag{\ref{t-SLfin}.96-t}\\
  P_0 = 1 \\
  P_1 = q^3 + q + 1\\
  P_n = (1 + q + q^{2n+1}) P_{n-1} - q P_{n-2} 
   \mbox{\ if $n\geqq 2$}
\end{gather*} 
\end{id}

\begin{obs}
Identity (97) is equivalent to (95).
\end{obs}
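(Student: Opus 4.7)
The plan is to establish this equivalence by an algebraic manipulation on both the series and product sides of Slater's identities, exactly in the spirit of other equivalences already recorded in this section. Concrete precedents include identity (\ref{t-ljslist}.9) being (\ref{t-ljslist}.5) with $q$ replaced by $-q$, and the derivation in the observation preceding (\ref{t-SLfin}.25) that transforms (\ref{t-ljslist}.24) into a product factor times (\ref{t-ljslist}.25) by an application of Heine's transformation (\ref{heine2}).

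First I would write down both (\ref{t-ljslist}.95) and (\ref{t-ljslist}.97) in their analytic series-product form as given in Slater~\cite{ljs} and compare the two product sides directly. In every observation of this type that appears elsewhere in \S 3, the two products differ by at most a trivial factor expressible as a finite combination of infinite $q$-factorials, which in many cases can be further simplified using Jacobi's triple product identity (\ref{jtp}). Once the product-side factor is isolated, the task reduces to proving an identity between the two series after multiplication by the reciprocal of that factor.

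To handle the series side, I would try in order: the substitutions $q \to -q$ and $q \to q^2$, followed by the three Heine transformations (\ref{heine1})--(\ref{heine3}). Should none of these directly succeed, I would fall back on expanding one of the rising $q$-factorials in the summand of (\ref{t-ljslist}.97) by means of (\ref{qbc1}) or (\ref{qbc2}), interchanging the order of summation, and recognizing the inner sum as the series of (\ref{t-ljslist}.95) multiplied by the identified product-side factor. This is essentially the same recipe that was used in the excerpt to relate (\ref{t-ljslist}.24) to (\ref{t-ljslist}.25). As a final safety net, one can pass to the finitization (\ref{t-SLfin}.95) already proved above and, using Abel's lemma (\ref{abel}) together with the asymptotic (\ref{Ulim}), recover (\ref{t-ljslist}.95) as a limit; if the same strategy applied to a suitable two-variable generalization of (\ref{t-ljslist}.97) yields the same limit, the equivalence is established.

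The main obstacle will be identifying the correct transformation: because many Slater identities have nearly indistinguishable exponent structures, matching the quadratic exponent $15k^2+4k$ that appears in the bosonic side of (\ref{t-SLfin}.95) against the corresponding exponent in (\ref{t-ljslist}.97), and tracking the compensating factor on the product side, requires care. Once that matching is correctly identified, the remaining verification is a routine manipulation of $q$-factorials of the kind that appears throughout \S 1.3 and \S 2.
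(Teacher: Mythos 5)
The paper gives no derivation here: the observation rests entirely on the fact, visible in the annotated list (Identities~\ref{t-ljslist}.95 and \ref{t-ljslist}.97), that the two product sides are \emph{literally identical}, namely
$\prod_{n\geqq 1}(1-q^{10n-3})(1-q^{10n-7})(1-q^{10n})(1-q^{20n-4})(1-q^{20n-16})(1+q^{2n-1})/(1-q^{2n})$.
The two Slater identities are therefore two fermionic expansions of one and the same product, so a single finitization (that of (95)) covers both; that is the whole content of ``equivalent'' as the word is used throughout \S 3. The first step of your plan --- writing down both identities and comparing the products --- would discover this immediately, and at that point you should stop: the compensating factor you propose to isolate is $1$.

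Where your proposal goes astray is in then committing to a direct series-to-series transformation and leaving it entirely unexecuted. None of the tools you list is likely to close that gap routinely: neither series is a $_2\phi_1$, so (\ref{heine1})--(\ref{heine3}) do not apply in any obvious way; the substitutions $q\to -q$ and $q\to q^2$ do not relate the summands, whose term ratio is
$q^{4n}/(1-q^{2n+1})$
and does not telescope; and the exponent $15k^2+4k$ from the bosonic side of (\ref{t-SLfin}.95) is a red herring, since the observation concerns only the infinite-product sides of Slater's analytic identities, not the finitized bosonic forms. Your ``safety net'' (pass to the finitization and take $n\to\infty$ via (\ref{abel}) and (\ref{Ulim})) is closest in spirit to what the paper actually relies on, but even there the point is not to recover (97) by a second limit computation --- it is simply that (97) asserts nothing beyond what (95) already asserts about the same product.
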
 

\begin{obs} Identity $(98)$ is the same as $(79)$. \end{obs}

\begin{id}[Finite forms of \ref{t-ljslist}.99]
Eqn. \textup{(\ref{t-SLfin}.99-b)}  was not stated 
explicilty but was indicated 
indirectly 
by Andrews~\cite[p. 5291]{hhm} as the dual of eqn. \textup{(4.2)} for even $N$.
\index{Andrews, George E.}
  \begin{gather*} 
   \sum_{j\geqq 0}  q^{j(j+1)} \gp{n+j}{2j}{q}  \\ =
     \sum_{k=-\infty}^\infty q^{15k^2 + 2k    } \gp{2n+1}{n+5k+1}{q} -
                             q^{15k^2 + 8k + 1} \gp{2n+1}{n+5k+2}{q}
    \tag{\ref{t-SLfin}.99-b}\\
   =\sum_{j=-\infty}^\infty (-1)^j q^{10j^2 + j  }\Tone{n+1}{5j}{q}   
                           -(-1)^j q^{10j^2 +9j+2}\Tone{n+1}{5j+2}{q}
   \tag{\ref{t-SLfin}.99-t}\\
  P_0 = 1 \\
  P_1 = q^2 + 1\\
  P_n = (1 + q + q^{2n}) P_{n-1} - q P_{n-2} 
   \mbox{\ if $n\geqq 2$}
  \end{gather*}
\end{id}

\begin{id}[Finite form of \ref{t-ljslist}.100]
  \begin{gather*} 
  \sum_{j\geqq 0}\sum_{k\geqq 0}\sum_{l\geqq 0}
   (-1)^l q^{3j^2 + k + 2l}
  \gp{j+k-1}{k}{q^2} \gp{j+l-1}{l}{q^2}
  \gp{n - 2j - k - l}{j}{q^2}\\ = 
  \sum_{k=-\infty}^\infty q^{15k^2 +  2k    } \U{n-1}{5k    }{q}
                        - q^{15k^2 +  8k + 1} \U{n-1}{5k + 1}{q}
    \tag{\ref{t-SLfin}.100}\\
  P_0 = P_1 = P_2 =1 \\
  P_n = (1+q-q^2) P_{n-1} + (q^3+q^2-q) P_{n-2} 
    + (q^{2n-3} - q^3) P_{n-3} 
   \mbox{\ if $n\geqq 3$}
  \end{gather*}
\end{id}

\begin{obs} Identity $(101)$ is the sum of $(105$-a$)$ and $(104)$.  \end{obs}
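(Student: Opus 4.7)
The plan is to verify this decomposition at the level of the defining two-variable generating functions, from which the polynomial identity $P_{101,n}(q) = P_{104,n}(q) + P_{105\text{-a},n}(q)$ follows termwise. Let $f_{i}(q,t)$ denote the two-variable generalization produced by the method of \S\ref{qdiffeqn} for identity $i \in \{101, 104, 105\}$, so that $P_{i,n}(q)$ is the coefficient of $t^n$ in $f_i(q,t)$. If one can show the generating-function identity
\[
  f_{101}(q,t) = f_{104}(q,t) + f_{105\text{-a}}(q,t),
\]
then extracting the coefficient of $t^n$ on both sides yields the claimed observation immediately, and in particular matches both fermionic representations and, via convergence as $n\to\infty$, the corresponding series-product identities in Slater's list.

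First I would write down the fermionic (multisum) expressions for each $f_i(q,t)$, inspecting the summand-by-summand structure. Typically in Slater's list, a relation of the form $(101)=(104)+(105\text{-a})$ arises when the summand defining $f_{101}$ factors as $(\text{common piece})\cdot(A+B)$, where $A$ matches the summand of $f_{104}$ and $B$ matches that of $f_{105\text{-a}}$; the splitting often comes from writing $1+q^{cj+d}$ as the sum of its two terms, or from separating two instances of $q$-factorials with slightly shifted parameters. I would therefore perform this algebraic split on the fermionic side and identify each piece with the relevant multisum.

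Second, I would verify the bosonic/recurrence side. The claim $P_{101,n} = P_{104,n} + P_{105\text{-a},n}$ forces the three recurrence relations to be consistent: the recurrence satisfied by $P_{101,n}$ must be a consequence of those satisfied by $P_{104,n}$ and $P_{105\text{-a},n}$, and the initial values must agree. Since each $P_{i,n}$ is uniquely determined by its recurrence together with finitely many initial conditions, verifying the first few cases $n = 0,1,2,\dots$ by direct computation (matched against the list computed from the given recurrences) and confirming that a linear combination of the $(104)$ and $(105\text{-a})$ recurrences yields the $(101)$ recurrence completes the argument. The bosonic representation of $(101)$, if available, should then arise from adding the bosonic forms of $(104)$ and $(105\text{-a})$, possibly after applying one of the Gaussian-polynomial identities (\ref{qpt1})--(\ref{qpt2}) or the $q$-trinomial recurrences of \S\ref{qtr} to put the sum into closed form.

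The main obstacle will be locating the precise algebraic splitting on the fermionic side: multisum summands in this part of Slater's list are intricate, and identifying the factor that cleanly decomposes the $(101)$ summand into the sum of the $(104)$ and $(105\text{-a})$ summands requires careful bookkeeping of the inner Gaussian polynomials and the parity/shift conventions. Once that splitting is isolated, the bosonic and recurrence checks are routine in principle, reducing to finite verifications and to applications of the Pascal-type identities already cited.
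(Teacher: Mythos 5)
The paper offers no argument for this Observation at all --- it is a bookkeeping remark about the original Slater $q$-series (A.101), (A.104), (A.105-a), recorded precisely so that no separate finitization of (101) need be given. Your proposal has the right instinct (the splitting does come from $q^{n(n+1)/2}=q^{n(n-1)/2}\cdot q^n$, i.e.\ from extracting a factor $1-q^n$), but you stop short of the one computation that constitutes the whole proof, and the machinery you erect around it does not quite fit. The actual verification is this: the difference of the $n$-th summands of (A.101) and (A.104) is
\begin{equation*}
\frac{(-q;q)_{n}\,(-q^2;q^2)_{n-1}\,q^{n(n-1)/2}\,(1-q^{n})}{(q;q)_{2n}},
\end{equation*}
and writing $(q;q)_{2n}=(q;q^2)_{n}(q^2;q^2)_{n}$ together with $(-q;q)_{n}=(q^2;q^2)_{n}/(q;q)_{n}$ gives $(-q;q)_{n}/(q;q)_{2n}=1/\bigl((q;q)_{n}(q;q^2)_{n}\bigr)$, after which the factor $1-q^{n}$ cancels the last factor of $(q;q)_{n}$; replacing $n$ by $n+1$ this is exactly the $n$-th summand $(-q^2;q^2)_{n}q^{n(n+1)/2}/\bigl((q;q^2)_{n+1}(q;q)_{n}\bigr)$ of (A.105-a). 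The product sides add by inspection once one rewrites the right side of (A.105-a) as $(q^2,q^6,q^8;q^8)_\infty/\bigl((q;q)_\infty(q;q^2)_\infty\bigr)$ using $(-q;q)_\infty=1/(q;q^2)_\infty$.

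The concrete gaps in your plan are two. First, the decomposition is not termwise in the same index: the $(n+1)$-st difference term equals the $n$-th summand of (105-a). Because the power of $t$ attached to a summand in the construction of $f(q,t)$ depends on that index, the generating-function identity $f_{101}(q,t)=f_{104}(q,t)+f_{105\text{-a}}(q,t)$ you want to read coefficients from will not hold with the standard $t$-insertions, and indeed the paper never defines a polynomial sequence $P_{101,n}$ at all --- so there is nothing for "the coefficient of $t^n$" to match. Second, your fallback of combining recurrences is also not what you describe: if two sequences satisfy different recurrences, their sum satisfies the least common left multiple of the corresponding operators, not "a linear combination of the recurrences," and in any case no recurrence for (101) is available. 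The statement lives entirely at the level of the infinite series and products, where the two-line summand computation above settles it.
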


\begin{obs}[Finite form of \ref{t-ljslist}.102]
Identity $(102)$ is $(105$-a$) + q\times (103)$.
\end{obs}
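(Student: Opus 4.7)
The plan is to verify the additive decomposition at the level of the original Slater series/product identities first, and then lift it to the finite polynomial setting. At the infinite level, I would take the series on the left-hand side of Slater's identity (\ref{t-ljslist}.102), split the summation (either by parity of the index, or by extracting a leading factor of $q$ from an appropriate subsum), and identify the two resulting pieces as the series sides of (\ref{t-ljslist}.105) and $q$ times (\ref{t-ljslist}.103). Dually, the infinite product representation of (\ref{t-ljslist}.102) can be manipulated via Jacobi's triple product identity (\ref{jtp}) (possibly with the quintuple product (\ref{qpi}) or Bailey's identity (\ref{btp})) to split it as a sum of two theta-like series corresponding to the product sides of (\ref{t-ljslist}.105-a) and (\ref{t-ljslist}.103). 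Consistency of the two viewpoints then gives the infinite identity.

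For the finite version, the cleanest approach is recurrence-based. The method of \S\ref{qdiffeqn} produces polynomial sequences $\{P_{102,n}(q)\}$, $\{P_{105a,n}(q)\}$, $\{P_{103,n}(q)\}$, each satisfying an explicit finite-order recurrence with tabulated initial values. Set $Q_n(q) := P_{105a,n}(q) + q\,P_{103,n}(q)$. It suffices to check two things: (i) $Q_n(q) = P_{102,n}(q)$ for $n$ up to the maximum of the recurrence orders involved, and (ii) $Q_n(q)$ satisfies the same recurrence as $P_{102,n}(q)$. Step (ii) is a direct (if tedious) linear-algebra verification: combine the recurrences for $P_{105a,n}$ and $P_{103,n}$, and confirm that the resulting higher-order recurrence for $Q_n$ reduces to, or contains as a divisor, the recurrence for $P_{102,n}$.

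The main obstacle will be aligning the recurrence orders in step (ii): the two summand sequences may satisfy recurrences of different orders, so a priori $Q_n$ will satisfy a recurrence of order at most the sum of those orders, and one must argue (perhaps by applying the original two-variable $q$-difference equation for the generating function of $P_{102,n}$, and decomposing it into analogous $q$-difference equations for the other two sequences) that the higher-order coincidences are automatic. An alternative route that avoids this bookkeeping is to work entirely on the bosonic side: if each $P_{\cdot,n}$ has a known representation as a bilateral sum of Gaussian polynomials or $q$-trinomials, the identity $P_{102,n} = P_{105a,n} + q\,P_{103,n}$ becomes a finite identity among such bilateral sums, provable directly by term-matching after a suitable reindexing.
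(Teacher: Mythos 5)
The paper offers no argument for this statement at all: like the other Observations in \S\ref{t-SLfin}, it is an unproved remark about the original Slater identities, and the intended verification is a one-line term-by-term computation that your plan never actually reaches. Concretely, since $(q;q)_{2n+2}=(q;q^2)_{n+1}(q^2;q^2)_{n+1}$ and $(q^2;q^2)_{n+1}=(q;q)_{n+1}(-q;q)_{n+1}$, the summands of (A.102) and (A.103) simplify to
$(-q^2;q^2)_n\,q^{n(n+1)/2}/\bigl((q;q^2)_{n+1}(q;q)_{n+1}\bigr)$ and
$(-q^2;q^2)_n\,q^{n(n+3)/2}/\bigl((q;q^2)_{n+1}(q;q)_{n+1}\bigr)$,
while the summand of (A.105-a) carries $(q;q)_n=(q;q)_{n+1}/(1-q^{n+1})$ in its denominator. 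The whole observation then reduces to the exponent identity
\[
q^{n(n+1)/2} \;=\; q^{n(n+1)/2}\bigl(1-q^{n+1}\bigr)\;+\;q\cdot q^{n(n+3)/2},
\]
which gives summand$(102)=$ summand$(105$-a$)+q\cdot$ summand$(103)$ for every fixed $n$, with no reindexing whatsoever. Your first proposed mechanism, splitting the sum by parity of the summation index, would not produce this decomposition (both constituent sums run over all $n$) and would fail here; your second alternative, ``extracting a leading factor of $q$ from an appropriate subsum,'' gestures in the right direction but never identifies the two steps that make it work, namely the rewriting of $(-q;q)_{n+1}/(q;q)_{2n+2}$ above and the split $1=(1-q^{n+1})+q^{n+1}$. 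On the product side your instinct is sound: the last two theta terms in the numerator of (A.102) are exactly $q$ times the numerator of (A.103), and the first two combine into the infinite product of (A.105-a) via Bailey's identity (\ref{btp}) with $q$ replaced by $q^8$ and $z=-q^2$.

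The second half of your plan --- manufacturing polynomial sequences $P_{102,n}$, $P_{105a,n}$, $P_{103,n}$ and reconciling their recurrences --- is not wrong, but it solves a problem that does not arise. The content of the observation is precisely that no separate finitization of (102) is introduced, because (102) decomposes summand-by-summand into (105-a) plus $q$ times (103) already at the level of Slater's series; the finite forms (\ref{t-SLfin}.105-a) and (\ref{t-SLfin}.103) then jointly serve as the finite form of (102). There is no independent recurrence for a $P_{102,n}$ against which anything needs to be checked, so the order-matching and operator-division machinery you describe is unnecessary for this statement.
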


\begin{id}[Finite form of \ref{t-ljslist}.103]
  \begin{gather*} 
  \sum_{i\geqq 0} \sum_{I\geqq0} \sum_{j\geqq 0} 
     \sum_{k\geqq 0}\sum_{K\geqq 0}  
     (-1)^I
      q^{j(j+3)/2 + i^2 + i  + I(I+1)/2 + k +K}  \\
      \times \gp{j}{i}{q^2} \gp{j}{I}{q} \gp{j+k}{k}{q^2} \gp{j+K}{K}{q} 
      \gp{n-2i-I-2k-K}{j}{q} \\ =
    \sum_{k=-\infty}^\infty q^{16k^2 +  8k    } \Tone{n+2}{8k+2}{\sqrt{q}} 
                          - q^{16k^2 + 16k + 3} \Tone{n+2}{8k+4}{\sqrt{q}} 
                          \tag{\ref{t-SLfin}.103}\\
  P_0 = 1 \\
  P_1 = q^2 + q + 1\\
  P_2 = q^5 + q^4 + q^3 + 2q^2 + q + 1\\
  P_3 = q^9 + q^8 + q^7 + q^6 + 3q^5 + 3q^4 + 3q^3 + 3q^2 + 2q + 1\\
  P_n =  (1+q+q^{n+1}) P_{n-1}  -q P_{n-2} 
   +(q^{n+1}-q^2-q) P_{n-3} + (q^2- q^{n+1}) P_{n-4}  
   \mbox{\ if $n\geqq 4$}
  \end{gather*}
\end{id}
 
\begin{id}[Finite form of \ref{t-ljslist}.104]
   \begin{gather*}
     1+\sum_{j\geqq 1} \sum_{i\geqq 0} \sum_{k\geqq 0}
       q^{j(j+1)/2 + i^2 + i + k} \gp{j-1}{i}{q^2} \gp{j+k-1}{k}{q^2}
       \gp{n-2i-2k}{k}{q}\\ =
     \sum_{k=-\infty}^\infty q^{16k^2}
      \V{n+1}{8k+1}{\sqrt{q}} 
        - q^{16k^2 + 8k +1}
     \V{n+1}{8k+3}{\sqrt{q}}
     \tag{\ref{t-SLfin}.104}\\
     P_0 = 1\\
     P_1 = q + 1\\
     P_2 = q^3 + q^2 + q + 1\\
     P_3 = q^6 + q^5 + q^4 + 2q^3 + 2q^2 + q + 1\\
     P_n = (1+q+q^n) P_{n-1} - q^n P_{n-2} + (q^n-q^2-q) P_{n-3}  + 
     (q^2-q^n) P_{n-4}\mbox{\ if $n\geqq 4$ }
    \end{gather*}
\end{id}

\begin{obs} Identity $(105)$ is the same as $(37)$. \end{obs}

\noindent\begin{id*}[Finite form of Identity 
\ref{t-ljslist}.105-a]{\ref{t-SLfin}.105-a}
 \begin{gather*}
    \sum_{i\geqq 0} \sum_{j\geqq 0} \sum_{k\geqq 0}  
    q^{ j(j+1)/2 + i^2 + i +k}
     \gp{j}{i}{q^2} \gp{j+k}{k}{q^2}  \gp{n-2i-2k}{j}{q} 
   \tag{\ref{t-SLfin}.105-a}\\
 = \sum_{j=-\infty}^\infty (-1)^j q^{4j^2 + 2j} \Tone{n+1}{4j+1}{\sqrt{q}}\\
  P_0 = 1 \\
  P_1 = q + 1\\
  P_2 = q^3 + q^2 + 2q + 1\\
  P_n =  (1+q^n) P_{n-1} + q P_{n-2} + (q^n - q) P_{n-3}
   \mbox{\ if $n\geqq 3$}
  \end{gather*}
\end{id*}

\begin{obs} Identity $(106)$ is the same as $(35)$. \end{obs}

\begin{id}[Finite form of \ref{t-ljslist}.107]
  \begin{gather*}
     \sum_{i\geqq 0} \sum_{j\geqq 0}\sum_{k\geqq 0}\sum_{l\geqq 0}
     (-1)^i q^{j^2 + j + 3i^2 + 2k + l}
       \gp{j}{i}{q^6} \gp{j+k}{k}{q^4} \gp{j+l-1}{l}{q^2}
       \gp{n-3i-2k-l}{j}{q^2} \\ =
     \sum_{j=-\infty}^\infty 
  (-1)^j q^{18j^2 + 3j}    
      \V{n+1}{6j+1}{q}  
 + (-1)^j q^{18j^2 +15j + 3} \V{n+1}{6j+3}{q} 
    \tag{\ref{t-SLfin}.107}\\
  P_0 = 1 \\
  P_1 = q^2 + 1\\
  P_2 = q^6 + q^4 + q^3 + 2q^2 + 1\\
  P_n =  (1+ q^{2n}) P_{n-1} + (q^2 + q^{2n-1}) P_{n-2} 
   + (q^{2n-2} - q^2) P_{n-3}
   \mbox{\ if $n\geqq 3$}
   \end{gather*}
\end{id}

\begin{obs} Identity $(108)$ is equivalent to $(115) - q^2\times(116)$. 
\end{obs}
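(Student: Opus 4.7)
The plan is to verify the stated equivalence at the level of the underlying Slater series-product identities, not at the level of the finitizations (since \ref{t-SLfin}.108 is not displayed in the excerpt, the observation is naturally read as an equivalence among the infinite identities $(\ref{t-ljslist}.108)$, $(\ref{t-ljslist}.115)$, $(\ref{t-ljslist}.116)$, and the finitization in $(\ref{t-SLfin}.108)$ inherits this through the two--variable generalization of \S\ref{finit}). I would begin by writing down the fermionic (series) sides of $(\ref{t-ljslist}.115)$ and $(\ref{t-ljslist}.116)$ and forming the combination
\[
S := \big[\text{LHS of }(\ref{t-ljslist}.115)\big] - q^{2}\big[\text{LHS of }(\ref{t-ljslist}.116)\big],
\]
then manipulating the rising $q$--factorials so that the two inner summands combine into a single series whose general term matches the series side of $(\ref{t-ljslist}.108)$. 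The standard move here is to pull out a common base factor, use the elementary identity $1 - q^{2} x = (1-x) - q^{2} x(\text{shift})$ or, more efficiently, to compare coefficients of like powers of $q$ after expansion in terms of $(q;q)_n$-denominated monomials.

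Next I would carry out the analogous verification on the product side. Writing both $(\ref{t-ljslist}.115)$ and $(\ref{t-ljslist}.116)$ as infinite products (each an instance of Jacobi's triple product identity, possibly divided by $(q;q)_\infty$ or $(q^2;q^2)_\infty$), the combination $\Pi_{115} - q^{2}\Pi_{116}$ becomes a sum of two triple products sharing the same outer modulus. I would then invoke either Jacobi's triple product identity (\ref{jtp}) in reverse to rewrite each piece as a bilateral theta series, combine the two bilateral series by re-indexing (so that the $q^{2}$ factor corresponds to a shift in the summation variable), and then, if necessary, apply the quintuple product identity (\ref{qpi}) or Bailey's identity (\ref{btp}) to collapse the result back into the single product that appears on the right-hand side of $(\ref{t-ljslist}.108)$.

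Finally, because both $(\ref{t-ljslist}.115)$ and $(\ref{t-ljslist}.116)$ are known Slater identities, the equality of their series and product sides is granted, so equivalence of $(\ref{t-ljslist}.108)$ with $(\ref{t-ljslist}.115) - q^{2}(\ref{t-ljslist}.116)$ reduces to a purely formal manipulation: one verifies that if two of the three identities hold, so does the third. For the finitized statement $(\ref{t-SLfin}.108)$ this lifts to the corresponding polynomial identity by observing that the two--variable generating functions $f_{108}(q,t)$, $f_{115}(q,t)$, $f_{116}(q,t)$ defined as in Conditions~\ref{cond} satisfy the same linear relation $f_{108} = f_{115} - q^{2} f_{116}$, which is inherited term by term by the polynomial sequences $P_n(q)$ defined via $f(q,t) = \sum_n P_n(q)\, t^n$.

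The main obstacle is the product-side collapse: recognizing which of Jacobi, quintuple, or Bailey's identity is needed to fuse the two theta series arising from $\Pi_{115} - q^{2}\Pi_{116}$ into the single product of $(\ref{t-ljslist}.108)$. In practice this amounts to matching the moduli $(q^{m};q^{m})_\infty$ appearing in the three products and choosing the shift parameter $z$ in Jacobi's triple product so that the bilateral combination telescopes correctly; once the correct theta identity is identified, the rest of the verification is mechanical.
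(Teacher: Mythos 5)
Your proposal is correct and is the argument the paper intends (the observation is stated without proof): on the series side the summands of (A.115) and $q^2\times$(A.116) differ only by the factor $q^{n(n+2)}$ versus $q^{2}\cdot q^{n(n+4)}=q^{n(n+2)}q^{2n+2}$, so the combination collapses termwise to the summand of (A.108) with its factor $(1-q^{2n+2})$ --- no coefficient comparison is needed. On the product side the fusion you were unsure about is exactly the quintuple product identity (\ref{qpi}) with $q$ replaced by $q^{12}$ and $z=q^{5}$, which gives $(q^9,q^{27},q^{36};q^{36})_\infty - q^2(q^3,q^{33},q^{36};q^{36})_\infty = (-q^5,-q^7,q^{12};q^{12})_\infty(q^2,q^{22};q^{24})_\infty$, matching the product in (A.108) after the common factor $(-q;q^2)_\infty/(q^2;q^2)_\infty$ is restored.
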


\begin{obs} Identity $(109)$ is equivalent to $(109$-a$) + q\times (110)$.
\end{obs}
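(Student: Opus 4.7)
The plan is to verify the stated equivalence at the level of Slater's original series-product identities (\ref{t-ljslist}.109), (\ref{t-ljslist}.109-a), and (\ref{t-ljslist}.110), by establishing the decomposition separately on the series (fermionic) side and on the product side. On the series side, I would write out the Rogers-Ramanujan-type sums defining the three identities and show by an elementary index manipulation that the summand of (\ref{t-ljslist}.109) splits naturally into two pieces: one matching the summand of (\ref{t-ljslist}.109-a) and one matching $q$ times the summand of (\ref{t-ljslist}.110). In the patterns used elsewhere in this section (cf.\ the analogous observations (47), (48), (66)--(67), (73)--(74), (88)--(89), (101), (102), (108)), this splitting is typically driven either by separating the $k=0$ term from the $k\geqq 1$ terms with a shift of summation, or by a parity split on one of the summation indices.

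Next, I would handle the infinite-product side. Here the claim is that the product side of (\ref{t-ljslist}.109), viewed as a theta-like expression in terms of $(a;q)_\infty$ factors, equals the product side of (\ref{t-ljslist}.109-a) plus $q$ times the product side of (\ref{t-ljslist}.110). The natural tool is one of the sum identities recorded in Section~1.3: most likely the quintuple product identity (\ref{qpi}) or Bailey's identity (\ref{btp}), both of which express a product on the right as a sum of two products of theta type. I would specialize the parameter $z$ in one of these identities so that the right-hand side becomes the desired combination of the (\ref{t-ljslist}.109-a) and (\ref{t-ljslist}.110) products, and verify that the left-hand side rearranges to the (\ref{t-ljslist}.109) product after pulling out a common $(q;q)_\infty$-type factor.

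The main obstacle will be choosing the correct specialization in the quintuple or Bailey product identity so that the three products in (\ref{t-ljslist}.109), (\ref{t-ljslist}.109-a), (\ref{t-ljslist}.110), together with the prefactor $q$, match up exactly; this is essentially the same bookkeeping that underlies the earlier observations in this section. Once the series-side and product-side equalities are both established, the observation follows: adding (\ref{t-ljslist}.109-a) and $q$ times (\ref{t-ljslist}.110) produces a true identity whose two sides coincide, term by term, with the two sides of (\ref{t-ljslist}.109), so (\ref{t-ljslist}.109) is a formal consequence of the other two, and conversely any finitization producing two of the three identities yields the third.
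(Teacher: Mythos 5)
Your overall strategy---decompose the series sides termwise and then match the product sides---is sound, but both of the concrete mechanisms you name are the wrong ones, and it is worth knowing that the paper offers no proof at all here: the statement is recorded as an unproved observation, and in the appendix Identity A.109-a is introduced precisely as ``the difference of $(109)$ and $q\times(110)$,'' so the whole content is an elementary summand computation. On the series side the decomposition is neither a separation of an initial term with an index shift nor a parity split; it is the termwise factorization $q^{n^2}=q^{n^2}(1-q^{2n+1})+q\cdot q^{n(n+2)}$ applied to the summand of (\ref{t-ljslist}.109). The second piece is literally $q$ times the summand of (\ref{t-ljslist}.110). For the first piece, use $(1-q^{2n+1})(-q;q^2)_{n+1}=(1-q^{4n+2})(-q;q^2)_n$ together with $(q^2;q^2)_{2n+1}=(1-q^{4n+2})(q^2;q^2)_{2n}$ and $(q^2;q^2)_{2n}=(q;q^2)_n(-q;q^2)_n(q^4;q^4)_n$, which collapses
\[
\frac{(q^3;q^6)_n\,(-q;q^2)_{n+1}\,q^{n^2}(1-q^{2n+1})}{(q^2;q^2)_{2n+1}\,(q;q^2)_n}
=\frac{(q^3;q^6)_n\,q^{n^2}}{(q^4;q^4)_n\,(q;q^2)_n^2},
\]
exactly the summand of (\ref{t-ljslist}.109-a). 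Without identifying this factorization your series-side step does not go through, since neither of the devices you list produces it.

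On the product side you do not need the quintuple product identity or Bailey's identity, and specializing $z$ in either would not yield this particular combination: Slater's right-hand side of (\ref{t-ljslist}.109) is \emph{already written} as a two-term sum over the common denominator $(q^4;q^4)_\infty(q;q^2)_\infty$. The term $q(q^{12};q^{12})_\infty/[(q^4;q^4)_\infty(q;q^2)_\infty]$ equals $q$ times the product side of (\ref{t-ljslist}.110) by the elementary rearrangement $(-q;q^2)_\infty/(q^2;q^2)_\infty=1/[(q;q^2)_\infty(q^4;q^4)_\infty]$, and the remaining term is, by the same rearrangement, a rewriting of the product side of (\ref{t-ljslist}.109-a). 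So the equivalence is a finite, purely algebraic manipulation of rising $q$-factorials on both sides; no product-to-sum theta identity enters, and invoking one would only obscure why the three identities are linearly dependent.
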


\noindent\begin{id*}[Finite form of Identity 
\ref{t-ljslist}.109-a]{\ref{t-SLfin}.109-a}
 \begin{gather*}
   \sum_{i\geqq 0} \sum_{j\geqq 0}\sum_{k\geqq 0}\sum_{K\geqq 0}\sum_{l\geqq 0} 
    (-1)^{i+L}
     q^{j^2 + 3i^2 + k + K + 2L}
     \gp{j}{i}{q^6} \gp{j+k-1}{k}{q^2} 
     \gp{j+K-1}{K}{q^2} \\ \times
     \gp{j+L-1}{L}{q^2}  \gp{n-3i-k-K-L}{j}{q^2}  \\ = 
   \sum_{j=-\infty}^\infty 
 (-1)^j q^{18 j^2         } \Big[\Tzero{n}{6j  }{q} + \Tzero{n-1}{6j  }{q}\Big]\\
+(-1)^j q^{18j^2 + 12j + 2} \Big[\Tzero{n}{6j+2}{q} + \Tzero{n-1}{6j+2}{q}\Big] 
     \tag{\ref{t-SLfin}.109-a} \\
  P_0 = 1\\
  P_1 = q + 1\\
  P_2 = q^4 + 2q^2 + q + 1\\
  P_n = (1+q-q^2+q^{2n-1})P_{n-1} + (q^3+q^2-q+q^{2n-2})P_{n-2}  + 
    (q^{2n-3}-q^3) P_{n-3} 
    \mbox{\quad if $n\geqq 3$}
 \end{gather*}
\end{id*}

\begin{id}[Finite form of \ref{t-ljslist}.110]
 \begin{gather*}
   \sum_{i\geqq 0} \sum_{j\geqq 0}\sum_{k\geqq 0}\sum_{K\geqq 0} 
   \sum_{L\geqq 0} (-1)^{i+L} 
      q^{j^2 + 2j + 3i^2 +k + K + 2L}
      \gp{j}{i}{q^6} \gp{j+k}{k}{q^2} \gp{j+K-1}{K}{q^2}
     \nonumber \\ \times
     \gp{j+L-1}{L}{q^2} \gp{n-3i-k-K-L}{j}{q^2} \nonumber \\ =
\sum_{j=-\infty}^\infty (-1)^j q^{18j^2 + 6j} 
  \Big[\Tzero{n+1}{6j+1}{q} + \Tzero{n}{6j+1}{q}\Big] \tag{\ref{t-SLfin}.110} \\
  P_0 = 1\\
  P_1 = q^3 + q + 1\\
  P_2 = q^8 + q^6 + 2q^4 + q^3 + q^2 + q + 1\\
  P_n = (1+q-q^2+q^{2n+1})P_{n-1} + (q^3 + q^2 -q + q^{2n} )P_{n-2}  + 
    (q^{2n-1} - q^3 ) P_{n-3} 
    \mbox{\quad if $n\geqq 3$}
 \end{gather*}
\end{id}

\begin{obs} Identity $(111)$ is equivalent to $(114) -q\times(115)$. \end{obs}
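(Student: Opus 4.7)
The plan is to establish the observation as an algebraic identity between the three Slater series–product identities (\ref{t-ljslist}.111), (\ref{t-ljslist}.114), and (\ref{t-ljslist}.115), by verifying the linear relation separately on the series side and on the product side, then invoking the truth of Slater's identities (as proved by Bailey pair methods in \cite{ljs}) to transfer the relation to the finitizations.

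First I would look up the three relevant identities in Slater's list (reproduced in the appendix). Writing $S_i(q)$ for the series side of Slater identity $i$ and $\Pi_i(q)$ for its product side, the claim splits into two parts: (a) $S_{111}(q) = S_{114}(q) - q\,S_{115}(q)$, and (b) $\Pi_{111}(q) = \Pi_{114}(q) - q\,\Pi_{115}(q)$. For (a), the strategy is to write each series as a single multisum and compare the summand of $S_{114}-qS_{115}$ term-by-term to that of $S_{111}$; experience with the adjacent observations (e.g.\ \mbox{(108)} $=$ \mbox{(115)} $-$ $q^2\cdot$\mbox{(116)}) suggests the series differ only by a shift in the exponent of $q$ and a rescaling of a rising $q$-factorial, so the verification reduces to an elementary cancellation after pulling out a common factor.

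For (b), I would convert each of the three products $\Pi_{111}$, $\Pi_{114}$, $\Pi_{115}$ into a bilateral theta series by applying Jacobi's triple product identity (\ref{jtp}), possibly combined with the quintuple product identity (\ref{qpi}) if the modulus demands it. Once each product is expressed as $(q;q)_\infty^{-1}$ (or a similar infinite factor) times a bilateral sum of the form $\sum_j (-1)^{?}q^{aj^2+bj}$, the linear relation $\Pi_{114}(q) - q\Pi_{115}(q) = \Pi_{111}(q)$ becomes a purely combinatorial identity among theta series, which can be checked by splitting the index of summation into residue classes modulo the relevant modulus and matching exponents.

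The hard part will be the product-side verification: identifying the correct residue classes and shifts of the summation index so that the theta-series combination telescopes correctly. Once (a) and (b) are both in hand, the observation follows — either directly, since the three Slater identities assert $S_i = \Pi_i$ for $i\in\{111,114,115\}$, or at the level of finitizations by checking that the polynomial sequences $P_n^{(111)}$, $P_n^{(114)}$, and $P_n^{(115)}$ satisfy $P_n^{(111)} = P_n^{(114)} - q\,P_n^{(115)}$ (verified from their defining recurrences and initial values, in analogy with the note following Identity~\ref{t-SLfin}.13 relating $P_{13,n}$ to $P_{12,n}$ and $P_{8,n}$).
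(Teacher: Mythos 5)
Your overall strategy --- verify the linear relation separately on the series sides and on the product sides of (\ref{t-ljslist}.111), (\ref{t-ljslist}.114), and (\ref{t-ljslist}.115) --- is the right one, and it is the only ``proof'' this observation admits; the paper states it without argument. Your part (a) is exactly the substantive step, and your sketch of it is correct: shifting $n\to n-1$ in $q\times S_{115}$ turns its summand into $(q^6;q^6)_{n-1}(-q;q^2)_n q^{n^2}/\bigl[(q^2;q^2)_{2n}(q^2;q^2)_{n-1}\bigr]$, and subtracting this from the $n$-th summand of $S_{114}$ leaves the factor $\frac{1}{(q^2;q^2)_{2n-1}(q^2;q^2)_n}-\frac{1}{(q^2;q^2)_{2n}(q^2;q^2)_{n-1}}=\frac{q^{2n}}{(q^2;q^2)_{2n}(q^2;q^2)_{n-1}}$, which converts $q^{n^2}$ into $q^{n(n+2)}$ and reproduces the summand of $S_{111}$.

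Your part (b), however, is misdirected. There is nothing on the product side that requires Jacobi's triple product, the quintuple product, or any sorting of a theta series into residue classes: the right-hand side of (\ref{t-ljslist}.111) is \emph{printed} as $\bigl[(q^{15},q^{21},q^{36};q^{36})_\infty-q(q^9,q^{27},q^{36};q^{36})_\infty\bigr]/\bigl[(q^4;q^4)_\infty(q;q^2)_\infty\bigr]$, and the products in (\ref{t-ljslist}.114) and (\ref{t-ljslist}.115) are $(q^{15},q^{21},q^{36};q^{36})_\infty$ and $(q^9,q^{27},q^{36};q^{36})_\infty$ respectively, each times $\prod_{n\geqq 1}(1+q^{2n-1})/(1-q^{2n})=1/\bigl[(q;q^2)_\infty(q^4;q^4)_\infty\bigr]$. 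So $\Pi_{114}-q\,\Pi_{115}$ equals the stated right-hand side of (\ref{t-ljslist}.111) verbatim after this one elementary rewriting; in particular the combination does \emph{not} collapse to a single infinite product, so the ``telescoping into residue classes'' you anticipate as the hard part has no target and would not terminate in the form you describe. The hard part is (a), not (b). With (b) replaced by this one-line identification of products, your argument is complete.
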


\begin{obs} Identity $(112)$ is equivalent to $(115) +q^3\times(116)$. \end{obs}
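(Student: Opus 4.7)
The plan is to verify the claim by working directly with the series-product forms of identities (3.112), (3.115), and (3.116) from Slater's list rather than with their finitizations. The statement ``(112) is equivalent to (115) $+\ q^3 \times$ (116)'' means that adding $q^3$ times identity (3.116) to identity (3.115) yields identity (3.112). Since each Slater identity is an equality of the form $\phi_i(q) = \Pi_i(q)$, I need to establish that both the series sides and the product sides combine linearly: $\phi_{112}(q) = \phi_{115}(q) + q^3 \phi_{116}(q)$ and $\Pi_{112}(q) = \Pi_{115}(q) + q^3 \Pi_{116}(q)$; if both hold then the three identities are linearly dependent, and conversely any two imply the third.

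First I would verify the series-side equality. Because Slater's series typically have numerator exponents of the form $q^{aj^2+bj}$, the factor $q^3$ multiplying the second series should correspond to a parameter or index shift in the summand. I would write $\phi_{115}(q)$ and $q^3 \phi_{116}(q)$ over a common rising $q$-factorial denominator, reindex the second sum if necessary so that its $j$-th term aligns with the $j$-th term of the first, and then combine. If the two series differ only in a numerator factor of the form $(1\pm q^{cj+d})$, then the sum collapses directly into $\phi_{112}(q)$.

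Next I would verify the product-side equality. Once the series identity is proven and since both (3.115) and (3.116) are established entries in Slater's list, the product-side relation follows automatically. For an independent check, I would invoke one of the three-term theta identities available in Section~1 --- Jacobi's triple product (\ref{jtp}), the quintuple product identity (\ref{qpi}), or Bailey's identity (\ref{btp}) --- with a choice of $z$ so that one instance yields $\Pi_{115}$ and a second, scaled by $q^3$, yields $\Pi_{116}$, with their sum collapsing to $\Pi_{112}$.

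The main obstacle will be the product-side verification: identifying which of the theta-function identities applies and selecting the correct substitution for $z$ (and possibly for the base $q \mapsto q^k$). The $q^3$ weight suggests the relevant modulus is fairly small, which narrows the search but still requires some experimentation; if none of the three stated identities suffices directly, I would fall back on Weierstra\ss's general theorem on $\sigma$-functions cited in the excerpt, from which both (\ref{qpi}) and (\ref{btp}) are derived, to produce the needed three-term product relation. The series-side manipulation, by contrast, should amount to routine index bookkeeping once the correct reindexing is chosen.
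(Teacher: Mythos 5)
Your approach is correct and is essentially the verification the paper intends: the summands of (A.115) and $q^3\times$(A.116) agree except for the power of $q$, and since $q^3\,q^{n(n+4)} = q^{n(n+2)}q^{2n+3}$, the termwise sum acquires the factor $1+q^{2n+3}$, which converts $(-q;q^2)_{n+1}$ into $(-q;q^2)_{n+2}$ and gives exactly the summand of (A.112). One caution about your product-side plan: no quintuple-product or Bailey-type collapse is needed (or available), because the right-hand side of (A.112) is already displayed as $\bigl[(q^{9},q^{27},q^{36};q^{36})_\infty + q^3(q^{3},q^{33},q^{36};q^{36})_\infty\bigr]/\bigl((q^4;q^4)_\infty(q;q^2)_\infty\bigr)$ (modulo an evident misprint in the second triple product as printed), which is literally $\Pi_{115}+q^3\Pi_{116}$ once one notes $(-q;q^2)_\infty/(q^2;q^2)_\infty = 1/\bigl((q;q^2)_\infty(q^4;q^4)_\infty\bigr)$; the two-term numerator does not reduce to a single infinite product, so the substitution hunt you describe would come up empty. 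With that adjustment your argument is complete.
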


\begin{obs} Identity $(113)$ is equivalent to $(114) -q^3\times(115)$. \end{obs}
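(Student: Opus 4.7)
The plan is to verify the asserted linear-combination relation on both sides of the three Rogers--Ramanujan type identities involved. Since an identity in Slater's list asserts the equality of a particular series (the fermionic side) with a particular infinite product or $\vartheta$-combination (the bosonic side), showing that the series of (\ref{t-ljslist}.113) equals the series of (\ref{t-ljslist}.114) minus $q^3$ times the series of (\ref{t-ljslist}.115), and separately that the products satisfy the same relation, proves the three identities form a linearly dependent triple. This is precisely the kind of routine reduction that the author uses to trim the list of identities for which separate finitizations must be exhibited.

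First, I would write out the fermionic series of (\ref{t-ljslist}.113), (\ref{t-ljslist}.114), and (\ref{t-ljslist}.115) explicitly from Slater's list. Each has the form $\sum_{j\geqq 0} q^{Q(j)} / D(j;q)$ with a shifted quadratic $Q$ and a product $D$ of rising $q$-factorials. To verify that $\mathrm{series}(113) = \mathrm{series}(114) - q^3\cdot\mathrm{series}(115)$, I would multiply the summands of (114) and (115) by the appropriate unit factor $1-q^{e+\kappa j}$ needed to place all three series over a common denominator, then shift the index of summation in the $q^3\cdot\mathrm{series}(115)$ term (typically by $j \mapsto j-1$ or $j\mapsto j$) so that the combined sum collapses to the series of (113). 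The key identity here is $(a;q)_j - (aq^j)(a;q)_j = (a;q)_{j+1} \cdot (\text{shift})$ and its variants.

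Second, on the bosonic side, I would rewrite each infinite product using Jacobi's triple product identity (\ref{jtp}) as a bilateral sum, giving three expressions of the form $\frac{1}{(q;q)_\infty}\sum_{j\in\mathbb Z} (-1)^{a_i j}q^{\alpha_i j^2 + \beta_i j}$ for $i=113,114,115$. The relation among products then reduces to an identity among theta series. Depending on how the exponents $(\alpha_i,\beta_i)$ align, this will follow either by a direct shift of summation index (combining two $\vartheta$-series of the same modulus) or, more delicately, from an application of the quintuple product identity (\ref{qpi}) or of Bailey's identity (\ref{btp}).

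The main obstacle will be the bosonic side: matching the quadratic exponents in the two $\vartheta$-sums after multiplying through by $q^3$ requires careful bookkeeping of signs and of the shift constants, and a misaligned index shift can easily produce a superficially different looking combination. The series-side verification is essentially formal manipulation of $q$-factorials and should go through without incident once the correct common denominator is chosen.
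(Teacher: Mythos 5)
Your two-pronged plan --- verify the linear relation separately on the series side and on the product side --- is the right and essentially only way to establish an observation of this kind, and the paper itself offers no proof at all here (these observations are stated as bare bookkeeping remarks). That said, two points would derail the execution as you have described it. First, the product-side portion of your plan is far heavier than needed: in Slater's list the right-hand side of (\ref{t-ljslist}.113) is \emph{already displayed} as a difference of two theta products over the common denominator $(q^4;q^4)_\infty (q;q^2)_\infty$, and the right-hand sides of (\ref{t-ljslist}.114), (\ref{t-ljslist}.115), and (\ref{t-ljslist}.116) are single theta products over that same denominator; the product-side relation is therefore read off by inspection, and no appeal to Jacobi's triple product, the quintuple product, or Bailey's identity is required. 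Second, and more seriously, before you set up the series-side telescoping you must pin down \emph{which} linear combination is actually being asserted: the observation in \S\ref{t-SLfin} says $(114)-q^3\times(115)$, the annotation attached to (\ref{t-ljslist}.113) in the appendix says $(115)-q^3\times(116)$, and the displayed product side of (\ref{t-ljslist}.113), whose numerator is $(q^{15},q^{21},q^{36};q^{36})_\infty - q^3(q^{3},q^{33},q^{36};q^{36})_\infty$, matches $(114)-q^3\times(116)$. If you carry out your common-denominator-and-index-shift manipulation on the combination as literally stated, the sum will not collapse; the quadratic exponents after the shift $n\mapsto n-1$ in the $q^3$-weighted series simply do not line up with those of (\ref{t-ljslist}.113) for the $(115)$ choice. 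So the first concrete step of a correct proof is to identify the intended combination from the product sides (where the answer is unambiguous) and only then perform the series-side verification against that combination.
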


\begin{id}[Finite form of \ref{t-ljslist}.114] 
Bosonic representation conjectured by Santos~\cite[p. 89, eqn. 6.63]{jpos}.
\index{Santos, J. P. O.}
\begin{gather*}
 1 + \sum_{j\geqq 1} \sum_{i\geqq 0} \sum_{k\geqq 0}\sum_{K\geqq 0} 
 \sum_{l\geqq 0}
 (-1)^{i+K}
   q^{j^2 + 3i^2 + 3i + k + 2K + 2l} \gp{j-1}{i}{q^6} \gp{j+k-1}{k}{q^2}
   \\ \times \gp{j+K-2}{K}{q^2}
   \gp{j+l-1}{l}{q^2} \gp{n-1-3i-k-K-k}{j-1}{q^2} \\ = 
 \sum_{j=-\infty}^\infty (-1)^j q^{18j^2 + 3j} \U{n}{6j}{q} \tag{\ref{t-SLfin}.114}
\\ P_0 = 1\\
  P_1 = q + 1\\
  P_2 = q^4 + q^3 + q^2 + q + 1\\
  P_3 = q^9 + q^8 + q^7 + q^6 + 2q^5 + 2q^4 + 
   2q^3 + q^2 + q + 1\\
  P_n = (1+q+q^{2n+1})P_{n-1} + (q^4-q)P_{n-2} -(q^4+q^5) P_{n-3} \\ 
    +(q^5 - q^{2n-1}) P_{n-4} \mbox{ if $n\geqq 3$}
\end{gather*}
\end{id}

\begin{id}[Finite form of \ref{t-ljslist}.115] 
Bosonic representation conjectured by Santos~\cite[p. 89, eqn. 6.64]{jpos}.
\index{Santos, J. P. O.}
\begin{gather*}
 \sum_{i\geqq 0} \sum_{j\geqq 0}\sum_{k\geqq 0}\sum_{K\geqq 0} 
  (-1)^i q^{j^2 + 2j + 3i^2 + 3i + k + 4K}
  \gp{j}{i}{q^6} \gp{j+k}{k}{q^4} \gp{j+K}{K}{q^4} \\ \times
  \gp{n - 3i  - k - 2K}{j}{q^2} \\= 
 \sum_{j=-\infty}^\infty (-1)^j q^{18j^2 + 9j} \U{n+1}{6j+1}{q} 
  \tag{\ref{t-SLfin}.115} \\
  P_0 = 1\\
  P_1 = q^3 + q + 1\\
  P_2 = q^8 + q^6 + q^5 + 2q^4 + q^3 + q^2 + q + 1\\
  P_3 = q^{15} + q^{13} + q^{12} + 2q^{11} + q^{10} + 2q^9 + 2q^8 + 3q^7 \\ + 
  2q^6 + 3q^5 + 2q^4 +  2q^3 + q^2 + q + 1\\
  P_n = (1+q+q^{2n+1})P_{n-1} + (q^4 - q +)P_{n-2} 
   - (q^4 + q^5) P_{n-3}  +(q^5 - q^{2n+1}) P_{n-4} 
  \mbox{\quad if $n\geqq 3$}
\end{gather*}
\end{id}

\begin{id}[Finite form of \ref{t-ljslist}.116] 
Bosonic representation conjectured by  Santos~\cite[p. 90, eqn 6.65]{jpos}.
\index{Santos, J. P. O.}
\begin{gather*}
  \sum_{i\geqq 0}  \sum_{j\geqq 0}\sum_{k\geqq 0}\sum_{K\geqq 0} 
  (-1)^i q^{j^2 + 4j + 3i^2 + 3i + k + 4K}
  \gp{j}{i}{q^6} \gp{j+k}{k}{q^2} \gp{j+K}{K}{q^4} 
  \gp{n - 3i -k - 2K}{j}{q^2} \\=
 \sum_{j=-\infty}^\infty (-1)^j q^{18j^2 + 15j} \U{n+2}{6j+2}{q} 
 \tag{\ref{t-SLfin}.116}\\
  P_0 = 1\\
  P_1 = q^5 + q + 1\\
  P_2 = q^{12} + q^8 + q^7 + q^6 + q^5 + q^4 + q^2 + q + 1\\
  P_3 = q^{21} + q^{17} + q^{16} + q^{15} + q^{14} + 2q^{13} + q^{12} + q^{11} + 
  q^{10} + 3q^9 \\
    + 2q^8 + 2q^7 + q^6 + 2q^5 + q^4 +  q^3 + q^2 + q + 1\\
  P_n = (1+q+q^{2n+3})P_{n-1} + (q^4 -q)P_{n-2}  
    - (q^5 + q^4) P_{n-3}  +(q^5 - q^{2n+3}) P_{n-4} 
    \mbox{\quad if $n\geqq 4$}
\end{gather*}
\end{id}

\begin{id}[Finite form of \ref{t-ljslist}.117] 
Bosonic representation conjectured by Santos~\cite[p. 88, eqn. 6.62]{jpos}.
\index{Santos, J. P. O.}
\begin{gather*}
  \sum_{j\geqq 0}\sum_{k\geqq 0}\sum_{K\geqq 0} 
  (-1)^K q^{j^2 + k + 2K} \gp{j+k-1}{k}{q^2}
  \gp{j+K-1}{K}{q^2} \gp{n-k-K}{j}{q^2} \\  = 
 \sum_{k=-\infty}^\infty q^{21k^2 +  2k    } \U{n}{7k    }{q} -
                         q^{21k^2 + 16k + 3} \U{n}{7k + 2}{q}
 \tag{\ref{t-SLfin}.117}\\
  P_0 = 1\\
  P_1 = q + 1\\
  P_2 = q^4 + q^2 + q + 1\\
  P_n = (1+ q -q^2 + q^{2n-1})P_{n-1} + (q^3 + q^2 -q)P_{n-2}  
    - q^3 P_{n-3} \mbox{\quad if $n\geqq 3$}
\end{gather*}
\end{id}

\begin{id}[Finite form of \ref{t-ljslist}.118] 
Bosonic representation conjectured by Santos~\cite[p. 90, eqn. 6.65]{jpos}
\index{Santos, J. P. O.}
\begin{gather*}
  \sum_{j\geqq 0}\sum_{k\geqq 0}\sum_{K\geqq 0} 
 (-1)^K q^{j^2 + 2j + k + 2K} \gp{j+k-1}{k}{q^2}
  \gp{j+K-1}{K}{q^2} \gp{n-k-K}{j}{q^2} \\=
  \sum_{k=-\infty}^\infty q^{21k^2 +  4k    } \U{n+1}{7k  }{q} -
                          q^{21k^2 + 10k + 1} \U{n+1}{7k+1}{q}
  \tag{\ref{t-SLfin}.118}\\
  P_0 = 1\\
  P_1 = q^3 + 1\\
  P_2 = q^8 + q^4 + q^3 + 1\\
  P_n = (1+ q -q^2 + q^{2n+1})P_{n-1} + (q^3 + q^2 -q)P_{n-2}  
    - q^3 P_{n-3} \mbox{\quad if $n\geqq 3$}
\end{gather*}
\end{id}

\begin{id}[Finite form of \ref{t-ljslist}.119] 
Bosonic representation conjectured by Santos~\cite[p. 91, eqn. 6.67]{jpos}.
\index{Santos, J. P. O.}
\begin{gather*}
  \sum_{j\geqq 0}\sum_{k\geqq 0}\sum_{K\geqq 0} 
  (-1)^K q^{j^2 + 2j + k + 2K} \gp{j+k}{k}{q^2}
  \gp{j+K-1}{K}{q^2} \gp{n-k-K}{j}{q^2} \\=
  \sum_{k=-\infty}^\infty q^{21k^2 +  8k     } \U{n+1}{7k+1}{q} -
                          q^{21k^2 + 20k +  4} \U{n+1}{7k+3}{q}
 \tag{\ref{t-SLfin}.119}\\
  P_0 = 1\\
  P_1 = q^3 + q + 1\\
  P_2 = q^8 + q^6 + q^4 + q^3 + q^2 + q + 1\\
  P_n = (1+ q -q^2 + q^{2n+1})P_{n-1} + (q^3 + q^2 -q)P_{n-2}  
    - q^3 P_{n-3} \mbox{\quad if $n\geqq 3$}
\end{gather*}
\end{id}

\begin{id}[Finite form of \ref{t-ljslist}.120] 
Bosonic representation conjectured by Santos~\cite[p. 91, eqn. 6.68]{jpos}.
\index{Santos, J. P. O.}
\begin{gather*}
  1+\sum_{j\geqq 1}\sum_{i\geqq 0}\sum_{k\geqq 0}
     q^{j^2 + j + i^2 + i + k} \gp{j-1}{i}{q^2}
      \gp{j+k-1}{k}{q^2} \gp{n-i-k}{j}{q^2} \nonumber \\ = 
  \sum_{k=-\infty}^\infty q^{24k^2 +  2k     } \gp{2n+1}{n+6k+1}{q} -
                          q^{24k^2 + 10k +  1} \gp{2n+1}{n+6k+2}{q}
  \tag{\ref{t-SLfin}.120}\\
  P_0 = 1\\
  P_1 = q^2 + 1\\
  P_2 = q^6 + q^4 + q^3 + q^2 + 1\\
  P_n = (1+ q + q^2 + q^{2n})P_{n-1} - (q^3 + q^2 +q)P_{n-2}  
    + (q^3 - q^{2n}) P_{n-3} \mbox{\quad if $n\geqq 3$}
\end{gather*}
\end{id}

\begin{id}[Finite form of \ref{t-ljslist}.121] 
Bosonic representation conjectured by Santos~\cite[p. 92, eqn. 6.69]{jpos}.
\index{Santos, J. P. O.}
\begin{gather*}
  1+\sum_{j\geqq 1}\sum_{i\geqq 0}\sum_{k\geqq 0} 
    q^{j^2 +  i^2 + i + k} \gp{j-1}{i}{q^2}
      \gp{j+k-1}{k}{q^2} \gp{n-i-k}{j}{q^2} \nonumber \\ =
  \sum_{k=-\infty}^\infty q^{24k^2 +  2k     } \gp{2n+1}{n+6k+1}{q} -
                          q^{24k^2 + 14k +  2} \gp{2n+1}{n+6k+2}{q}
  \tag{\ref{t-SLfin}.121}\\
  P_0 = 1\\
  P_1 = q + 1\\
  P_2 = q^4 + q^3 + q^2 + q + 1\\
  P_n = (1+ q + q^2 + q^{2n-1})P_{n-1} - (q^3 + q^2 +q)P_{n-2}  
    + (q^3 - q^{2n-1}) P_{n-3} \mbox{\quad if $n\geqq 3$}
\end{gather*}
\end{id}

\begin{id}[Finite form of \ref{t-ljslist}.122] 
Bosonic representation conjectured by Santos~\cite[p. 92, eqn. 6.70]{jpos};
corrected below.
\index{Santos, J. P. O.}
\begin{gather*}
  \sum_{i,j,k\geqq 0} q^{j^2 + 3j + i^2 + i + k} \gp{j}{i}{q^2}
    \gp{j+k}{k}{q^2} \gp{n+1-i-k}{j+1}{q^2} \\ = 
  \sum_{k=-\infty}^\infty q^{24k^2 + 14k     } \gp{2n+3}{n+6k+3}{q} -
                          q^{24k^2 + 22k +  3} \gp{2n+3}{n+6k+4}{q}
  \tag{\ref{t-SLfin}.122}\\
  P_0 = 1\\
  P_1 = q^4 + q^2 + q + 1\\
  P_2 = q^{10} + q^8 + q^7 + 2q^6 + q^6 + 2q^4 + q^3 + 2q^2 + q + 1\\
  P_n = (1+ q + q^2 + q^{2n+2})P_{n-1} - (q^3 + q^2 + q) P_{n-2} +
    (q^3 - q^{2n+2})P_{n-3}  \mbox{\quad if $n\geqq 3$}
\end{gather*}
\end{id}

\begin{id}[Finite form of \ref{t-ljslist}.123] 
Bosonic representation conjectured by Santos~\cite[p. 93, eqn. 6.71]{jpos};
corrected below.
\index{Santos, J. P. O.}
\begin{gather*}
  \sum_{i\geqq 0} \sum_{j\geqq 0}\sum_{k\geqq 0}\sum_{l\geqq 0}  
  (-1)^i q^{2j^2 + 2j + 2i^2 + 2i+k + 2l}
  \gp{j}{i}{q^4} \gp{j+k}{k}{q^2} \gp{j+l}{l}{q^2}
  \gp{n - 2i - k - l}{j}{q^2} \nonumber \\ =
  \sum_{k=-\infty}^\infty q^{24k^2 + 10k    } \gp{2n+2}{n + 6k + 2}{q} -
                          q^{24k^2 + 22k + 4} \gp{2n+2}{n + 6k + 4}{q}
  \tag{\ref{t-SLfin}.123}\\
  P_0 = 1\\
  P_1 = q^3 + q^2 + q + 1\\
  P_2 = q^8 + q^7 + q^6 + 2q^5 + 2q^4 + 2q^3 + 2q^2 + q + 1\\
  P_n = (1+ q + q^2 + q^{2n+1})P_{n-1} - (q^3 + q^2 +q)P_{n-2}  
    + (q^3 - q^{2n+1}) P_{n-3} \mbox{\quad if $n\geqq 3$}
\end{gather*}
\end{id}

\begin{id}[Finite form of \ref{t-ljslist}.124] 
Bosonic representation for even $n$ conjectured by 
Santos~\cite[p. 93, eqn. 6.72]{jpos}.
\index{Santos, J. P. O.}
 \begin{gather*}
  \sum_{i\geqq 0} \sum_{j\geqq 0}\sum_{k\geqq 0}\sum_{K\geqq 0}
  \sum_{l\geqq 0} \sum_{L\geqq 0}
    (-1)^{i+K+l} q^{2j^2 + 2j + 3i^2 + k + K + 
    2l + L} \gp{j}{i}{q^6} \gp{j+k}{k}{q^2} \gp{j+K}{K}{q^2} \\ \times
    \gp{j+l-1}{l}{q^2} \gp{j+L-1}{L}{q^2} 
    \gp{n-j-3i-k-K-l-L}{j}{q^2}  \tag{\ref{t-SLfin}.124}  
\end{gather*}
 \[ =\left\{ \begin{array}{l}
   \sum_k  q^{108 k^2 +  12 k     } \gp{2m+1}{m + 9k + 1}{q^2} 
        -  q^{108 k^2 +  60 k +  8} \gp{2m+1}{m + 9k + 3}{q^2}
        +  q^{108 k^2 +  48 k +  5} \gp{2m  }{m + 9k + 2}{q^2} \\ \qquad
        -  q^{108 k^2 +  96 k + 21} \gp{2m  }{m + 9k + 4}{q^2}
    \mbox{\ \ if $n = 2m$,} \\
  \sum_k  q^{108 k^2 +  12 k     } \gp{2m+1}{m + 9k + 1}{q^2}
        - q^{108 k^2 +  60 k +  8} \gp{2m+1}{m + 9k + 3}{q^2}
        + q^{108 k^2 +  48 k +  5} \gp{2m+2}{m + 9k + 3}{q^2} \\ \qquad
        - q^{108 k^2 +  96 k + 21} \gp{2m+2}{m + 9k + 5}{q^2}
     \mbox{\ \ if $n= 2m+1$.}
   \end{array} \right. \]
  \begin{gather*}
  P_0 = 1\\
  P_1 = 1\\
  P_2 = q^4 + q^2  + 1\\
  P_3 = q^5 + q^4 + q^2 + 1\\
  P_n = (1- q^2)P_{n-1} + (2q^2 +q^{2n})P_{n-2}  
    + (q^4 - q^2 + q^{2n-1}) P_{n-3} + (q^{2n-2}-q^4)P_{n-4} 
    \mbox{\quad if $n\geqq 3$}
  \end{gather*}
\end{id}

\begin{id}[Finite form of \ref{t-ljslist}.125] 
Bosonic representation for for even $n$ conjectured by 
Santos~\cite[p. 94, eqn. 6.73]{jpos}.
\index{Santos, J. P. O.}
 \begin{gather*}
  \sum_{i\geqq 0} \sum_{j\geqq 0}\sum_{k\geqq 0}\sum_{K\geqq 0}
  \sum_{l\geqq 0} \sum_{L\geqq 0} (-1)^{i+K+l} q^{2j^2 + 4j + 3i^2 + k + K +
    2l + L} \gp{j}{i}{q^6} \gp{j+k}{k}{q^2} \gp{j+K}{K}{q^2} \\ \times
    \gp{j+l-1}{l}{q^2}\gp{j+L-1}{L}{q^2} 
    \gp{n-j-3i-k-K-l-L}{j}{q^2}  \tag{\ref{t-SLfin}.125}
\end{gather*}
 \[ =\left\{ \begin{array}{l}
   \sum_k q^{108 k^2 +  24 k     } \gp{2m+2}{m + 9k + 2}{q^2} -
          q^{108 k^2 +  48 k +  4} \gp{2m+2}{m + 9k + 3}{q^2}
        + q^{108 k^2 +  60 k +  7} \gp{2m+1}{m + 9k + 3}{q^2} \\ \qquad -
          q^{108 k^2 +  84 k + 15} \gp{2m+1}{m + 9k + 4}{q^2}
   \mbox {\ \  if $n=2m$,} \\
   \sum_k q^{108 k^2 +  24 k     } \gp{2m+2}{m + 9k + 2}{q^2} -
          q^{108 k^2 +  48 k +  4} \gp{2m+2}{m + 9k + 3}{q^2}
        + q^{108 k^2 +  60 k +  7} \gp{2m+3}{m + 9k + 4}{q^2} \\ \qquad  -
          q^{108 k^2 +  84 k + 15} \gp{2m+3}{m + 9k + 5}{q^2}
    \mbox{\ \  if $n=2m+1$.}
    \end{array} \right. \]
 \begin{gather*}
  P_0 = 1\\
  P_1 = 1\\
  P_2 = q^6 + q^2  + 1\\
  P_3 = q^7 + q^6 + q^2 + 1\\
  P_n = (1- q^2)P_{n-1} + (2q^2 + q^{2n+2})P_{n-2}  
    + (-q^2 + q^4 + q^{2n+1}) P_{n-3} \\ + (q^{2n}-q^4)P_{n-4} 
     \mbox{\quad if $n\geqq 4$}
  \end{gather*}
\end{id}

\begin{obs}
Identity (126) is equivalent to 
$(71) + q\times(68) -q\times(128)$.
\end{obs}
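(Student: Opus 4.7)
The plan is to verify the asserted linear combination at the level of the classical (infinite) Slater identities, not at the level of the polynomial finitizations. Since Slater already established the series-product identities (68), (71), and (128) in her list, it suffices to prove any one of the following equivalent claims: (a) the fermionic (series) side of (126) equals the fermionic side of (71) plus $q$ times that of (68) minus $q$ times that of (128); or (b) the bosonic (product) side of (126) equals the corresponding linear combination of the bosonic sides of (71), (68), and (128). Once either (a) or (b) is established, the three known Slater identities immediately combine to produce (126).

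I would first attempt the bosonic approach (b), since the product sides of these mod-$24$ identities in the (120)--(128) cluster are all built from infinite products in base $q^{24}$ (together with factors like $(q;q)_\infty$ or $(-q;q)_\infty$) and are therefore well suited to Jacobi's triple product identity (\ref{jtp}). The strategy is to expand each of the four products as a bilateral theta series via (\ref{jtp}), clear the common $(q;q)_\infty^{-1}$-type prefactor, and reduce the claim to a bilateral-series identity of the form
\[
\sum_{j\in\mathbb Z} (-1)^j q^{\alpha j^2 + \beta_{126}\, j}
= \sum_{j\in\mathbb Z} (-1)^j q^{\alpha j^2 + \beta_{71}\, j}
+ q\sum_{j\in\mathbb Z}(-1)^j q^{\alpha j^2 + \beta_{68}\, j}
- q\sum_{j\in\mathbb Z} (-1)^j q^{\alpha j^2 + \beta_{128}\, j},
\]
with $\alpha = 24$ and linear coefficients $\beta$ read off from the respective products. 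This in turn is verified by splitting the sum on the left according to the residue of $j$ modulo a small integer (typically $2$ or $3$, depending on which residues of $\beta$ appear on the right) and matching the resulting sub-series term by term with those on the right, after a single index shift to absorb the extra factors of $q$.

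If the bosonic route proves algebraically cumbersome, I would fall back on the fermionic approach (a): write the four series from Slater's list side by side, and split the sum defining the series of (126) by parity of the summation index (or by its residue mod $3$), showing that one residue class reproduces the series of (71), another contributes the $q\cdot(\text{series of (68)})$ term, and a cross term supplies $-q\cdot(\text{series of (128)})$. The key manipulation here is the identity $(a;q)_{j+1} = (1-aq^j)(a;q)_j$ applied to the rising $q$-factorials that distinguish (68), (71), and (128), which is what converts a single summand of (126) into the required weighted combination.

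The main obstacle is purely bookkeeping: identifying the correct index splitting (equivalently, the correct residue classes for the theta reindexing) so that the exponents $\alpha j^2 + \beta j$ on the two sides actually line up after the $+1$ shift induced by the factors of $q$. This is the step most prone to sign and shift errors; I would guard against them by checking agreement of both sides through, say, $q^{40}$ via the recurrences already listed for the polynomial finitizations of (68), (71), and (128), and only then writing down the general index-matching argument.
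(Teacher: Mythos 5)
The paper offers no argument for this observation: it is asserted without proof both here and in the note attached to Identity A.126 in the appendix, so there is no proof of record to measure you against, and your outline is an attempt to fill a gap the author leaves to the reader. The overall shape of your plan --- verify the asserted linear relation separately on the fermionic sides (by rising $q$-factorial manipulations) and on the bosonic sides (by Jacobi's triple product and a residue splitting of the summation index) --- is the standard and correct way to justify observations of this type.

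The genuine gap is your claim that ``it suffices to prove any one of (a) or (b).'' To conclude that Identity (126), with the specific product side $\bigl[(-q^{28},-q^{36},q^{64};q^{64})_\infty - q^3(-q^{12},-q^{52},q^{64};q^{64})_\infty\bigr]/\bigl((q;q^2)_\infty(q^4;q^4)_\infty\bigr)$ recorded in Appendix A, follows from (68), (71) and (128), you must verify the relation on \emph{both} sides. If you prove only (a), the three known Slater identities yield that the series of (126) equals $(71)_{\mathrm{prod}} + q\,(68)_{\mathrm{prod}} - q\,(128)_{\mathrm{prod}}$; this is a true identity, but it is not Identity (126) until you have also shown that this combination of products equals the stated product side of (126) --- which is precisely claim (b). So plan on carrying out both computations rather than falling back from one to the other. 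Two further corrections to the bosonic computation as described: these are not ``mod-$24$'' products with a common quadratic coefficient $\alpha=24$. After putting all four products over the common factor $1/\bigl((q;q^2)_\infty(q^4;q^4)_\infty\bigr)$ --- for (68) and (71) this uses $(-q;q^2)_\infty/(q^2;q^2)_\infty = 1/\bigl((q;q^2)_\infty(q^4;q^4)_\infty\bigr)$, a normalization step your outline omits --- the numerators of (126) and (128) are \emph{non-alternating} theta series $\sum_j q^{32j^2+\beta j}$ in base $q^{64}$, while those of (68) and (71) are alternating series $\sum_j (-1)^j q^{8j^2+\beta j}$ in base $q^{16}$; the matching proceeds by splitting the latter according to the parity of $j$, turning each into a difference of two base-$q^{64}$ thetas. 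Likewise, on the fermionic side the relevant device is not a splitting of the summation index by residue class but the factorizations $(-q;q^2)_{n+1}=(1+q^{2n+1})(-q;q^2)_n$, $(-q^2;q^2)_n=(1+q^{2n})(-q^2;q^2)_{n-1}$ together with the shift $n\mapsto n-1$ that converts $q\cdot q^{n(n+2)}$ into $q^{n^2}$. Your proposed numerical check through $q^{40}$ would expose these discrepancies, but as written the outline would not compile into a proof.
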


\begin{obs}
Identity (127) is equivalent to $(71) - q\times (128).$
\end{obs}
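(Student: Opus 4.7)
The plan is to verify the claimed equivalence directly at the level of Slater's series--product identities, i.e.\ to establish
\[
  \phi_{127}(q) \;=\; \phi_{71}(q) \;-\; q\,\phi_{128}(q),
\]
where $\phi_i(q)$ denotes the common value of the series and product in the $i$-th entry on Slater's list. If this linear relation holds simultaneously on the series sides and on the product sides, then the series--product identity (127) is a formal consequence of (71) and (128), which is exactly what the observation asserts.

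First I would compare the series sides. The three Slater series are all Rogers--Ramanujan-type sums of the form $\sum_{j\ge 0} q^{aj^2+bj}/\prod(q^{c};q^{d})_{\cdots}$, and empirically, observations of this kind in \S 3 (e.g.\ for identities 66--67, 73--75, 102, 126) rest on a one-term index shift: pulling off the $j=0$ contribution of one of the series and re-indexing $j\mapsto j+1$ typically absorbs a factor of $q$. I would write out the three $\phi_i$ explicitly from Slater's list and manipulate the series for $\phi_{71}(q) - q\,\phi_{128}(q)$ until it coincides term-by-term with $\phi_{127}(q)$.

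Next, for the product sides, I would invoke Jacobi's triple product identity~(\ref{jtp}) and, if needed, the quintuple product identity~(\ref{qpi}) or Bailey's identity~(\ref{btp}) to re-express each of the three infinite products as a single theta series divided by $(q;q)_\infty$ (or the appropriate denominator). Once each product is written as a bilateral sum $\sum_j (-1)^{\epsilon j} q^{\alpha j^2+\beta j}$, the linear combination $\phi_{71}(q)-q\,\phi_{128}(q)$ becomes a direct theta-series computation: an index shift in one of the two theta sums should merge them into the theta series representing $\phi_{127}(q)$.

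The main obstacle will be bookkeeping. The shift required to match the powers of $q$ between $\phi_{128}$ and $\phi_{127}$ must be chosen carefully, and there is a real danger of an off-by-one error in the quadratic exponents that govern the theta-series expansions. A safer alternative, which I would fall back on if the direct manipulation proves delicate, is to work with the finitizations: if the polynomials $P^{(127)}_n(q)$, $P^{(71)}_n(q)$, $P^{(128)}_n(q)$ are shown (using the recurrences from \S 3 and matching initial values) to satisfy $P^{(127)}_n(q) = P^{(71)}_n(q) - q\,P^{(128)}_n(q)$ for all $n$, then condition~(2) of Conditions~\ref{cond} lets us pass to the $n\to\infty$ limit and recover the desired series--product identity.
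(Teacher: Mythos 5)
Your strategy is the right one, and it is worth noting that the paper itself supplies no proof of this observation (it is asserted bare, both here and in the note attached to Identity A.127), so carrying out your plan is all that is required; the computation does go through exactly as you anticipate. On the series side the one fact you need is $(q^2;q^4)_n(q^2;q^2)_n(-q^2;q^2)_{n-1}=(q^2;q^2)_{2n}/(1+q^{2n})$, which puts the summand of (A.71) over the common denominator $(q^2;q^2)_{2n}$ with an extra factor $(1+q^{2n})$ in the numerator; the ``$1$'' piece is cancelled by $q\times$(A.128) after the shift $n\mapsto n-1$ (the prefactor $q$ turns $q^{(n-1)(n+1)}$ into $q^{n^2}$ --- exactly your ``one-term index shift absorbing a factor of $q$''), and the ``$q^{2n}$'' piece is precisely the summand of (A.127), with the $n=0$ term of (A.71) supplying the leading $1$. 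On the product side, writing the product of (A.71) as $(q^6,q^{10},q^{16};q^{16})_\infty\big/\bigl[(q;q^2)_\infty(q^4;q^4)_\infty\bigr]$ and splitting the theta series $\sum_j(-1)^jq^{8j^2-2j}$ according to the parity of $j$ yields $(-q^{28},-q^{36},q^{64};q^{64})_\infty-q^6(-q^4,-q^{60},q^{64};q^{64})_\infty$, from which the stated combination of the products in (A.127) and (A.128) drops out immediately; no quintuple product is needed. One caveat: your fallback via finitizations is not actually available here, since identity 127 receives no polynomial sequence $P_n$ in \S\ref{t-SLfin} --- this observation \emph{is} the paper's entire treatment of it --- so the direct series and product verification is the route you must take.
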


\begin{id}[Finite form of \ref{t-ljslist}.128]
\begin{gather*}
   \sum_{j\geqq 0}\sum_{i\geqq 0}\sum_{I\geqq 0}\sum_{k\geqq 0} \sum_{K\geqq 0}
   \sum_{L\geqq 0} 
 (-1)^{i+L} q^{j^2 + 2j + 2i^2 + 2i + 2I^2 + 2I + 4k + K+2L} \gp{j}{i}{q^4}  
   \gp{j}{I}{q^4} \gp{j+k}{k}{q^4}\\ \times \gp{j+K}{K}{q^2} 
   \gp{j+L-1}{L}{q^2}
   \gp{n-2i-2I-2k-K+L}{j}{q^2} \\ =
  \sum_{k=-\infty}^\infty q^{32k^2 + 12k}     \U{n+1}{8k+1}{q} -
                          q^{32k^2 + 28k + 5} \U{n+1}{8k+3}{q}
   \tag{\ref{t-SLfin}.128}\\
  P_0 = 1\\
  P_1 = q^3 + q + 1\\
  P_2 = q^8 + q^6 + 2q^4 + q^3 + q^2 + q + 1\\
  P_3 = q^{15} + q^{13} + 2q^{11} + 2q^9 + q^8 + 3q^7 + q^6 + 2q^5 + 2q^4 + 
  2q^3 + q^2 + q + 1\\
   P_n = (1 + q + q^{2n+1}) P_{n-1} + (q^4 -q - q^{2n+1}) 
    P_{n-2} + (-q^5 - q^4 + q^{2n+1}) P_{n-3} \\
      +(q^5  - q^{2n+1})P_{n-4}  \mbox{\ if $n\geqq 4$.}
\end{gather*}
\end{id}

\begin{obs}
Identity (129) is equivalent to $q^{-2} \times \Big( (128) - (68) \Big)$.
\end{obs}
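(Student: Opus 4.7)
The plan is to verify the observation by checking the claimed algebraic combination at the level of the series sides of Slater's identities 68, 128, and 129; the corresponding product-side relation then follows automatically from Slater's list.

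First, I would consult the appendix (Slater's list) to write out explicitly the three identities in the form $\phi_k(q) = \Pi_k(q)$ for $k \in \{68, 128, 129\}$, where $\phi_k$ is a Rogers--Ramanujan type $q$-hypergeometric series and $\Pi_k$ is an infinite product (or a short $\mathbb{Z}[q,q^{-1}]$-linear combination of such products). These expressions are the inputs to the verification.

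Second, I would verify the formal $q$-series identity
\[
\phi_{129}(q) \;=\; q^{-2}\bigl(\phi_{128}(q) - \phi_{68}(q)\bigr)
\]
term-by-term. In Slater's list, closely related identities typically share the same denominator structure in the summand, with numerators differing only by a shift in the linear part of a quadratic exponent of $q$. The combination $q^{-2}(\phi_{128}-\phi_{68})$ should then telescope to the summand of $\phi_{129}$ after a single reindexing of the summation variable. Once this is in hand, the product-side relation $\Pi_{129} = q^{-2}(\Pi_{128}-\Pi_{68})$ is immediate from Slater's identities 68 and 128: we have $q^{-2}(\Pi_{128}-\Pi_{68}) = q^{-2}(\phi_{128}-\phi_{68}) = \phi_{129}$, and by Slater's identity 129 itself this equals $\Pi_{129}$. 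Thus the observation says that, given Slater's 68 and 128, his identity 129 is a purely algebraic consequence.

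The main obstacle is the bookkeeping in the second step: Rogers--Ramanujan series are sensitive to small changes in exponents, so identifying the precise index shift that makes $q^{-2}(\phi_{128}-\phi_{68})$ collapse to $\phi_{129}$ requires careful attention to the quadratic and linear exponents appearing in the three series. If one wished to prove Slater's identity 129 independently of 68 and 128 rather than as a corollary, the alternative route would be to verify the product-side relation directly, reducing the difference of the two infinite products in $\Pi_{128} - \Pi_{68}$ to the single-product form of $\Pi_{129}$ using Jacobi's triple product identity (\ref{jtp}) or Bailey's identity (\ref{btp}); this would involve extracting the appropriate theta-like factors and recognizing the resulting bilateral sums.
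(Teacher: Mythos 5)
Your plan is sound, and in fact the paper supplies no proof of this observation at all (it is asserted both here and in the note accompanying (A.129)), so a direct verification of the series-side and product-side linear relations is exactly what is called for. Two refinements to your outline. First, no reindexing or telescoping occurs: the summands of (A.128) and (A.129) share the numerator $(-q^4;q^4)_n(-q;q^2)_{n+1}$ and the denominator $(q^2;q^2)_{2n+2}$, differing only in the exponent $n(n+2)$ versus $n(n+4)$, so at each fixed $n$ one has $q^{n(n+2)}-q^2\,q^{n(n+4)}=q^{n(n+2)}(1-q^{2n+2})$, and the factor $(1-q^{2n+2})$ converts $(q^2;q^2)_{2n+2}$ into the denominator of (A.68) via $(q^2;q^2)_{2n+2}=(q^2;q^4)_{n+1}(-q^2;q^2)_{n+1}(q^2;q^2)_{n+1}$; the relation therefore holds summand by summand. (Carrying this out forces the summand of (A.68) to be $(-q;q^2)_{n+1}(-q^4;q^4)_n\,q^{n(n+2)}(1-q^{2n+2})/(q^2;q^2)_{2n+2}$ --- consistent with the companion observations for (126) and (127) --- so the garbled factor in the printed (A.68) must be read as $(-q;q^2)_{n+1}$.) Second, the product-side check is not an optional alternative but the other half of the claimed equivalence, though it is quick: over the common denominator $(q;q^2)_\infty(q^4;q^4)_\infty$ the terms $q^5(-q^4,-q^{60},q^{64};q^{64})_\infty$ from (A.128) and $q^2\cdot q^3(-q^4,-q^{60},q^{64};q^{64})_\infty$ from (A.129) cancel, and the remaining assertion
\[
(q^2,q^{14},q^{16};q^{16})_\infty=(-q^{20},-q^{44},q^{64};q^{64})_\infty-q^2(-q^{12},-q^{52},q^{64};q^{64})_\infty
\]
follows from (\ref{jtp}) by writing the left-hand side as $\sum_{j\in\mathbb{Z}}(-1)^jq^{8j^2-6j}$ and splitting the sum according to the parity of $j$.
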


\begin{id}[Finite form of \ref{t-ljslist}.130]
   \begin{gather*}
     \sum_{i\geqq 0} \sum_{j\geqq 0}\sum_{k\geqq 0}\sum_{l\geqq 0}
  (-1)^l q^{j^2 + 2i^2 + k + 2l}
        \gp{j}{i}{q^4} \gp{j+k}{k}{q^2} \gp{j+l-1}{l}{q^2}
        \gp{n-2i-k-l}{j}{q^2} \\ =
   \sum_{j=-\infty}^\infty 
      (-1)^j q^{8j^2}         \Big[\Tzero{n}{4j}{q}   + \Tzero{n-1}{4j}{q} \Big] \\
   +  (-1)^j q^{8j^2 + 4j +1} \Big[\Tzero{n}{4j+1}{q} + \Tzero{n-1}{4j+1}{q} \Big] 
   \tag{\ref{t-SLfin}.130}\\
  P_0 = 1\\
  P_1 = 2q + 1\\
  P_2 = 2q^4 + 2q^2 + 2q + 1\\
  P_n = (1+q- q^2 + q^{2n-1})P_{n-1}  + (q^3 + q^2 -q)P_{n-2}  
    + (q^{2n-3} - q^3) P_{n-3} \mbox{\quad if $n\geqq 3$}
  \end{gather*}
\end{id}     
\index{Slater's list of Rogers-Ramanujan type identities!polynomial 
generalizations of|)}
\index{Slater, Lucy J.|)}

\section{Proving Conjectured Polynomial Identities}\label{proof}

We shall now discuss several methods by which one can prove the identities 
stated in the previous section.
\subsection{Direct Proof}

The following finitization of identity 39 on Slater's list 
 \begin{gather*}
   \sum_{j\geqq 0} q^{2j^2} \gp{n}{2j}{q} =
   \sum_{j=-\infty}^\infty q^{4j^2 + j} \gp{n}{\lfloor \frac{n+4j+1}{2} 
   \rfloor}{q^2}  \tag{\ref{t-SLfin}.39-b}
 \end{gather*}
was stated by Andrews and Santos in~\cite[p. 94, eqn. 3.1]{attached}, but 
only hint of the proof was given, since it is a finite analog of
the proof of (\ref{t-ljslist}.39), which was written out in full.
The details of the proof hinted at are 
presented below: \index{Andrews, George E.}\index{Santos, J. P. O.}

\begin{proof} 
\begin{eqnarray*}
 &   & \sum_{j\geqq 0} q^{2j^2} \gp{n}{2j}{q} \\
 & = & \sum_{j\geqq 0} q^{(2j)^2/2} \gp{n}{2j}{q} \\
 & = & \underset{k\mathrm{\ even}} {\sum_{k\geqq 0}} q^{k^2/2} \gp{n}{k}{q}\\
 & = & \frac12 \left( \sum_{k\geqq 0}        q^{k^2/2} \gp{n}{k}{q}
                     +\sum_{k\geqq 0} (-1)^k q^{k^2/2} \gp{n}{k}{q} \right) \\
 & = & \frac12 \Big[ (-\sqrt{q};q)_n + (\sqrt{q};q)_n \Big]
       \mbox{\qquad (by (\ref{qbc1}))}
\end{eqnarray*}

 CASE 1: (Even $n$)
\begin{eqnarray*}
 & = & \frac12 \Big[ (-\sqrt{q};q^2)_{n/2} (-\sqrt{q^3};q^2)_{n/2} 
                    +( \sqrt{q};q^2)_{n/2} ( \sqrt{q^3};q^2)_{n/2} \Big] \\
 & = & \frac12 \left(
       \sum_{k=-\infty}^\infty        q^{k^2 + k/2} \gp{n}{n/2 + k}{q^2}
      +\sum_{k=-\infty}^\infty (-1)^k q^{k^2 + k/2} \gp{n}{n/2 + k}{q^2} 
      \right)\\
 &   & \hspace*{1 in}\mbox{\quad (by Theorem~\ref{fjtp})} \\
 & = & \underset{k\mathrm{\ even}}{\sum_{k=-\infty}^\infty}
         q^{k^2 + k/2} \gp{n}{n/2 + k}{q^2} \\
 & = & \sum_{j=-\infty}^\infty q^{4j^2 + j} \gp{n}{\lfloor \frac{n+4j}{2} 
   \rfloor}{q^2}  \\
 & = & \sum_{j=-\infty}^\infty q^{4j^2 + j} \gp{n}{\lfloor \frac{n+4j+1}{2} 
   \rfloor}{q^2} 
\end{eqnarray*}

 CASE 2: (Odd $n$)
\begin{eqnarray*}
 & = & \frac12 \Big[ (-\sqrt{q};q^2)_{(n+1)/2} (-\sqrt{q^3};q^2)_{(n-1)/2} 
                    +( \sqrt{q};q^2)_{(n+1)/2} ( \sqrt{q^3};q^2)_{(n-1)/2} 
\Big] \\
 & = & \frac 12 \Big[ 
    (1+q^{(2n-1)/2})(-\sqrt{q};q^2)_{(n-1)/2} (-\sqrt{q^3};q^2)_{(n-1)/2} \\
 &   &\hskip 2cm
   +(1-q^{(2n-1)/2})( \sqrt{q};q^2)_{(n-1)/2} ( \sqrt{q^3};q^2)_{(n-1)/2} 
\Big] \\
 & = & \frac 12 \left(
    \sum_{k=-\infty}^\infty        q^{k^2+k/2} \gp{n-1}{(n-1)/2 + k}{q^2} +
    \sum_{k=-\infty}^\infty (-1)^k q^{k^2+k/2} \gp{n-1}{(n-1)/2 + k}{q^2} \right)
\\ & & \hskip 0.5cm
   +\frac{q^{(2n-1)/2}}{2} \Bigg(
    \sum_{k=-\infty}^\infty        q^{k^2+k/2} \gp{n-1}{(n-1)/2 + 
    k}{q^2} \\
  & &\hskip 3.2cm  +\sum_{k=-\infty}^\infty (-1)^k q^{k^2+k/2} \gp{n-1}{(n-1)/2 + k}{q^2} 
    \Bigg)
\\
  & = & \underset{k\mathrm{\ even}}{\sum_{k=-\infty}^\infty} 
     q^{k^2+k/2} \gp{n-1}{(n-1)/2 + k}{q^2}
      +\underset{k\mathrm{\ odd}}{\sum_{k=-\infty}^\infty} 
     q^{k^2+k/2} \gp{n-1}{(n-1)/2 + k}{q^2} \\
  & = & \sum_{j=-\infty}^\infty q^{4j^2-j} \gp{n-1}{(n-1)/2 - 2j}{q^2}\\
 & &\hskip 5mm +q^{n-1/2} \sum_{j=-\infty}^\infty q^{(1-2j)^2 +(1-2j)/2} 
     \gp{n-1}{(n-1)/2 - 2j+1}{q^2} \\
  & = & \sum_{j=-\infty}^\infty q^{4j^2-j} \gp{n-1}{(n-1)/2 - 2j}{q^2} +
   \sum_{j=-\infty}^\infty q^{n+1-4j}q^{4j^2-j} 
     \gp{n-1}{\frac{n+1-4j}{2}}{q^2} \\
  & = & \sum_{j=-\infty}^\infty q^{4j^2-j} \left(
       \gp{n-1}{\frac{n+1-4j}{2} - 1}{q^2}
     + q^{n+1-4j}\gp{n-1}{\frac{n+1-4j}{2} - 1}{q^2} \right) \\
  & = & \sum_{j=-\infty}^\infty q^{4j^2 - j} 
  \gp{n}{\frac{n+1-4j}{2}}{q^2} \mbox{\quad (by (\ref{qpt2}))}\\
  & = & \sum_{j=-\infty}^\infty q^{4j^2 + j} 
  \gp{n}{\frac{n+1+4j}{2}}{q^2} \\
   & = & \sum_{j=-\infty}^\infty q^{4j^2 + j} 
  \gp{n}{\lfloor \frac{n+1+4j}{2} \rfloor}{q^2}
\end{eqnarray*} 
\end{proof}

While the preceeding proof is nice in that it is a finite analog of the 
proof of (A.39) given by Andrews and Santos~\cite[pp. 93-94]{attached}, the 
method is not applicable to all the polynomial identities stated in 
\S~\ref{t-SLfin}. \index{Andrews, George E.}\index{Santos, J. P. O.}

\subsection{WZ proofs} \label{WZproofs}
We now turn our attention to a method which, in theory, will provide proofs 
for all of the polynomial identities in \S~\ref{t-SLfin}.   In 1990, 
Wilf and Zeilberger published their ground-breaking paper ``Rational 
Functions Certify Combinatorial Identities"~\cite{wz:rf}, for which they 
later won the prestigious Leroy P. Steele Prize for Seminal Contribution to 
Research~\cite{ams:steele}.  Their original methods, which were designed 
for single sum hypergeometric type identities, were successfully extended to 
multisum identities and $q$-analogs (see, e.g. Wilf and 
Zeilberger~\cite{wz:multiq} and Koornwinder~\cite{thk}). 
\index{Zeilberger, Doron}
\index{Wilf, Herbert S.}
\index{Koornwinder, T.}
\index{Steele Prize}

\subsubsection{A Brief Introduction to the WZ-theory}
\index{WZ method|(}
The following is a brief exposition of the part of Wilf and Zeilberger's
WZ-theory\footnote{named in honor of two famous complex variables; see~\cite[p. 
148, footnote 1]{wz:rf}.}  which will be required for our present purposes. 
For a detailed 
introduction to WZ theory, including the Sister Celine's algorithm,
Gosper's Algorithm, Zeilberger's algorithm, and WZ pairs, 
see Petkov{\v s}ek, Wilf and Zeilberger~\cite{pwz:a=b}.
\index{Zeilberger, Doron}
\index{Wilf, Herbert S.}
\index{Petkov{\v s}ek, Marko}
\index{Fasenmyer, Sister Mary Celine}

Recall that the identities conjectured in \S~\ref{t-SLfin} are of the
form 
\[ LHS(n) = RHS(n) \] where the LHS is a polynomial known to satisfy a
certain recurrence relation of order, say $r$.   
Thus, to prove the identity is true for all
$n$, it is sufficient to show that $RHS(n)$ satisfies the same $r$th order
recurrence
relation  and the initial conditions
\[ LHS(0) = RHS(0), \quad LHS(1)=RHS(1), \dots, LHS(r-1)=RHS(r-1). \]
  We need to introduce some notation at this point.  Let us say that
the $r$th order recurrence relation satisfied by $LHS(n)$ is
\begin{equation} \label{genrec}
  \sum_{i=0}^r p_i(q) LHS(n-i) = 0,
\end{equation}
where the $p_i(q)$ are polynomials in $q$ depending only on $n$, 
and note that $RHS(n)$ is of the form
$\sum_{j=-\infty}^\infty F(n,j)$.
We obtain a function $G(n,j)$ which satisfies the following conditions:
\begin{equation} \label{wzcond1}
  \sum_{i=0}^r p_i(q) F(n-i,j)= G(n,j) - G(n,j-1)
\end{equation} and
\begin{equation} \label{wzcond2}
 \lim_{j=\pm\infty} G(n,j) = 0.
\end{equation}
Then, by summing (\ref{wzcond1}) over all $j$, we automatically 
obtain 
\[ \sum_{i=0}^r p_i(q) RHS(n-i) = 0,\] and this observation together with the 
checking of the appropriate initial 
conditions, completes the proof.  
Of course, the big question is, ``How does 
one obtain this $G(n,j)$ function?"  The process by which one finds
$G$ is known as either\index{Zeilberger's algorithm} \index{creative 
telescoping} \index{Zeilberger, Doron}
``Zeilberger's Algorithm" or 
``creative telescoping"; see Zeilberger~\cite{dz:ct} and \cite{dz:fa}. 
For a detailed explaination of  
why such a $G$ is guaranteed to exist, and how to find one, the reader is 
referred to 
Petkov{\v s}ek, Wilf and Zeilberger~\cite{pwz:a=b}.  
\index{Zeilberger, Doron}
\index{Wilf, Herbert S.}
For our present 
purposes, we merely note that there are several packages available which 
will find $G(n,j)$ for a given $F(n,j)$.  Zeilberger's \texttt{qEKHAD} 
package\footnote{named 
in honor of Shalosh B. Ekhad, \index{Ekhad, Shalosh, B.}
Zeilberger's computer and prolific 
author in its 
own right.} 
for Maple is available for free download from his web site
\texttt{http://www.math.rutgers.edu/\~{ }zeilberg}.  Axel Riese 
has written Mathematica packages for proving both single and multisum 
$q$-hypergeometric identities.  They are available for download, free of 
charge to 
researchers and non-commercial users, at \\
\texttt{http://www.risc.uni-linz.ac.at/research/combinat/risc/software/}\\ 
and have accompanying documentation in
\index{Paule, Peter}
\index{Riese, Axel}
\index{qMultiSum (Mathematica package)}
Paule and Riese~\cite{pr:qzeil} and Riese~\cite{ar:qMultiSum} respectively.
  We note that the $G(n,j)$ function is a rational function multiple of
the summand function $F(n,j)$, i.e. that 
\[ G(n,j) = F(n,j) R(n,j) \] for some rational function $R(n,j)$.  
This function $R(n,j)$ is called the {\em WZ certificate} or simply {\em 
certificate} function, and it is actually this certificate function rather 
than the $G(n,j)$ function that is 
produced for a given summand $F(n,j)$ by the Maple and Mathematica 
packages of Zeilberger and Riese respectively.
\index{Zeilberger, Doron}
\index{Riese, Axel}
\vskip 3mm
\noindent\textbf{A Detailed Example.}\\
Suppose we are interested in proving the identity conjectured in 
\S~\ref{finit} using Zeilberger's Algorithm.  Recall that the identity is
\begin{equation} \label{ls7ff}
\sum_{j=0}^n q^{j^2+j} \gp{n}{j}{q^2} 
=\sum_{j=-\infty}^\infty (-1)^j q^{2j^2+j} \gp{2n+1}{n+2j+1}{q}. 
\end{equation}
\index{Zeilberger, Doron}
We saw in \S~\ref{finit} that the idenitity (\ref{ls7ff}) 
satisfies the recurrence \[ P_n(q) = (1+q^{2n}) P_{n-1}(q)\] and the initial
condition $P_0 (q) = 1$.  Thus to prove (\ref{ls7ff}), it suffices to show that
the righthand side of (\ref{ls7ff}) satisfies, the same recurrence and 
initial conditions, i.e. that
\begin{equation} 
  \sum_{j=-\infty}^\infty F(n,j) 
   = (1+q^{2n}) \sum_{j=-\infty}^\infty F(n-1,j),\label{1} 
\end{equation}
for $n\geqq 0$ and 
\begin{equation} 
   \sum_{j=-\infty}^\infty F(0,j)=1 \label{2} 
\end{equation}
where $F(n,j) = (-1)^j q^{2j^2 + j} \gp{2n+1}{n+2j+1}{q}$. 
As mentioned in the previous section, one can use the \texttt{qZeil} function 
in Riese's \texttt{qZeil} \index{Riese, Axel}\index{qZeil (Mathematica 
package)} 
Mathematica package to produce a function $G(n,j)$ such that 
\begin{equation} 
   F(n,j) - (1+q^{2n})F(n-1,j) = G(n,j) - G(n,j-1) \label{WZeq}
\end{equation}
and 
\begin{equation}   
  \lim_{j\to\pm\infty} G(n,j) = 0. \label{bc}
\end{equation}
We then sum both sides of equation (\ref{WZeq}) over all integral $j$, 
and observe that the righthand side of (\ref{WZeq}) telescopes to 0. 
This will then guarantee that 
equation (\ref{1}) holds.
Let us now begin a Mathematica session (for a detailed explanation of the
use of the \texttt{qZeil} package, see Paule and Riese~\cite{pr:qzeil}):
\index{Paule, Peter}\index{Riese, Axel}\index{qZeil (Mathematica
package)}
\begin{verbatim}
In[1]:=  << qZeil.m

Out[1]= Axel Riese's q-Zeilberger
          implementation version 2.01 (04/12/01) loaded

In[2]:= F[n_,j_]:= (-1)^j q^(2j^2 + j) qBinomial[2n+1, n+2j+1, q]

In[3]:= qZeil[F[n,j], {j, -Infinity, Infinity}, n, 1]

                       
Out[3]=\end{verbatim} \[{SUM}(n) = 
  \left( 1 + q^{2\,n} \right) \,{SUM}(-1 + n)\]

I interrupt the Mathematica session here to note that {\em if we trust the 
computer}, the proof of the identity is complete, 
since the \texttt{qZeil} procedure output the recurrence we are trying to
demonstrate that $\sum F(n,j)$ satisfies.  (Remember that the initial
conditions were already verified in an earlier Maple session.  However, if
we do not implicitly trust the computer, we can use it to produce the WZ 
certificate, which in turn allows us to verify a rational function identity 
algebraically equivalent to (\ref{WZeq}).  Let us therefore
continue the Mathematica
session by producing the WZ certificate\footnote{Note that Zeilberger's
\index{Zeilberger, Doron}
algorithm is much stronger than we actually need here.  Zeilberger's 
algorithm not only finds the certificate, but the {\em recurrence} satisfied by
the summand.  Thus, even though the method of $q$-difference equations
allows us to know the recurrence in advance, Zeilberger's algorithm by
no means requires this.}:
\begin{verbatim}
In[4]:= Cert[ ]
  
Out[4]=\end{verbatim} \[\frac{q^{-2\,j + n}\,\left( -q^{2\,j} + q^n \right) \,
    \left( -q^{1 + 2\,j} + q^n \right) }{\left( -1 + q^n
      \right) \,\left( 1 + q^n \right) \,
    \left( -1 + q^{1 + 2\,n} \right) }\] 

Thus, the \texttt{qZeil} package claims that
\[ G(n,j) = F(n,j) R(n,j)\]
where \[ R(n,j) =\frac{q^{n-2j}(q^n-q^{2j})(q^n-q^{2j+1})}
    {(q^n-1)(q^n+1)(q^{2n+1}-1)} \]
is the desired function $G$.
It is easy to see that $G(n,j)$ satifies equation (\ref{bc}) since the 
Gaussian polynomial $\gp{A}{B}{q}$ is 0 when $B>A$ and when $B<0$.  We now
must verify that it satisfies equation (\ref{WZeq}).  For this purpose,
it is easiest to divide (\ref{WZeq}) through by $F(n,j)$ to obtain the
equivalent formulation
\begin{equation} 
  1 - \frac{(1+q^{2n}) F(n-1,j)}{F(n,j)} =R(n,j) - \frac{F(n,j-1) 
  R(n,j-1)}{F(n,j)}. 
  \label{WZeq2}
\end{equation}
Continuing with Mathematica:
\begin{verbatim}
In[5]:= R[n_,j_] := (q^(n-2j) (-q^(2j)+q^n) (q^n-q^(2j+1)))/
          ((q^n-1)(q^n+1) (q^(2n+1) - 1))

In[6]:= lhs = qSimplify[1 - (1+q^(2n)) F[n-1,j]/F[n,j]]

Out[6]= \end{verbatim} \[ 1 + \frac{\left( -1 - q^{2\,n} \right) \,
     \left( 1 - q^{-2\,j + n} \right) \,
     \left( 1 - q^{1 + 2\,j + n} \right) }{\left( 1 - 
       q^{2\,n} \right) \,\left( 1 - q^{1 + 2\,n} \right) }\]
\begin{verbatim}
In[7]:= rhs = qSimplify[R[n,j] - F[n,j-1] R[n,j-1]/F[n,j]]

Out[7]=\end{verbatim} \begin{gather*}
\frac{q^{-2\,j + n}\,\left( -q^{2\,j} + q^n \right) \,
     \left( -q^{1 + 2\,j} + q^n \right) }{\left( 1 - 
       q^{2\,n} \right) \,\left( 1 - q^{1 + 2\,n} \right) } +\\
   \frac{q^{3 - 6\,j + n}\,
     \left( -q^{-2 + 2\,j} + q^n \right) \,
     \left( -q^{-1 + 2\,j} + q^n \right) \,
     \left( 1 - q^{2\,j + n} \right) \,
     \left( 1 - q^{1 + 2\,j + n} \right) }{\left( 1 - 
       q^{2\,n} \right) \,
     \left( 1 - q^{1 - 2\,j + n} \right) \,
     \left( 1 - q^{2 - 2\,j + n} \right) \,
     \left( 1 - q^{1 + 2\,n} \right) }\end{gather*}
\begin{verbatim}
In[8]:= Simplify[lhs - rhs]

Out[8]= 0
\end{verbatim}
And thus using the Mathematica \texttt{Simplify} procedure together with 
the \texttt{qSimplify} procedure available in the \texttt{qZeil} package, 
we see that (\ref{WZeq2}), and therefore (\ref{WZeq}), and therefore 
(\ref{1}) is true.  Of course, the complete computer skeptic can verify
(\ref{WZeq2}) directly by hand.

\subsubsection{Dealing with Non-minimal Recurrences}
\textbf{Paule's Creative Symmetrization.}
\index{creative symmetrization|(}
It is well known (see e.g. Paule~\cite{pp:rr}) that creative
\index{Paule, Peter} 
telescoping does not always output the {\em minimal} recurrence that is 
satisfied by the summand.  Let us examine an example where this is the case,
Identity~\ref{t-SLfin}.5-b:
\begin{gather*}
 \sum_{j\geqq 0} q^{2j^2 + j} \gp{n+1}{2j+1}{q} =
   \sum_{j=-\infty}^\infty (-1)^j q^{3j^2 + j} \gp{2n}{n+2j}{q} 
   \tag{\ref{t-SLfin}.5-b}
\end{gather*}
From the Maple session in which this identity was conjectured we know that 
the LHS satisfies the recurrence
\begin{gather*}
   P_0 = 1,\\
   P_n = (1+q^n) P_{n-1}  
      \mbox{\ if $n\geqq 1$.}\\
 \end{gather*}
We return to Mathematica in the hopes of demonstrating that the RHS 
satisfies the same recurrence:
\begin{verbatim}
In[9]:= qZeil[ (-1)^j q^(3j^2 + j) qBinomial[2n, n + 2j, q], 
          {j, -Infinity, Infinity}, n, 3] 
                  
Out[9]= \end{verbatim} \begin{gather*}{SUM}(n) = 
  q^3\,\left( 1 - q^{-5 + 2\,n} \right) \,
    \left( 1 - q^{-4 + 2\,n} \right) \,
    {SUM}(-3 + n) - \\
   \frac{\left( q^5 + q^6 + q^7 + q^{3\,n} - q^{2 + 2\,n} + 
        q^{2 + 3\,n} \right) \,{SUM}(-2 + n)}
      {q^4} - \\ \frac{\left( -q - q^2 - q^3 - q^{2\,n} \right)
        \,{SUM}(-1 + n)}{q}\end{gather*}
but alas the creative telescoping algorithm finds a {\em third order} 
recurrence, when we were attempting to show that the RHS satisfies a 
certain {\em first order} recurrence.  But thanks to the following useful 
observation of Peter Paule, \index{Paule, Peter}all hope is not lost. 

  Notice that any function $F(n,j)$ can be written as the sum of its even 
part and its odd part, i.e.
\begin{equation} \label{evenodd}
  F(n,j) = \frac{F(n,j)+F(n,-j)}{2} + \frac{F(n,j) - F(n,-j)}{2}.
\end{equation}
If we sum both sides of (\ref{evenodd}) over all $j\in\mathbb{Z}$, the odd 
portion of $F(n,j)$ vanishes, so we obtain
\begin{equation} \label{crsym}
  \sum_{j\in\mathbb{Z}} F(n,j) 
= \sum_{j\in\mathbb{Z}} \frac{F(n,j) + F(n,-j)}{2}
\end{equation}
Applying (\ref{crsym}) to the RHS of (\ref{t-SLfin}.5), we obtain
\begin{equation}
  \sum_{j=-\infty}^\infty 2F(n,j) = 
  \sum_{j=-\infty}^\infty (-1)^j q^{3j^2 - j} \gp{2n}{n+2j}{q} (1+q^{2j})
\end{equation}
\begin{verbatim}
In[10]:= qZeil[ (-1)^j q^(3j^2 - j)(1 + q^(2j)) qBinomial[2n, n + 2j, 
      q], {j, -Infinity, Infinity}, n, 1]
                    
Out[10]= \end{verbatim} \[{SUM}(n) = 
  \left( 1 + q^n \right) \,{SUM}(-1 + n)\]
which is the desired (minimal) recurrence.
\index{creative symmetrization|)}

\textbf{Operator Algebra.}
\index{operator algebra}
  The technique of creative symmetrization works perfectly on the example 
under consideration and in fact works on many well known examples (see 
Paule~\cite{pr:qzeil}).  However, creative symmetrization by no means {\em 
guarantees} that the minimal, or even a lower order recurrence, will be 
produced using Zeilberger's algorithm.  It would therefore be wise to have 
a way of dealing with non-minimal recurrences in the event that creative 
symmetrizing fails.  For comparison, I will continue to work with the same 
example as from the previous section, even though the creative symmetrizing 
technique produces a minimal recurrence for it.
  But first, the forward shift operator must be introduced. Define
\[ NF(n,j) := F(n+1,j) \] and by extension,
\[ N^s F(n,j) := F(n+s,j) \] for any integer $s$.
For example, (\ref{WZeq}) re-written in operator notation is
\begin{equation}
   (1 - (1+q^{2n})N^{-1}) F(n,j) = (1 - J^{-1}) G(n,j),
\end{equation}
where, of course, $J$ is the forward shift operator in $j$, i.e.
\[ JF(n,j) := F(n,j+1) \] and by extension,
\[ J^s F(n,j) := F(n,j+s) \] for any integer $s$.

  Now the (unsymmetrized) RHS of (\ref{t-SLfin}.5-b) was shown by 
Mathematica to satisfy the recurrence
\begin{gather*}
  \sum_{j} F(n) = 
  q^3 (1 - q^{2n-5}) (1-q^{2n-4}) \sum_{j} F(n-3,j) \\ 
- (q + q^2 + q^3 + q^{3n} - q^{2n-2} + q^{3n-2}) \sum_{j} F(n-2,j) \\
+ (1 + q + q^2 + q^{2n-1}) \sum_{j} F(n-1,j),
\end{gather*} which is easily seen to be equivalent to
\begin{gather*}
  -q^3(1-q^{2n+1})(1-q^{2n+2}) \sum_j F(n,j) \\ + 
  (q + q^2 + q^3 - q^{2n+4} + q^{3n+5} + q^{3n+7}) \sum_j F(n+1,j) \\ -
  (1 + q + q^2 + q^{2n+5}) \sum_j F(n+2,j) \\ +
  \sum_j F(n+3,j) = 0,
\end{gather*}
which in turn can be written in operator notation as
\begin{gather} \label{RHS5op}
  \Big[ -q^3(1-q^{2n+1})(1-q^{2n+2}) +
  (q + q^2 + q^3 - q^{2n+4} + q^{3n+5} + q^{3n+7}) N \\ -
  (1 + q + q^2 + q^{2n+5}) N^2 + N^3 \Big] \sum_j F(n,j) = 0. \nonumber
\end{gather}
\index{operator!annihilating}
We say that an operator $B$ {\em annihilates} a sequence $S(n)$ if
\[ [B] S(n) = 0, \] so it is clear that the operator in (\ref{RHS5op}) 
annihilates the RHS of (\ref{t-SLfin}.5-b). 
  Similarly, the LHS of (\ref{t-SLfin}.5-b) was shown by Mathemtaica to 
satisfy the recurrence
  \begin{equation*}
     \sum_j F(n,j) = (1 + q^n) \sum_j F(n-1,j)
  \end{equation*} which is equivalent to the recurrence
  \begin{equation*}
     \sum_j F(n+1,j) - (1+q^{n+1}) \sum_j F(n,j) = 0
  \end{equation*} which in turn can be written in operator notation as
  \begin{equation} \label{LHS5op}
      \Big[ N - (1+q^{n+1}) \Big] \sum_j F(n,j) = 0.
  \end{equation} 
Now it is clear that if an operator $B$ annihilates a sequence $S(n)$, 
then any left multiple of $B$ will also annihilate $S(n)$, i.e.
\[ [B]S(n) = 0 \implies [AB]S(n) = 0, \] where $A$ is any $n$-shift operator.
Thus, if it can be shown that the operator in (\ref{RHS5op}) is a left 
multiple of (\ref{LHS5op}) (along with the appropriate initial conditions),
then the identity will be proved.   Dividing the third order recurrence
on the right by the first order recurrence, we find 
that
\begin{gather*}
\Big[ N^2 - q(1+q -q^{n+2} + q^{2n+4})N + q^3(q^{2n+1} - 1)(q^{n+1}-1)\Big]
\Big[ N - (1+q^{n+1}) \Big] \\ 
=\Big[ -q^3(1-q^{2n+1})(1-q^{2n+2}) +
  (q + q^2 + q^3 - q^{2n+4} + q^{3n+5} + q^{3n+7}) N \\ -
  (1 + q + q^2 + q^{2n+5}) N^2 + N^3 \Big],
\end{gather*}
and so both the LHS and RHS of (\ref{t-SLfin}.5-b) are annhilated by the
third order recurrence found by Mathematica, and together with the 
appropriate initial conditions, the identity is established.

\subsubsection{Multisum techniques}
The Bosonic forms for each of the identities in \S~\ref{t-SLfin} are 
single sums, but many are single sums involving $q$-trinomial 
co\"efficients.  There are currently no computer implementations of 
Zeilberger's algorithm which allows direct inputting of $q$-trinomial 
co\"efficients, but recalling the definitions of the $q$-trinomial 
co\"efficients (\ref{Trbdef}--\ref{Udef}), we can express any single sum 
$q$-trinomial expressions as a double sum $q$-binomial expression.
Thus the situation we are now in is wanting to prove an identity
\[ LHS(n) = RHS(n) \] where
$RHS(n)$ is of the form
\[ RHS(n) = \sum_{j=-\infty}^\infty \sum_{s=0}^\infty F(n,j,s).\]
for all nonnegative integers $n$.
The conditions analogous to (\ref{wzcond1}) and (\ref{wzcond2})
in the two-fold sum case is
that we find functions $G(n,j,s)$ and $H(n,j,s)$ such that
\begin{equation} \label{2wzcond1}
  \sum_{i=0}^r p_i(q) F(n+i,j,s) = G(n,j,s) - G(n,j-1,s) + H(n,j,s) - H(n,j,s-1)
\end{equation}  
and
\begin{equation} \label{2wzcond2}
  \lim_{j\to\pm\infty} G(n,j,s) = \lim_{s\to\pm\infty} H(n,j,s) = 0
\end{equation}
Then, by summing (\ref{2wzcond1}) over all $j$ and $s$, we immediately obtain
  \[ \sum_{i=0}^r p_i(q) RHS(n-i) = 0, \]
which, together with the appropriate initial conditions, will complete the
proof.  The WZ certificate is now a pair of functions $R_j(n,j,s)$ and  
$R_s(n,j,s)$ such that
  \[ G(n,j,s) = F(n,j,s) R_j (n,j,s) \] and
  \[ H(n,j,s) = F(n,j,s) R_s (n,j,s). \]
(Of course, these ideas extend from two-fold summation to $k$-fold summation for
any fixed positive integer $k$.)

\noindent\textbf{A Detailed Example.}\\
Let us consider the identity (\ref{t-SLfin}.4).
\begin{gather*}
   \sum_{i\geqq 0} \sum_{j\geqq 0} \sum_{k\geqq 0} 
     (-1)^{j+k} q^{i^2 + j^2 + 2k} 
     \gp{j}{i}{q^2} \gp{j+ k -1}{k}{q^2}  \gp{n-i-k}{j}{q^2} \\=
   \sum_{j=-\infty}^\infty (-1)^j q^{j^2} \U{n-1}{j}{q} \tag{\ref{t-SLfin}.4}
\end{gather*}
where during the Maple session in which it was conjectured, the LHS was found to 
satisfy the recurrence
\begin{gather*}
    P_0 = 1,\\
    P_1 = -q + 1,\\ 
    P_n = (1-q^2 - q^{2n-1}) P_{n-1} +(q^2-q^{2n-2}) P_{n-2} 
      \mbox{\ if $n\geqq 2$.}
 \end{gather*}
Translating the above into forward shifts and operator notation, we obtain
 \begin{equation} \label{LHS4op}
   \Big[ -q^2(-1+q^n)(1+q^n) + (1-q^2 - q^{3n+3})N - N^2 \Big] \sum\sum 
   F(n,j,r) = 0
 \end{equation}
Thus, it needs to be shown that the RHS satisfies the same recurrence (or 
an operator algebraic multiple of it) and the initial conditions.
Now as stated previously, the Riese Mathematica packages can not deal with 
the RHS of (\ref{t-SLfin}.4) directly.  Instead, we must use (\ref{Udef}) 
and (\ref{Tzerodef}) to find that
\begin{gather*} \sum_{j=-\infty}^\infty (-1)^j q^{j^2} \U{n-1}{j}{q}
  =  \sum_{j=-\infty}^\infty (-1)^j q^{j^2} \Big\{ \Tzero{n-1}{j}{q}
     + \Tzero{n-1}{j+1}{q} \Big\} \\
  =   \sum_{j=-\infty}^\infty (-1)^j q^{j^2} \Big\{
        \sum_{r=0}^{n-1} (-1)^r \gp{n-1}{r}{q^2} \gp{2n-2-2r}{n-1-j-r}{q} \\+
        \sum_{r=0}^n     (-1)^r \gp{n-1}{r}{q^2} \gp{2n-2-2r}{n  -j-r}{q}
     \Big\} \\
  =   \sum_{j=-\infty}^\infty \sum_{r=0}^\infty
       (-1)^{j+r} q^{j^2} \gp{n-1}{r}{q^2} \Big( 
           \gp{2n-2r-2}{n-j-r-1}{q} + \gp{2n-2r-2}{n-j-r}{q} \Big) \\
  =  \sum_{j=-\infty}^\infty \sum_{r=0}^\infty
       (-1)^{j+r} q^{j^2} \gp{n-1}{r}{q^2}  \gp{2n-2r-2}{n-j-r-1}{q}
          \Big(  1 +  \frac{1-q^{n+j-r-1}}{1-q^{n-j-r}} \Big).
\end{gather*}
We now return to Mathematica, this time using Riese's \texttt{qMultiSum} 
package.\index{Riese, Axel}\index{qMultiSum (Mathematica
package)}
\begin{verbatim}
In[11]:= <<qMultiSum.m

Out[11]= Axel Riese's qMultiSum implementation 
           version 2.15 (07/26/01) loaded

In[12]:= qFindRecurrence[ (-1)^(j + r) q^(j^2) 
          qBinomial[n - 1, r, q^2] 
          qBinomial[2n - 2r - 2, n - 1 - j - r, q]
          (1 + (1 - q^(n + j - r - 1))/(1 - q^(n - j - r))),
           n, {j, r}, 1, {0, 2}]
           
Out[12] = \end{verbatim}
\begin{gather*} 
     -( \left( q^2 - q^n \right) \,\left( q^3 - q^n \right) \,
      \left( q^4 - q^n \right) \,\left( q^2 + q^n \right) \,
      \left( q^3 + q^n \right) \,\left( q^4 + q^n \right) \, \\
      F[-4 + n,-1 + j,-1 + r]  \\ + 
   q^6\,\left( -1 + q + q^2 \right) \,\left( q^2 - q^n \right) \,
    \left( q^3 - q^n \right) \,\left( q^2 + q^n \right) \,
    \left( q^3 + q^n \right) \,\\
     F[-3 + n,-1 + j,-1 + r] \\ - 
   q\,\left( q^2 - q^n \right) \,\left( q^3 - q^n \right) \,
    \left( q^2 + q^n \right) \,\left( q^3 + q^n \right) \,
    \left( q^7 - q^{2\,n} \right) \,F[-3 + n,-1 + j,r] \\ - 
   q^{8 + 2\,n}\,\left( q^2 - q^n \right) \,
    \left( q^2 + q^n \right) \,F[-2 + n,-2 + j,r] \\ - 
   q^{11}\,\left( -1 - q + q^2 \right) \,\left( q^2 - q^n \right) \,
    \left( q^2 + q^n \right) \,F[-2 + n,-1 + j,-1 + r] \\ + 
   q^9\,\left( q^2 - q^n \right) \,\left( q^2 + q^n \right) \,
    \left( -q^3 + q^4 + q^5 + q^{2\,n} \right) \,F[-2 + n,-1 + j,r] \\ -
    q^{6 + 2\,n}\,\left( q^2 - q^n \right) \,
    \left( q^2 + q^n \right) \,F[-2 + n,j,r] \\ - 
   q^{14 + 2\,n}\,F[-1 + n,-2 + j,r] - 
   q^{15}\,F[-1 + n,-1 + j,-1 + r] \\ - 
   q^{11}\,\left( -q^4 - q^5 + q^6 - q^{2\,n} \right) \,
    F[-1 + n,-1 + j,r] - q^{12 + 2\,n}\,F[-1 + n,j,r] \\ - 
   q^{15}\,F[n,-1 + j,r])  == 0
\end{gather*}
Thus we have a recurrence satisfied by the RHS summand.  Notice, however, 
that the recurrence involves shifts in the summation variables as well as 
in $n$.  To remedy this, we use the \texttt{qRecurrenceToCertificate} and 
\texttt{qSumCertificate} procedures.

\begin{verbatim}          
In[13]:= qRecurrenceToCertificate[%]

Out[13]= \end{verbatim}
\begin{gather*}
  \Delta_j[q^{18}\,\left( -1 + q^n \right) \,
      \left( 1 + q^n \right) \,\left( -1 + q^{1 + n} \right) \,
      \left( 1 + q^{1 + n} \right) \,\left( -1 + q^{2 + n} \right) \,
      \left( 1 + q^{2 + n} \right) \,F[n,j,r] \\ + 
     q^{16}\,\left( -1 + q + q^2 \right) \,
      \left( -1 + q^{1 + n} \right) \,\left( 1 + q^{1 + n} \right) \,
      \left( -1 + q^{2 + n} \right) \,\left( 1 + q^{2 + n} \right) \,
      F[1 + n,j,r] \\
     + q^{18}\,\left( -1 + q^{1 + n} \right) \,
      \left( 1 + q^{1 + n} \right) \,\left( -1 + q^{2 + n} \right) \,
      \left( 1 + q^{2 + n} \right) \,
      \left( -1 + q^{1 + 2\,n} \right) \,F[1 + n,j,1 + r] \\ + 
     q^{15}\,\left( -1 - q + q^2 \right) \,
      \left( -1 + q^{2 + n} \right) \,\left( 1 + q^{2 + n} \right) \,
      F[2 + n,j,r] \\
      - q^{16}\,\left( -1 + q^{2 + n} \right) \,
      \left( 1 + q^{2 + n} \right) \,
      \left( -1 + q + q^2 - q^{2 + 2\,n} + q^{5 + 2\,n} \right) \,
      F[2 + n,j,1 + r] \\
     + q^{18 + 2\,n}\,
      \left( -1 + q^{2 + n} \right) \,\left( 1 + q^{2 + n} \right) \,
      F[2 + n,1 + j,1 + r] \\- q^{15}\,F[3 + n,j,r] - 
     q^{15}\,\left( -1 - q + q^2 - q^{4 + 2\,n} + q^{5 + 2\,n}
        \right) \,F[3 + n,j,1 + r]\\ - 
     q^{20 + 2\,n}\,F[3 + n,1 + j,1 + r] - q^{15}\,F[4 + n,j,1 + r]] \\
    + \Delta_r[
    q^{18}\,\left( -1 + q^{1 + n} \right) \,
      \left( 1 + q^{1 + n} \right) \,\left( -1 + q^{2 + n} \right) \,
      \left( 1 + q^{2 + n} \right) \,
      \left( -1 + q^{1 + 2\,n} \right) \,F[1 + n,j,r] \\ - 
     q^{16}\,\left( -1 + q^{2 + n} \right) \,
      \left( 1 + q^{2 + n} \right) \,
      \left( -1 + q + q^2 - q^{2 + 2\,n} - q^{4 + 2\,n} + 
        q^{5 + 2\,n} \right) \,F[2 + n,j,r] \\ - 
     q^{15}\,\left( -1 - q + q^2 - q^{4 + 2\,n} + q^{5 + 2\,n} + 
        q^{7 + 2\,n} \right) \,F(3 + n,j,r) - q^{15}\,F[4 + n,j,r]] \\ +
    q^{18}\,\left( -1 + q^n \right) \,\left( 1 + q^n \right) \,
    \left( -1 + q^{1 + n} \right) \,\left( 1 + q^{1 + n} \right) \,
    \left( -1 + q^{2 + n} \right) \,\left( 1 + q^{2 + n} \right) \,
    F[n,j,r] \\
    + q^6\,\left( -1 + q + q^2 \right) \,
    \left( q^2 - q^{4 + n} \right) \,
    \left( q^3 - q^{4 + n} \right) \,
    \left( q^2 + q^{4 + n} \right) \,
    \left( q^3 + q^{4 + n} \right) \,F[1 + n,j,r]\\ - 
   q\,\left( q^2 - q^{4 + n} \right) \,
    \left( q^3 - q^{4 + n} \right) \,
    \left( q^2 + q^{4 + n} \right) \,
    \left( q^3 + q^{4 + n} \right) \,
    \left( q^7 - q^{8 + 2\,n} \right) \,F[1 + n,j,r] \\ - 
   q^{14 + 2\,n}\,\left( q^2 - q^{4 + n} \right) \,
    \left( q^2 + q^{4 + n} \right) \,F[2 + n,j,r] \\ - 
   q^{16 + 2\,n}\,\left( q^2 - q^{4 + n} \right) \,
    \left( q^2 + q^{4 + n} \right) \,F[2 + n,j,r] \\ - 
   q^{11}\,\left( -1 - q + q^2 \right) \,
    \left( q^2 - q^{4 + n} \right) \,
    \left( q^2 + q^{4 + n} \right) \,F[2 + n,j,r] \\ + 
   q^9\,\left( q^2 - q^{4 + n} \right) \,
    \left( q^2 + q^{4 + n} \right) \,
    \left( -q^3 + q^4 + q^5 + q^{8 + 2\,n} \right) \,F[2 + n,j,r] \\ - 
   q^{15}\,F(3 + n,j,r) - q^{20 + 2\,n}\,F[3 + n,j,r] \\ - 
   q^{22 + 2\,n}\,F[3 + n,j,r] - 
   q^{11}\,\left( -q^4 - q^5 + q^6 - q^{8 + 2\,n} \right) \,
    F[3 + n,j,r]\\
    - q^{15}\,F[4 + n,j,r] == 0
\end{gather*}

Note that the $\Delta_j$ and $\Delta_r$ are the shift operators $J-1$ and 
$R-1$ respectively.  Thus, the functions they act on are the certificate 
function pair $R_j(n,j,r)$ and $R_r(n,j,r)$ respectively.  We now find the 
recurrence (with shifts only in the $n$) for the RHS sum:

\begin{verbatim}
In[14]:=qSumCertificate[%]

Out[14]=
\end{verbatim}
\begin{gather*}
  q^3\,\left( -1 + q^n \right) \,\left( 1 + q^n \right) \,
    \left( -1 + q^{1 + n} \right) \,\left( 1 + q^{1 + n} \right) \,
    \left( -1 + q^{2 + n} \right) \,\left( 1 + q^{2 + n} \right) \,
    {SUM}[n] \\+ 
   q\,\left( -1 + q^{1 + n} \right) \,\left( 1 + q^{1 + n} \right) \,
    \left( -1 + q^{2 + n} \right) \,\left( 1 + q^{2 + n} \right) \,
    \left( -1 + q + q^{3 + 2\,n} \right) \,{SUM}[1 + n] \\
    - \left( -1 + q^{2 + n} \right) \,\left( 1 + q^{2 + n} \right) \,
    \left( 1 + q^3 - q^{3 + 2\,n} - q^{5 + 2\,n} + q^{6 + 2\,n}
      \right) \,{SUM}[2 + n] \\- 
   q\,\left( -1 + q - q^{3 + 2\,n} + q^{4 + 2\,n} + q^{6 + 2\,n}
      \right) \,{SUM}[3 + n] - 
   SUM[4 + n] == 0
\end{gather*}
So, once again the computer finds a nonminimal recurrence.  So, we perform
another right division to demonstrate that 
the operator represented above is a left multiple of the operator in 
(\ref{LHS4op}):
\begin{gather*}
  \Big[  -N^2 - (1-q)(-1+q^{n+2})(1+q^{n+2}) N + \\
 q(1+q^{n+1}) (-1 + q^{n+2})(1+q^{n+2})(-1+q^{n+1}) \Big] \\
   \Big[ -N^2 + (1-q^2 - q^{2n+3}) N + q^2(1-q^n)(1+q^n) \Big] \\ =
  N^4 + q(-1+q-q^{3n+3}+q^{2n+4}+q^{2n+6})N^3 \\
  +(-1+q^{n+2})(1+q^{n+2})(1+q^3-q^{2n+3}-q^{2n+5}+q^{2n+6})N^2 \\
  -q(-1+q^{n+1})(1+q^{n+2})(-1+q^{n+2})(1+q^{n+2})(-1+q+q^{2n+3})N \\
  -q^3(q^n-1)(1+q^n)(-1+q^{n+1})(1+q^{n+1})(-1+q^{n+2})(1+q^{n+2})
\end{gather*}

\subsubsection{WZ Certificates for Selected Identities}

\indent The RHS of Identity~\ref{t-SLfin}.2-b is certified by
  \[ \frac{q^{-2\,j + n}\,
    \left( -q^{2\,j} + q^n \right)
      \,\left( -q^{1 + 2\,j} + 
      q^n \right) }{\left( -1 + 
      q^n \right) \,
    \left( 1 + q^n \right) \,
    \left( -1 + q^{1 + 2\,n}
      \right) }.\]

The RHS of Identity~\ref{t-SLfin}.3-b is certified by 
  \[ \frac{q^{-2\,j + n}\,
    \left( -q^{2\,j} + q^n \right)
      \,\left( -q^{1 + 2\,j} + 
      q^n \right) }{\left( -1 + 
      q^n \right) \,
    \left( 1 + q^n \right) \,
    \left( -q + q^{2\,n} \right) }.\]

After creative symmetrizing with the factor $(1 + q^{2j})$, 
the RHS of Identitiy~\ref{t-SLfin}.5-b is certified by
   \[ \frac{q^{-2\,j + n}\,
    \left( -q^{2\,j} + q^n \right)
      \,\left( -q^{1 + 2\,j} + 
      q^n \right) }{\left( -1 + 
      q^n \right) \,
    \left( 1 + q^n \right) \,
    \left( -q + q^{2\,n} \right) }.\]

The RHS of Identity~\ref{t-SLfin}.11-b is certified by 
  \[ \frac{ q^{2n-3j} (q^n-q^{3j}) (q^n-q^{3j+1}) (q^n - q^{3j+2}) }
  {(q^n-1) (q^n+1) (q^{2n}-q) (q^{2n+1} - 1) }. \]

The RHS of Identity~\ref{t-SLfin}.50-b is certified by 
  \[ \frac{q^{1 - 3\,j + 2\,n}\,
    \left( -q^{3\,j} + q^n \right)
      \,\left( -q^{1 + 3\,j} + 
      q^n \right) \,
    \left( -q^{2 + 3\,j} + 
      q^n \right) }{\left( -1 + 
      q^n \right) \,
    \left( 1 + q^n \right) \,
    \left( -1 + q^{1 + n} \right)
      \,\left( 1 + q^{1 + n}
      \right) \,
    \left( -1 + q^{1 + 2\,n}
      \right) }.\]

The RHS of Identity~\ref{t-SLfin}.96-b is certified by 
   \begin{gather*}
      -\frac{q^{2 - 5\,k + 2\,n}\,
      \left( -q^{5\,k} + q^n
        \right) \,
      \left( -q^{1 + 5\,k} + 
        q^n \right) \,
      \left( -q^{2 + 5\,k} + 
        q^n \right)}
      {\left( -1 + 
        q^{2 + 5\,k} \right) \,
      \left( 1 + q^{2 + 5\,k}
        \right) \,
      \left( -1 + q^n \right) \,
      \left( 1 + q^n \right) \,
      \left( -1 + q^{1 + n}
        \right)} \\
   \times\frac{\left( -q^{3 + 5\,k} + 
        q^n \right) \,
      \left( -q^{4 + 5\,k} + 
        q^n \right) } {\left( 1 + q^{1 + n} \right)
        \,\left( -1 + 
        q^{1 + 2\,n} \right) \,
      \left( -1 + 
        q^{3 + 2\,n} \right)}. 
     \end{gather*}

The RHS of Identity~\ref{t-SLfin}.99-b is certified by
 \begin{gather*} 
   - \frac{q^{-5\,k + 2\,n}\,
      \left( -q^{5\,k} + q^n
        \right) \,
      \left( -q^{1 + 5\,k} + 
        q^n \right) \,
      \left( -q^{2 + 5\,k} + 
        q^n \right) \,
      \left( -q^{3 + 5\,k} + 
        q^n \right) }
     {\left( -1 + 
        q^{2 + 5\,k} \right) \,
      \left( 1 + q^{2 + 5\,k}
        \right) \,
      \left( -1 + q^n \right) \,
      \left( 1 + q^n \right) \,
      \left( -q + q^n \right) \,
      \left( q + q^n \right) \,
      \left( -q + q^{2\,n} \right)} \\ 
    \times\frac{\left( -q^{4 + 5\,k} + 
        q^n \right) \,
      \left( 1 + q^{9 + 10\,k} - 
        q^{3 + 5\,k + n} - 
        2\,q^{4 + 5\,k + n} + 
        q^{2 + 5\,k + 3\,n}
        \right) } {\,\left( -1 + 
        q^{1 + 2\,n} \right) \,
      \left( 1 + q^{4 + 10\,k} - 
        q^{2 + 5\,k + n} - 
        q^{3 + 5\,k + n} - 
        q^{4 + 5\,k + n} + 
        q^{5 + 5\,k + 3\,n}
        \right)}.
\end{gather*}\index{WZ method|)}

\subsection{Recurrence Proof} \label{rp}
\index{recurrence proof|(}
Depending on the complexity of the summand and recurrence, 
and the limits of the memory 
available on a given computer system, it may or may not be feasible at the 
present time to obtain a computer generated proof of a given finite 
Rogers-Ramanujan type identity.  However, guided by the philosophy of the 
WZ method, one can show that the 
conjectured bosonic form satisfies the same recurrence relation and initial 
conditions as the known fermionic form.  The main differences between 
this ``recurrence proof" and the WZ method is that the entire summation is dealt with as a whole, rather than just the 
summand, and no proof certificate is produced in the process.  Let us work 
through some examples:

\vskip 5mm
\noindent\begin{id*}[$q$-Trinomial Finite form of \ref{t-ljslist}.3/23 
(with $q$ replaced by $-q$)]{\ref{t-SLfin}.3-T} 
 \begin{gather*}
    \sum_{j\geqq 0} q^{j^2} \gp{n}{j}{q^2} 
   = \sum_{j=-\infty}^\infty (-1)^j q^{3j^2 + j} \U{n}{3j}{q}\tag{\ref{t-SLfin}.3-t}\\
  P_0 = 1,\\ 
  P_n = (1+q^{2n-1}) P_{n-1} \mbox{\ if $n\geqq 1$.} \\
 \end{gather*}
\end{id*}

\begin{proof}
Let \[ P_n = \sum_{j=-\infty}^\infty (-1)^j q^{3j^2 + j} \U{n}{3j}{q}. \]
We need to show that $P_n - (1+q^{2n-1}) P_{n-1} = 0 $ and $P_0 = 1$.

\begin{eqnarray*}
& & P_n - P_{n-1} + q^{2n-1} P_{n-1} \\
&=& \hskip 4mm
    \sum_{j=-\infty}^\infty (-1)^j q^{3j^2 + j         } \U{n  }{3j  }{q} 
   -\sum_{j=-\infty}^\infty (-1)^j q^{3j^2 + j         } \U{n-1}{3j  }{q}\\
& &-\sum_{j=-\infty}^\infty (-1)^j q^{3j^2 + j + 2n - 1} \U{n-1}{3j  }{q}
\end{eqnarray*}
By applying (\ref{U1}) to the first term, we obtain
\begin{eqnarray*}
&=& \hskip 4mm
    \sum_{j=-\infty}^\infty (-1)^j q^{3j^2 + j         }\U   {n-1}{3j  }{q}
   +\sum_{j=-\infty}^\infty (-1)^j q^{3j^2 + j + 2n - 1}\U   {n-1}{3j  }{q}\\
& &+\sum_{j=-\infty}^\infty (-1)^j q^{3j^2 -2j +  n    }\Tone{n-1}{3j-1}{q}
   +\sum_{j=-\infty}^\infty (-1)^j q^{3j^2 +4j +  n + 1}\Tone{n-1}{3j+2}{q}\\
& &-\sum_{j=-\infty}^\infty (-1)^j q^{3j^2 + j         }\U   {n-1}{3j  }{q}
   -\sum_{j=-\infty}^\infty (-1)^j q^{3j^2 + j + 2n - 1}\U   {n-1}{3j  }{q}
\end{eqnarray*}
whereupon the first term cancels the fifth and the second cancels the sixth
leaving
\begin{eqnarray*}
&=& \hskip 4mm
    \sum_{j=-\infty}^\infty (-1)^j q^{3j^2 - 2j + n} \Tone{n-1}{3j-1}{q}
   +\sum_{j=-\infty}^\infty (-1)^j q^{3j^2 + 4j +n+1}\Tone{n-1}{3j+2}{q}\\
&=& 0 \mbox{\quad (by shifting $j$ to $j-1$ in the second term).}
\end{eqnarray*}
Since $P_0 = \sum_{j=-\infty}^\infty (-1)^j q^{3j^2 + j} \U{0}{3j}{q} = 1$,
the proof is complete.
\end{proof} 

The preceeding example was particularly simple.  In general, more effort 
is required in recurrence proofs.  Let us now look at a somewhat more 
intricate example.
\vskip 5mm 
\begin{id*}[Finite form of \ref{t-ljslist}.17]{\ref{t-SLfin}.17}  
 \begin{gather*}
    \sum_{j\geqq 0} \sum_{k\geqq 0} (-1)^k q^{j^2+j+k}
     \gp{j+k}{j}{q^2} \gp{n-k}{j}{q^2} =
   \sum_{j=-\infty}^\infty (-1)^j q^{j(5j+3)/2} \Tone{n+1}{\lfloor       
      \frac{5j+2}{2} \rfloor}{q} \\
  P_0 = 1\\
  P_1 = q^2 - q + 1\\
  P_n = (1 - q + q^{2n}) P_{n-1} + q P_{n-2} \mbox{\ if $n\geqq 2$.}
 \end{gather*}
\end{id*}

\begin{proof}
Let \begin{eqnarray*}
 P_n & = & \sum_{j=-\infty}^\infty (-1)^j q^{j(5j+3)/2} \Tone{n+1}{\lfloor       
      \frac{5j+2}{2} \rfloor}{q}  \\
     & = & \sum_{k=-\infty}^\infty q^{10k^2 + 3k  }\Tone{n+1}{5k+1}{q}
                                  -q^{10k^2 +13k+4}\Tone{n+1}{5k+3}{q}.
\end{eqnarray*}
We need to show that $P_n - (1 - q + q^n) P_{n-1} - q P_{n-2} = 0$
if $n\geqq 2$ and that the appropriate initial conditions hold.

\begin{eqnarray*}
& & \hskip 4mm P_n - P_{n-1} +q P_{n-1} - q^{2n} P_{n-1} - q P_{n-2} \\
&=& \hskip 4mm
    \sum_{k=-\infty}^\infty q^{10k^2 + 3k      }\Tone{n+1}{5k+1}{q}
   -\sum_{k=-\infty}^\infty q^{10k^2 + 3k      }\Tone{n  }{5k+1}{q}\\
& &+\sum_{k=-\infty}^\infty q^{10k^2 + 3k + 1  }\Tone{n  }{5k+1}{q}
   -\sum_{k=-\infty}^\infty q^{10k^2 + 3k +2n  }\Tone{n  }{5k+1}{q}\\
& &-\sum_{k=-\infty}^\infty q^{10k^2 + 3k + 1  }\Tone{n-1}{5k+1}{q}
   -\sum_{k=-\infty}^\infty q^{10k^2 +13k + 4  }\Tone{n+1}{5k+3}{q}\\
& &+\sum_{k=-\infty}^\infty q^{10k^2 +13k + 4  }\Tone{n  }{5k+3}{q}
   -\sum_{k=-\infty}^\infty q^{10k^2 +13k + 5  }\Tone{n  }{5k+3}{q}\\
& &+\sum_{k=-\infty}^\infty q^{10k^2 +13k +2n+4}\Tone{n  }{5k+3}{q}
   +\sum_{k=-\infty}^\infty q^{10k^2 +13k + 5  }\Tone{n-1}{5k+3}{q}
\end{eqnarray*}
Apply (\ref{ET1}) to the first, third, sixth, and ninth terms to obtain:
\begin{eqnarray*}
&=& \hskip 4mm
    \sum_{k=-\infty}^\infty q^{10k^2 + 3k      }\Tone {n  }{5k+1}{q}
   +\sum_{k=-\infty}^\infty q^{10k^2 + 8k+  n+2}\Tzero{n  }{5k+2}{q}\\
& &+\sum_{k=-\infty}^\infty q^{10k^2 - 2k + n  }\Tzero{n  }{5k  }{q}
   -\sum_{k=-\infty}^\infty q^{10k^2 + 3k      }\Tone {n  }{5k+1}{q}\\
& &+\sum_{k=-\infty}^\infty q^{10k^2 + 3k + 1  }\Tone {n-1}{5k+1}{q}
   +\sum_{k=-\infty}^\infty q^{10k^2 + 8k + n+2}\Tzero{n-1}{5k+2}{q}\\
& &+\sum_{k=-\infty}^\infty q^{10k^2 - 2k + n  }\Tzero{n-1}{5k  }{q}
   -\sum_{k=-\infty}^\infty q^{10k^2 + 3k +2n  }\Tone {n  }{5k+1}{q}\\
& &-\sum_{k=-\infty}^\infty q^{10k^2 + 3k + 1  }\Tone {n-1}{5k+1}{q}
   -\sum_{k=-\infty}^\infty q^{10k^2 +13k + 4  }\Tone {n  }{5k+3}{q}\\
& &-\sum_{k=-\infty}^\infty q^{10k^2 +18k + n+8}\Tzero{n  }{5k+4}{q}
   -\sum_{k=-\infty}^\infty q^{10k^2 + 8k + n+2}\Tzero{n  }{5k+2}{q}\\
& &+\sum_{k=-\infty}^\infty q^{10k^2 +13k + 4  }\Tone {n  }{5k+3}{q}
   -\sum_{k=-\infty}^\infty q^{10k^2 +13k + 5  }\Tone {n-1}{5k+3}{q}\\
& &-\sum_{k=-\infty}^\infty q^{10k^2 +18k + n+8}\Tzero{n-1}{5k+4}{q}
   -\sum_{k=-\infty}^\infty q^{10k^2 + 8k + n+2}\Tzero{n-1}{5k+2}{q}\\
& &+\sum_{k=-\infty}^\infty q^{10k^2 +13k +2n+4}\Tone{n  }{5k+3}{q}
   +\sum_{k=-\infty}^\infty q^{10k^2 +13k + 5  }\Tone{n-1}{5k+3}{q}
\end{eqnarray*}
whereupon the first terms cancels the fourth, the second cancels the 
twelfth, the fifth cancels the ninth, the sixth cancels the sixteenth,
the tenth cancels the thirteenth, and the fourteenth cancels the eighteenth 
leaving
\begin{eqnarray*}
&=& \hskip 4mm
    \sum_{k=-\infty}^\infty q^{10k^2 - 2k + n   }\Tzero{n  }{5k  }{q} 
   +\sum_{k=-\infty}^\infty q^{10k^2 - 2k + n   }\Tzero{n-1}{5k  }{q}\\
& &-\sum_{k=-\infty}^\infty q^{10k^2 + 3k +2n   }\Tone {n  }{5k+1}{q}
   -\sum_{k=-\infty}^\infty q^{10k^2 +18k + n +8}\Tzero{n  }{5k+4}{q}\\
& &-\sum_{k=-\infty}^\infty q^{10k^2 +18k + n +8}\Tzero{n-1}{5k+4}{q}
   +\sum_{k=-\infty}^\infty q^{10k^2 +13k +2n +4}\Tone {n  }{5k+3}{q}
\end{eqnarray*}
Next, we apply (\ref{ET0}) to the first and fourth terms, and (\ref{ET1}) 
to the third and sixth terms:
\begin{eqnarray*}
&=& \hskip 4mm
    \sum_{k=-\infty}^\infty q^{10k^2 - 2k + n   }\Tzero{n-1}{5k-1}{q}
   +\sum_{k=-\infty}^\infty q^{10k^2 + 3k +2n   }\Tone {n-1}{5k  }{q}\\
& &+\sum_{k=-\infty}^\infty q^{10k^2 + 8k +3n   }\Tzero{n-1}{5k+1}{q} 
   +\sum_{k=-\infty}^\infty q^{10k^2 - 2k + n   }\Tzero{n-1}{5k  }{q}\\
& &-\sum_{k=-\infty}^\infty q^{10k^2 + 3k +2n   }\Tone {n-1}{5k+1}{q}
   -\sum_{k=-\infty}^\infty q^{10k^2 + 8k +3n +1}\Tzero{n-1}{5k+2}{q}\\
& &-\sum_{k=-\infty}^\infty q^{10k^2 - 2k +3n -1}\Tzero{n-1}{5k  }{q}
   -\sum_{k=-\infty}^\infty q^{10k^2 -18k + n +8}\Tzero{n-1}{5k-5}{q}\\
& &-\sum_{k=-\infty}^\infty q^{10k^2 -13k +2n +4}\Tone {n-1}{5k-4}{q}
   -\sum_{k=-\infty}^\infty q^{10k^2 - 8k +3n   }\Tzero{n-1}{5k-3}{q}\\
& &-\sum_{k=-\infty}^\infty q^{10k^2 -18k + n +8}\Tzero{n-1}{5k-4}{q}
   +\sum_{k=-\infty}^\infty q^{10k^2 +13k +2n +4}\Tone {n-1}{5k+3}{q}\\
& &+\sum_{k=-\infty}^\infty q^{10k^2 +18k +3n +7}\Tzero{n-1}{5k+4}{q}
   +\sum_{k=-\infty}^\infty q^{10k^2 + 8k +3n +1}\Tzero{n-1}{5k+2}{q}
\end{eqnarray*}

In the above, the sixth and fourteenth terms cancel each other.
The second, third, fifth, and seventh terms sum to zero by (\ref{E0}).
Next, replace $k$ by $-k$ and apply (\ref{T1sym}) to the ninth term. 
Replace $k$ by $-k$ and apply (\ref{T0sym}) to the tenth term.  Then, the 
ninth, tenth, twelfth, and thirteen terms sum to zero by (\ref{E0}).  Now 
we have
\begin{eqnarray*}
& & \hskip 4mm
    \sum_{k=-\infty}^\infty q^{10k^2 - 2k + n   } \Tzero{n-1}{5k-1}{q}
   +\sum_{k=-\infty}^\infty q^{10k^2 - 2k + n   } \Tzero{n-1}{5k  }{q}\\
& &-\sum_{k=-\infty}^\infty q^{10k^2 -18k + n + 8}\Tzero{n-1}{5k-5}{q}
   -\sum_{k=-\infty}^\infty q^{10k^2 -18k + n + 8}\Tzero{n-1}{5k-4}{q}
\end{eqnarray*}
In the third and fourth terms, replace $k$ by $1-k$ and apply 
(\ref{T0sym}).  Then the first term will cancel the fourth and the second 
will cancel the third:
\begin{eqnarray*}
&=& 0.
\end{eqnarray*}
Upon verifying the easily checked initial conditions, the proof is 
complete.
\end{proof}

  Further examples of recurrence proofs may be found elsewhere in the 
literature:
Santos~\cite[Chapter 2]{jpos} 
contains proofs of Identities~\ref{t-SLfin}.29 and 
\ref{t-SLfin}.38-b.  Santos proves Identity~~\ref{t-SLfin}.20 in
\cite{jpos:rr}.  Proofs of Identities 3.8 and 3.12 are given in Santos and 
Sills~\cite[Theorems 1 and 3]{ss:qpell}.
\index{Sills, Andrew V.}
\index{Santos, J. P. O.}

As one can imagine, once higher order recurrences are encountered, the
process of giving a recurrence proof becomes increasingly tedious. 
Merely transcribing one line to the next without ever inadvertantly changing
a ``0'' to a ``1" or a ``+" to ``$-$" is more than one could reasonbly 
expect a human to do.  Accordingly, I have written the \texttt{recpf}
Maple package to assist the human mathematician in successfully carrying
out recurrence proofs in an efficient and minimally tedious manner.  The
\texttt{recpf} package is documented
in~\cite{avs:RRtools}, 
and Maple worksheets
containing recurrence proofs for the identities listed in 
\S~\ref{t-SLfin} are available from my web site:
\texttt{http://www.math.psu.edu/sills} (through August 2003),
\texttt{http://www.math.rutgers.edu/{\~{}}sills} (starting September 2003). 
\index{recurrence proof|)}

\section{Rogers-Ramanujan Reciprocal Duality} \label{dual}
\subsection{Introduction}
\index{duality, reciprocal|(}
In \cite{hhm}, Andrews demonstrated a type of duality relationship that 
exists among a few sets of Rogers-Ramanujan type identities.
\index{Andrews, George E.}  
The {\em (reciprocal) dual} of a polynomial 
  \[ a_n q^n + a_{n-1} q^{n-1} + a_{n-2} q^{n-2} + \cdots +a_1 q + a_0 \] is
  \[ a_0 q^n + a_{1} q^{n-1} + a_{2} q^{n-2} + \cdots +a_{n-1} q + a_n. \]
Equivalently,  the reciprocal of $P(q)$ is 
\begin{equation}\label{recip} 
  q^{\mathrm{deg}(P(q))} P(q^{-1}).
\end{equation} 
If $q^{\mathrm{deg}(P(q))} P(q^{-1})= P(q)$, the associated identity is 
called {\em self-dual}.
Let us work through an example of calculating the dual of an identity:
Consider, say, identity 10 from Slater's list.  A finite form 
(\ref{t-SLfin}.10) is

 If $n$ is a nonnegative integer, then
 \begin{gather*}
   \sum_{i\geqq 0}\sum_{j\geqq 0}\sum_{k\geqq 0}
       q^{j^2 + i^2 - i + k}
       \gp{j}{i}{q^2}\gp{j+k-1}{k}{q^2} \gp{n-i-k}{j}{q^2} \\
    = \sum_{j=-\infty}^\infty q^{2j^2 + j} 
          \Big[ \Tzero{n}{2j}{q} + \Tzero{n-1}{2j}{q}. \Big] 
    \tag{\ref{t-SLfin}.10}
 \end{gather*}
An examination of the first few cases $n=0, 1, 2,$ etc., 
convinces one that the 
degree of the polynomial is $n^2$.  Thus, by (\ref{t-SLfin}.10) and 
(\ref{recip}), the dual polynomials of (\ref{t-SLfin}.10) can be 
represented by either of the two forms
 \begin{gather} \label{dual10f}
   \sum_{i\geqq 0}\sum_{j\geqq 0}\sum_{k\geqq 0}
       q^{n^2 - (j^2 + i^2 - i + k)}
       \gp{j}{i}{1/q^2}\gp{j+k-1}{k}{1/q^2} \gp{n-i-k}{j}{1/q^2} \\
    = \sum_{j=-\infty}^\infty q^{n^2 - (2j^2 + j)} 
          \Big[ \Tzero{n}{2j}{1/q} + \Tzero{n-1}{2j}{1/q} \Big] 
 \end{gather}
Applying  (\ref{T0inv}) and (\ref{tau0eq}) on the righthand side
and (\ref{gpinv}) on the left hand side, we obtain
\begin{gather*}
  \sum_{h\geqq 0}\sum_{i\geqq 0}\sum_{k\geqq 0}  
   q^{k + i + 2i(h+i+k) + (h+k)^2 }
    \gp{n-i-h-k}{i}{q^2} \gp{n-i-h-1}{k}{q^2} \gp{n-i-k}{h}{q^2} \\ =
  \sum_{j=-\infty}^\infty q^{2j^2 - j} \Trb{n}{2j}{2j}{q^2}
                    + q^{2j^2 - j + 2n -1} \Trb{n-1}{2j}{2j}{q^2}.
   \tag{\ref{dual}.10f}
\end{gather*}

We can suppose that $|q|<1$ and let $n\to\infty$ in ({\ref{dual}.10f}) 
to obtain a Rogers-Ramanujan 
type identity.  First, we consider the lefthand side:
\begin{eqnarray*}
 & & \lim_{n\to\infty} \sum_{h,i,k\geqq 0} 
   q^{k + i + 2i(h+i+k) + (h+k)^2 }
    \gp{n-i-h-k}{i}{q^2} \gp{n-i-h-1}{k}{q^2} \gp{n-i-k}{h}{q^2} \\
 &=& \hspace*{-0.1 in} \lim_{n\to\infty} \sum_{h,i,k\geqq 0} 
   \frac{ q^{k + i + 2i(h+i+k) + (h+k)^2 } 
     (q^2;q^2)_{n-i-h-k} (q^2;q^2)_{n-i-h-1} (q^2;q^2)_{n-i-k} }
    {(q^2;q^2)_i (q^2;q^2)_{n-2i-j-k} (q^2;q^2)_k (q^2;q^2)_{n-i-k-h-1}
     (q^2;q^2)_h (q^2;q^2)_{n-i-k-h} } \\
 &=& \sum_{h=0}^\infty\sum_{i=0}^\infty\sum_{k=0}^\infty  
   \frac{ q^{k + i + 2i(h+i+k) + (h+k)^2 }}
        { (q^2;q^2)_h (q^2;q^2)_i (q^2;q^2)_k}
\end{eqnarray*}

Next, we consider the righthand side:
\begin{eqnarray*}
 & & \lim_{n\to\infty} \sum_{j=-\infty}^\infty q^{2j^2 - j} \Trb{n}{2j}{2j}{q^2}
                    + q^{2j^2 - j + 2n -1} \Trb{n-1}{2j}{2j}{q^2}\\
 &=& \frac{1}{(q^2;q^2)_\infty} \left[
   \sum_{j=-\infty}^\infty q^{2j^2 - j} + 0 \right]
    \mbox{\qquad (by (\ref{tau0lim}) and since $|q|<1$)} \\
 &=& \frac{1}{(q^2;q^2)_\infty} {(-q,-q^3,q^4;q^4)_\infty}
     \mbox{\qquad (by Theorem~\ref{jtp})} \\
 &=& (-q;q)_\infty \\
 &=& \prod_{j=1}^\infty (1+q^j)
\end{eqnarray*}

Thus, for $|q|<1$, 
\begin{gather} \label{dual10}
\sum_{h=0}^\infty\sum_{i=0}^\infty\sum_{k=0}^\infty  
   \frac{ q^{k + i + 2i(h+i+k) + (h+k)^2 }}
        { (q^2;q^2)_h (q^2;q^2)_i (q^2;q^2)_k} = \prod_{j=1}^\infty (1+q^j).
\end{gather}

Note that the triple sum in the preceeding equation can be simplified:
\begin{eqnarray*}
 & & \sum_{h=0}^\infty\sum_{i=0}^\infty\sum_{k=0}^\infty  
   \frac{ q^{k + i + 2i(h+i+k) + (h+k)^2 }}
        { (q^2;q^2)_h (q^2;q^2)_i (q^2;q^2)_k} \\
 &=& \sum_{i=0}^\infty\sum_{k=0}^\infty  
   \frac{ q^{k + i + i^2 + (i+k)^2 }}
        { (q^2;q^2)_i (q^2;q^2)_k}  
   \sum_{h=0}^\infty \frac{ q^{h^2 + (2i+2k)h }}{(q^2;q^2)_h }\\
 &=& \sum_{i=0}^\infty\sum_{k=0}^\infty  
   \frac{ q^{k + i + i^2 + (i+k)^2 }}
        { (q^2;q^2)_i (q^2;q^2)_k} (-q^{2i+2k+1};q^2)_\infty
    \mbox{\quad (by (\ref{qbc3}))}\\
 &=& (-q;q^2)_\infty \sum_{i=0}^\infty \sum_{k=0}^\infty  
   \frac{ q^{k^2 + 2ik + 2i^2 + k + i }}
        { (q^2;q^2)_i (q^2;q^2)_k (-q;q^2)_{i+k} } \\
 &=& (-q;q^2)_\infty \sum_{i=0}^\infty \sum_{K=i}^\infty  
   \frac{ q^{K^2 + K + i^2 }}
        { (q^2;q^2)_i (q^2;q^2)_{K-i} (-q;q^2)_{K} }
   \mbox{\quad (by taking $K=k+i$)} \\
 &=& (-q;q^2)_\infty \sum_{K=0}^\infty \frac{ q^{K^2 + K} }{(-q;q^2)_K} 
   \sum_{i=0}^K  \frac{ q^{i^2}}
   { (q^2;q^2)_i (q^2;q^2)_{K-i} } \\
 &=& (-q;q^2)_\infty \sum_{K=0}^\infty \frac{ q^{K^2 + K} }
   {(-q;q^2)_K (q^2;q^2)_K} 
   \sum_{i=0}^K  \frac{ q^{i^2} (q^2;q^2)_K}
   { (q^2;q^2)_i (q^2;q^2)_{K-i} } \\
 &=& (-q;q^2)_\infty \sum_{K=0}^\infty 
  \frac{ q^{K^2 + K} } {(-q;q^2)_K (q^2;q^2)_K} 
   \sum_{i=0}^K  q^{i^2} \gp{K}{i}{q^2}  \\
 &=& (-q;q^2)_\infty \sum_{K=0}^\infty \frac{ q^{K^2 + K} (-q;q^2)_K }
  {(-q;q^2)_K (q^2;q^2)_K } \mbox{\qquad(by (\ref{qbc1}))} \\
  &=& (-q;q^2)_\infty \sum_{K=0}^\infty \frac{ q^{K^2 + K} }{(q^2;q^2)_K}.  
\end{eqnarray*}
Thus, (\ref{dual10}) is equivalent to
\begin{gather*} (-q;q^2)_\infty
\sum_{j=0}^\infty \frac{ q^{j(j+1)}} {(q^2;q^2)_j} = \prod_{j=1}^\infty (1+q^j),
\end{gather*}
or, after dividing through by $(-q;q^2)_\infty$,
\begin{gather*} 
\sum_{j=0}^\infty \frac{ q^{j(j+1)}} {(q^2;q^2)_j} = \prod_{j=1}^\infty 
(1+q^{2j}),
\end{gather*}
which is Identity~\ref{t-ljslist}.7.
I will not go so far as to say that ``Identities~\ref{t-ljslist}.7
and \ref{t-ljslist}.10 are dual" since it was necessary to take the limit as
$n\to\infty$ and to divide out an 
infinite product in order to obtain Identity~\ref{t-ljslist}.7 from the 
dual of (\ref{t-SLfin}.10).
In fact, it is not hard to show that Identity~\ref{t-ljslist}.7 is 
{\em self}-dual.  
Nonetheless, it is clear that (\ref{t-ljslist}.7) and 
(\ref{t-ljslist}.10) are very closely related identities.

In some cases, the sequence of polynomials $\{ P_n (q) \}_{n=0}^\infty$ does 
not converge, but the subsequence $\{ P_{2m} (q) \}_{m=0}^\infty$ converges to
one series, and the subsequence $\{ P_{2m+1} (q) \}_{m=0}^\infty$ converges to
a different series.  One immediate clue that this may be occuring is when
the formula for the degree of the polynomial varies with the 
parity of $n$.  For example, with the finite First Rogers-Ramanujan Identity
(\ref{t-SLfin}.18), 
\index{Rogers-Ramanujan identities!MacMahon-Schur finitization}
the degree of the polynomial is $n^2/4$ if $n$ is 
even, and $(n^2-1)/4$ if $n$ is odd.  In cases such as this, we consider 
the dual of (\ref{t-SLfin}.18) to be a pair of identities.  The 
appropriate calculation shows that the dual of ``18 even" is identity 79, 
and the dual of ``18 odd" is identity 99.

In \S~\ref{t-SLfin}, I presented a finitization of each of the 
Rogers-Ramanujan type identities on Slater's list.  In this section, I
present the dual polynomial identities for some of the identities appearing 
in 
\S~\ref{t-SLfin}, as well as their limiting cases.

\subsection{Identities}
\hskip 6.25mm\begin{obs*}{\ref{dual}.2} Identity (2) is self-dual. \end{obs*}

\begin{obs*}{\ref{dual}.3} Identity (3) is self dual. \end{obs*}

\begin{id*}[Dual of \ref{t-SLfin}.4]{\ref{dual}.4f}\nopagebreak
\begin{gather*}
   \sum_{h\geqq 0}\sum_{i\geqq 0}\sum_{k\geqq 0}
  (-1)^{n-i-h} q^{2i(i+h+k) + (h+k)^2} \gp{n-i-h-k}{i}{q^2} 
  \gp{n-i-h-1}{k}{q^2}\\ \times \gp{n-i-k}{h}{q^2} \\ =
  \sum_{j=-\infty}^\infty (-1)^j \tzero njq +
  \sum_{j=-\infty}^\infty (-1)^j q^{2n-1} \tzero{n-1}{j}{q}
\tag{\ref{dual}.4f}
\end{gather*}
\end{id*}

\begin{obs*}{\ref{dual}.5} Identity (5) (with $q$ replaced by $-q$) is
self dual. \end{obs*}

\begin{id*}[Dual of \ref{t-SLfin}.6 even]{\ref{dual}.6f-even}
 \begin{gather*}
    \sum_{i\geqq 0}\sum_{j\geqq 0}\sum_{k\geqq 0} 
    q^{j^2 + i(i+1)/2 + k} \gp{m-j}{i}{q} 
    \gp{m-j+k-1}{k}{q^2} \gp{j+m-i-2k}{m-j}{q} \\ =
    \sum_{j=-\infty}^\infty  q^{j(3j-1)} \gp{2m}{m+3j}{q} 
                           + q^{j(3j+4) + m + 1} \gp{2m-1}{m+3j+1}{q} 
    \tag{\ref{dual}.6f-even}
 \end{gather*}
\end{id*}

\begin{id*}[Dual of \ref{t-ljslist}.6 even]{\ref{dual}.6-even}
 \begin{gather*}
 \sum_{i=0}^\infty \sum_{j=0}^\infty \sum_{k=0}^\infty
   \frac{q^{j^2 + i(i+1)/2 + k}}{(q;q)_i (q^2;q^2)_k (q;q)_{2j - i - 2k}} =
   \prod_{n=1}^\infty \frac{(1+q^{6n-2})(1+q^{6n-4})(1-q^{6n})}{1-q^n} 
 \tag{\ref{dual}.6-even}
 \end{gather*}
\end{id*}

\begin{id*}[Dual of \ref{t-SLfin}.6 odd]{\ref{dual}.6f-odd}
 \begin{gather*}
    \sum_{i\geqq 0}\sum_{j\geqq 0}\sum_{k\geqq 0} 
    q^{j^2 + j + i(i+1)/2 + k} \gp{m-j}{i}{q} 
    \gp{m-j+k-1}{k}{q^2} \gp{j+m-i-2k+1}{m-j}{q} \\ =
    \sum_{j=-\infty}^\infty  q^{j(3j-1) + n} \gp{2m}{m+3j}{q} 
                           + q^{j(3j+4)  1} \gp{2m+1}{m+3j+2}{q} 
    \tag{\ref{dual}.6f-odd} 
 \end{gather*}
\end{id*}

\begin{id*}[Dual of \ref{t-ljslist}.6 odd]{\ref{dual}.6-odd}
 \begin{gather*}
 \sum_{i=0}^\infty \sum_{j=0}^\infty \sum_{k=0}^\infty
   \frac{q^{j^2 + j + i(i+1)/2 + k}}{(q;q)_i (q^2;q^2)_k (q;q)_{2j -i -2k +1}} =
   \prod_{n=1}^\infty \frac{(1+q^{6n-1})(1+q^{6n-5})(1-q^{6n})}{1-q^n} 
 \tag{\ref{dual}.6-odd}
 \end{gather*}
\end{id*}

\begin{obs*}{\ref{dual}.8} The dual of $(8)$ is $(12)$. \end{obs*}

\begin{id*}[Dual of \ref{t-SLfin}.10]{\ref{dual}.10f}
\begin{gather*}
  \sum_{h\geqq 0}\sum_{i\geqq 0}\sum_{k\geqq 0}  
   q^{k + i + 2i(h+i+k) + (h+k)^2 }
    \gp{n-i-h-k}{i}{q^2} \gp{n-i-h-1}{k}{q^2} \gp{n-i-k}{h}{q^2} \\ =
  \sum_{j=-\infty}^\infty q^{2j^2 - j} \Trb{n}{2j}{2j}{q^2}
                    + q^{2j^2 - j + 2n -1} \Trb{n-1}{2j}{2j}{q^2}
   \tag{\ref{dual}.10f}
\end{gather*}
\end{id*}

\begin{id*}[Dual of \ref{t-ljslist}.10]{\ref{dual}.10}
\begin{gather*}
  \sum_{h=0}^\infty\sum_{i=0}^\infty\sum_{k=0}^\infty  
   \frac{ q^{k + i + 2i(h+i+k) + (h+k)^2 }}
        { (q^2;q^2)_h (q^2;q^2)_i (q^2;q^2)_k} =
  \prod_{j=1}^\infty (1+q^j)
   \tag{\ref{dual}.10} \\
\prod_{j=0}^\infty (1+q^{2j-1})
\sum_{j=0}^\infty \frac{ q^{j(j+1)}} {(q^2;q^2)_j} = \prod_{j=1}^\infty (1+q^j)
   \tag{\ref{dual}.$10'$}
\end{gather*}
\end{id*}

\begin{id*}[Dual of \ref{t-SLfin}.11]{\ref{dual}.11f}
\begin{gather*}
  \sum_{h\geqq 0}\sum_{i\geqq 0} 
   q^{(h+i)^2 + i(i+1) }
    \gp{n-i-h}{i}{q^2} \gp{2n-2i-h+1}{h}{q^2} \\ =
  \sum_{j=-\infty}^\infty (-1)^j q^{3j^2 + j} \gp{2n+1}{n+3j+1}{q}
   \tag{\ref{dual}.11f}
\end{gather*}
\end{id*}

\begin{id*}[Dual of \ref{t-ljslist}.11]{\ref{dual}.11}
\begin{gather*}
  \sum_{h=0}^\infty\sum_{i=0}^\infty  
  \frac { q^{(h+i)^2 + i(i+1)} }
    { (q^2;q^2)_i (q;q)_h } = 
  \prod_{j=1}^\infty (1+q^j)
   \tag{\ref{dual}.11}
\end{gather*}
\end{id*}

\begin{obs*}{\ref{dual}.12}The dual of (12) is (8).\end{obs*}

\begin{obs*}{\ref{dual}.14-even} The dual of (14 even) is (99). \end{obs*}

\begin{obs*}{\ref{dual}.14-odd}  The dual of (14 odd) is (96).  \end{obs*}

\begin{id*}[Dual of \ref{t-SLfin}.16]{\ref{dual}.16f} 
 \begin{gather*}
   \sum_{h\geqq 0} \sum_{k\geqq 0}  (-1)^k q^{2(h+k) + (h+k)^2}
    \gp{n-h-1}{k}{q^2} \gp{n-k}{h}{q^2} \\=
   \sum_{k=-\infty}^\infty 
        q^{15k^2 +  7k        } \Trb{n+1}{5k+1}{5k+1}{q^2}
       -q^{15k^2 + 17k + 4    } \Trb{n+1}{5k+3}{5k+3}{q^2} \\
       +q^{15k^2 +  7k + 1 +2n} \Trb{n+1}{5k+1}{5k+1}{q^2} 
       -q^{15k^2 + 17k + 5 +2n} \Trb{n+1}{5k+3}{5k+3}{q^2}            
  \tag{\ref{dual}.16f} 
 \end{gather*}
\end{id*}

\begin{id*}[Dual of \ref{t-ljslist}.16]{\ref{dual}.16} 
Note:  The dual of (\ref{t-ljslist}.16) with $q$ replaced by $-q$
is equivalent to  $(q;q^2)_\infty \times (\ref{t-ljslist}.96)$.
 \begin{gather*}
  \sum_{h=0}^\infty \sum_{k=0}^\infty  \frac{(-1)^k q^{2(h+k) + (h+k)^2}}
    { (q^2;q^2)_\infty (q^2;q^2)_h }\\ =
  \prod_{j=1}^\infty \frac{ (1-q^{10j-4}) (1-q^{10j-6}) (1-q^{10j})
    (1-q^{20j-2})(1-q^{20j-18})} {(1-q^{2j})} \tag{\ref{dual}.16} 
 \end{gather*} 
\end{id*}

\begin{id*}[Dual of \ref{t-SLfin}.17]{\ref{dual}.17f}
 \begin{gather*}
   \sum_{h\geqq 0} \sum_{k\geqq 0} (-1)^k q^{(h+k)^2 + h} \gp{n-h}{k}{q^2}
     \gp{n-k}{h}{q^2} \\=
   \sum_{k=-\infty}^\infty q^{15k^2 +  2k    }\Trb{n+1}{5k  }{5k+1}{q} 
                          -q^{15k^2 + 12k + 2}\Trb{n+1}{5k+2}{5k+3}{q}
   \tag{\ref{dual}.17f}
 \end{gather*}
\end{id*}

\begin{id*}[Dual of \ref{t-ljslist}.17]{\ref{dual}.17}
Note: The dual of (\ref{t-ljslist}.17) with $q$ replaced by $-q$ is equivalent to 
$(-q^2;q^2)_\infty\times (\ref{t-ljslist}.20)$.
  \begin{gather*}
    \sum_{h=0}^\infty \sum_{k=0}^\infty \frac{(-1)^k q^{(h+k)^2 + h}}
      {(q^2;q^2)_k (q^2;q^2)_h} \\ =
    \prod_{j=1}^\infty \frac{ (1-q^{10j-1})(1-q^{10j-9})(1-q^{10j}) 
    (1-q^{20j-8})(1-q^{20j-12})}
     {(1-q^{2j})} \tag{\ref{dual}.17}
    \end{gather*}
\end{id*}

\begin{obs*}{\ref{dual}.18-even} 
 The dual of (18 even) is $(79)$.
\end{obs*}

\begin{obs*}{\ref{dual}.18-odd} 
 The dual of (18 odd) is $(94)$. 
\end{obs*} 

\begin{id*}[Dual of \ref{t-SLfin}.20]{\ref{dual}.20f}
 \begin{gather*}
   \sum_{h\geqq 0} \sum_{k\geqq 0} q^{(h+k)^2}
    \gp{n-h-1}{k}{q^2} \gp{n-k}{h}{q}
      \\=
     \sum_{j=-\infty}^\infty 
      (-1)^j q^{j(15j-1)}          \Trb{n}  {5j  }{5j  }{q^2}
     +(-1)^j q^{j(15j-1) + 2n - 1} \Trb{n-1}{5j+1}{5j+1}{q^2}\\
     -(-1)^j q^{j(15j+19) + 6    } \Trb{n}  {5j+3}{5j+3}{q^2}
     -(-1)^j q^{j(15j+19) +2n  +5} \Trb{n-1}{5j+3}{5j+3}{q^2} 
  \tag{\ref{dual}.20f} 
 \end{gather*}
\end{id*}

\begin{id*}[Dual of \ref{t-ljslist}.20]{\ref{dual}.20} 
Note:  The dual of (\ref{t-ljslist}.20) with $q$ replaced by $-q$
is equivalent to  $(q;q^2)_\infty \times (\ref{t-ljslist}.79)$.
 \begin{gather*}
   \sum_{h=0}^\infty\sum_{k=0}^\infty \frac{q^{(h+k)^2}}
   { (q^2;q^2)_h (q^2;q^2)_k} \\ =
     \prod_{j=1}^\infty \frac{ (1-q^{10j-2}) (1-q^{10j-8}) (1-q^{10j})
    (1-q^{20j-14})(1-q^{20j-6})} {(1-q^{2j})} \tag{\ref{dual}.20} 
 \end{gather*} 
\end{id*}

\begin{id*}[Dual of \ref{t-SLfin}.25]{\ref{dual}.25f}
\begin{gather*}
  \sum_{h\geqq 0} \sum_{i\geqq 0} \sum_{k\geqq 0}  
   (-1)^k q^{2i(i+h+k) + (h+k)^2}
    \gp{n-i-h-k}{i}{q^2} \gp{n-i-h}{k}{q^2} \gp{n-i-k}{h}{q^2} \\ =
  \sum_{j=-\infty}^\infty (-1)^j q^{6j^2 }         \Trb{n}{3j}{3j}{q^2} +
  \sum_{j=-\infty}^\infty (-1)^j q^{6j^2 + 6j + 1} \Trb{n}{3j+1}{3j+1}{q^2}
  \tag{\ref{dual}.25f} 
\end{gather*}
\end{id*}

\begin{id*}[Dual of \ref{t-ljslist}.25]{\ref{dual}.25} 
\begin{gather*}
 \sum_{h\geqq 0} \sum_{i\geqq 0} \sum_{k\geqq 0}
    \frac{  (-1)^k q^{2i(h+i+k) + (h+k)^2 }}
    {(q^2;q^2)_h (q^2;q^2)_i (q^2;q^2)_k }
    =  \prod_{j=1}^\infty
  \frac{ (1-q^{12j-6})^2 (1-q^{12j})}{ (1-q^{2j})} \tag{\ref{dual}.25} \\
  \prod_{j=1}^\infty (1+q^{2j-1})
  \sum_{h=0}^\infty\sum_{i=0}^\infty \frac { (-1)^k q^{(h+i)^2 + i^2}}
    { (q^2;q^2)_i (q^2;q^2)_h (-q;q^2)_{h+i} } = 
  \prod_{j=1}^\infty
  \frac{ (1-q^{12j-6})^2 (1-q^{12j})}{ (1-q^{2j})} \tag{\ref{dual}.$25'$} 
\end{gather*}
\end{id*}

\begin{id*}[Dual of \ref{t-SLfin}.27 even]{\ref{dual}.27f-even}
\begin{gather*}
  \sum_{i\geqq 0}\sum_{J\geqq 0}\sum_{k\geqq 0}\sum_{l\geqq 0}
    (-1)^l q^{2J^2 + 2J + i^2 -k}
    \gp{n-J}{i}{q^2} \gp{n-J+k}{k}{q^2} \gp{n-J+l-1}{l}{q^2}\\ \times
    \gp{n+J-i-k-l}{2j-i-k-l}{q^2} \\ =
   \sum_{j=-\infty}^\infty q^{2j(3j+1)} \gp{2m+1}{m+3j+1}{q^2}
                          -q^{2j(3j+2) + 2m + 1}\gp{2m}{m+3j+1}{q^2}
   \tag{\ref{dual}.27f-even}
\end{gather*}
\end{id*}

\begin{id*}[Dual of \ref{t-ljslist}.27 even]{\ref{dual}.27-even}
\begin{gather*}
  \sum_{i=0}^\infty\sum_{J=0}^\infty\sum_{k=0}^\infty\sum_{l=0}^\infty
    \frac{ (-1)^l q^{2J^2 + 2J + i^2 -k} }
    {(q^2;q^2)_i (q^2;q^2)_k (q^2;q^2)_l (q^2;q^2)_{2j-i-k-l}}\\ =
   \prod_{j=1}^\infty \frac{ (1+q^{12j-4})(1+q^{12j-8})(1-q^{12j})}
     {(1-q^{2j})}
   \tag{\ref{dual}.27-even}
\end{gather*}
\end{id*}

\begin{id*}[Dual of \ref{t-SLfin}.28]{\ref{dual}.28f}
\begin{gather*}
   \sum_{h\geqq 0} \sum_{i\geqq 0} 
      q^{(h+i)^2 + i^2} \gp{n-i-h}{i}{q^2} \gp{2n-2i-h+1}{h}{q} \\ =
  \sum_{j=-\infty}^\infty (-1)^j q^{j(6j+1)}\Trb{n+1}{3j}{3j+1}{q^2}
  \tag{\ref{dual}.28f} 
\end{gather*}
\end{id*}

\begin{id*}[Dual of \ref{t-ljslist}.28]{\ref{dual}.28}
\begin{gather*}
  \sum_{h=0}^\infty \sum_{i=0}^\infty
   \frac { q^{i^2 + (h+i)^2}}
     { (q^2;q^2)_i (q;q)_h }\\ = 
   \prod_{j=1}^\infty \frac{ (1+q^{3j-1}) (1+q^{3j-2}) (1-q^{3j})} 
  {(1-q^{2j})} \tag{\ref{dual}.28} 
\end{gather*}
\end{id*}

\begin{id*}[Dual of \ref{t-SLfin}.29]{\ref{dual}.29f}
\begin{gather*} \sum_{h\geqq 0} \sum_{i\geqq 0}
  q^{ (h+i)^2 + i^2 } \gp{n-h-i}{i}{q^2} \gp{2n-2i-h}{h}{q} \\ =
  \sum_{k=-\infty}^\infty q^{6k^2 -k} \Trb{n}{3k}{3k}{q^2}
                         +q^{6k^2+5k+1}\Trb{n}{3k+1}{3k+1}{q^2}
\tag{\ref{dual}.29f} 
\end{gather*}
\end{id*} 

\begin{id*}[Dual of \ref{t-ljslist}.29]{\ref{dual}.29} 
  \begin{gather*}
   \sum_{h=0}^\infty \sum_{i=0}^\infty \frac{ q^{i^2 + (i+h)^2}}
      {(q^2;q^2)_i (q;q)_h } =
  \prod_{j=1}^\infty \frac{ (1+q^{3j-1}) (1+q^{3j-2}) (1-q^{3j})} 
  {(1-q^{2j})}
\tag{\ref{dual}.29}
\end{gather*}
\end{id*} 

\begin{id*}[Dual of \ref{t-SLfin}.31 even]{\ref{dual}.31f-even}
  \begin{gather*}
    \sum_{J,k,L\geqq 0} (-1)^{k+L} q^{2J^2 + 2J -k} \gp{m-J+k}{k}{q^2}
      \gp{m-J+L-1}{L}{q^2} \gp{m+J-k-L}{m-J}{q^2} \\
     =\sum_{k=-\infty}^\infty 
       q^{42k^2 +  4k    }\gp{2m+1}{m+7k+1}{q^2}
     - q^{42k^2 + 32k + 6}\gp{2m+1}{m+7k+3}{q^2}\\
     + q^{42k^2 + 60k +21} 
      \left(\gp{2m+1}{m+7k+5}{q^2}-\gp{2m+1}{m+7k+6}{q^2}\right)
  \tag{\ref{dual}.31f-even}
  \end{gather*}
\end{id*}

\begin{id*}[Dual of \ref{t-ljslist}.31 even]{\ref{dual}.31-even}
  \begin{gather*}
    \sum_{J=0}^\infty \sum_{k=0}^\infty \sum_{L=0}^\infty
    \frac{(-1)^{k+L} q^{2J^2 + 2J - k}} { (q^2;q^2)_k (q^2;q^2)_L
       (q^2;q^2)_{2J-k-L}} \\    
=\prod_{j=1}^\infty \frac{ (1-q^{28j-6}) (1-q^{28j-22}) (1-q^{28j})
    (1-q^{56j-16}) (1-q^{56j-40}) } {(1-q^{2j})}
  \tag{\ref{dual}.31-even}
  \end{gather*}
\end{id*}

\begin{id*}[Dual of \ref{t-SLfin}.31 odd]{\ref{dual}.31f-odd}
  \begin{gather*}
    \sum_{J,k,L\geqq 0} (-1)^{k+L} q^{2J^2 + 4J -k+1} \gp{m-J+k}{k}{q^2}
      \gp{m-J+L-1}{L}{q^2} \gp{m+J-k-L+1}{m-J}{q^2} \\
     =\sum_{k=-\infty}^\infty
     - q^{42k^2 - 10k    }\gp{2m+2}{m+7k}{q^2}
     + q^{42k^2 - 38k + 8}\gp{2m+2}{m+7k-2}{q^2}\\
     + q^{42k^2 + 18k +1}
      \left(\gp{2m+2}{m+7k+2}{q^2}-\gp{2m+2}{m+7k+3}{q^2}\right)
  \tag{\ref{dual}.31f-odd}
  \end{gather*}
\end{id*}

\begin{id*}[Dual of \ref{t-ljslist}.31 odd]{\ref{dual}.31-odd}
  \begin{gather*}
    \sum_{J=0}^\infty \sum_{k=0}^\infty \sum_{L=0}^\infty
    \frac{(-1)^{k+L} q^{2J^2 + 4J - k+1}}{(q^2;q^2)_k (q^2;q^2)_L
       (q^2;q^2)_{2J-k-L}} \\
=\prod_{j=1}^\infty \frac{ (1-q^{28j-8}) (1-q^{28j-20}) (1-q^{28j})
    (1-q^{56j-28}) (1-q^{56j-44}) } {(1-q^{2j})}
  \tag{\ref{dual}.31-odd}
  \end{gather*}
\end{id*}

\begin{id*}[Dual of \ref{t-SLfin}.32 even]{\ref{dual}.32f-even}
  \begin{gather*}
    \sum_{J,k,L\geqq 0} (-1)^{k+L} q^{2J^2 + 2J +k} \gp{m-J+k-1}{k}{q^2}
      \gp{m-J+L-1}{L}{q^2} \gp{m+J-k-L}{m-J}{q^2} \\
     =\sum_{k=-\infty}^\infty
       q^{42k^2 +  8k    }\gp{2m+1}{m+7k+1}{q^2}
     - q^{42k^2 - 20k + 2}\gp{2m+1}{m+7k+6}{q^2}\\
     + q^{42k^2 + 22k + 2n + 3}\gp{2m}{m+7k+2}{q^2} 
     - q^{42k^2 + 50k + 2n + 15}\gp{2m}{m+7k+4}{q^2}
  \tag{\ref{dual}.32f-even}
  \end{gather*}
\end{id*}

\begin{id*}[Dual of \ref{t-ljslist}.32 even]{\ref{dual}.32-even}
  \begin{gather*}
    \sum_{J=0}^\infty \sum_{k=0}^\infty \sum_{L=0}^\infty
    \frac{(-1)^{k+L} q^{2J^2 + 2J + k}}{(q^2;q^2)_k (q^2;q^2)_L
       (q^2;q^2)_{2J-k-L}} \\
=\prod_{j=1}^\infty \frac{ (1-q^{28j-2}) (1-q^{28j-26}) (1-q^{28j})
    (1-q^{56j-28}) (1-q^{56j-32}) } {(1-q^{2j})}
  \tag{\ref{dual}.32-even}
  \end{gather*}
\end{id*}

\begin{id*}[Dual of \ref{t-SLfin}.32 odd]{\ref{dual}.32f-odd}
  \begin{gather*}
    \sum_{J,k,L\geqq 0} (-1)^{k+L} q^{2J^2 + 4J +k-1} \gp{m-J+k-1}{k}{q^2}
      \gp{m-J+L-1}{L}{q^2}\\ \times \gp{m+J-k-L+1}{m-J}{q^2} \\
     =\sum_{k=-\infty}^\infty
     - q^{42k^2 - 22k    }\gp{2m+2}{m+7k+3}{q^2}
     + q^{42k^2 - 34k + 4}\gp{2m+2}{m+7k-2}{q^2}\\
     + q^{42k^2 +  8k 2m-1}\gp{2m+1}{m+7k+1}{q^2}
     - q^{42k^2 + 64k + 2m+23}\gp{2m+1}{m+7k+6}{q^2}
  \tag{\ref{dual}.32f-odd}
  \end{gather*}
\end{id*}

\begin{id*}[Dual of \ref{t-ljslist}.32 odd]{\ref{dual}.32-odd}
  \begin{gather*}
    \sum_{J=0}^\infty \sum_{k=0}^\infty \sum_{L=0}^\infty
    \frac{(-1)^{k+L} q^{2J^2 + 4J +k-1}}{(q^2;q^2)_k (q^2;q^2)_L
       (q^2;q^2)_{2J-k-L+1}} \\
=\prod_{j=1}^\infty \frac{ (1-q^{28j-4}) (1-q^{28j-24}) (1-q^{28j})
    (1-q^{56j-20}) (1-q^{56j-36}) } {(1-q^{2j})}
  \tag{\ref{dual}.32-odd}
  \end{gather*}
\end{id*}

\begin{id*}[Dual of \ref{t-SLfin}.33 even]{\ref{dual}.33f-even}
  \begin{gather*}
    \sum_{J,k,L\geqq 0} (-1)^{k+L} q^{2J^2 +k} \gp{m-J+k-1}{k}{q^2}
      \gp{m-J+L-1}{L}{q^2} \gp{m+J-k-L}{m-J}{q^2} \\
     =\sum_{k=-\infty}^\infty
       q^{42k^2 -  2k    }\gp{2m}{m+7k}{q^2}
     - q^{42k^2 + 26k + 4}\gp{2m}{m+7k-2}{q^2}\\
     + q^{42k^2 + 30k +5}
      \left(\gp{2m}{m+7k+2}{q^2}-\gp{2m}{m+7k+3}{q^2}\right)
  \tag{\ref{dual}.33f-even}
  \end{gather*}
\end{id*}

\begin{id*}[Dual of \ref{t-ljslist}.33 even]{\ref{dual}.33-even}
  \begin{gather*}
    \sum_{J=0}^\infty \sum_{k=0}^\infty \sum_{L=0}^\infty
    \frac{(-1)^{k+L} q^{2J^2 + k}}{(q^2;q^2)_k (q^2;q^2)_L
       (q^2;q^2)_{2J-k-L}} \\
=\prod_{j=1}^\infty \frac{ (1-q^{28j-4}) (1-q^{28j-24}) (1-q^{28j})
    (1-q^{56j-20}) (1-q^{56j-36}) } {(1-q^{2j})}
  \tag{\ref{dual}.33-even}
  \end{gather*}
\end{id*}

\begin{obs*}{\ref{dual}.34} Identity 34 is self-dual.\end{obs*}

\begin{id*}[Dual of \ref{t-SLfin}.35]{\ref{dual}.35f}
  \begin{gather*}
    \sum_{h\geqq 0} \sum_{i\geqq 0} \sum_{k\geqq 0}
      q^{ 2h(k+i) + h(h+3)/2 + 4ik + 2k(k+1) + 3i(i+1)}
      \gp{n-2i-h-2k}{i}{q^2} \\ \times \gp{n-2i-h-k}{k}{q^2}
         \gp{n-2i-2k}{h}{q} \\ =
      \sum_{j=-\infty}^\infty (-1)^j q^{n + 4j^2 + 3j + 1} 
    \Trb{n+1}{4j+1}{4j+2}{q} \\ +
  \sum_{j=-\infty}^\infty (-1)^j q^{4j^2 + 3j} \Trb{n+1}{4j+1}{4j+1}{q}
   \tag{\ref{dual}.35f}
\end{gather*} 
\end{id*}
    
\begin{id*}[Dual of \ref{t-ljslist}.35]{\ref{dual}.35}
\begin{gather*}
  \sum_{h=0}^\infty \sum_{i=0}^\infty \sum_{k=0}^\infty
   \frac{ q^{ 2h(k+i) + h(h+3)/2 + 4ik  + 2k(k+1) + 3i(i+1)}}
   {(q^2;q^2)_i (q^2;q^2)_k (q;q)_h} \\ =
  \prod_{j=1}^\infty \frac{ (1-q^{8j-1})(1-q^{8j-7}) (1-q^{8j})}{(1-q^j)}
 \tag{\ref{dual}.35}
\end{gather*}
\end{id*}

\begin{obs*}{\ref{dual}.36} Identity 36 is self-dual.\end{obs*}

\begin{id*}[Dual of \ref{t-SLfin}.37]{\ref{dual}.37f}
\begin{gather*}
  \sum_{h\geqq 0} \sum_{i\geqq 0} \sum_{k\geqq 0}
    q^{ 2hk + h(h+1)/2 + 4ik + 2ih + 2k^2 + 3i^2 + i}
   \gp{n-2i-h-2k}{i}{q^2} \gp{n-2i-h-k}{k}{q^2} \\ \times\gp{n-2i-2k}{h}{q} \\ =
  \sum_{j=-\infty}^\infty (-1)^j q^{n + 4j^2 + j} 
    \Trb{n}{4j}{4j+1}{q} +
  \sum_{j=-\infty}^\infty (-1)^j q^{4j^2 + j} \Trb{n}{4j}{4j}{q}
   \tag{\ref{dual}.37f}
\end{gather*} 
\end{id*}

\begin{id*}[Dual of \ref{t-ljslist}.37]{\ref{dual}.37}
Note: This identity is equivalent to $(-q;q)_\infty \times 
(\ref{t-ljslist}.39)$ with $q$ replaced by $-q$.
\begin{gather*}
  \sum_{h=0}^\infty \sum_{i=0}^\infty \sum_{k=0}^\infty
   \frac{ q^{ 2hk + h(h+1)/2 + 4ik + 2ih + 2k^2 + 3i^2 + i}}
   {(q^2;q^2)_i (q^2;q^2)_k (q;q)_h} =
  \prod_{j=1}^\infty \frac{ (1-q^{8j-3})(1-q^{8j-5}) (1-q^{8j})}{(1-q^j)}
 \tag{\ref{dual}.37}
\end{gather*}
\end{id*}

\begin{obs*}{\ref{dual}.38-even} Identity (38 even) is the dual of (39 odd). \end{obs*}
\begin{obs*}{\ref{dual}.38-odd}  Identity (38 odd)  is self-dual. \end{obs*}
\begin{obs*}{\ref{dual}.39-even} Identity (39 even) is self-dual. \end{obs*}
\begin{obs*}{\ref{dual}.39-odd}  Identity (39 odd)  is the dual of (38 even).\end{obs*}

\begin{id*}[Dual of \ref{t-ljslist}.45]{\ref{dual}.45}
 \begin{gather*}
   \sum_{h=0}^\infty \sum_{i=0}^\infty \sum_{k=0}^\infty
   \frac{q^{2k(i+h) + ih + h(h+1)/2 + 2k^2 + i^2}}
   {(q^2;q^2)_i (q^2;q^2)_k (q;q)_h} \\ = 
  \prod_{j=1}^\infty 
    \frac{ (1+q^{5j-1})(1+q^{5j-4})(1-q^{10j-3})(1-q^{10j-7})(1-q^{5j})}
    {(1-q^j)}
 \end{gather*}
\end{id*}

\begin{id*}[Dual of \ref{t-SLfin}.50]{\ref{dual}.50f}
\begin{gather*}
  \sum_{h\geqq 0} \sum_{i\geqq 0} 
   q^{2i^2 + 2ih + h^2 + 2i + h}
    \gp{n-i-h}{i}{q^2} \gp{2n-2i-h+1}{h}{q} \\ =
  \sum_{j=-\infty}^\infty (-1)^j q^{3j^2 + 2j} \gp{2n+2}{n+3j+2}{q}
 \tag{\ref{dual}.50f}
\end{gather*}
\end{id*}

\begin{id*}[Dual of \ref{t-ljslist}.50]{\ref{dual}.50}
\begin{gather*}
  \sum_{h=0}^\infty \sum_{i=0}^\infty
  \frac { q^{(h+i)^2 + i(i+2) + h}}
    { (q^2;q^2)_i (q;q)_h } = 
  \prod_{j=1}^\infty\frac{(1-q^{6j-1}) (1-q^{6j-5}) (1-q^{6j}) }
  {(1-q^{j}) } \tag{\ref{dual}.50}
\end{gather*}
\end{id*}

\begin{id*}[Dual of \ref{t-SLfin}.59 even]{\ref{dual}.59f-even}
\begin{gather*}
  \sum_{j\geqq 0} \sum_{k\geqq 0}
    q^{(j+k)^2 +2j +k} \gp{n-j}{k}{q^2} \gp{n+j-k}{2j}{q}\\ =
  \sum_{k=-\infty}^\infty q^{21 k^2 + 4k   } \gp{2m+2}{m+7k+2}{q}
                         -q^{21 k^2 +32k+ 4} \gp{2m+2}{m+7k+6}{q}
 \tag{\ref{dual}.59f-even}
\end{gather*}
\end{id*}

\begin{id*}[Dual of \ref{t-ljslist}.59 even]{\ref{dual}.59 even}
Note: This identity is equivalent to $(-q;q^2)_\infty \times
 (\ref{t-ljslist}.118)$
\begin{gather*}
  \sum_{j=0}^\infty \sum_{k=0}^\infty
    \frac{ q^{ (j+k)^2 + 2j + k} } {(q^2;q^2)_k (q;q)_{2j}}
 \\ =  \prod_{j=1}^\infty
   \frac{(1-q^{14j-1})(1-q^{14j-13})(1-q^{28j-12})(1-q^{28j-16})
(1-q^{14j})} {(1-q^j)}
\tag{\ref{dual}.59-even}
\end{gather*}
\end{id*}

\begin{id*}[Dual of \ref{t-SLfin}.59 odd]{\ref{dual}.59f odd}
\begin{gather*}
  \sum_{j\geqq 0} \sum_{k\geqq 0}
    q^{(j+k)^2 +3j + 2k} \gp{n-j}{k}{q^2} \gp{n+j-k+1}{2j+1}{q}\\ =
  \sum_{k=-\infty}^\infty q^{21 k^2 +11k   } \gp{2m+3}{m+7k+3}{q}
                         -q^{21 k^2 +25k+ 6} \gp{2m+3}{m+7k+6}{q}
 \tag{\ref{dual}.59f-odd}
\end{gather*}
\end{id*}

\begin{id*}[Dual of \ref{t-ljslist}.59 odd]{\ref{dual}.59 odd}
Note: This identity is equivalent to $(-q^2;q^2)_\infty \times 
(\ref{t-ljslist}.82)$
with $q$ replaced by $q^2$.
\begin{gather*}
  \sum_{j=0}^\infty \sum_{k=0}^\infty
    \frac{ q^{ (j+k)^2 + 3j + 2k} } {(q^2;q^2)_k (q;q)_{2j+1}}
  \\ = \prod_{j=1}^\infty
   \frac{(1-q^{14j-6})(1-q^{14j-8})(1-q^{28j-2})(1-q^{28j-26})
(1-q^{14j})} {(1-q^j)}
\tag{\ref{dual}.59-odd}
\end{gather*}
\end{id*}

\begin{id*}[Dual of \ref{t-SLfin}.60 even]{\ref{dual}.60f-even}
\begin{gather*}
  \sum_{j\geqq 0} \sum_{k\geqq 0}
    q^{(j+k)^2 + j} \gp{n-j}{k}{q^2} \gp{n+j-k}{2j}{q}\\ =
  \sum_{k=-\infty}^\infty q^{21 k^2 +  k   } \gp{2m+1}{m+7k+1}{q}
                         -q^{21 k^2 +29k+10} \gp{2m+1}{m+7k+5}{q}
 \tag{\ref{dual}.60f-even}
\end{gather*}
\end{id*}

\begin{id*}[Dual of \ref{t-ljslist}.60 even]{\ref{dual}.60-even}
Note: This identity is equivalent to $(-q;q^2)_\infty \times 
(\ref{t-ljslist}.81)$
with $q$ replaced by $q^2$.
\begin{gather*}
  \sum_{j=0}^\infty \sum_{k=0}^\infty
    \frac{ q^{ (j+k)^2 + j} } {(q^2;q^2)_k (q;q)_{2j}}
 \\ =  \prod_{j=1}^\infty
   \frac{(1-q^{14j-2})(1-q^{14j-12})(1-q^{28j-10})(1-q^{28j-18})
(1-q^{14j})} {(1-q^j)}
\tag{\ref{dual}.60-even}
\end{gather*}
\end{id*}

\begin{id*}[Dual of \ref{t-SLfin}.60 odd]{\ref{dual}.60f-odd}
\begin{gather*}
  \sum_{j\geqq 0} \sum_{k\geqq 0}
    q^{(j+k)^2 + 2j + k} \gp{n-j}{k}{q^2} \gp{n+j-k+1}{2j+1}{q}\\ =
  \sum_{k=-\infty}^\infty q^{21 k^2 + 8k   } \gp{2m+2}{m+7k+2}{q}
                         -q^{21 k^2 +22k+ 5} \gp{2m+2}{m+7k+5}{q}
 \tag{\ref{dual}.60f-odd}
\end{gather*}
\end{id*}

\begin{id*}[Dual of \ref{t-ljslist}.60 odd]{\ref{dual}.60-odd}
Note: This identity is equivalent to $(-q;q^2)_\infty \times 
(\ref{t-ljslist}.119)$
\begin{gather*}
  \sum_{j=0}^\infty \sum_{k=0}^\infty
    \frac{ q^{ (j+k)^2 + 2j + k} } {(q^2;q^2)_k (q;q)_{2j+1}}
 = \\ \prod_{j=1}^\infty
   \frac{(1-q^{14j-5})(1-q^{14j-9})(1-q^{28j-4})(1-q^{28j-28})
(1-q^{14j})} {(1-q^j)}
\tag{\ref{dual}.60-odd}
\end{gather*}
\end{id*}

\begin{id*}[Dual of \ref{t-SLfin}.61 even]{\ref{dual}.61f-even}
\begin{gather*}
  \sum_{j\geqq 0} \sum_{k\geqq 0}
    q^{(j+k)^2 + k} \gp{n-j-1}{k}{q^2} \gp{n+j-k}{2j}{q}\\ =
  \sum_{k=-\infty}^\infty q^{21 k^2 + 2k   } \gp{2m}{m+7k  }{q}
                         -q^{21 k^2 +26k +8} \gp{2m}{m+7k+4}{q}
 \tag{\ref{dual}.61f-even}
\end{gather*}
\end{id*}

\begin{id*}[Dual of \ref{t-ljslist}.61 even]{\ref{dual}.61-even}
Note: This identity is equivalent to $(-q^2;q^2)_\infty \times 
(\ref{t-ljslist}.117)$. 
\begin{gather*}
  \sum_{j=0}^\infty \sum_{k=0}^\infty
    \frac{ q^{ (j+k)^2 + k} } {(q^2;q^2)_k (q;q)_{2j}}
 \\ =  \prod_{j=1}^\infty 
   \frac{(1-q^{14j-3})(1-q^{14j-11})(1-q^{28j-8})(1-q^{28j-20})
(1-q^{14j})} {(1-q^j)}
\tag{\ref{dual}.61-even}
\end{gather*}
\end{id*}

\begin{id*}[Dual of \ref{t-SLfin}.61 odd]{\ref{dual}.61f-odd}
\begin{gather*}
  \sum_{j\geqq 0} \sum_{k\geqq 0}
    q^{(j+k)^2 + j +2k} \gp{n-j-1}{k}{q^2} \gp{n+j-k+1}{2j+1}{q}\\ =
  \sum_{k=-\infty}^\infty q^{21 k^2 + 5k   } \gp{2m}{m+7k+1}{q}
                         -q^{21 k^2 +19k +4} \gp{2m}{m+7k+4}{q}
 \tag{\ref{dual}.61f-odd}
\end{gather*}
\end{id*}

\begin{id*}[Dual of \ref{t-ljslist}.61 odd]{\ref{dual}.61-odd}
Note: This identity is equivalent to $(-q;q^2)_\infty \times 
(\ref{t-ljslist}.80)$ with
$q$ replaced by $q^2$.
\begin{gather*}
  \sum_{j=0}^\infty \sum_{k=0}^\infty
    \frac{ q^{ (j+k)^2 + j +2k} } {(q^2;q^2)_k (q;q)_{2j+1}}
 \\ =  \prod_{j=1}^\infty
   \frac{(1-q^{14j-4})(1-q^{14j-10})(1-q^{28j-6})(1-q^{28j-22})
(1-q^{14j})} {(1-q^j)}
\tag{\ref{dual}.61-odd}
\end{gather*}
\end{id*}

\begin{id*}[Dual of \ref{t-SLfin}.68]{\ref{dual}.68f}
\begin{gather*}
  \sum_{h\geqq 0} \sum_{i\geqq 0} \sum_{k\geqq 0} \sum_{l\geqq 0}
    (-1)^k q^{2h + 2i(2h+2k+2l+1+3i +(h+k)^2}
     \gp{n-2i-h-k-l}{i}{q^4} \\ \times \gp{n-2i-h-l}{k}{q^2}
      \times \gp{n-2i-h-k}{l}{q^2} \gp{n-2i-k-l}{h}{q^2}\\ =
  \sum_{j=-\infty}^\infty (-1)^j q^{8j^2 + 2j   } \Trb{n+1}{4j+1}{4j+1}{q^2}
                         -(-1)^j q^{8j^2 +10j +3} \Trb{n+1}{4j+2}{4j+2}{q^2}
 \tag{\ref{dual}.68}
\end{gather*}
\end{id*}

\begin{id*}[Dual of \ref{t-ljslist}.68]{\ref{dual}.68}
\begin{gather*}
  \sum_{h=0}^\infty \sum_{i=0}^\infty \sum_{k=0}^\infty \sum_{l=0}^\infty
    \frac{ (-1)^k q^{2h + 2i(2h+2k+2l+1+3i +(h+k)^2}}{ (q^4;q^4)_i 
    (q^2;q^2)_k (q^2;q^2)_l (q^2;q^2)_h}
 \\= \frac{(q,-q^3,-q^4;-q^4)_\infty}{(q^2;q^2)_\infty}
\tag{\ref{dual}.68}
\end{gather*}
\end{id*}

\begin{id*}[Dual of \ref{t-SLfin}.69]{\ref{dual}.69f}
\begin{gather*}
  \sum_{h\geqq 0} \sum_{i\geqq 0} \sum_{k\geqq 0} \sum_{l\geqq 0}
    (-1)^i q^{2h + 2i(2h+2k+2l+1+3i) +(h+k)^2}
     \gp{n-2i-h-k-l}{i}{q^4}\\ \times \gp{n-2i-h-l}{k}{q^2}  
      \times \gp{n-2i-h-k}{l}{q^2} \gp{n-2i-k-l}{h}{q^2}\\ =
  \sum_{j=-\infty}^\infty (-1)^j q^{8j^2 + 2j   } \Trb{n+1}{4j+1}{4j+1}{q^2}
                         +(-1)^j q^{8j^2 +10j +3} \Trb{n+1}{4j+2}{4j+2}{q^2}
 \tag{\ref{dual}.69f}
\end{gather*}
\end{id*}

\begin{id*}[Dual of \ref{t-ljslist}.69]{\ref{dual}.69}
\begin{gather*}
  \sum_{h=0}^\infty \sum_{i=0}^\infty \sum_{k=0}^\infty \sum_{l=0}^\infty
    \frac{ (-1)^i q^{2j + 2i(2h+2k+2l+1+3i) +(h+k)^2}}{ (q^4;q^4)_i 
    (q^2;q^2)_k (q^2;q^2)_l (q^2;q^2)_h}
 =  \prod_{j=1}^\infty {(1+q^j) }
\tag{\ref{dual}.69}
\end{gather*}
\end{id*}

\begin{obs*}{\ref{dual}.79} 
 The dual of (79) is (18 even).
\end{obs*}

\begin{id*}[Dual of \ref{t-SLfin}.80]{\ref{dual}.80f}
\begin{gather*}
  \sum_{h\geqq 0} \sum_{k\geqq 0} 
    q^{ h(h+1)/2 + 2k(h+k) } \gp{n-h-k}{k}{q^2} \gp{n-2k}{h}{q}
 \\ =
  \sum_{k=-\infty}^\infty 
     q^{14k^2 +   k    } \Trb{n+1}{7k  }{7k+1}{q}
   - q^{14k^2 + 13k + 3} \Trb{n+1}{7k+3}{7k+4}{q} 
\tag{\ref{dual}.80f}
\end{gather*}
\end{id*}

\begin{id*}[Dual of \ref{t-ljslist}.80]{\ref{dual}.80}
Equivalent to Andrews~\cite[p. 331, eqn. (1.1)]{mod11}, and to 
$(-q;q)_\infty
\times (\ref{t-ljslist}.33)$.
\begin{gather*}
  \sum_{h=0}^\infty \sum_{k=0}^\infty
   \frac{ q^{ h(h+1)/2 + 2k(h+k)} } {(q^2;q^2)_k (q;q)_h }
 = \prod_{j=1}^\infty \frac{ (1-q^{7j-3}) (1-q^{7j-4}) (1-q^{7j})} {(1-q^j)}
\tag{\ref{dual}.80}
\end{gather*}
\end{id*}

\begin{id*}[Dual of \ref{t-SLfin}.81]{\ref{dual}.81f}
\begin{gather*}
  \sum_{h\geqq 0} \sum_{k\geqq 0}
    q^{ h(h+1)/2 + 2k(h+k+1) } \gp{n-h-k-1}{k}{q^2} \gp{n-2k}{h}{q}
 \\ =
  \sum_{k=-\infty}^\infty
     q^{14k^2 -  4k    } \Trb{n+1}{7k-1}{7k  }{q}
   - q^{14k^2 +  4k    } \Trb{n+1}{7k+1}{7k+2}{q}
\tag{\ref{dual}.81f}
\end{gather*}
\end{id*}

\begin{id*}[Dual of \ref{t-ljslist}.81]{\ref{dual}.81}
Equivalent to Andrews~\cite[p. 331, eqn. (1.2)]{mod11}, and to
$(-q;q)_\infty
\times (\ref{t-ljslist}.32)$.
\begin{gather*}
  \sum_{h=0}^\infty \sum_{k=0}^\infty
   \frac{ q^{ h(h+1)/2 + 2k(h+k+1)} } {(q^2;q^2)_k (q;q)_h }
 = \prod_{j=1}^\infty \frac{ (1-q^{7j-2}) (1-q^{7j-5}) (1-q^{7j})} {(1-q^j)}
\tag{\ref{dual}.81}
\end{gather*}
\end{id*}
 
\begin{id*}[Dual of \ref{t-SLfin}.82]{\ref{dual}.82f}
\begin{gather*}
  \sum_{h\geqq 0} \sum_{k\geqq 0}
    q^{ h(h+3)/2 + 2k(h+k+1) } \gp{n-h-k}{k}{q^2} \gp{n-2k}{h}{q}
 \\ =
  \sum_{k=-\infty}^\infty
     q^{14k^2 +  5k    } \Trb{n+2}{7k+1}{7k+2}{q}
   - q^{14k^2 +  9k + 1} \Trb{n+2}{7k+2}{7k+3}{q}
\tag{\ref{dual}.82f}
\end{gather*}
\end{id*}

\begin{id*}[Dual of \ref{t-ljslist}.82]{\ref{dual}.82}
Equivalent to Andrews~\cite[p. 331, eqn. (1.3)]{mod11}, and to
$(-q;q)_\infty
\times (\ref{t-ljslist}.31)$.
\begin{gather*}
  \sum_{h=0}^\infty \sum_{k=0}^\infty
   \frac{ q^{ h(h+3)/2 + 2k(h+k+1)} } {(q^2;q^2)_k (q;q)_h }
 = \prod_{j=1}^\infty \frac{ (1-q^{7j-1}) (1-q^{7j-6}) (1-q^{7j})} {(1-q^j)}
\tag{\ref{dual}.82}
\end{gather*}
\end{id*}

\begin{id*}[Dual of \ref{t-SLfin}.90-even]{\ref{dual}.90f-even}
\begin{gather*}
  \sum_{i=0}^\infty \sum_{J=0}^\infty \sum_{k=0}^\infty \sum_{L=0}^\infty
  (-1)^i q^{J^2 + 3J + 3i(i-1)/2 - k - 2L} \gp{n-J}{i}{q^3} \\ \times
  \gp{n-J+k}{k}{q^2} \gp{n-J+L}{L}{q^2} \gp{n+J-3i-2k-2L}{n-J}{q} \\ =
  \sum_{k=-\infty}^\infty q^{27k^2 + 6k} \gp{2m+3}{m+9k+3}{q} -
     q^{27k^2 + 42k + 16} \gp{2m+3}{m+9k+8}{q}
\tag{\ref{dual}.90f-even}
\end{gather*}
\end{id*}

\begin{id*}[Dual of \ref{t-ljslist}.90-even]{\ref{dual}.90-even}
\begin{gather*}
  \sum_{i=0}^\infty \sum_{J=0}^\infty \sum_{k=0}^\infty \sum_{L=0}^\infty
   \frac{ (-1)^i q^{J^2 + 3J + 3i(i-1)/2 - k - 2L} } {(q^3;q^3)_i (q^2;q^2)_k
    (q^2;q^2)_L (q;q)_{2J-3i-2k-2L} }\\
 = \prod_{j=1}^\infty 
     \frac{ (1-q^{18j-1}) (1-q^{18j-17}) (1-q^{18j}) (1-q^{36j-16}) 
     (1-q^{36j-20})} {(1-q^j)}
\tag{\ref{dual}.90-even}
\end{gather*}
\end{id*}

\begin{id*}[Dual of \ref{t-SLfin}.90-odd]{\ref{dual}.90f-odd}
\begin{gather*}
  \sum_{i=0}^\infty \sum_{J=0}^\infty \sum_{k=0}^\infty \sum_{L=0}^\infty
  (-1)^i q^{J^2 + 4J + 3i(i-1)/2 - k - 2L} \gp{n-J}{i}{q^3} \\ \times
  \gp{n-J+k}{k}{q^2} \gp{n-J+L}{L}{q^2} \gp{n+J-3i-2k-2L}{n-J}{q} \\ =
  \sum_{k=-\infty}^\infty q^{27k^2 + 15k} \gp{2m+4}{m+9k+4}{q} -
     q^{27k^2 + 33k + 8} \gp{2m+4}{m+9k+8}{q}
\tag{\ref{dual}.90f-odd}
\end{gather*}
\end{id*}

\begin{id*}[Dual of \ref{t-ljslist}.90-odd]{\ref{dual}.90-odd}
\begin{gather*}
  \sum_{i=0}^\infty \sum_{J=0}^\infty \sum_{k=0}^\infty \sum_{L=0}^\infty
   \frac{ (-1)^i q^{J^2 + 4J + 3i(i-1)/2 - k - 2L} } {(q^3;q^3)_i (q^2;q^2)_k
    (q^2;q^2)_L (q;q)_{2J-3i-2k-2L} }\\
 = \prod_{j=1}^\infty 
     \frac{ (1-q^{18j-8}) (1-q^{18j-10}) (1-q^{18j}) (1-q^{36j-2}) 
     (1-q^{36j-34})} {(1-q^j)}
\tag{\ref{dual}.90-odd}
\end{gather*}
\end{id*}

\begin{id*}[Dual of \ref{t-SLfin}.91-even]{\ref{dual}.91f-even}
\begin{gather*}
  \sum_{i=0}^\infty \sum_{J=0}^\infty \sum_{k=0}^\infty \sum_{L=0}^\infty
  (-1)^i q^{J^2 + 2J + 3i(i-1)/2 - k - 2L} \gp{n-J}{i}{q^3} \\ \times
  \gp{n-J+k}{k}{q^2} \gp{n-J+L}{L}{q^2} \gp{n+J-3i-2k-2L}{n-J}{q} \\ =
  \sum_{k=-\infty}^\infty q^{27k^2 + 3k} \gp{2m+2}{m+9k+2}{q} -
     q^{27k^2 + 39k + 14} \gp{2m+2}{m+9k+7}{q}
\tag{\ref{dual}.91f-even}
\end{gather*}
\end{id*}

\begin{id*}[Dual of \ref{t-ljslist}.91-even]{\ref{dual}.91-even}
\begin{gather*}
  \sum_{i=0}^\infty \sum_{J=0}^\infty \sum_{k=0}^\infty \sum_{L=0}^\infty
   \frac{ (-1)^i q^{J^2 + 2J + 3i(i-1)/2 - k - 2L} } {(q^3;q^3)_i (q^2;q^2)_k
    (q^2;q^2)_L (q;q)_{2J-3i-2k-2L} }\\
 = \prod_{j=1}^\infty 
     \frac{ (1-q^{18j-2}) (1-q^{18j-16}) (1-q^{18j}) (1-q^{36j-14}) 
     (1-q^{36j-22})} {(1-q^j)}
\tag{\ref{dual}.91-even}
\end{gather*}
\end{id*}

\begin{id*}[Dual of \ref{t-SLfin}.91-odd]{\ref{dual}.91f-odd}
\begin{gather*}
  \sum_{i=0}^\infty \sum_{J=0}^\infty \sum_{k=0}^\infty \sum_{L=0}^\infty
  (-1)^i q^{J^2 + 3J + 3i(i-1)/2 - k - 2L} \gp{n-J}{i}{q^3} \\ \times
  \gp{n-J+k}{k}{q^2} \gp{n-J+L}{L}{q^2} \gp{n+J+1-3i-2k-2L}{n-J}{q} \\ =
  \sum_{k=-\infty}^\infty q^{27k^2 + 12k} \gp{2m+3}{m+9k+3}{q} -
     q^{27k^2 + 30k + 7} \gp{2m+3}{m+9k+7}{q}
\tag{\ref{dual}.91f-odd}
\end{gather*}
\end{id*}

\begin{id*}[Dual of \ref{t-ljslist}.91-odd]{\ref{dual}.91-odd}
\begin{gather*}
  \sum_{i=0}^\infty \sum_{J=0}^\infty \sum_{k=0}^\infty \sum_{L=0}^\infty
   \frac{ (-1)^i q^{J^2 + 3J + 3i(i-1)/2 - k - 2L} } {(q^3;q^3)_i (q^2;q^2)_k
    (q^2;q^2)_L (q;q)_{2J-3i-2k-2L} }\\
 = \prod_{j=1}^\infty 
     \frac{ (1-q^{18j-7}) (1-q^{18j-11}) (1-q^{18j}) (1-q^{36j-4}) 
     (1-q^{36j-32})} {(1-q^j)}
\tag{\ref{dual}.91-odd}
\end{gather*}
\end{id*}

\begin{id*}[Dual of \ref{t-SLfin}.92-even]{\ref{dual}.92f-even}
\begin{gather*}
  \sum_{i=0}^\infty \sum_{J=0}^\infty \sum_{k=0}^\infty \sum_{L=0}^\infty
  (-1)^i q^{J^2 + J + 3i(i-1)/2 - k} \gp{n-J}{i}{q^3} \\ \times
  \gp{n-J+k}{k}{q^2} \gp{n-J+L-1}{L}{q^2} \gp{n+J-3i-2k-2L}{n-J}{q} \\ =
  \sum_{k=-\infty}^\infty q^{27k^2 + 3k} \gp{2m+2}{m+9k+2}{q} -
     q^{27k^2 + 39k + 14} \gp{2m+2}{m+9k+7}{q}
\tag{\ref{dual}.92f-even}
\end{gather*}
\end{id*}

\begin{id*}[Dual of \ref{t-ljslist}.92-even]{\ref{dual}.92-even}
\begin{gather*}
  \sum_{i=0}^\infty \sum_{J=0}^\infty \sum_{k=0}^\infty \sum_{L=0}^\infty
   \frac{ (-1)^i q^{J^2 + J + 3i(i-1)/2 - k} } {(q^3;q^3)_i (q^2;q^2)_k
    (q^2;q^2)_L (q;q)_{2J-3i-2k-2L} }\\
 = \prod_{j=1}^\infty 
     \frac{ (1-q^{18j-3}) (1-q^{18j-15}) (1-q^{18j}) (1-q^{36j-12}) 
     (1-q^{36j-24})} {(1-q^j)}
\tag{\ref{dual}.92-even}
\end{gather*}
\end{id*}

\begin{id*}[Dual of \ref{t-SLfin}.92-odd]{\ref{dual}.92f-odd}
\begin{gather*}
  \sum_{i=0}^\infty \sum_{J=0}^\infty \sum_{k=0}^\infty \sum_{L=0}^\infty
  (-1)^i q^{J^2 + 2J + 3i(i-1)/2 - k} \gp{n-J}{i}{q^3} \\ \times
  \gp{n-J+k}{k}{q^2} \gp{n-J+L-1}{L}{q^2} \gp{n+J+1-3i-2k-2L}{n-J}{q} \\ =
  \sum_{k=-\infty}^\infty q^{27k^2 + 9k} \gp{2m+2}{m+9k+2}{q} -
     q^{27k^2 + 27k + 6} \gp{2m+2}{m+9k+6}{q}
\tag{\ref{dual}.92f-odd}
\end{gather*}
\end{id*}

\begin{id*}[Dual of \ref{t-ljslist}.92-odd]{\ref{dual}.92-odd}
\begin{gather*}
  \sum_{i=0}^\infty \sum_{J=0}^\infty \sum_{k=0}^\infty \sum_{L=0}^\infty
   \frac{ (-1)^i q^{J^2 + 2J + 3i(i-1)/2 - k} } {(q^3;q^3)_i (q^2;q^2)_k
    (q^2;q^2)_L (q;q)_{2J-3i-2k-2L} }\\
 = \prod_{j=1}^\infty 
     \frac{ (1-q^{18j-6}) (1-q^{18j-12}) (1-q^{18j}) (1-q^{36j-6}) 
     (1-q^{36j-30})} {(1-q^j)}
\tag{\ref{dual}.92-odd}
\end{gather*}
\end{id*}

\begin{obs*}{\ref{dual}.94} 
 The dual of (94) is (18 odd).
\end{obs*}

\begin{obs*}{\ref{dual}.96} 
 The dual of (96) is (14 odd).
\end{obs*}

\begin{obs*}{\ref{dual}.99} 
 The dual of (99) is (14 even).
\end{obs*}

\begin{id*}[Dual of \ref{t-SLfin}.120]{\ref{dual}.120f}
\begin{gather*}
\sum_{h\geqq 0} \sum_{i\geqq 0} \sum_{k\geqq 0} q^{(h+k)^2 + 2i(h+k+i) + 
h + 2(k+i)} \gp{n-i-h-k-1}{i}{q^2} \gp{n-i-h-1}{k}{q^2}\\ \times \gp{n-i-k}{h}{q^2}
\\ =
\sum_{j=-\infty}^\infty q^{12j^2 + 4j} \gp{2n+1}{n+6j+1}{q} - q^{12j^2 + 
8j +1}\gp{2n+1}{n+6j+2}{q} \\
\tag{\ref{dual}.120f}
\end{gather*}
\end{id*}

\begin{id*}[Dual of \ref{t-ljslist}.120]{\ref{dual}.120}
\begin{gather*}
\sum_{h\geqq 0} \sum_{i\geqq 0} \sum_{k\geqq 0} 
 \frac{ q^{(h+k)^2 + 2i(h+k+i) + 
h + 2(k+i)}} {(q^2;q^2)_h (q^2;q^2)_i (q^2;q^2)_k}  =
\prod_{j=1}^\infty \frac{ (1-q^{6j-1})(1-q^{6j-5})(1-q^{6j})} {1-q^j} \\
\tag{\ref{dual}.120}
\end{gather*}
\end{id*}

\begin{id*}[Dual of \ref{t-SLfin}.130]{\ref{dual}.130f}
\begin{gather*}
  \sum_{h=0}^\infty \sum_{i=0}^\infty \sum_{k=0}^\infty \sum_{l=0}^\infty 
  (-1)^l q^{4ik + 4ih + 4il + h^2 + 2hk + 2hl + l^2 + 2kl + k^2 -k + 
  6i^2} \gp{n-2i-h-k-l}{i}{q^4} \\ \times \gp{n-2i-h-l}{k}{q^2} 
  \gp{n-2i-h-j-1}{l}{q^2} \gp{n-2i-k-l}{h}{q^2} \\ =
  \sum_{j=-\infty}^\infty (-1)^j q^{8j^2 + 4j} \Trb{n}{4j+1}{4j+1}{q^2}
 +\sum_{j=-\infty}^\infty (-1)^j q^{8j^2 + 4j+2n-1} 
 \Trb{n-1}{4j+1}{4j+1}{q^2}\\
+\sum_{j=-\infty}^\infty (-1)^j q^{8j^2} \Trb{n}{4j}{4j}{q^2}
+\sum_{j=-\infty}^\infty (-1)^j q^{8j^2 + 2n -1} \Trb{n-1}{4j}{4j}{q^2}
\tag{\ref{dual}.130f}
\end{gather*}
\end{id*}

\begin{id*}[Dual of \ref{t-ljslist}.130]{\ref{dual}.130}
\begin{gather*}
  \sum_{h=0}^\infty \sum_{i=0}^\infty \sum_{k=0}^\infty \sum_{l=0}^\infty 
  (-1)^l \frac{ q^{4ik + 4ih + 4il + h^2 + 2hk + 2hl + l^2 + 2kl + k^2 -k + 
  6i^2}} { (q^4;q^4)_i (q^2;q^2)_k (q^2;q^2)_l (q^2;q^2)_h} \\ =
  \prod_{j=1}^\infty \frac{ (1-q^{16j-8})^2 (1-q^{16j}) + 
  (1-q^{16j-4})(1-q^{16j-12})(1-q^{16j}) }{ (1-q^{2j})}
\tag{\ref{dual}.130}
\end{gather*}
\end{id*}
\index{duality, reciprocal|)}

\section{More Finitizations: Bressoud Type Polynomial Identities} \label{bressoud}

\subsection{On Bressoud's Identities}
The process of finitization is certainly not unique.  There may be many 
different polynomial sequences which converge to a given series.  So far, 
we have only considered the polynomial identites which arise when the 
two-parameter generalization $f(q,t)$ of the series $\phi(q)$ satisfies a 
first order,
nonhomogeneous $q$-difference equation.  
For example, if we let
\[ \phi(q) = \sum_{j=0}^\infty \frac{q^{j^2}}{(q;q)_j}, \] 
the series side of the first Rogers-Ramanujan identity,  
\index{Rogers-Ramanujan identities}equation (A.18),
then \begin{eqnarray*}
 f(q,t) & = & \sum_{j=0}^\infty \frac{t^{2j} q^{j^2}}{(t;q)_{j+1}} \\
        & = & \sum_{j=0}^\infty t^{2j} q^{j^2} \sum_{k=0}^\infty 
        \gp{j+k}{k}{q} t^k \mbox{\qquad\qquad (by (\ref{qbc2}))} \\
        & = & \sum_{j=0}^\infty \sum_{k=0}^\infty t^{2j+k} q^{j^2} 
        \gp{j+k}{k}{q} \\
        & = & \sum_{n=0}^\infty t^n \sum_{k=0}^\infty q^{j^2} \gp{n-j}{k}{q} 
        \mbox{\qquad\qquad (by letting $n=2j+k$)}.
      \end{eqnarray*}
So $f(q,t)$ is the generating function for polynomials $D_n(q)$ which are given by the 
identity
\begin{gather*}
       \sum_{j\geqq 0} q^{j^2} \gp{n-j}{j}{q}
     = \sum_{j=-\infty}^\infty (-1)^j q^{j(5j+1)/2} \gp{n}{\lfloor 
       \frac{n+5j+1}{2} \rfloor}{q}. 
\end{gather*}

 Let us examine what can happen if we drop the requirement that $f(q,t)$ satisfy 
the a first order nonhomogeneous $q$-difference equation, 
\index{q-difference equation}but retain the 
other two requirements of Conditions~\ref{cond}. 
We modify $f(q,t)$ slightly to obtain
 \begin{eqnarray*}
 \tilde{f}(q,t) & = & \sum_{j=0}^\infty \frac{t^{j} q^{j^2}}{(t;q)_{j+1}} \\
        & = & \sum_{j=0}^\infty t^{j} q^{j^2} \sum_{k=0}^\infty 
        \gp{j+k}{k}{q} t^k \mbox{\qquad\qquad (by (\ref{qbc2}))} \\
        & = & \sum_{j=0}^\infty \sum_{k=0}^\infty t^{j+k} q^{j^2} 
        \gp{j+k}{k}{q} \\
        & = & \sum_{n=0}^\infty t^n \sum_{k=0}^\infty q^{j^2} \gp{n}{k}{q} 
        \mbox{\qquad\qquad (by letting $n=j+k$)}.
      \end{eqnarray*}
It turns out that $\tilde{f}(q,t)$ is the generating function for 
polynomials $B_n(q)$ which satisfy the polynomial identity
 \begin{equation} \label{db1}
   \sum_{j\geqq 0} q^{j^2} \gp{n}{j}{q} = \sum_{j=-\infty}^\infty (-1)^j 
 q^{j(5j+1)/2} \gp{2n}{n+2j}{q}, 
 \end{equation}
an identity which was discovered by David Bressoud~\cite[p. 211, eqn. 
(1.1)]{dmb} (by a different method), 
\index{Bressoud, David M.}
\index{Rogers-Ramanujan identities!Bressoud finitization}
and is easily seen to converge to the first Rogers-Ramanujan 
identity (A.18).  Note well that (\ref{db1}) is not merely an alternate 
representation of the MacMahon-Schur finitization (3.18) of the first
Rogers-Ramanujan identity (A.18), but rather a different sequence of 
polynomials which also converges to (A.18).

   We can perform the analagous calculation with the second Rogers-Ramanujan 
identity and arrive at 
    \begin{equation} \label{db2}
   \sum_{j\geqq 0} q^{j(j+1)} \gp{n}{j}{q} = \sum_{j=-\infty}^\infty (-1)^j 
 q^{j(5j+3)/2} \gp{2n+1}{n+2j+1}{q}, 
 \end{equation}
which is equivalent to Bressoud~\cite[p. 212, eqn. (1.3)]{dmb}.

  The next identity to appear in Bressoud's paper,
   \begin{equation} \label{db3}
    \sum_{j\geqq 0} q^{jn} \gp{n}{j}{q} = \sum_{j=-\infty}^\infty (-1)^j 
      q^{j(3j+1)/2} \gp{2n}{n+2j}{q},
   \end{equation}
(Bressoud~\cite[p. 212, (1.5)]{dmb}) 
\index{Bressoud, David M.}
is a finitization of Euler's \index{Euler, L.}
Pentagonal Number Theorem (\ref{t-ljslist}.1).  
\index{Pentagonal Number Theorem!finite}It is interesting to note 
that this identity is the reciprocal dual of both (\ref{db1}) and 
(\ref{db2}).  \index{duality, reciprocal}
This exemplifies the fact that duality can only be discussed 
within the context of a particular finitization.  Recall from 
\S~\ref{dual} that the dual of the First Rogers-Ramanujan Identity is 
the pair of identities (\ref{t-ljslist}.79) and (\ref{t-ljslist}.94).  We 
see here, however, that under the Bressoud finitization of the First 
Rogers-Ramanujan Identity, its dual is Euler's Pentagonal Number Theorem.  
Thus, a reciprocal duality theory for Rogers-Ramanujan type identities only 
makes sense with respect to a given method of finitization.

\subsection{New Bressoud Type Polynomial Identities}
By altering the exponent of $t$ in the $f(q,t)$ associated with a given 
Rogers-Ramanujan type identity, we can always produce a fermionic 
representation for a sequence of polynomials $P_n(q)$ which converges to the series 
side of the original identity.  In some cases, we may be lucky enough to 
find a nice bosonic representation for $P_n(q)$, and when this is the case, 
we have found what I will refer to as a {\em polynomial identity of the 
Bressoud Type}.  Note, however, that since we have forfeited the 
generalized first order $q$-difference equation property, we no longer 
immediately obtain a 
recurrence relation for the polynomials generated.  However, 
Riese's Mathematica packages can find a recurrence which 
the polyomials satisfy.\index{Riese, Axel}

I found the following alternate finitizations of some of the identities on 
Slater's list.

\begin{id*}[Alternate finitization of 
(\ref{t-ljslist}.79)]{\ref{bressoud}.79} This identity is explicitly a 
special case of Warnaar~\cite[equation (4.7)]{sow:qti}.\index{Warnaar, S. Ole}  It is likely
that the identities below can also be derived by the methods of    
Warnaar~\cite{sow:qti}. 
 \begin{gather*}
   \sum_{j\geqq 0} q^{j^2} \gp{n}{2j}{q} = \sum_{k=-\infty}^\infty 
   q^{15k^2 + k} \Trb{n}{6k}{6k}{q} - q^{15k^2 + 19k + 6} 
   \Trb{n}{6k+4}{6k+4}{q} \tag{\ref{bressoud}.79}
 \end{gather*}
\end{id*}

\begin{id*}[Alternative finitization of 
(\ref{t-ljslist}.94)]{\ref{bressoud}.94}
 \begin{gather*}
   \sum_{j\geqq 0} q^{j(j+1)} \gp{n+1}{2j+1}{q} \\= \sum_{k=-\infty}^\infty 
   q^{15k^2 + 4k} \Trb{n+1}{6k+1}{6k+1}{q} - q^{15k^2 + 16k + 4} 
   \Trb{n+1}{6k+3}{6k+3}{q} \tag{\ref{bressoud}.94}
 \end{gather*}
\end{id*}

\begin{id*}[Alternative finitization of 
(\ref{t-ljslist}.96)]{\ref{bressoud}.96}
 \begin{gather*}
   \sum_{j\geqq 0} q^{j(j+2)} \gp{n+1}{2j+1}{q} \\ = \sum_{k=-\infty}^\infty 
   q^{15k^2 + 7k} \Trb{n+1}{6k+1}{6k+1}{q} - q^{15k^2 + 17k + 4} 
   \Trb{n+1}{6k+3}{6k+3}{q} \tag{\ref{bressoud}.96}
 \end{gather*}
\end{id*}

\begin{id*}[Alternative finitization of 
(\ref{t-ljslist}.99)]{\ref{bressoud}.99}
 \begin{gather*}
   \sum_{j\geqq 0} q^{j(j+1)} \gp{n}{2j}{q} = \sum_{k=-\infty}^\infty 
   q^{15k^2 + 2k} \Trb{n}{6k}{6k}{q} - q^{15k^2 + 22k + 8} 
   \Trb{n}{6k+4}{6k+4}{q} \tag{\ref{bressoud}.99}
 \end{gather*}
\end{id*}

\begin{proof}
We shall prove (\ref{bressoud}.79) and (\ref{bressoud}.96) simultaneously.
Let $S_n(q)$ represent the polynomial on the LHS of (\ref{bressoud}.79), and 
$T_n(q)$ present the polynomial on the LHS of (\ref{bressoud}.96).  Then 
\begin{gather*}
  T_n(q) =  \sum_{j=0}^\infty q^{j(j+2)} \gp{n+1}{2j+1}{q} \\
  = \sum_{j=0}^\infty q^{j(j+2)} \gp{n}{2j+1}{q} + 
    \sum_{j=0}^\infty q^{j^2 + n} \gp{n}{2j}{q} \mbox{\quad (by~\ref{qpt1})}
  \\= T_{n-1}(q) + q^n S_n(q)
\end{gather*} 
Thus, we have
\begin{equation} 
  T_n(q) - T_{n-1}(q) - q^n S_n(q) = 0 \mbox{ for $n\geqq 1$.} \label{firstrec}
\end{equation}

Also, 
\begin{gather*}
  S_n(q) = \sum_{j=0}^\infty q^{j^2} \gp{n}{2j}{q} \\
 = \sum_{j=0}^\infty q^{j^2} \gp{n-1}{2j}{q} +
   \sum_{j=1}^\infty q^{j^2 + n - 2j} \gp{n-1}{2j-1}{q} 
  \mbox{\quad(by~\ref{qpt1})} \\
= \sum_{j=0}^\infty q^{j^2} \gp{n-1}{2j}{q} +
   q^{n-1}\sum_{j=0}^\infty q^{j^2 } \gp{n-1}{2j+1}{q}\\ 
=\sum_{j=0}^\infty q^{j^2} \gp{n-1}{2j}{q} +
   q^{n-1}\sum_{j=0}^\infty q^{j^2 } \gp{n-2}{2j}{q} 
 + q^n \sum_{j=0}^\infty q^{j(j-2)}\gp{n-2}{2j+1}{q} \\
= S_{n-1}(q) + q^{n-1} S_{n-2}(q) + q^n T_{n-3}(q)
\end{gather*}

Thus,
\begin{equation}
  S_n(q) - S_{n-1}(q) - q^{n-1}S_{n-2}(q) - q^n T_{n-3}(q) = 0
\mbox{ for $n\geqq 2$.}\label{secrec}
\end{equation}

So, by verifying that the RHS of (\ref{bressoud}.79) and (\ref{bressoud}.96) 
satisfy 
(\ref{firstrec}) and (\ref{secrec}) along with the initial conditions
\[ S_0(q) = S_1(q) = T_0(q) = 1, \]
the identities will be proved.

We show that the RHS of (\ref{bressoud}.79) and (\ref{bressoud}.96) satisfy 
(\ref{firstrec}):
\begin{gather*}
  \sum_{j=-\infty}^\infty q^{15j^2 + 7j} \Trb{n+1}{6j+1}{6j+1}{q}
 -\sum_{j=-\infty}^\infty q^{15j^2 + 7j}\Trb{n}{6j+1}{6j+1}{q}\\
-\sum_{j=-\infty}^\infty q^{15j^2 + 17j + 4}\Trb{n+1}{6j+3}{6j+3}{q}
+\sum_{j=-\infty}^\infty q^{15j^2 + 17j + 4}\Trb{n}{6j+3}{6j+3}{q}\\ 
-\sum_{j=-\infty}^\infty q^{15j^2 + j + n} \Trb{n}{6j}{6j}{q}
+\sum_{j=-\infty}^\infty q^{15j^2 + 19j + 6 + n}\Trb{n}{6j+4}{6j+4}{q}  
\end{gather*}
Apply (\ref{ETrb28}) to the first and third terms to obtain
\begin{gather*}
= \sum_{j=-\infty}^\infty q^{15j^2 + 7j + n} \Trb{n}{6j+1}{6j+2}{q}
 -\sum_{j=-\infty}^\infty q^{15j^2 + 17j + 4 + n}\Trb{n}{6j+}{6j+4}{q}\\
 -\sum_{j=-\infty}^\infty q^{15j^2 + 11j + 2 + n}\Trb{n}{6j+2}{6j+2}{q}
 +\sum_{j=-\infty}^\infty q^{15j^2 - 11j + 2 + n}\Trb{n}{6j-2}{6j-2}{q}
\end{gather*}
In the above, the third and fourth terms cancel after replacing
$j$ by $-j$ in the fourth and applying (\ref{Trbsym}).
To see that  the first and second terms cancel, replace 
$j$ by $1-j$ in the second term, and apply (\ref{Trbsym}) to it. 
 
\[ =0. \]

Now, we shall see that the RHS's of (\ref{bressoud}.79) and 
(\ref{bressoud}.96) satisfy
(\ref{secrec}):

\begin{gather*} 
 \sum_{j=-\infty}^\infty q^{15j^2 + j} \Trb{n}{6j}{6j}{q}
-\sum_{j=-\infty}^\infty q^{15j^2 + 19j + 6} \Trb{n}{6j+4}{6j+4}{q}\\
-\sum_{j=-\infty}^\infty q^{15j^2 + j} \Trb{n-1}{6j}{6j}{q} 
+\sum_{j=-\infty}^\infty q^{15j^2 + 19j + 6} \Trb{n-1}{6j+4}{6j+4}{q}\\
-\sum_{j=-\infty}^\infty q^{15j^2 + j+n-1} \Trb{n-2}{6j}{6j}{q} 
+\sum_{j=-\infty}^\infty q^{15j^2 + 19j + 5+n} \Trb{n-2}{6j+4}{6j+4}{q}\\ 
-\sum_{j=-\infty}^\infty q^{15j^2 +7j+n} \Trb{n-2}{6j+1}{6j+1}{q}
+\sum_{j=-\infty}^\infty q^{15j^2 +17j+n+4}\Trb{n-2}{6j+3}{6j+3}{q}
\end{gather*}
Apply (\ref{ETrb28}) to the first term to obtain
\begin{gather*}  
=\sum_{j=-\infty}^\infty q^{15j^2 + j+n-1} \Trb{n-1}{6j}{6j}{q} 
+\sum_{j=-\infty}^\infty q^{15j^2 - 5j + n}\Trb{n-1}{6j-1}{6j-1}{q}\\
-\sum_{j=-\infty}^\infty q^{15j^2 + 19j + 6} \Trb{n}{6j+4}{6j+4}{q} 
+\sum_{j=-\infty}^\infty q^{15j^2 + 19j + 6} \Trb{n-1}{6j+4}{6j+4}{q}\\
-\sum_{j=-\infty}^\infty q^{15j^2 + j+n-1} \Trb{n-2}{6j}{6j}{q}  
+\sum_{j=-\infty}^\infty q^{15j^2 + 19j + 5+n} \Trb{n-2}{6j+4}{6j+4}{q}\\ 
-\sum_{j=-\infty}^\infty q^{15j^2 +7j+n} \Trb{n-2}{6j+1}{6j+1}{q}
+\sum_{j=-\infty}^\infty q^{15j^2 +17j+n+4}\Trb{n-2}{6j+3}{6j+3}{q}
\end{gather*}
Next, apply (\ref{ETrb1}) to the first term, (\ref{ETrb28}) to
the third term, and apply (\ref{ETrb00})
to the seventh term, to wind up with
\begin{gather*}  
=\sum_{j=-\infty}^\infty q^{15j^2 - 5j + n}\Trb{n-1}{6j-1}{6j-1}{q}
-\sum_{j=-\infty}^\infty q^{15j^2 + 19j+n+5}\Trb{n-1}{6j+4}{6j+5}{q}\\
-\sum_{j=-\infty}^\infty q^{15j^2 + 13j + 2 + n}\Trb{n-1}{6j+3}{6j+3}{q}
+\sum_{j=-\infty}^\infty q^{15j^2 + 19j + 5+n} \Trb{n-2}{6j+4}{6j+4}{q}\\ 
+\sum_{j=-\infty}^\infty q^{15j^2 + 7j+2n-2} \Trb{n-2}{6j+1}{6j+2}{q}
+\sum_{j=-\infty}^\infty q^{15j^2 +17j+n+4}\Trb{n-2}{6j+3}{6j+3}{q}.
\end{gather*}
Next, apply (\ref{ETrb1}) to the second term:
\begin{gather*}  
=\sum_{j=-\infty}^\infty q^{15j^2 - 5j + n}\Trb{n-1}{6j-1}{6j-1}{q}
-\sum_{j=-\infty}^\infty q^{15j^2 + 19j+2n+3}\Trb{n-2}{6j+4}{6j+5}{q}\\
-\sum_{j=-\infty}^\infty q^{15j^2 + 25j+10 + n} \Trb{n-2}{6j+6}{6j+6}{q}
-\sum_{j=-\infty}^\infty q^{15j^2 + 13j + 2 + n}\Trb{n-1}{6j+3}{6j+3}{q}\\
+\sum_{j=-\infty}^\infty q^{15j^2 + 7j+2n-2} \Trb{n-2}{6j+1}{6j+2}{q}
+\sum_{j=-\infty}^\infty q^{15j^2 +17j+n+4}\Trb{n-2}{6j+3}{6j+3}{q}.
\end{gather*}
Next, shift $j$ to $j-1$ in the second, third, and fourth terms, 
apply (\ref{ETrb0}) to the first to obtain the following:
\begin{gather*}  
=\sum_{j=-\infty}^\infty q^{15j^2 - 11j + 2n}\Trb{n-2}{6j-2}{6j-2}{q}
-\sum_{j=-\infty}^\infty q^{15j^2 + 13j+n+2}\Trb{n-1}{6j+3}{6j+3}{q}\\
+\sum_{j=-\infty}^\infty q^{15j^2 + 7j+2n-2} \Trb{n-2}{6j+1}{6j+2}{q}
+\sum_{j=-\infty}^\infty q^{15j^2 -13j+n+2}\Trb{n-2}{6j-3}{6j-3}{q}.
\end{gather*}
Apply (\ref{ETrb29}) to the second term:
\begin{gather*}  
=\sum_{j=-\infty}^\infty q^{15j^2 - 11j + 2n}\Trb{n-2}{6j-2}{6j-2}{q}
-\sum_{j=-\infty}^\infty q^{15j^2 + 13j+n+2}\Trb{n-2}{6j+3}{6j+3}{q}\\
-\sum_{j=-\infty}^\infty q^{15j^2 +19j+2n+4} \Trb{n-2}{6j+4}{6j+4}{q}
+\sum_{j=-\infty}^\infty q^{15j^2 -13j+n+2}\Trb{n-2}{6j-3}{6j-3}{q}.
\end{gather*}
The first and third terms cancel after shifting $j$ to $j-1$ in the third.
The second and fourth terms can be seen to cancel after replacing $j$
by $-j$ in the fourth, and applying (\ref{Trbsym}) to the fourth.
\[ = 0 .\]

Upon checking the initial conditions, the proof is complete. 
\end{proof}

  Identities (\ref{bressoud}.94) and 
(\ref{bressoud}.99) can be proved together in an analogous
fashion.

\appendix
\section{Annotated Slater List }\label{t-ljslist}

\index{Slater, Lucy J.|(}
\index{Slater's list of Rogers-Ramanujan type identities|(}
Lucy Slater's list of 130 identities of the Rogers-Ramanujan type appeared 
in \cite[p. 152-167]{ljs}.  While studying these identities, I found that 
some identities appeared more than once on the list, usually in slightly 
different forms.  Also, I found misprints in identities 
6, 10, 40, 41, 42, 54, 88, 89, 97, 100, 108, 110, 115, and 128.  
Some identities are simple 
sums of others on the list; a few are the sum of an identity on the list 
with another which does not appear but is easily deduced.  Not all of the 
identities originate with Slater.  Some date back as far as Euler; many 
were known to Rogers.  I have 
attempted to provide detailed references for all of the identities which 
predate Slater. 

In all cases, $|q|<1$.
\begin{id}[Euler, 1748] The Pentagonal Number Theorem. 
Euler~\cite[p. 274, sec. 323]{euler}.
\index{Euler, L.}
\index{Pentagonal Number Theorem}
  \begin{gather*}
    \sum_{n=-\infty}^\infty (-1)^n q^{3n(n+1)/2}  =
    \prod_{n=1}^\infty (1-q^n) \tag{A.1}
  \end{gather*} 
\end{id}

\begin{id}[Euler, 1748] Euler~\cite{euler}.  
\index{Euler, L.}
\index{Andrews, George E.}
See also 
Andrews~\cite[p. 19, eqn (2.2.6) with $t=q$]{top}.
Note: This identity is the same as $(7)$ with $q$ replaced by $\sqrt{q}$.
 \begin{gather*}
   \sum_{n=0}^\infty \frac{q^{n(n+1)/2}} {(q;q)_n} =
   \prod_{n=1}^\infty (1+q^n)  \tag{A.2}
 \end{gather*} 
\end{id}

\begin{id}[Euler, 1748] Euler~\cite{euler}. 
\index{Euler, L.}
\index{Andrews, George E.}
See also
Andrews~\cite[p. 19, eqn (2.2.6) with $q$ replaced by $q^2$ and $t=-q$]{top}.
Note: This identity is the same as $(23)$.
 \begin{gather*}
   \sum_{n=0}^\infty \frac{(-1)^n q^{n^2}}{(q^2;q^2)_n} =
   \prod_{n=1}^\infty (1-q^{2n-1}) \tag{A.3}
 \end{gather*} 
\end{id}

\begin{id}
 \begin{gather*}
   \sum_{n=0}^\infty \frac{(-1)^n (-q;q^2)_n q^{n^2}}{(q^4;q^4)_n} =
   \prod_{n=1}^\infty (1-q^{2n-1})(1-q^{4n-2}) \tag{A.4}
 \end{gather*} 
\end{id}

\begin{obs}
Note: Identity $(5)$ (with $q$ replaced by $-q$) is the same as $(9)$
 and $(84)$. 
\end{obs}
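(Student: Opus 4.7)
The plan is to carry out a direct, line-by-line verification. Since the statement concerns three specific entries of Slater's list, the first step is simply to copy down Slater's formulas (A.5), (A.9), and (A.84) in full (series on the left, infinite product on the right), writing both sides using the compact notation $(a_1,\dots,a_r;q)_\infty$ introduced in \S\ref{backgr}. Having the three identities side by side will reduce the observation to a mechanical check on the series side and an equally mechanical check on the product side; no deep tool is required.

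For the product side, the key fact is that under $q\mapsto -q$ a factor of the form $(q^a;q^k)_\infty$ transforms in a predictable way depending on the parity of $k$: when $k$ is even, $(-q)^a=(-1)^a q^a$ so the base $q^k$ is unchanged and one picks up sign factors that can be absorbed by re-indexing; when $k$ is odd, $(q^a;q^k)_\infty$ must be split into even- and odd-index subproducts and recombined. Since the three identities all have limit $(-q;q)_\infty=\prod(1+q^n)$ in their finite forms (compare the calculation following (\ref{t-SLfin}.5-p)), the product sides of (A.5), (A.9), (A.84) should all be expressible as products over arithmetic progressions that coincide after applying these transformation rules. I expect to reach the common form using only Jacobi's triple product (\ref{jtp}) and Euler's identity $(-q;q)_\infty = 1/(q;q^2)_\infty$.

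For the series side, the substitution $q\mapsto -q$ turns $q^{n^2}$ into $(-1)^n q^{n^2}$ (since $(-1)^{n^2}=(-1)^n$), turns $q^{n(n+1)/2}$ into $(-1)^{n(n+1)/2}q^{n(n+1)/2}$ (a period-$4$ sign), and alters each factor $(a;q)_n$ in the denominator via $(-1)^n$ and a base change. I would track these sign changes carefully in the summand of (A.5), then compare against the summands in (A.9) and (A.84); a short re-indexing or application of (\ref{qbc3}) should complete the match. The same Observation is already recorded in the main text as Observations~3.7 and~3.9 and underlies the grouped finitization (\ref{t-SLfin}.5-b)/(\ref{t-SLfin}.9)/(\ref{t-SLfin}.84), so consistency with those entries gives an independent sanity check.

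The main obstacle is purely bookkeeping: Slater states the three identities in slightly different normalizations, so I expect a noticeable amount of juggling between $(q;q)_\infty$-style and $(q;q^2)_\infty$-style factors before the two sides align term for term. There is no conceptual difficulty and no need for the $q$-difference machinery of \S\ref{finit} or for WZ verification; the observation is ultimately a corollary of the trivial fact that applying an invertible substitution to a true identity produces another true identity, combined with the product-side identifications above.
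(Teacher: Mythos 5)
Your plan---write out the relevant entries of Slater's list and verify by direct inspection that the substitution $q\mapsto -q$ carries (5) into (9), and that (9) and (84) are literally the same entry---is exactly the (unwritten) argument behind the paper's annotation, which is a bookkeeping observation offered without proof both here and in \S 3. The only caveat is that you anticipate far more machinery than the check requires: the series exponent in these entries is $n(2n+1)$, for which $(-q)^{n(2n+1)}=(-1)^n q^{n(2n+1)}$, and the product sides are just $\prod_{n\geqq 1}(1+q^n)$ and its image $\prod_{n\geqq 1}\left(1-q^{2n-1}\right)\left(1+q^{2n}\right)$ under $q\mapsto -q$, so splitting products by parity suffices and no appeal to the triple or quintuple product identities is needed.
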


\begin{id}
 \begin{gather*}
   \sum_{n=0}^\infty \frac{(-1;q)_n q^{n^2}}{(q;q)_n (q;q^2)_n} =
   \prod_{n=1}^\infty \frac{(1+q^{3n-1})(1+q^{3n-2})(1-q^{3n})}{1-q^n} \tag{A.6}
 \end{gather*} 
\end{id}

\begin{obs}
Identity $(7)$ is the same as $(2)$ with $q$ replaced by $q^2$. 
\end{obs}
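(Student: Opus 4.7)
The plan is to verify this observation by a single substitution. Identity (A.2) reads
\[
  \sum_{n=0}^\infty \frac{q^{n(n+1)/2}}{(q;q)_n} = \prod_{n=1}^\infty (1+q^n),
\]
an identity valid for all complex $q$ with $|q|<1$. Since the substitution $q\mapsto q^2$ preserves the hypothesis $|q|<1$, I may apply it directly to both sides.

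On the left, the exponent $n(n+1)/2$ is transformed by $(q^2)^{n(n+1)/2}=q^{n(n+1)}=q^{n^2+n}$, and the finite $q$-factorial transforms by the definition of rising $q$-factorial as $(q;q)_n\big|_{q\to q^2}=(q^2;q^2)_n$. On the right, the factor $1+q^n$ becomes $1+q^{2n}$. Thus (A.2) yields
\[
  \sum_{n=0}^\infty \frac{q^{n^2+n}}{(q^2;q^2)_n} = \prod_{n=1}^\infty (1+q^{2n}),
\]
which is precisely identity (A.7) as displayed in equation (\ref{ls7}) of the detailed example in Section~\ref{finit}.

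Because the transformation $q\mapsto q^2$ is a literal term-by-term substitution on convergent series and products, there is no analytic obstacle and no auxiliary identity to invoke; the only thing to check is that each occurrence of $q$ in (A.2) is mapped consistently, which I have just done. Hence (A.7) is established as a formal consequence of (A.2), and no separate proof of (A.7) is required.
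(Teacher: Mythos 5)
Your verification is correct and is exactly the routine substitution the paper's observation implicitly relies on: replacing $q$ by $q^2$ in (A.2) sends $q^{n(n+1)/2}$ to $q^{n^2+n}$, $(q;q)_n$ to $(q^2;q^2)_n$, and $1+q^n$ to $1+q^{2n}$, producing precisely the identity displayed as (\ref{ls7}). The paper offers no further argument for this observation, so there is nothing to compare beyond noting that your write-up supplies the (trivial) details it leaves unstated.
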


\begin{id}[Gauss-Lebesgue] 
\index{Gauss, K. F.}
\index{Lebesgue, V. A.}
This is a special case $(z= -q)$ of an identity
appearing in Lebesgue~\cite[pp. 44-47]{val}.  See also 
Andrews~\cite{2color}.  
Lebesgue attributes the identity to 
Gauss~\cite{kfg}, however I have been unable to find it there.
 \begin{gather*}
   \sum_{n=0}^\infty \frac{(-q;q)_n q^{n(n+1)/2}}{(q;q)_n} =
   \prod_{n=1}^\infty 
   \frac{1-q^{4n}}{1-q^n} \tag{A.8}
 \end{gather*} 
\end{id}

\begin{id}[Jackson, 1928] Jackson~\cite[p. 179, 3 lines from bottom]{fhj}
Note: Identity $(9)$ is the same as $(84)$ and equivalent to
$(5)$ with $q$ replaced by $-q$.
\index{Jackson, F. H.}
\begin{gather*}
   \sum_{n=0}^\infty \frac{q^{n(2n+1)}}{(q;q)_{2n+1}} =
   \prod_{n=1}^\infty (1+q^n) \tag{A.5}
 \end{gather*} 
\end{id}

\begin{id} Note: This identity is the same as $(47)$.
 \begin{gather*}
   \sum_{n=0}^\infty \frac{(-1;q)_{2n} q^{n^2}}{(q^2;q^2)_{n} (q^2;q^4)_n} =
   \prod_{n=1}^\infty (1+q^{2n-1})(1+q^n)  \tag{A.10}
 \end{gather*}
\end{id}

\begin{id}
Note: This identity is the same as $(51)$ and $(64)$.
 \begin{gather*}
   \sum_{n=0}^\infty \frac{(-q;q)_{2n} q^{n(n+1)}}{(q;q^2)_{n+1} (q^4;q^4)_n} =
   \prod_{n=1}^\infty (1+q^n)(1+q^{2n})  \tag{A.11}
 \end{gather*}
\end{id}

\begin{id}[Gauss-Lebesgue] This is a special case $(z= -1)$ of an identity
appearing in Lebesgue~\cite[pp. 44-47]{val}.  
\index{Gauss, K. F.}
\index{Lebesgue, V. A.}
\index{Andrews, George E.}
Lebesgue attributes the identity to 
Gauss~\cite{kfg}, however I have been unable to find it there.  See also 
Andrews~\cite{2color}. 
 \begin{gather*}
  \sum_{n=0}^\infty \frac{(-1;q)_n q^{n(n+1)/2}} {(q;q)_n} =
   \prod_{n=0}^\infty \frac{1+q^{2n+1}}{1-q^{2n+1}} \tag{A.12}
 \end{gather*} 
\end{id}

\begin{id} 
 \begin{gather*}
  \sum_{n=0}^\infty \frac{(-q;q)_n q^{n(n-1)/2}} {(q;q)_n} =
   \prod_{n=1}^\infty \frac{1-q^{4n}}{1-q^{n}} + \prod_{n=1}^\infty 
   \frac{1+q^{2n-1}}{1-q^{2n-1}}  \tag{A.13}
 \end{gather*}
\end{id}

\begin{id}[Rogers, 1894] The Second Rogers-Ramanujan Identity.
\index{Rogers-Ramanujan identities}
\index{Rogers, L. J.}
Rogers~\cite[p. 329 (2)]{ljr:mem2}.
 \begin{gather*}
  \sum_{n=0}^\infty \frac{q^{n(n+1)}} {(q;q)_n} =
   \prod_{n=1}^\infty \frac{1}{(1-q^{5n-2})(1-q^{5n-3})} \tag{A.14}
 \end{gather*} 
\end{id}

\begin{id}[Rogers, 1917] Rogers~\cite[p. 330 (5)]{ljr:1917}. 
\index{Rogers, L. J.}
\index{Jackson, F. H.}
\index{Bailey, W. N.}
This identity appears with a misprint in 
Jackson~\cite[p. 170, third eqn.]{fhj}.  Bailey notes and  corrects 
this error in~\cite[p. 426, eqn 6.3]{wnb:comb}.  It appears that neither
Jackson nor Bailey noticed that it had already appeared (correctly)
in Rogers~\cite{ljr:1917}.
 \begin{gather*}
  \sum_{n=0}^\infty \frac{(-1)^n q^{n(3n-2)}} {(-q;q^2)_n (q^4;q^4)_n} =
   \prod_{n=1}^\infty \frac{(1-q^{5n-1})(1-q^{5n-4})(1-q^{5n})}{(1-q^{2n})}
  \tag{A.15}
 \end{gather*} 
\end{id}

\begin{id}[Rogers, 1894]\index{Rogers, L. J.}
Rogers~\cite[p. 331, immediately preceding eqn. (7)]{ljr:mem2}. 
 \begin{gather*}
  \sum_{n=0}^\infty \frac{q^{n(n+2)}} {(q^4;q^4)_n} =
    \prod_{n=1}^\infty \frac{1}{(1-q^{5n-2})(1-q^{5n-3})(1+q^{2n})}
  \tag{A.16}
 \end{gather*} 
\end{id}

\begin{id} Note: This identity is equivalent to (A.94) with $q$ replaced
by $-q$. 
 \begin{gather*}
  \sum_{n=0}^\infty \frac{q^{n(n+1)}} {(q^2;q^2)_n (-q;q^2)_{n+1}} =
   \prod_{n=1}^\infty \frac{(1-q^{5n-1})(1-q^{5n-4})(1-q^{5n})(1+q^{2n})}
   {(1-q^{2n})} \tag{A.17}
 \end{gather*} 
\end{id}

\begin{id}[Rogers, 1894] The First Rogers-Ramanujan Identity.
Rogers~\cite[p. 328 (2)]{ljr:mem2}.
\index{Rogers, L. J.}
\index{Rogers-Ramanujan identities}
 \begin{gather*}
  \sum_{n=0}^\infty \frac{q^{n^2}} {(q;q)_n} =
   \prod_{n=1}^\infty \frac{1}{(1-q^{5n-1})(1-q^{5n-4})} \tag{A.18}
 \end{gather*} 
\end{id}

\begin{id}[Rogers, 1894] 
\index{Rogers, L. J.}
Rogers~\cite[p. 339, immediately preceeding
``Example 3"]{ljr:mem2} 
 \begin{gather*}
  \sum_{n=0}^\infty \frac{(-1)^n q^{3n^2}} {(-q;q^2)_n (q^4;q^4)_n} =
   \prod_{n=1}^\infty \frac{(1-q^{5n-2})(1-q^{5n-3})(1-q^{5n})}{(1-q^{2n})}
  \tag{A.19}
 \end{gather*} 
\end{id}

\begin{id} [Rogers, 1894]
\index{Rogers, L. J.} 
Rogers~\cite[p. 330, last eqn.]{ljr:mem2}. 
\begin{gather*}
  \sum_{n=0}^\infty \frac{q^{n^2}} {(q^4;q^4)_n} =
    \prod_{n=1}^\infty \frac{1}{(1-q^{5n-1})(1-q^{5n-4})(1+q^{2n})}
  \tag{A.20}
 \end{gather*} 
\end{id}

\begin{id} 
 \begin{gather*}
  \sum_{n=0}^\infty \frac{(-1)^n (q;q^2)_n q^{n^2}} {(-q;q^2)_n (q^4;q^4)_n} =
   \prod_{n=1}^\infty \frac{(1+q^{5n-2})(1+q^{5n-3})(1-q^{5n})(1-q^{2n-1})}{(1-q^{2n})}
\tag{A.21} 
\end{gather*} 
\end{id}

\begin{id} 
 \begin{gather*}
  \sum_{n=0}^\infty \frac{(-q;q)_n q^{n(n+1)}} {(q;q^2)_{n+1} (q;q)_n} =
   \prod_{n=1}^\infty \frac{(1-q^{6n-1})(1-q^{6n-5})(1-q^{6n})(1+q^{n})}{(1-q^{n})}
\tag{A.22} 
\end{gather*} 
\end{id}

\begin{obs} Identity $(23)$ is the same as $(3)$. \end{obs}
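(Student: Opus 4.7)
The plan is to verify directly, rather than by any nontrivial manipulation, that identity A.23 as printed in Slater's list coincides term-for-term with identity A.3 once both sides are rewritten in a common notational form. Since the statement is an \emph{observation} — asserting literal identity after accounting for Slater's conventions — no transformation theory (such as Heine's transformations or Bailey pairs) is needed; only a careful comparison is required.

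First, I would write out A.23 as it appears in Slater's original list, namely its series side and its infinite product side. Second, I would record A.3 in the form already established above:
\[
\sum_{n=0}^\infty \frac{(-1)^n q^{n^2}}{(q^2;q^2)_n} = \prod_{n=1}^\infty (1-q^{2n-1}),
\]
and rewrite its product as $(q;q^2)_\infty$ in compact notation. Third, I would compare the two series: if Slater's A.23 is displayed with summand $\frac{(-1)^n q^{n^2}}{(q^2;q^2)_n}$ (possibly with an equivalent encoding of $(q^2;q^2)_n$), the match on the series side is immediate. Fourth, I would compare the products: Slater typically writes the right-hand side of A.23 using $(q;q^2)_\infty$ or the equivalent $\prod (1-q^{2n-1})$, and these are literally equal by the definition of the $q$-Pochhammer symbol recalled in Section 1.1.

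The only possible source of confusion — and thus the only real \emph{step} — is reconciling Slater's notational conventions (e.g.\ her use of $(a)_\infty$ with the base $q$ implicit, or the occasional appearance of a doubly-indexed factorial like $(q^2;q^2)_n$ written as a ratio of infinite products) with the abbreviated notation established in Section 1.1 of the present paper. I expect no mathematical obstacle; the observation is resolved by inspection once both identities are written in the uniform notation of \S1.1. Accordingly, the proof amounts to a single line stating that both the series and the product sides agree symbol-for-symbol, with a parenthetical pointer to the relevant entry in the appendix of Slater's list.
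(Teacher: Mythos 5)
Your proposal is correct and matches what the paper does: the paper records this as a bare observation (both in \S 3 and in the annotated Appendix entry for A.3, which notes ``This identity is the same as $(23)$''), offering no argument beyond the implicit claim that the two entries of Slater's list coincide once her notation is normalized. Your plan of a symbol-for-symbol comparison of the series side $\sum_{n\geqq 0}(-1)^n q^{n^2}/(q^2;q^2)_n$ and the product side $\prod_{n\geqq 1}(1-q^{2n-1})=(q;q^2)_\infty$ is exactly the verification the observation rests on.
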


\begin{id} 
 \begin{gather*}
  \sum_{n=0}^\infty \frac{(-1;q)_{2n} q^n} {(q^2;q^2)_n} =
   \prod_{n=1}^\infty \frac{(1-q^{6n-3})^2 (1-q^{6n})(1+q^{n})}{(1-q^{n})}
 \tag{A.24}
 \end{gather*} 
\end{id}

\begin{id} 
 \begin{gather*}
  \sum_{n=0}^\infty \frac{(-q;q^2)_n q^{n^2}} {(q^4;q^4)_n} =
   \prod_{n=1}^\infty \frac{(1-q^{6n-3})^2 (1-q^{6n})(1+q^{2n-1})}{(1-q^{2n})}
 \tag{A.25} 
\end{gather*} 
\end{id}

\begin{id} 
 \begin{gather*}
  \sum_{n=0}^\infty \frac{(-q;q)_{n} q^{n^2}} {(q;q^2)_{n+1} (q;q)_n} =
   \prod_{n=1}^\infty \frac{(1-q^{6n-3})^2 (1-q^{6n})(1+q^{n})}{(1-q^{n})}
\tag{A.26}
 \end{gather*} 
\end{id}

\begin{id} Note: This identity is the same as $(87)$.
 \begin{gather*}
  \sum_{n=0}^\infty \frac{(-q;q^2)_{n} q^{2n(n+1)}} {(q;q^2)_{n+1} (q^4;q^4)_n} =
   \prod_{n=1}^\infty \frac{(1+q^{6n-1})(1+q^{6n-5}) (1-q^{6n})}{(1-q^{2n})}
\tag{A.27}
 \end{gather*} 
\end{id}

\begin{id} 
 \begin{gather*}
  \sum_{n=0}^\infty \frac{(-q^2;q^2)_{n} q^{n(n+1)}} {(q;q)_{2n+1}} =
   \prod_{n=1}^\infty \frac{(1+q^{6n-1})(1+q^{6n-5}) (1-q^{6n}) (1+q^{2n})}{(1-q^{2n})}
\tag{A.28}
 \end{gather*} 
\end{id}

\begin{id} 
 \begin{gather*}
  \sum_{n=0}^\infty \frac{(-q;q^2)_{n} q^{n^2}} {(q;q)_{2n}} =
   \prod_{n=1}^\infty 
   \frac{(1+q^{6n-2})(1+q^{6n-4}) (1-q^{6n}) (1+q^{2n-1})}{(1-q^{2n})}
 \tag{A.29}
 \end{gather*}
\end{id}

\begin{obs} 
Identity $(30)$ is the same as $(24)$ with $q$ replaced by $-q$.
\end{obs}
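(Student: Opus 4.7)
The plan is to verify the observation by performing the substitution $q \mapsto -q$ directly in identity (A.24), simplifying both the series and the infinite product, and then checking that the result coincides termwise with Slater's identity (A.30). Since the observation is purely formal, no analytic work is required; everything reduces to bookkeeping with rising $q$-factorials under sign change.

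On the series side of (A.24), the only nontrivial step is to re-express $(-1;-q)_{2n}$ with base $q^2$. Splitting the product into even- and odd-indexed factors gives
\[ (-1;-q)_{2n} = \prod_{k=0}^{2n-1}\bigl(1+(-q)^k\bigr) = \prod_{m=0}^{n-1}(1+q^{2m}) \cdot \prod_{m=0}^{n-1}(1-q^{2m+1}) = (-1;q^2)_n\,(q;q^2)_n. \]
Since $((-q)^2;(-q)^2)_n = (q^2;q^2)_n$ and $(-q)^n = (-1)^n q^n$, the series on the left of (A.24), after $q \mapsto -q$, becomes a single sum whose summand I would then match against the series side of Slater's (A.30).

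On the product side, the substitution acts factor by factor. I would use $(-q)^{6n} = q^{6n}$, $1-(-q)^{6n-3} = 1+q^{6n-3}$, and then separate the problematic factor $\prod_{n\geq 1}(1+(-q)^n)/(1-(-q)^n)$ according to the parity of $n$. Each piece recombines into a product in the abbreviated $(\,\cdot\,;\,\cdot\,)_\infty$ notation, at which point the product side of (A.30) should appear directly.

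The main (and only) obstacle is keeping the sign-parity bookkeeping clean: every factor of the form $(a;-q)_m$ or $\prod(1\pm(-q)^n)$ has to be rewritten with base $q$ or $q^2$, and one must not conflate the index parities that arise from $6n$, $6n-3$, and $n$ in the product. Once both sides of (A.24) have been rewritten under $q\mapsto-q$ in the base-$q^2$ form that Slater uses, the identification with (A.30) is immediate, completing the verification.
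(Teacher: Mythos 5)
Your plan is the right one and matches what the paper implicitly does: the observation is stated without proof (identity (30) is not even reproduced in the appendix), the substitution $q\mapsto -q$ being treated as immediate bookkeeping, and your key reduction $(-1;-q)_{2n}=(-1;q^2)_n\,(q;q^2)_n$ together with the parity split of $\prod_{n\geq 1}(1+(-q)^n)/(1-(-q)^n)$ is exactly that bookkeeping, carried out correctly. The only step left implicit is the final comparison with Slater's (30), which you assert rather than display; since the paper omits that identity entirely, this is consistent with the level of detail the author intended.
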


\begin{id}[Rogers, 1917] The Third Rogers-Selberg Identity.
\index{Rogers, L. J.}
\index{Selberg, A.}
\index{Rogers-Selberg identities|(} 
Rogers~\cite[p. 331 (6), line 2]{ljr:1917}.  Independently rediscovered by 
Selberg~\cite{as}.
 \begin{gather*}
  \sum_{n=0}^\infty   \frac{ q^{2n(n+1)} } {(q^2;q^2)_n (-q;q)_{2n+1}} =
   \prod_{n=1}^\infty \frac{(1-q^{7n-1})(1-q^{7n-6}) (1-q^{7n}) }{(1-q^{2n})}
   \tag{A.31}
 \end{gather*} 
\end{id}

\begin{id}[Rogers, 1894]  The  Second Rogers-Selberg Identity.
\index{Rogers, L. J.}
\index{Selberg, A.}  
Rogers~\cite[bottom of p. 342]{ljr:mem2}.  Independently rediscovered by 
Selberg~\cite{as}.
 \begin{gather*}
  \sum_{n=0}^\infty   \frac{ q^{2n(n+1)} } {(q^2;q^2)_n (-q;q)_{2n}} =
   \prod_{n=1}^\infty \frac{(1-q^{7n-2})(1-q^{7n-5}) (1-q^{7n}) }{(1-q^{2n})}
   \tag{A.32}
 \end{gather*}
\end{id}

\begin{id}[Rogers, 1894]  The First Rogers-Selberg Identity. 
\index{Rogers, L. J.}
\index{Selberg, A.}
\index{Rogers-Selberg identities|)}
Rogers~\cite[bottom of p. 339]{ljr:mem2}. Independently rediscovered by 
Selberg~\cite{as}.
 \begin{gather*}
  \sum_{n=0}^\infty   \frac{ q^{2n^2} } {(q^2;q^2)_n (-q;q)_{2n}} =
   \prod_{n=1}^\infty \frac{(1-q^{7n-3})(1-q^{7n-4}) (1-q^{7n}) }{(1-q^{2n})}
  \tag{A.33}
 \end{gather*}
\end{id}

\begin{id}[Slater, 1952] The analytic version of the 
second G\"ollnitz-Gordon partition identity.
\index{G\"ollnitz, H.}
\index{Gordon, Basil}
\index{G\"ollnitz-Gordon identities}
 \begin{gather*}
  \sum_{n=0}^\infty   \frac{(-q;q^2)_n q^{n(n+2)} } {(q^2;q^2)_n } =
   \prod_{n=1}^\infty \frac{1}{(1-q^{8n-3})(1-q^{8n-4})(1-q^{8n-5})}
  \tag{A.34}
 \end{gather*}
\end{id}

\begin{id} Note: This identity is the same as $(106)$.
 \begin{gather*}
  \sum_{n=0}^\infty   \frac{ (-q;q^2)_n (-q;q)_n q^{n(n+3)/2} } 
  {(q;q)_{2n+1}} =
   \prod_{n=1}^\infty \frac{(1-q^{8n-1})(1-q^{8n-7}) (1-q^{8n}) (1+q^n) } {(1-q^n)}
 \tag{A.35}
\end{gather*}
\end{id}

\begin{id}[Slater, 1952] The analytic version of the 
first G\"ollnitz-Gordon partition identity.
\index{G\"ollnitz, H.}
\index{Gordon, Basil}
\index{G\"ollnitz-Gordon identities}
 \begin{gather*}
  \sum_{n=0}^\infty   \frac{(-q;q^2)_n q^{n^2} } {(q^2;q^2)_n } =
   \prod_{n=1}^\infty \frac{1}{(1-q^{8n-1})(1-q^{8n-4})(1-q^{8n-7})}  \tag{A.36}
 \end{gather*}  
\end{id}

\begin{id} Note: This identity is the same as $(105)$.
 \begin{gather*}
  \sum_{n=0}^\infty   \frac{ (-q;q^2)_n (-q;q)_n q^{n(n+1)/2} } 
  {(q;q)_{2n+1}} =
   \prod_{n=1}^\infty \frac{(1-q^{8n-3})(1-q^{8n-5}) (1-q^{8n}) (1+q^n) } {(1-q^n)}
  \tag{A.37}
\end{gather*}
\end{id}

\begin{id} Note: This identity is the same as $(86)$.  
 \begin{gather*}
  \sum_{n=0}^\infty   \frac{q^{2n(n+1)} } {(q;q)_{2n+1}} =
   \prod_{n=1}^\infty \frac{(1-q^{8n-3})(1-q^{8n-5}) (1-q^{8n}) 
   (1-q^{16n-2})(1-q^{16n-14}) } {(1-q^n)}
  \tag{A.38}
 \end{gather*}
\end{id}

\begin{id}[Jackson, 1928] Note: This identity is the same as $(83)$. 
\index{Jackson, F. H.} 
 Jackson~\cite[p. 170, 5th eqn.]{fhj}. 
 \begin{gather*}
  \sum_{n=0}^\infty   \frac{q^{2n^2} } {(q;q)_{2n}} =
   \prod_{n=1}^\infty \frac{(1-q^{8n-1})(1-q^{8n-7}) (1-q^{8n}) 
   (1-q^{16n-6})(1-q^{16n-10}) } {(1-q^n)}
   \tag{A.39}
 \end{gather*}
\end{id}

\begin{id}[Bailey, 1947] Bailey~\cite[p. 422, eqn. (1.7)]{wnb:comb}.
\index{Bailey, W. N.}
\index{Bailey's mod 9 identities|(}
 \begin{gather*}
  \sum_{n=0}^\infty   \frac{(q;q)_{3n+1} q^{3n(n+1)} } {(q^3;q^3)_n 
  (q^3;q^3)_{2n+1}} =
   \prod_{n=1}^\infty \frac{(1-q^{9n-1})(1-q^{9n-8}) (1-q^{9n}) } {(1-q^{3n})}
  \tag{A.40}
 \end{gather*}
\end{id}

\begin{id}[Bailey, 1947] Bailey~\cite[p. 422, eqn. (1.8)]{wnb:comb}.
\index{Bailey, W. N.}
 \begin{gather*}
  \sum_{n=0}^\infty   \frac{(q;q)_{3n} (1-q^{3n+2}) q^{3n(n+1)} } {(q^3;q^3)_n 
  (q^3;q^3)_{2n+1}} =
   \prod_{n=1}^\infty \frac{(1-q^{9n-2})(1-q^{9n-7}) (1-q^{9n}) } {(1-q^{3n})}
  \tag{A.41}
 \end{gather*}
\end{id}

\begin{id}[Bailey, 1947] Bailey~\cite[p. 422, eqn. (1.6)]{wnb:comb}.
\index{Bailey, W. N.}
 \begin{gather*}
  \sum_{n=0}^\infty   \frac{(q;q)_{3n}  q^{3n^2} } {(q^3;q^3)_n 
  (q^3;q^3)_{2n}} =
   \prod_{n=1}^\infty \frac{(1-q^{9n-4})(1-q^{9n-5}) (1-q^{9n}) } {(1-q^{3n})}
   \tag{A.42}
 \end{gather*}
\end{id}
\index{Bailey's mod 9 identities|)}

\begin{id}
 \begin{gather*}
  \sum_{n=0}^\infty   \frac{(-q;q)_{n}  q^{n(n+3)/2} } {(q;q^2)_{n+1} 
  (q;q)_{n}} =
   \prod_{n=1}^\infty \frac{(1-q^{10n-1})(1-q^{10n-9}) (1-q^{10n}) (1+q^n) } {(1-q^n)}
  \tag{A.43}
\end{gather*}
\end{id}

\begin{id}[Rogers, 1917]  Rogers~\cite[p. 330 (2), line 2]{ljr:1917} 
\index{Rogers, L. J.}
Note: this identity is the same as $(63)$.
 \begin{gather*}
  \sum_{n=0}^\infty   \frac{ q^{3n(n+1)/2} } {(q;q^2)_{n+1} (q;q)_{n}} =
   \prod_{n=1}^\infty \frac{(1-q^{10n-2})(1-q^{10n-8}) (1-q^{10n})  } {(1-q^n)}
  \tag{A.44}
 \end{gather*}
\end{id}

\begin{id}
 \begin{gather*}
  \sum_{n=0}^\infty   \frac{(-q;q)_{n}  q^{n(n+1)/2} } 
   {(q;q^2)_{n+1} (q;q)_{n}} =
  \prod_{n=1}^\infty \frac{(1-q^{10n-3})(1-q^{10n-7}) (1-q^{10n}) (1+q^n) } 
  {(1-q^n)}
  \tag{A.45}
 \end{gather*}
\end{id}

\begin{id}[Rogers, 1917]  Rogers~\cite[p. 330 (2) line 3]{ljr:1917}.  
This identity is equivalent to (62): See Andrews~\cite[p. 20, eqn. 
(8.7)]{comb}
\index{Rogers, L. J.}
\index{Andrews, George E.}
 \begin{gather*}
  \sum_{n=0}^\infty   \frac{ q^{n(3n-1)/2} } {(q;q^2)_{n} (q;q)_{n}} =
  \prod_{n=1}^\infty \frac{(1-q^{10n-4})(1-q^{10n-6}) (1-q^{10n})  } {(1-q^n)}
   \tag{A.46}
 \end{gather*}
\end{id}

\begin{id} Note: This identity equivalent to $(10)$, and to $(54) + q\times(49)$.
 \begin{gather*}
  \sum_{n=0}^\infty   \frac{(-1;q^2)_{n}  q^{n^2} } {(q;q)_{2n}} =
  \prod_{n=1}^\infty \frac{1+q^{2n-1}}{1-q^{2n-1}}
  \tag{A.47}
 \end{gather*}
\end{id}

\begin{id} Note: This identity is $(54) -q\times(49)$.
 \begin{gather*}
   \sum_{n=0}^\infty   \frac{(-1;q^2)_{n}  q^{n(n+1)} } {(q;q)_{2n}} \\ =
  \prod_{n=1}^\infty \frac{ (1-q^{12n-5})(1-q^{12n-7})(1-q^{12n})}{1-q^n}-q 
     \frac{ (1-q^{12n-1})(1-q^{12n-11})(1-q^{12n})}{1-q^n}
   \tag{A.48}
 \end{gather*}
\end{id}

\begin{id}
 \begin{gather*}
  \sum_{n=0}^\infty   \frac{(-q^2;q^2)_{n}  (1-q^{n+1}) q^{n(n+2)} } 
  {(q;q)_{2n+2}} =
  \prod_{n=1}^\infty \frac{(1-q^{12n-1})(1-q^{12n-11})(1-q^{12n})}{(1-q^n)}
   \tag{A.49}
 \end{gather*}
\end{id}

\begin{id}
 \begin{gather*}
  \sum_{n=0}^\infty   \frac{(-q;q^2)_{n}  q^{n(n+2)} } 
  {(q;q)_{2n+1}} =
  \prod_{n=1}^\infty \frac{(1-q^{12n-2})(1-q^{12n-10})(1-q^{12n})}{(1-q^n)}
  \tag{A.50}
 \end{gather*}
\end{id}

\begin{obs} Identity $(51)$ is the same as $(11)$ and $(64)$. \end{obs}

\begin{id} Note: This identity is the same as $(85)$.
 \begin{gather*}
  \sum_{n=0}^\infty   \frac{ q^{n(2n-1)} } {(q;q)_{2n}}  =
  \prod_{n=1}^\infty (1+q^{n}) \tag{A.52}
 \end{gather*}
\end{id}

\begin{id}
 \begin{gather*}
  \sum_{n=0}^\infty   \frac{(q;q^2)_{2n} q^{4n^2} } {(q^4;q^4)_{2n}} =
  \prod_{n=1}^\infty  \frac{(1-q^{12n-5})(1-q^{12n-7})(1-q^{12n})} {(1-q^{4n})}
  \tag{A.53}
 \end{gather*}
\end{id}

\begin{id}
 \begin{gather*}
  \sum_{n=0}^\infty   \frac{(-q^2;q^2)_{n-1} (1+q^n) q^{n^2} } {(q;q)_{2n}} =
  \prod_{n=1}^\infty  \frac{(1-q^{12n-5})(1-q^{12n-7})(1-q^{12n})} {(1-q^n)}
  \tag{A.54}
 \end{gather*}
\end{id}

\begin{id}
 \begin{gather*}
  \sum_{n=0}^\infty   \frac{(q;q^2)_{2n+1}  q^{4n(n+1)} } {(q^4;q^4)_{2n+1}} =
  \prod_{n=1}^\infty  \frac{(1-q^{12n-1})(1-q^{12n-11})(1-q^{12n})} 
  {(1-q^{4n})} \tag{A.55}
 \end{gather*}
\end{id}

\begin{id}
 \begin{gather*}
  \sum_{n=0}^\infty   \frac{(-q;q)_{n}  q^{n(n+2)} } {(q;q^2)_{n+1} 
  (q;q)_{n+1} } =
  \prod_{n=1}^\infty  \frac{(1+q^{12n-1})(1+q^{12n-11})(1-q^{12n})} 
  {(1-q^{n})}
  \tag{A.56}
 \end{gather*}
\end{id}

\begin{id} 
Note: This identity is the same as (55) with $q$ replaced by $-q$.
 \begin{gather*}
  \sum_{n=0}^\infty   \frac{(-q;q^2)_{2n+1}  q^{4n(n+1)} } {(q^4;q^4)_{2n+1}} =
  \prod_{n=1}^\infty  \frac{(1+q^{12n-1})(1+q^{12n-11})(1-q^{12n})} 
  {(1-q^{4n})} \tag{A.57}
 \end{gather*}
\end{id}

\begin{id}
 \begin{gather*}
  1 + \sum_{n=1}^\infty   \frac{(-q;q)_{n-1}  q^{n^2} } {(q;q^2)_{n} 
  (q;q)_{n} } =
  \prod_{n=1}^\infty  \frac{(1+q^{12n-5})(1+q^{12n-7})(1-q^{12n})} 
  {(1-q^{n})} \tag{A.58}
 \end{gather*}
\end{id}

\begin{id}[Rogers, 1917]  Rogers~\cite[p. 329 (1), line 3]{ljr:1917} 
\index{Rogers, L. J.}
 \begin{gather*}
  \sum_{n=0}^\infty   \frac{ q^{n(n+2)} } {(q;q^2)_{n+1} (q;q)_{n} } =
  \prod_{n=1}^\infty  \frac{(1-q^{14n-2})(1-q^{14n-12})(1-q^{14n})} 
  {(1-q^{n})} \tag{A.59}
 \end{gather*}
\end{id}

\begin{id}[Rogers, 1917]  Rogers~\cite[p. 329 (1), line 2]{ljr:1917}
\index{Rogers, L. J.} 
 \begin{gather*}
  \sum_{n=0}^\infty   \frac{ q^{n(n+1)} } {(q;q^2)_{n+1} (q;q)_{n} } =
  \prod_{n=1}^\infty  \frac{(1-q^{14n-4})(1-q^{14n-10})(1-q^{14n})} 
  {(1-q^{n})} \tag{A.60}
 \end{gather*}
\end{id}

\begin{id}[Rogers, 1894]  Rogers~\cite[p. 341, ex. 2]{ljr:mem2} 
\index{Rogers, L. J.}
 \begin{gather*}
  \sum_{n=0}^\infty   \frac{ q^{n^2} } {(q;q^2)_{n} (q;q)_{n} } =
  \prod_{n=1}^\infty  \frac{(1-q^{14n-6})(1-q^{14n-8})(1-q^{14n})} 
  {(1-q^{n})} \tag{A.61}
 \end{gather*}
\end{id}

\begin{id} This identity is equivalent to (46): See Andrews~\cite[p. 20, 
eqn. (8.7)]{comb}.
 \begin{gather*}
  \sum_{n=0}^\infty   \frac{(-q;q)_n q^{n(3n+1)/2} } {(q;q)_{2n+1}} =
  \prod_{n=1}^\infty  \frac{(1-q^{10n-4})(1-q^{10n-6})(1-q^{10n})} 
  {(1-q^{n})} \tag{A.62}
 \end{gather*}
\end{id}

\begin{obs}
 Identity $(63)$ is the same as $(44)$. 
\end{obs}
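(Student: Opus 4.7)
The observation asserts a redundancy in Slater's list: that entry $(63)$ and entry $(44)$ represent the same series–product identity, possibly recorded in slightly different typographical form. My plan is a direct, essentially bookkeeping, comparison of the two explicit expressions.

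First, I would transcribe Slater's original formulation of Identity $(63)$ and place it side-by-side with the statement of Identity $(44)$ already recorded above, namely
\[
  \sum_{n=0}^\infty \frac{q^{3n(n+1)/2}}{(q;q^2)_{n+1}(q;q)_n}
  = \prod_{n=1}^\infty \frac{(1-q^{10n-2})(1-q^{10n-8})(1-q^{10n})}{1-q^n}.
\]
On the product side, I expect the two entries to exhibit the same multiset of factors once each is expanded out of any compressed $(a_1,a_2,\dots,a_r;q)_\infty$ notation; I would verify this factor by factor. On the series side, I would compare the summand of $(63)$ with the summand $q^{3n(n+1)/2}/\bigl((q;q^2)_{n+1}(q;q)_n\bigr)$ of $(44)$, allowing for trivial reindexing of the summation variable.

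Any remaining discrepancy is expected to dissolve under one of a small number of standard rewrites of $q$-shifted factorials, e.g.\ splitting even and odd factors via $(q;q)_{2n+1}=(q;q^2)_{n+1}(q^2;q^2)_n$, or combining two factorials via $(a,aq;q^2)_n=(a;q)_{2n}$. In either case the manipulation is purely formal and uses only the definitions in Section~\ref{gausspoly}; no appeal to Jacobi's triple product, Bailey's transform, or any $q$-difference equation is needed.

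The main (and only) obstacle is that Slater's printed list is known to contain typographical errors; the author has flagged misprints in items $6,10,40,41,42,54,88,89,97,100,108,110,115,128$, so I must take care to compare against the corrected forms rather than the text as printed. Once both sides of $(63)$ have been brought into the canonical normalization of $(44)$, the equivalence is immediate and the observation follows with no analytic content.
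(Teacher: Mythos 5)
Your approach is the same as the paper's: the observation carries no analytic content, and the paper likewise justifies it only by direct inspection of the two entries in Slater's list (the appendix records the duplicate at (A.44) with the note that it coincides with $(63)$, without reproducing $(63)$ separately). The one step you leave undone is actually transcribing Slater's $(63)$ and exhibiting the factor-by-factor match, but once that transcription is made the verification is exactly the routine normalization of $q$-shifted factorials you describe.
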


\begin{obs} Identity $(64)$ is the same as $(11)$ and $(51)$. 
\end{obs}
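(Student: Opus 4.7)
The plan is to verify that the three series Slater recorded separately as (11), (51), and (64) are in fact the same $q$-series, so that these three entries of Slater's list collapse to a single identity with common value $\prod_{n\geq 1}(1+q^n)(1+q^{2n})$. Since the infinite product on the right-hand side is manifestly the same for all three, the entire content of the observation is formal equality of the three series sides.

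First, I would extract Slater's original presentations of identities 51 and 64 from \cite{ljs} (the series for 11 is displayed explicitly earlier in this appendix as $\sum_{n\geq 0}(-q;q)_{2n}q^{n(n+1)}/((q;q^2)_{n+1}(q^4;q^4)_n)$). Next, I would reduce each of the three sums to a common canonical form by applying Heine's transformation in one of the forms (\ref{heine1})--(\ref{heine3}), together with the standard splittings $(-q;q)_{2n} = (-q;q^2)_n(-q^2;q^2)_n$ and $(q;q)_{2n+1} = (q;q^2)_{n+1}(q^2;q^2)_n$, to convert base-$q$ factorials into base-$q^2$ or base-$q^4$ factorials. In practice one side typically collapses onto another after a single Heine step followed by a reindexing of the summation variable.

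As a fallback, I would invoke the uniqueness principle underlying the method of \S\ref{finit}: construct the natural two-variable generalization $f(q,t)$ of each of the three series so that Conditions~\ref{cond} hold, derive the first-order nonhomogeneous $q$-difference equation that each $f$ satisfies, and check that the three $q$-difference equations and their initial data coincide. Since such a first-order equation has a unique formal power-series solution, the three series must agree, and the observation follows on letting $t\to 1^{-}$ via Abel's Lemma (\ref{abel}).

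The main obstacle is purely algebraic bookkeeping: identifying the correct parameter choices $(a,b,c,z)$ for each Heine transformation and tracking the limiting cases that arise when one of Slater's forms appears as a $_2\phi_1$ series with a parameter sent to $0$ or $\infty$. Once the right canonical form is located -- typically the simplest of the three series -- the remaining manipulations are routine rearrangements of $q$-factorials.
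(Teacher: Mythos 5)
Your plan is correct in the sense that it would certainly establish the claim, but it is considerably heavier than what the statement actually requires, and the paper supplies no argument at all precisely because none is needed. The observation asserts that entries 11, 51, and 64 of Slater's list are \emph{the same} identity --- verbatim duplicates up to a trivial regrouping of rising $q$-factorials such as $(-q;q)_{2n}=(-q;q^2)_n(-q^2;q^2)_n$ and $(q^4;q^4)_n=(q^2;q^2)_n(-q^2;q^2)_n$. Note the deliberate distinction the paper draws between ``is the same as'' (a literal duplicate, stated with no citation or proof, as here) and ``is equivalent to'' (related by a genuine transformation, as with identities 46 and 62, where a reference to the transformation is given). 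Your first step --- extracting Slater's originals and applying the standard factorial splittings --- is exactly right and is all that is needed; once the splittings are performed the three summands coincide term by term, so no Heine transformation and no choice of parameters $(a,b,c,z)$ ever enters. Your fallback via first-order $q$-difference equations and Abel's Lemma is also valid (uniqueness of the solution given matching initial data does force the series to agree), and it has the minor virtue of being robust if the forms had differed nontrivially; but for a claim of literal identity it replaces a one-line bookkeeping check with an argument that itself presupposes you can already manipulate the three series into the framework of Conditions~\ref{cond}, which is no easier than the direct comparison. In short: keep the factorial-splitting step, discard the rest.
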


\begin{obs} Note: Identity $(65)$ is $(37) + \sqrt{q} \times (35)$, 
with $q$ replaced by $q^2$ throughout. 
\end{obs}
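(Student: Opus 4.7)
The observation claims a three-way relation: Slater's Identity (65) equals $(37) + \sqrt{q}\cdot(35)$ after the substitution $q\mapsto q^2$ in both (35) and (37). The plan is to verify the observation by checking the series side and the product side separately.

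For the series side, I would first write out the summand of Slater's (65) and show that it splits as a sum of two pieces, one of which matches the summand of (37) under $q\mapsto q^2$ and the other of which matches $\sqrt q$ times the summand of (35) under $q\mapsto q^2$. Because the exponents on the right-hand summands are $q^{n(n+1)}$ (from $q^{n(n+1)/2}$ with $q\to q^2$) and $q^{n(n+3)+1/2}$ (from $\sqrt q\cdot q^{n(n+3)}$), one expects the summand of (65) to factor through a binomial of the form $(1+q^{n+1/2})$ or similar, which accounts for the split. Concretely, I would identify the common prefactor $\frac{(-q^2;q^4)_n(-q^2;q^2)_n}{(q^2;q^2)_{2n+1}}$ in the two pieces and exhibit the remaining factor in (65) as $q^{n(n+1)}+\sqrt q\,q^{n(n+3)}=q^{n(n+1)}(1+q^{2n+1}\sqrt q)$, which is exactly the kind of bisection one expects to see.

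For the product side, I would take the product representations of (35) and (37) from the excerpt, apply $q\mapsto q^2$ throughout, and then form $\Pi_{37}(q^2)+\sqrt q\,\Pi_{35}(q^2)$. The factors $(1-q^{16n-6})(1-q^{16n-10})(1-q^{16n})$ from (37) and $(1-q^{16n-2})(1-q^{16n-14})(1-q^{16n})$ from (35), multiplied by the common $(1+q^{2n})/(1-q^{2n})$, combine via a suitable two-term theta identity. My intended tool is Bailey's identity (\ref{btp}) with a judicious choice of $z$—most likely $z=q^{a/2}$ for an appropriate half-integer $a$—since the $\sqrt q$ factor and the moduli $16$ appearing in the two products together suggest a specialization of (\ref{btp}) with base $q^4$. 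Alternatively, one could recast the combination as a single instance of Jacobi's triple product via (\ref{jtp}) after recognizing the bilateral theta series hiding in the sum.

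The main obstacle will be the product-side manipulation: verifying the series decomposition is essentially bookkeeping, but matching the combined theta-like sum on the right-hand side to the product representation of (65) requires choosing the correct specialization of Bailey's two-term identity and carefully tracking the $\sqrt q$ shift. Once the right value of $z$ in (\ref{btp}) is identified, the proof reduces to routine simplification of infinite products with base $q$ or $q^2$, together with the dissection of $(q;q)_\infty$-type factors into their even and odd parts.
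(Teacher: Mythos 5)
The first thing to say is that the paper contains no proof of this statement at all: it is a bare cross-reference (Slater's (65) is not even restated in the appendix), to be checked by inspection against Slater's original equations. So your plan is being measured only against what such a check actually requires, and there it has one genuine gap and one arithmetic slip.

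The gap is on the product side. After $q\mapsto q^2$ the two products become $(q^{6},q^{10},q^{16};q^{16})_\infty$ and $(q^{2},q^{14},q^{16};q^{16})_\infty$, each multiplied by the common factor $(-q^{2};q^{2})_\infty/(q^{2};q^{2})_\infty$, and you propose to fuse
$(q^{6},q^{10},q^{16};q^{16})_\infty+q\,(q^{2},q^{14},q^{16};q^{16})_\infty$
into a single product via (\ref{btp}) with $z=q^{a/2}$. No such specialization exists: with the base of (\ref{btp}) raised to $q^{16}$ you would need $-z^{2}q^{4}=q^{6}$, i.e.\ $z^{2}=-q^{2}$, so $z=iq$, and then the cross term carries the coefficient $iq$ rather than $q$. (In series form the combination is $\sum_{j}(-1)^{j}\bigl(q^{8j^{2}-2j}+q^{8j^{2}-6j+1}\bigr)$, whose exponents are the triangular numbers $\tbinom{4j}{2}$ and $\tbinom{4j-1}{2}$ with a period-eight sign pattern; this is not a single Jacobi theta function.) In fact no fusion is required: exactly as for the neighbouring identities (A.66) and (A.67), whose right-hand sides are two-term sums of triple products over a common denominator, the right-hand side of Slater's (65) is itself such a two-term sum, and the product-side verification is term-by-term matching rather than an application of (\ref{btp}) or (\ref{jtp}). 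The slip is in the series-side bookkeeping: $\sqrt q\,q^{n(n+3)}/q^{n(n+1)}=q^{2n+1/2}$, and once $q\mapsto q^{2}$ is applied throughout (so that $\sqrt q\mapsto q$) the common factor is $1+q^{2n+1}$, not $1+q^{2n+1}\sqrt q$. Getting this right is not cosmetic, since $1+q^{2n+1}$ is precisely the factor by which the numerator of Slater's (65) exceeds the shared prefactor $(-q^{2};q^{4})_{n}(-q^{2};q^{2})_{n}\,q^{n(n+1)}/(q^{2};q^{2})_{2n+1}$ that you correctly identified.
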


\begin{id} Note: Identity $(66)$ is $(71) + q\times (68)$
 \begin{gather*}
  \sum_{n=0}^\infty   \frac{(-1;q^4)_n (-q;q^2)_n q^{n^2}} {(q^2;q^2)_{2n}} 
  = \frac{(q^6,q^{10},q^{16};q^{16})_\infty + q(q^2,q^{14},q^{16};q^{16})_\infty}
  {(q,q^2)_\infty (q^4;q^4)_\infty} \tag{A.66}
 \end{gather*}
\end{id}

\begin{id} Note: Identity $(67)$ is $(71) - q\times (68)$
 \begin{gather*}
  \sum_{n=0}^\infty   \frac{(-1;q^4)_n (-q;q^2)_n q^{n(n+2)}} {(q^2;q^2)_{2n}} 
  = \frac{(q^6,q^{10},q^{16};q^{16})_\infty - q(q^2,q^{14},q^{16};q^{16})_\infty}
  {(q,q^2)_\infty (q^4;q^4)_\infty} \tag{A.67}
 \end{gather*}
\end{id}

\begin{id} 
 \begin{gather*}
  \sum_{n=0}^\infty   \frac{(-q^;q^2)_n (-q^4;q^4)_n q^{n(n+2)}} 
  {(-q^2;q^2)_{n+1} (q^2;q^4)_{n+1} (q^2;q^2)_n} =
   \prod_{n=1}^\infty \frac{ (1-q^{16n-2})(1-q^{16n-14})(1-q^{16n}) 
   (1+q^{2n-1})}{(1-q^{2n})}  \tag{A.68}
 \end{gather*}
\end{id}

\begin{id} 
 \begin{gather*}
  \sum_{n=0}^\infty   \frac{(-q^2;q^2)_n  q^{n(n+2)}} 
  {(q;q)_{2n+2} } =
   \prod_{n=1}^\infty  \left[  (1+q^{16n-2})(1+q^{16n-14})(1-q^{16n}) 
   \right]
   \frac{(1+q^{2n-1})}{(1-q^{2n})}  \tag{A.69}
 \end{gather*}
\end{id}

\begin{id} 
 \begin{gather*}
  \sum_{n=0}^\infty   \frac{(-q;q^2)_{n+1} (-q^2;q^4)_n q^{n(n+2)}} 
  {(q^2;q^4)_{n+1} (q^4;q^4)_n } =
   \prod_{n=1}^\infty  \frac{(1-q^{16n-4})(1-q^{16n-12})(1-q^{16n}) 
    (1+q^{2n-1})}{(1-q^{2n})} \tag{A.70}
 \end{gather*} 
\end{id}

\begin{id} 
 \begin{gather*}
  \sum_{n=0}^\infty   \frac{(-q^4;q^4)_{n-1} (-q;q^2)_n q^{n^2}} 
  {(q^2;q^4)_{n} (q^2;q^2)_n (-q^2;q^2)_{n-1} } =
   \prod_{n=1}^\infty  \frac{(1-q^{16n-6})(1-q^{16n-10})(1-q^{16n}) 
   (1+q^{2n-1})}{(1-q^{2n})} \tag{A.71}
 \end{gather*}
\end{id}

\begin{id} 
 \begin{gather*}
  1 + \sum_{n=1}^\infty   \frac{(-q^4;q^4)_{n-1} (-q;q^2)_n q^{n^2}} 
  {(q^2;q^4)_{n} (q^2;q^2)_n (q^2;q^2)_{n-1} } =
   \prod_{n=1}^\infty  \frac{ (1-q^{16n+6})(1-q^{16n+10})(1-q^{16n}) 
   (1+q^{2n-1})}{(1-q^{2n})}  \tag{A.72}
 \end{gather*}
\end{id}

\noindent\textbf{Identity 72-a.} \textit{Note: This identity does not appear 
explictly in Slater's list.  It appears implicitly as the sum of $(130)$ 
and $q\times (70)$. }
\begin{gather*}
   \sum_{n=0}^\infty \frac 
    {(-q^4; q^4)_{n} (-q;q^2)_{n+1} (1+q^{2n+1}) q^{n^2}}
    {(q^2;q^2)_{2n+2}  }  =
  \prod_{n=1}^\infty 
 \frac{ (1-q^{16n-8})^2 (1-q^{16n})(1+q^{2n-1})}{1-q^{2n}}
  \tag{A.72-a}
\end{gather*}

\begin{id} Note: Identity $(73)$ is the sum of $(77)$ and $(78)$.
\begin{gather*}
 1+\sum_{n=1}^\infty   \frac{(q^3;q^3)_{n-1} (-q;q)_{n} q^{n(n-1)/2}} 
  {(q;q)_{2n-1} (q;q)_{n} } = 
  \frac{(q^6,q^{12},q^{18};q^{18})_\infty + (q^9,q^9,q^{18};q^{18})_\infty}
   {(q;q)_\infty (q;q^2)_\infty}  \tag{A.73}
\end{gather*}
\end{id}

\begin{id} Note: Identity $(74)$ is $(77) + (78) - q\times (76)$.
\begin{gather*}
1+\sum_{n=1}^\infty   \frac{(q^3;q^3)_{n-1} (-q;q)_{n} q^{n(n-1)/2}} 
  {(q;q)_{2n} (q;q)_{n-1} }  \\ =
  \frac{(q^6,q^{12},q^{18};q^{18})_\infty + (q^9,q^9,q^{18};q^{18})_\infty -
   q(q^3,q^{15},q^{18};q^{18})_\infty}
  {(q;q)_\infty (q;q^2)_\infty} \tag{A.74}
\end{gather*}
\end{id}

\begin{obs}  Note: Identity $(75)$ is $(78) - q\times (76)$.
\begin{gather*}
1+\sum_{n=1}^\infty   \frac{(q^3;q^3)_{n-1} (-q;q)_{n} q^{n(n+1)/2}} 
  {(q;q)_{2n} (q;q)_{n-1} } 
  =\frac{(q^9,q^9,q^{18};q^{18})_\infty -
   q(q^3,q^{15},q^{18};q^{18})_\infty}
  {(q;q)_\infty (q;q^2)_\infty}   \tag{A.75}
\end{gather*}
\end{obs}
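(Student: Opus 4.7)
The strategy is to derive the infinite series-product identity A.75 as a limiting case of the conjectured polynomial (finite) identity 3.75. This follows the general framework announced in \S2: every one of Slater's identities is recovered as a corollary of its finitization after letting $n \to \infty$.

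First I would establish the polynomial identity 3.75,
\[
P_n(q) = \sum_{k=-\infty}^\infty (-1)^k q^{9k^2}\Tone{n+1}{6k}{\sqrt{q}}
 - (-1)^k q^{9k^2+6k+1}\Tone{n+1}{6k+2}{\sqrt{q}},
\]
where $P_n(q)$ denotes the multisum on the LHS of 3.75. By the construction described in \S2.1, $P_n(q)$ automatically satisfies the fourth-order recurrence $P_n = (1+q+q^n)P_{n-1} - (q+q^2)P_{n-3} + (q^2-q^n)P_{n-4}$ ($n\geqq 4$) extracted from the defining $q$-difference equation; so it suffices to verify the four initial conditions $P_0,\dots,P_3$ by direct expansion, and then to show that the bosonic sum on the RHS satisfies the \emph{same} recurrence. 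For this I would use the ``method of recurrence proof'' demonstrated in \S4.3 on Identity 3.17: apply the Pascal-type relations (\ref{ET1}) and (\ref{ET0}) to each of the shifted $T_1$ summands in $P_n - (1+q+q^n)P_{n-1} + (q+q^2)P_{n-3} - (q^2-q^n)P_{n-4}$, then use the symmetries (\ref{T0sym}), (\ref{T1sym}) and the ``tautology'' (\ref{E0}) to pair off the resulting dozen-or-so families of $T_0$ and $T_1$ sums after appropriate index shifts $k\mapsto k-1, -k, 1-k$. As a mechanical alternative, one can expand each $\mathrm{T}_1$ using (\ref{Tonedef}) to rewrite the RHS as a double $q$-binomial sum and feed it to Riese's \texttt{qMultiSum}, which would output a WZ-style certificate.

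Second, I would pass to the limit $n\to\infty$ in 3.75. Termwise, using $\lim_{n\to\infty}\gp{n-1-3i-I-2k-2K}{j-1}{q} = 1/(q;q)_{j-1}$, the LHS of 3.75 collapses to the series on the LHS of A.75 (after simplifying the internal $q$-binomial sums, whose closed forms come from (\ref{qbc1}) and (\ref{qbc2})). On the RHS, (\ref{T1lim}) with $q\to\sqrt{q}$ gives $\lim_{n\to\infty}\Tone{n+1}{A}{\sqrt q}=(-q;q)_\infty/(q;q)_\infty$, reducing the RHS to
\[
\frac{(-q;q)_\infty}{(q;q)_\infty}\Bigl(\sum_{k=-\infty}^\infty(-1)^k q^{9k^2} - q\sum_{k=-\infty}^\infty(-1)^k q^{9k^2+6k}\Bigr).
\]
Each bilateral theta sum is evaluated by Jacobi's Triple Product (\ref{jtp}): the first equals $(q^9,q^9,q^{18};q^{18})_\infty$, the second equals $(q^3,q^{15},q^{18};q^{18})_\infty$. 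Finally, the Euler identity $(-q;q)_\infty = 1/(q;q^2)_\infty$ converts the prefactor into $1/\bigl((q;q)_\infty (q;q^2)_\infty\bigr)$, yielding exactly the right-hand side of A.75.

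The main obstacle is the first step: establishing the fourth-order recurrence for the bosonic form. Unlike the first-order example worked in \S4.3 (Identity 3.3-T), here the recurrence involves four shifted summands, each of which splits into several $T_0$ and $T_1$ pieces after (\ref{ET1})/(\ref{ET0}), and the cancellation pattern requires careful sign and exponent tracking across multiple index shifts. This is precisely the situation for which the author's \texttt{recpf} Maple package was built, and I would use it to automate the bookkeeping while preserving a human-readable chain of substitutions. The $n\to\infty$ passage is then routine, and the stated equivalence ``(75) $=$ (78) $- q\times$ (76)'' provides a useful independent consistency check once the analogous limits for 3.76 and (a finite form of) A.78 are in hand.
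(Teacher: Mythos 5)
Your proposal is correct and follows exactly the paper's intended route: prove the finitization (3.75) by showing the bosonic $T_1$-sum satisfies the fourth-order recurrence $P_n=(1+q+q^n)P_{n-1}-(q+q^2)P_{n-3}+(q^2-q^n)P_{n-4}$ known for the fermionic side (the ``recurrence proof'' of \S 4.3, or a \texttt{qMultiSum} certificate), check the initial conditions, and then let $n\to\infty$ using (\ref{T1lim}) with $q\mapsto\sqrt q$, Jacobi's triple product, and $(-q;q)_\infty=1/(q;q^2)_\infty$. The limiting computation you give is exactly right, and the noted relation $(75)=(78)-q\times(76)$ is, as you say, only an independent consistency check.
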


\begin{id}[Bailey-Dyson, 1947] Appears in Bailey~\cite[p. 434, (D1)]{wnb:comb}; 
Bailey credits its derivation to Dyson.
\index{Bailey, W. N.}
\index{Dyson, Freeman}
\begin{gather*} 
 \sum_{n=0}^\infty   \frac{(q^3;q^3)_{n} (-q;q)_{n+1} q^{n(n+3)/2}} 
  {(q;q)_{2n+2} (q;q)_n } =
   \prod_{n=1}^\infty  \frac{(1-q^{18n-3})(1-q^{18n-15})(1-q^{18n}) 
    (1+q^{n})}{(1-q^{n})}  \tag{A.76}
\end{gather*}
\end{id}

\begin{id}
\begin{gather*} 
 \sum_{n=0}^\infty   \frac{(q^3;q^3)_{n} (-q;q)_{n+1} (1-q^{n+1}) q^{n(n+1)/2}} 
  {(q;q)_{2n+2} (q;q)_{n} }  =
   \prod_{n=1}^\infty  \frac{ (1-q^{18n-6})(1-q^{18n-12})(1-q^{18n}) 
   (1+q^{n})}{(1-q^{n})}  \tag{A.77}
\end{gather*}
\end{id}

\begin{id}[Bailey-Dyson, 1947] Note: This identity is $(75) + q\times (76)$.
Appears in Bailey~\cite[p. 434, (D3)]{wnb:comb}; 
Bailey credits its derivation to Dyson.
\index{Bailey, W. N.}
\index{Dyson, Freeman}
\begin{gather*} 
 1 + \sum_{n=1}^\infty   \frac{(q^3;q^3)_{n-1} (-1;q)_{n+1} q^{n(n+1)/2}} 
  {(q;q)_{2n} (q;q)_{n-1} } =
   \prod_{n=1}^\infty  \frac{ (1-q^{18n-9})^2 (1-q^{18n}) 
   (1+q^{n})}{(1-q^{n})}  \tag{A.78}
\end{gather*}
\end{id}

\begin{id}[Rogers, 1894] Rogers~\cite[p. 330 second eqn. from top]{ljr:mem2}.
 Note: This identity is the same as $(98)$.  
\index{Rogers, L. J.}
\begin{gather*} 
 \sum_{n=0}^\infty   \frac{q^{n^2}} {(q;q)_{2n}} =
   \prod_{n=1}^\infty  \frac{ (1-q^{20n-8})(1-q^{20n-12})(1-q^{20n}) 
   (1+q^{2n-1})}{(1-q^{2n})} \tag{A.79}
\end{gather*}
\end{id}

\begin{id} [Rogers, 1917] Rogers~\cite[p. 331 (1), line 2]{ljr:1917}. 
\index{Rogers, L. J.}
\begin{gather*} 
 \sum_{n=0}^\infty   \frac{q^{n(n+1)/2}} 
  {(q;q^2)_{n+1} (q;q)_n  }  \\ =
   \prod_{n=1}^\infty  \Big[  
   (1-q^{7n-2})(1-q^{7n-5})(1-q^{7n})(1-q^{14n-3})(1-q^{14n-11}) 
   \Big]
   \frac{(1+q^{n})}{(1-q^{n})} \tag{A.80}
\end{gather*}
\end{id}

\begin{id} [Rogers, 1917] Rogers~\cite[p. 331 (1), line 1]{ljr:1917}. 
\index{Rogers, L. J.}
\begin{gather*} 
 \sum_{n=0}^\infty   \frac{ q^{n(n+1)/2}} 
  {(q;q^2)_{n} (q;q)_n  }  \\ =
   \prod_{n=1}^\infty  \Big[  
   (1-q^{7n-1})(1-q^{7n-6})(1-q^{7n})(1-q^{14n-5})(1-q^{14n-9}) 
   \Big]
   \frac{(1+q^{n})}{(1-q^{n})} \tag{A.81}
\end{gather*}
\end{id}

\begin{id} [Rogers, 1917] Rogers~\cite[p. 331 (1), line 3]{ljr:1917}. 
\index{Rogers, L. J.}
\begin{gather*} 
 \sum_{n=0}^\infty   \frac{ q^{n(n+3)/2}} 
  {(q;q^2)_{n+1} (q;q)_n } \\ =
   \prod_{n=1}^\infty  \Big[  
   (1-q^{7n-3})(1-q^{7n-4})(1-q^{7n})(1-q^{14n-1})(1-q^{14n-13}) 
   \Big]
   \frac{(1+q^{n})}{(1-q^{n})} \tag{A.82}
\end{gather*}
\end{id}

\begin{obs} Identity $(83)$ is the same as $(39)$. \end{obs}

\begin{obs} Identity $(84)$ is the same as $(9)$. \end{obs}

\begin{obs} Identity $(85)$ is the same as $(52)$. \end{obs}

\begin{obs} Identity $(86)$ is the same as $(38)$. \end{obs}

\begin{obs} Identity $(87)$ is the same as $(27)$. \end{obs}

\begin{id} Note: This identity is $(91) - q^2\times(90)$.
\begin{gather*} 
 \sum_{n=1}^\infty   \frac{(q^3;q^3)_{n-1}  (1-q^{n+1}) q^{n^2 - 1}} 
  { (q;q)_{2n} (q;q)_{n-1} } 
  = \frac{ (q^6,q^{21},q^{27};q^{27})_\infty -q^2(q^3,q^{24},q^{27};q^{27})_\infty}
  {(q;q)_\infty}\tag{A.88}
\end{gather*}
\end{id}

\begin{id}  Note: This identity is $(93) - q\times(91)$.
\begin{gather*} 
1 + \sum_{n=1}^\infty   \frac{(q^3;q^3)_{n-1}  q^{n(n+1)}} 
  { (q;q)_{2n} (q;q)_{n-1} } 
   = \frac{ (q^{12},q^{15},q^{27};q^{27})_\infty -q (q^6,q^{21},q^{27};q^{27})_\infty}
  {(q;q)_\infty}\tag{A.89}
\end{gather*}
\end{id}

\begin{id}[Bailey-Dyson, 1947] Appears in Bailey~\cite[p. 434, (B1)]{wnb:comb}; 
Bailey credits its derivation to Dyson.
\index{Bailey, W. N.}
\index{Dyson, Freeman}
\begin{gather*}
  \sum_{n=0}^\infty   \frac{(q^3;q^3)_{n}  q^{n(n+3)}} 
  { (q;q)_{2n+2} (q;q)_{n} }  =
   \prod_{n=1}^\infty \frac{
        (1-q^{27n-3})(1-q^{27n-24})(1-q^{27n})} {1-q^n} \tag{A.90}
\end{gather*}
\end{id}

\begin{id}[Bailey-Dyson, 1947] Appears in Bailey~\cite[p. 434, (B2)]{wnb:comb}; 
Bailey credits its derivation to Dyson.
\index{Bailey, W. N.}
\index{Dyson, Freeman}
\begin{gather*} 
  \sum_{n=0}^\infty   \frac{(q^3;q^3)_{n}  q^{n(n+2)}} 
  { (q;q)_{2n+2} (q;q)_{n} }  =
   \prod_{n=1}^\infty \frac{
        (1-q^{27n-6})(1-q^{27n-18})(1-q^{27n})} {1-q^n} \tag{A.91}
\end{gather*}
\end{id}

\begin{id}[Bailey-Dyson, 1947] Appears in Bailey~\cite[p. 434, (B3)]{wnb:comb}; 
Bailey credits its derivation to Dyson.
\index{Bailey, W. N.}
\index{Dyson, Freeman}
\begin{gather*} 
  \sum_{n=0}^\infty   \frac{(q^3;q^3)_{n}  q^{n(n+1)}} 
  { (q;q)_{2n+1} (q;q)_{n} }  =
   \prod_{n=1}^\infty \frac{1-q^{9n}} {1-q^n} \tag{A.92}
\end{gather*}
\end{id}

\begin{id}[Bailey-Dyson, 1947] Appears in Bailey~\cite[p. 434, (B4)]{wnb:comb}; 
Bailey credits its derivation to Dyson.
\index{Bailey, W. N.}
\index{Dyson, Freeman}
\begin{gather*} 
  1 + \sum_{n=1}^\infty   \frac{(q^3;q^3)_{n-1}  q^{n^2}} 
  { (q;q)_{2n-1} (q;q)_{n} }  =
   \prod_{n=1}^\infty \frac{
        (1-q^{27n-12})(1-q^{27n-15})(1-q^{27n})} {1-q^n} \tag{A.93}
\end{gather*}
\end{id}

\begin{id}[Rogers, 1894] 
Equivalent to Rogers~\cite[p. 331, eqn. (6)]{ljr:mem2}. 
\index{Rogers, L. J.}
 \begin{gather*} 
  \sum_{n=0}^\infty   \frac{q^{n(n+1)} } { (q;q)_{2n+1} }  =
   \prod_{n=1}^\infty \frac{
      (1-q^{10n-3})(1-q^{10n-7})(1-q^{10n})(1-q^{20n-4})(1-q^{20n-16})} 
{1-q^n} \tag{A.94}
\end{gather*}
\end{id} 

\begin{id} Note: This identity is equivalent to (97).
 \begin{gather*} 
  \sum_{n=0}^\infty   \frac{(-q;q^2)_n q^{n(3n-2)}} 
  { (q^2;q^2)_{2n} }  \\ =
   \prod_{n=1}^\infty 
      (1-q^{10n-3})(1-q^{10n-7})(1-q^{10n})(1-q^{20n-4})(1-q^{20n-16})  
\cdot\frac{1+q^{2n-1}}{1-q^{2n}} \tag{A.95}
\end{gather*}
\end{id} 

\begin{id}[Rogers, 1894] 
Equivalent to Rogers~\cite[p. 331, eqn. (7)]{ljr:mem2}.
\index{Rogers, L. J.}
 \begin{gather*} 
  \sum_{n=0}^\infty   \frac{q^{n(n+2)} } { (q;q)_{2n+1} }  =
   \prod_{n=1}^\infty \frac{
      (1-q^{10n-4})(1-q^{10n-6})(1-q^{10n})(1-q^{20n-2})(1-q^{20n-18})} 
{1-q^n} \tag{A.96}
\end{gather*}
\end{id} 

\begin{id} Note: This identity is equivalent to (95).
 \begin{gather*} 
  \sum_{n=0}^\infty   \frac{(-q;q^2)_{n+1} q^{n(3n+2)}} 
  { (q^2;q^2)_{2n+1} } \\ =
   \prod_{n=1}^\infty 
      (1-q^{10n-3})(1-q^{10n-7})(1-q^{10n})(1-q^{20n-4})(1-q^{20n-16})  
\cdot\frac{1+q^{2n-1}}{1-q^{2n}} \tag{A.97}
\end{gather*}
\end{id} 

\begin{obs} Identity $(98)$ is the same as $(79)$. \end{obs}

\begin{id}[Rogers, 1894] 
Equivalent to Rogers~\cite[p. 332, between eqns. (12) and (13)]{ljr:mem2}.
\index{Rogers, L. J.}
\begin{gather*} 
 \sum_{n=0}^\infty   \frac{q^{n(n+1)} } { (q;q)_{2n} }  =
   \prod_{n=1}^\infty \frac{
      (1-q^{10n-1})(1-q^{10n-9})(1-q^{10n})(1-q^{20n-8})(1-q^{20n-12})} 
{1-q^n} \tag{A.99}
\end{gather*}
\end{id} 

\begin{id}
 \begin{gather*} 
  \sum_{n=0}^\infty   \frac{(-q;q^2)_n q^{3n^2} } { (q^2;q^2)_{2n} } \\ =
   \prod_{n=1}^\infty 
      (1-q^{10n-1})(1-q^{10n-9})(1-q^{10n})(1-q^{20n-8})(1-q^{20n-12})
   \cdot\frac{1+q^{2n-1}}{1-q^{2n}} 
 \tag{A.100}
\end{gather*}
\end{id} 

\begin{id} Note: This identity is the sum of $(104)$ and $(105$-a$)$.
  \begin{gather*}
    1 + \sum_{n=1}^\infty \frac{ 
   (-q;q)_n (-q^2;q^2)_{n-1} q^{n(n-1)/2} } {(q;q)_{2n}}  \\
   = \frac{ (q^2,q^6,q^8;q^8)_\infty + (-q^{16},-q^{16},q^{32};q^{32})_\infty 
     -q(-q^8,-q^{24},q^{32};q^{32})_\infty}{(q;q)_\infty (q;q^2)_\infty}
  \tag{A.101}
\end{gather*}
\end{id}

\begin{id} Note: This identity is $(105$-a$) + q\times(103)$
  \begin{gather*}
    \sum_{n=0}^\infty \frac{ 
   (-q;q)_{n+1} (-q^2;q^2)_{n} q^{n(n+1)/2} } {(q,q)_{2n+2}}  \\ = 
  \frac{ (-q^{12},-q^{20},q^{32};q^{32})_\infty -q^2 (-q^{4},-q^{28},q^{32};q^{32})_\infty 
     +q(-q^8,-q^{24},q^{32};q^{32})_\infty -q^4(-1,-q^{32},q^{32};q^{32})_\infty}
     {(q;q)_\infty (q;q^2)_\infty}
  \tag{A.102}
\end{gather*}
\end{id}

\begin{id}
  \begin{gather*}
    \sum_{n=0}^\infty \frac{ 
   (-q;q)_{n+1} (-q^2;q^2)_{n} q^{n(n+3)/2} } {(q;q)_{2n+2}} 
   = \frac{(-q^8,-q^{24},q^{32};q^{32})_\infty -q^3(-1,-q^{32},q^{32};q^{32})_\infty}
      {(q;q)_\infty (q;q^2)_\infty}
  \tag{A.103}
\end{gather*}
\end{id}

\begin{id}
  \begin{gather*}
    1 + \sum_{n=1}^\infty \frac{ 
   (-q;q)_{n} (-q^2;q^2)_{n-1} q^{n(n+1)/2} } {(q;q)_{2n}}  
   = \frac{(-q^{16},-q^{16},q^{32};q^{32})_\infty -q(-q^8,-q^{24},q^{32};q^{32})_\infty}
      {(q;q)_\infty (q;q^2)_\infty}
  \tag{A.104}
\end{gather*}
\end{id}

\begin{obs} Identity $(105)$ is the same as $(37)$. \end{obs}

\noindent\textbf{Identity A.105-a.} \textit{This identity does not appear 
explicitly in Slater's list.  It appears implicilty as the difference of 
$(101)$ and $(104)$, and as the difference of $(102)$ and $q\times (103)$.}
  \begin{gather*}
    \sum_{n=0}^\infty \frac{ 
   (-q^2;q^2)_{n}  q^{n(n+1)/2} } {(q;q^2)_{n+1} (q;q)_n}  = 
  \prod_{n=1}^\infty 
    \frac{ (1-q^{8n-2})(1-q^{8n-6})(1-q^{8n}) (1+q^n) } {1-q^n}
  \tag{A.105-a}
\end{gather*}

\begin{obs} Identity $(106)$ is the same as $(35)$. \end{obs}

\begin{id} 
\begin{gather*}
 \sum_{n=0}^\infty \frac 
    {(q^3;q^6)_{n} (-q^2; q^2)_{n} q^{n(n+1)}}
    {(q^2;q^2)_{2n+1} (q;q^2)_n}  \\ =
 \prod_{n=1}^\infty \frac
 {(1+q^{12n-3})(1+q^{12n-9})(1-q^{12n})(1-q^{24n-6})(1-q^{24n-18})(1+q^{2n})}
     {1-q^{2n}}
  \tag{A.107}
\end{gather*}
\end{id}

\begin{id} Note: This identity is $(115) - q^2\times(116)$.
\begin{gather*}
 \sum_{n=0}^\infty \frac 
    {(q^6;q^6)_{n} (-q; q^2)_{n+1} (1-q^{2n+2}) q^{n(n+2)}}
    {(q^2;q^2)_{2n+2} (q^2;q^2)_n}  \\ =
 \prod_{n=1}^\infty \Big[
       (1+q^{12n- 5})(1+q^{12n-7})(1-q^{12n}) 
       (1-q^{24n- 2})(1-q^{24n-22}) \Big] 
\frac {(1+q^{2n-1})}{(1-q^{2n})}
  \tag{A.108}
\end{gather*}
\end{id}

\begin{id} 
\begin{gather*} 
 \sum_{n=0}^\infty \frac 
    {(q^3;q^6)_{n} (-q; q^2)_{n+1}  q^{n^2}}
    {(q^2;q^2)_{2n+1} (q;q^2)_n}  =
 \frac{(-q^2,-q^{10},q^{12};q^{12})_\infty (q^8,q^{16};q^{24})_\infty +q(q^{12};q^{12})_\infty }
 {(q^4;q^4)_\infty (q;q^2)_\infty}
  \tag{A.109}
\end{gather*}
\end{id}

\noindent\textbf{Identity A.109-a.} \textit{This identity does not appear 
explicitly in Slater's list.  It appears implicilty as the difference of 
$(109)$ and $q\times(110)$.}
\begin{gather*} 
 \sum_{n=0}^\infty \frac 
    {(q^3;q^6)_{n}  q^{n^2}}
    {(q^4;q^4)_{n} (q;q^2)_n^2} \\ =
 \prod_{n=1}^\infty 
\frac{ (1+q^{12n-2})(1+q^{12n-10}) (1-q^{12n})(1-q^{24n-8})(1-q^{24n-16}) 
(1+q^{2n-1})}{1-q^{2n}}
  \tag{A.109-a}
\end{gather*}

\begin{id} 
\begin{gather*}
 \sum_{n=0}^\infty \frac 
    {(q^3;q^6)_{n} (-q; q^2)_{n+1}  q^{n(n+2)}}
    {(q^2;q^2)_{2n+1} (q;q^2)_n}   =
 \prod_{n=1}^\infty 
  \frac{(1-q^{12n})(1+q^{2n-1})}{1-q^{2n}}
  \tag{A.110}
\end{gather*}
\end{id}

\begin{id} 
\begin{gather*}
 1 + \sum_{n=1}^\infty \frac 
    {(q^6;q^6)_{n-1} (-q; q^2)_{n}  q^{n(n+2)}}
    {(q^2;q^2)_{2n} (q^2;q^2)_{n-1}} 
    = \frac{(q^{15},q^{21},q^{36};q^{36})_\infty - q(q^9,q^{27},q^{36};q^{36})_\infty}
    {(q^4;q^4)_\infty (q;q^2)_\infty}
  \tag{A.111}
\end{gather*}
\end{id}

\begin{id} Note: This identity is $(115) + q^3\times (116)$. 
\begin{gather*}
 \sum_{n=0}^\infty \frac 
    {(q^6;q^6)_{n} (-q; q^2)_{n+2}  q^{n(n+2)}}
    {(q^2;q^2)_{2n+2} (q^2;q^2)_{n}} 
    = \frac{(q^{9},q^{27},q^{36};q^{36})_\infty + q^3(q^9,q^{3},q^{33};q^{36})_\infty}
    {(q^4;q^4)_\infty (q;q^2)_\infty}
  \tag{A.112}
\end{gather*}
\end{id}

\begin{id}  Note: This identity is $(115) - q^3\times (116)$.
\begin{gather*}
 1 + \sum_{n=1}^\infty \frac 
    {(q^6;q^6)_{n-1} (-q; q^2)_{n}  q^{n(n+2)}}
    {(q^2;q^2)_{2n-1} (q^2;q^2)_{n-1}}  
    = \frac{(q^{15},q^{21},q^{36};q^{36})_\infty - q^3(q^3,q^{33},q^{36};q^{36})_\infty}
    {(q^4;q^4)_\infty (q;q^2)_\infty}
  \tag{A.113}
\end{gather*}
\end{id}

\begin{id} 
\begin{gather*}
 1 + \sum_{n=1}^\infty \frac 
    {(q^6;q^6)_{n-1} (-q; q^2)_{n}  q^{n^2}}
    {(q^2;q^2)_{2n-1} (q^2;q^2)_{n}}  =
 \prod_{n=1}^\infty 
 \Big[ (1-q^{36n-15})(1-q^{36n-21})(1-q^{36n}) \Big]
\frac {1+q^{2n-1}}{1-q^{2n}}
  \tag{A.114}
\end{gather*}
\end{id}

\begin{id}[Bailey-Dyson] Bailey~\cite[p. 435, (C3)]{wnb:comb}.
\index{Bailey, W. N.}\index{Dyson, Freeman}
\begin{gather*}
  \sum_{n=0}^\infty \frac 
    {(q^6;q^6)_{n} (-q; q^2)_{n+1}  q^{n(n+2)}}
    {(q^2;q^2)_{2n+2} (q^2;q^2)_{n}}  =
 \prod_{n=1}^\infty 
 \Big[ (1-q^{36n-9})(1-q^{36n-27})(1-q^{36n}) \Big]
\frac {1+q^{2n-1}}{1-q^{2n}}
  \tag{A.115}
\end{gather*}
\end{id}

\begin{id}[Bailey-Dyson] Bailey~\cite[p. 435, (C1)]{wnb:comb}.
\index{Bailey, W. N.}\index{Dyson, Freeman}
\begin{gather*}
  \sum_{n=0}^\infty \frac 
    {(q^6;q^6)_{n} (-q; q^2)_{n+1}  q^{n(n+4)}}
    {(q^2;q^2)_{2n+2} (q^2;q^2)_{n}}  =
 \prod_{n=1}^\infty 
 \Big[ (1-q^{36n-3})(1-q^{36n-33})(1-q^{36n}) \Big]
\frac {1+q^{2n-1}}{1-q^{2n}}
  \tag{A.116}
\end{gather*}
\end{id}

\begin{id}  
\begin{gather*}
  \sum_{n=0}^\infty \frac 
    {(-q; q^2)_{n}  q^{n^2}}
    {(q^2;q^2)_{2n} }  \\ =
 \prod_{n=1}^\infty 
 \Big[ (1-q^{14n-3})(1-q^{14n-11})(1-q^{14n})(1-q^{28n-8})(1-q^{28n-20}) \Big]
\frac {1+q^{2n-1}}{1-q^{2n}}
  \tag{A.117}
\end{gather*}
\end{id}

\begin{id}
\begin{gather*}
  \sum_{n=0}^\infty \frac 
    {(-q; q^2)_{n}  q^{n(n+2)}}
    {(q^2;q^2)_{2n} }  \\ =
 \prod_{n=1}^\infty 
 \Big[ (1-q^{14n-1})(1-q^{14n-13})(1-q^{14n})(1-q^{28n-12})(1-q^{28n-16}) \Big]
\frac {1+q^{2n-1}}{1-q^{2n}}
  \tag{A.118}
\end{gather*}
\end{id}

\begin{id}
\begin{gather*}
  \sum_{n=0}^\infty \frac 
    {(-q; q^2)_{n+1}  q^{n(n+2)}}
    {(q^2;q^2)_{2n+1} }  \\ =
 \prod_{n=1}^\infty 
 \Big[ (1-q^{14n-5})(1-q^{14n-9})(1-q^{14n})(1-q^{28n-4})(1-q^{28n-24}) \Big]
\frac {1+q^{2n-1}}{1-q^{2n}}
  \tag{A.119}
\end{gather*}
\end{id}

\begin{id}
\begin{gather*}
  1 + \sum_{n=1}^\infty \frac 
    {(-q^2; q^2)_{n-1}  q^{n(n+1)}}
    {(q;q)_{2n} } 
  = \frac{(-q^{22},-q^{26},q^{48};q^{48})_\infty -q(-q^{14},-q^{34},q^{48};q^{48})_\infty}
  {(q;q)_\infty}
  \tag{A.120}
\end{gather*}
\end{id}

\begin{id}
\begin{gather*}
  1 + \sum_{n=1}^\infty \frac 
    {(-q^2; q^2)_{n-1}  q^{n^2}}
    {(q;q)_{2n} } \\ =
 \prod_{n=1}^\infty 
 \frac {(1-q^{16n-2})(1-q^{16n-14})(1-q^{16n}) 
        (1-q^{32n-12})(1-q^{32n-20})} {1-q^n}
  \tag{A.121}
\end{gather*}
\end{id}

\begin{id}
\begin{gather*}
  \sum_{n=0}^\infty \frac 
    {(-q^2; q^2)_{n}  q^{n(n+3)}}
    {(q;q)_{2n+2} } 
    =  \frac{(-q^{10},-q^{26},q^{38};q^{48})_\infty -q^3(-q^2,-q^{46},q^{48};q^{48})_\infty}
  {(q;q)_\infty}
  \tag{A.122}
\end{gather*}
\end{id}

\begin{id}
\begin{gather*}
  \sum_{n=0}^\infty \frac 
    {(-q^2; q^2)_{n}  q^{n(n+2)}}
    {(q;q)_{2n+2} } \\ =
 \prod_{n=1}^\infty 
 \frac {(1-q^{16n-6})(1-q^{16n-10})(1-q^{16n}) 
        (1-q^{32n-4})(1-q^{32n-28})} {1-q^n}
  \tag{A.123}
\end{gather*}
\end{id}

\begin{id}
\begin{gather*}
  \sum_{n=0}^\infty \frac 
    {(q^3; q^6)_{n}  q^{2n(n+1)}}
    {(q^2;q^2)_{2n+1} (q;q^2)_n } \\ =
 \prod_{n=1}^\infty 
 \frac {(1+q^{18n-5})(1+q^{18n-13})(1-q^{18n}) 
        (1-q^{36n-8})(1-q^{36n-28})} {1-q^{2n}}
  \tag{A.124}
\end{gather*}
\end{id}

\begin{id}
\begin{gather*}
  \sum_{n=0}^\infty \frac 
    {(q^3; q^6)_{n}  q^{2n(n+2)}}
    {(q^2;q^2)_{2n+1} (q;q^2)_n } \\ =
 \prod_{n=1}^\infty 
 \frac {(1+q^{18n-7})(1+q^{18n-11})(1-q^{18n}) 
        (1-q^{36n-4})(1-q^{36n-32})} {1-q^{2n}}
  \tag{A.125}
\end{gather*}
\end{id}

\begin{id} Note: This identity is equivalent to 
 $(71) + q\times(68) - q\times(128)$.
\begin{gather*}
  1 + \sum_{n=1}^\infty \frac 
    {(-q^4; q^4)_{n-1} (-q;q^2)_{n} q^{n^2}}
    {(q^2;q^2)_{2n}  } 
     = \frac{(-q^{28},-q^{36},q^{64};q^{64})_\infty -q^3(-q^{12},-q^{52},q^{64};q^{64})_\infty}
  {(q;q^2)_\infty (q^4;q^4)_\infty}
  \tag{A.126}
\end{gather*}
\end{id}

\begin{id}
Note: This identity is equivalent to 
 $(71) - q\times (128)$.
\begin{gather*}
  1 + \sum_{n=1}^\infty \frac 
    {(-q^4; q^4)_{n-1} (-q;q^2)_{n} q^{n(n+2)}}
    {(q^2;q^2)_{2n}  } 
     = \frac{(-q^{28},-q^{36},q^{64};q^{64})_\infty -q(-q^{20},-q^{44},q^{64};q^{64})_\infty}
  {(q;q^2)_\infty (q^4;q^4)_\infty}
  \tag{A.127}
\end{gather*}
\end{id}

\begin{id} 
\begin{gather*}
   \sum_{n=0}^\infty \frac 
    {(-q^4; q^4)_{n} (-q;q^2)_{n+1} q^{n(n+2)}}
    {(q^2;q^2)_{2n+2}  } 
    =  \frac{(-q^{20},-q^{44},q^{64};q^{64})_\infty -q^5(-q^{4},-q^{60},q^{64};q^{64})_\infty}
  {(q;q^2)_\infty (q^4;q^4)_\infty}
  \tag{A.128}
\end{gather*}
\end{id}

\begin{id} Note: This identity is equivalent to
$q^{-2}\times \Big( (128) - (68) \Big).$
\begin{gather*}
   \sum_{n=0}^\infty \frac 
    {(-q^4; q^4)_{n} (-q;q^2)_{n+1} q^{n(n+4)}}
    {(q^2;q^2)_{2n+2}  } 
     = \frac{(-q^{12},-q^{52},q^{64};q^{64})_\infty -q^3(-q^{4},-q^{60},q^{64};q^{64})_\infty}
  {(q;q^2)_\infty (q^4;q^4)_\infty}
  \tag{A.129}
\end{gather*}
\end{id}

\begin{id} Note: This identity is $(72$-a$) - q\times(70)$.
\begin{gather*}
   \sum_{n=0}^\infty \frac 
    {(-q^2; q^4)_{n} (-q;q^2)_{n+1} q^{n^2}}
    {(q^2;q^2)_{2n+1}  } 
     = \frac{(q^{8},q^{8},q^{16};q^{16})_\infty -q(q^{4},q^{12},q^{16};q^{16})_\infty}
  {(q;q^2)_\infty (q^4;q^4)_\infty}
  \tag{A.130}
\end{gather*}
\end{id}
\index{Slater, Lucy J.|)}
\index{Slater's list of Rogers-Ramanujan type identities|)}

\section*{Acknowledgements}
First and foremost, I thank my thesis advisor George E. Andrews \index{Andrews, George E.}.   
I am also grateful to Alexander Berkovich,
\index{Berkovich, Alexander}
Paul Eakin,
\index{Eakin, Paul}
Peter Paule, 
\index{Paule, Peter}
Avinash Sathaye, 
\index{Sathaye, Avinash}
and Doron Zeilberger 
\index{Zeilberger, Doron}
for for their support and encouragement of this project.

\bibliographystyle{plain}

\index{Jacobi's triple product identity|see{triple product identity}}
\index{q-binomial co\"efficient|see{Gaussian polynomial}}
\index{Euler's pentagonal number theorem|see{Pentagonal Number Theorem}}
\index{trinomial co\"efficeint!q-analog of|see{q-trinomial co\"efficients}}

\begin{theindex}

  \item Abel's lemma, 12
  \item Alder, H. L., 14
  \item Andrews, George E., 2, 7, 8, 10, 11, 14, 21, 25, 26, 31--33, 35, 
		43, 46, 47, 56, 57, 74, 98, 99, 103, 114

  \indexspace

  \item Bailey's mod 9 identities, 103
    \subitem finite, 33--34
  \item Bailey, W. N., 12, 14, 100, 103, 107--109, 112
  \item Baxter, Rodney, 2, 8, 10, 13, 14, 21
  \item Baxter, Rodney M., 2
  \item Berkovich, Alexander, 2, 4, 7, 14, 21--26, 30--32, 39, 114
  \item binomial theorem, 5
  \item bosonic representation, 13
  \item Bressoud, David M., 3, 93, 94

  \indexspace

  \item Cauchy, A.-L., 5
  \item creative symmetrization, 61--63
  \item creative telescoping, 59

  \indexspace

  \item duality, reciprocal, 74--92, 94
  \item Dyson, Freeman, 107--109, 112

  \indexspace

  \item Eakin, Paul, 114
  \item Ekhad, Shalosh, B., 59
  \item Euler's pentagonal number theorem, 
		\see{Pentagonal Number Theorem}{120}
  \item Euler, L., 16, 22, 94, 98

  \indexspace

  \item Fasenmyer, Sister Mary Celine, 58
  \item fermionic representation, 13
  \item Forrester, P. J., 2, 14

  \indexspace

  \item Gauss, K. F., 5, 99
  \item Gaussian polynomial, 4--5, 18
  \item Gordon, Basil, 14, 102
  \item G\"ollnitz, H., 14, 102
  \item G\"ollnitz-Gordon identities, 102
    \subitem finite, 31--32

  \indexspace

  \item Hardy, G. H., 14
  \item Heine's transformation, 12, 28
  \item Heine, E., 5, 12, 28

  \indexspace

  \item Jackson, F. H., 14, 99, 100, 103
  \item Jacobi's triple product identity, 
		\see{triple product identity}{120}
  \item Jacobi, K., 11

  \indexspace

  \item Koornwinder, T., 58

  \indexspace

  \item Lebesgue, V. A., 99
  \item Lepowski, J., 13

  \indexspace

  \item MacMahon, P. A., 13, 14, 25, 26
  \item McCoy, Barry M., 2, 4, 14, 21--26, 30--32, 39

  \indexspace

  \item operator
    \subitem annihilating, 64
  \item operator algebra, 63
  \item Orrick, William P., 4, 21, 22, 24, 25

  \indexspace

  \item partition (of an integer), 13
  \item Paule, Peter, 14, 59--62, 114
  \item Pentagonal Number Theorem, 98
    \subitem finite, 22, 94
  \item Petkov{\v s}ek, Marko, 58

  \indexspace

  \item q-binomial co\"efficient, \see{Gaussian polynomial}{120}
  \item q-binomial theorem, 5
  \item q-difference equation, 93
  \item q-difference equations, 14--17
  \item q-factorial, 4
  \item q-trinomial co\"efficients, 6--10
    \subitem asymptotics, 10
    \subitem definitions, 7
    \subitem recurrences, 7
  \item qMultiSum (Mathematica package), 59, 66
  \item qZeil (Mathematica package), 59, 60

  \indexspace

  \item Ramanujan, S., 13
  \item recurrence proof, 69--73
  \item Riese, Axel, 59, 60, 66, 94
  \item Rogers, L. J., 13, 14, 100, 102, 103, 105, 107--109
  \item Rogers-Ramanujan identities, 13--14, 92, 100
    \subitem Bressoud finitization, 93
    \subitem MacMahon-Schur finitization, 25, 26, 76
  \item Rogers-Selberg identities, 102
    \subitem finite, 30--31
  \item Rothe, H. A., 6
  \item RRtools (Maple package), 14, 16

  \indexspace

  \item Santos, J. P. O., 2, 22, 24, 26--47, 50--54, 56, 57, 73
  \item Sathaye, Avinash, 114
  \item Schilling, Anne, 2, 22
  \item Schur, I., 14, 25, 26
  \item Selberg, A., 102
  \item Sills, Andrew V., 14, 16, 73
  \item Slater's list of Rogers-Ramanujan type identities, 14, 98--114
    \subitem polynomial generalizations of, 21--55
  \item Slater, L. J., 14
  \item Slater, Lucy J., 2, 3, 11, 14, 16, 20--55, 98--114
  \item Steele Prize, 58

  \indexspace

  \item theta functions, 11
  \item trinomial co\"efficeint
    \subitem q-analog of, \see{q-trinomial co\"efficients}{120}
  \item trinomial co\"efficient
    \subitem ordinary, 6
  \item triple product identity, 10

  \indexspace

  \item Warnaar, S. Ole, 2, 3, 7, 14, 94
  \item Watson, G. N., 12
  \item Weierstra\ss, K., 12
  \item Wilf, Herbert S., 3, 58, 59
  \item Wilson, Robert L., 13
  \item WZ method, 58--69

  \indexspace

  \item Zeilberger's algorithm, 59
  \item Zeilberger, Doron, 3, 58--60, 114

\end{theindex}

\end{document}